\tikzset{commutative diagrams/.cd}
\numberwithin{equation}{section}
\newtheorem{theorem}{Theorem}[section]
\newtheorem{corollary}[theorem]{Corollary}
\newtheorem{lemma}[theorem]{Lemma}
\newtheorem{proposition}[theorem]{Proposition}
\newtheorem{conjecture}[theorem]{Conjecture}
\theoremstyle{definition}
\newtheorem{definition}[theorem]{Definition}
\newtheorem{example}[theorem]{Example}
\newtheorem{assumption}[theorem]{Assumption}
\newtheorem{remark}[theorem]{Remark}
\theoremstyle{remark}
\newtheorem*{warning}{Warning}
\newcommand\Z{{\mathbb Z}}
\newcommand\N{{\mathbb N}}
\newcommand\Q{{\mathbb Q}}
\newcommand\C{{\mathbb C}}
\newcommand\A{{\mathbb A}}
\def\P{{\mathbb P}}
\newcommand\Fq{{{\mathbb F}_q}}
\newcommand\Zp{{{\mathbb Z}_p}}
\def\O{\mathcal O}
\DeclareMathOperator{\im}{im}
\DeclareMathOperator{\coker}{coker}
\DeclareMathOperator{\Hom}{Hom}
\DeclareMathOperator{\End}{End}
\DeclareMathOperator{\Aut}{Aut}
\DeclareMathOperator{\Spec}{Spec}
\DeclareMathOperator{\GL}{GL}
\DeclareMathOperator{\Mat}{Mat}
\DeclareMathOperator{\Sym}{Sym}
\newcommand\subeq{\subseteq}
\newcommand\bbar{\overline} %\bar is too narrow; avoid
\newcommand\ttilde{\widetilde} %\tilde is too narrow; avoid
\newcommand\hhat{\widehat} %\hat is too narrow; avoid
\newcommand\onto{\twoheadrightarrow}
\newcommand\incl{\hookrightarrow}
\newcommand{\map}[1][]{{\xrightarrow{#1}}} %\map[f] gives an arrow with f
\DeclarePairedDelimiter{\abs}{\lvert}{\rvert}
\DeclarePairedDelimiter{\set}{\{}{\}}
\DeclarePairedDelimiter{\parens}{\lparen}{\rparen}
\DeclarePairedDelimiter\floor{\lfloor}{\rfloor}
\newcommand{\ev}[2]{\left. {#1} \right\rvert_{#2}}
\newcommand{\tu}{\textup} %To avoid unwanted italic in theorem env
\newcommand{\tuparens}[1]{\textup{(}#1\textup{)}} % In theorem env, \tuparens*{text} produces (text) with upright () and italic text. 
\DeclareMathOperator{\Hilb}{Hilb}
\DeclareMathOperator{\LT}{LT}
\DeclareMathOperator{\LM}{LM}
\newcommand{\Zhat}{\hhat{Z}}\DeclareMathOperator{\Coh}{Coh}
\DeclareMathOperator{\Quot}{Quot}
\DeclareMathOperator{\rk}{rk}
\newcommand{\m}{\mathfrak{m}}
\DeclareMathOperator{\Span}{span}
\newcommand{\Hilbnil}{\Hilb^0}%to be decided
\renewcommand{\L}{\mathbb{L}}
\DeclareMathOperator{\LCM}{LCM}
\newcommand{\Ohat}{\hhat{\O}}
\DeclareMathOperator{\Nilp}{Nilp}
\newcommand{\cl}{\mathrm{cl}}
\newcommand{\cI}{\mathcal{I}}
\DeclareMathOperator{\Surj}{Surj}
\DeclareMathOperator{\Fil}{Fil}
\DeclareMathOperator{\gr}{gr}
\DeclareMathOperator{\Gr}{Gr}
\DeclareMathOperator{\Rel}{Rel}
\newcommand{\ul}{\underline}
\newcommand{\sbp}[1]{_{(#1)}}
\DeclareMathOperator{\Cont}{Cont}
\newcommand{\KVar}[1]{K_0(\mathrm{Var}_{#1})}
\newcommand{\mot}{\mathrm{mot}}
\newcommand{\NH}{\mathit{NH}}
\newcommand{\NQ}{\mathit{NQ}}
\newcommand{\KStck}[1]{{\hhat{K}_0(\mathrm{Var}_{#1})}}
\begin{document}
\title[Punctual Quot schemes and Cohen--Lenstra series of cusp]{Punctual Quot schemes and Cohen--Lenstra series of the cusp singularity}
\author{Yifeng Huang}
\address{Dept.\ of Mathematics, University of British Columbia}
\email{huangyf@math.ubc.ca}

\author{Ruofan Jiang}
\address{Dept.\ of Mathematics, University of Wisconsin--Madison}
\email{rjiang32@wisc.edu}

\date{\today}

\bibliographystyle{abbrv}

\begin{abstract}
The Quot scheme of points $\mathrm{Quot}_{d,n}(X)$ on a variety $X$ over a field $k$ parametrizes quotient sheaves of $\mathcal{O}_X^{\oplus d}$ of zero-dimensional support and length $n$. It is a rank-$d$ generalization of the Hilbert scheme of $n$ points. When $X$ is a reduced curve with only the cusp singularity $\{x^2=y^3\}$ and $d\geq 0$ is fixed, the generating series for the motives of $\mathrm{Quot}_{d,n}(X)$ in the Grothendieck ring of varieties is studied via Gröbner bases, and shown to be rational. Moreover, the generating series is computed explicitly when $d\leq 3$. The computational results exhibit surprising patterns (despite the fact that the category of finite length coherent modules over a cusp is wild), which not only enable us to conjecture the exact form of the generating series for all $d$, but also 
suggest a general functional equation whose $d=1$ case is the classical functional equation of the motivic zeta function known for any Gorenstein curve.  

As another side of the story, Quot schemes are related to the Cohen--Lenstra series. The Cohen--Lenstra series encodes the count of ``commuting matrix points'' (or equivalently, coherent modules of finite length) of a variety over a finite field, about which Huang \cite{huang2023mutually} formuated a ``rationality'' conjecture for singular curves. We prove a general formula that expresses the Cohen--Lenstra series in terms of the motives of the (punctual) Quot schemes, which together with our main rationality theorem, provides positive evidence for Huang's conjecture for the cusp.
\end{abstract}

\maketitle
\tableofcontents
%Beginning of Article
\section{Introduction}
Rationality results are ubiquitous in generating series arising from moduli spaces. We consider several moduli spaces parametrizing sheaves with zero-dimensional support on a $k$-variety $X$, where $k$ is any field:

\begin{enumerate}
%Quot scheme
\item The Grothendieck Quot scheme $\Quot_{d,n}(X)$ parametrizing zero-dimensional quotient of $\O_X^{\oplus d}$ of length $n$, with generating series
\begin{equation}
Q_{d,X}^\mot(t):=\sum_{n\geq 0} [\Quot_{d,n}(X)]\, t^n \in \KVar{k}[[t]]
\end{equation}
with coefficients in the Grothendieck ring of $k$-varieties. 

%Stack of coherent sheaves
\item The stack $\Coh_n(X)$ parametrizing zero-dimensional coherent sheaves of length $n$ on $X$, with generating series
\begin{equation}
\widehat{Z}_X^{\text{mot}}(t):=\sum_{n\geq 0}[\Coh_n(X)] t^n\in \KStck{k}[[t]]
\end{equation}
with coefficients in $\KStck{k}:=\KVar{k}[\L^{-1},(1-\L^{-1})^{-b}:b\geq 0] \subeq \KVar{k}[[\L^{-1}]]$. The stack  $[\Coh_n(X)]$ (or its punctual counterpart, see next paragraph) is a quotient stack of certain commuting variety by $\GL_n$, see \S\ref{sec:moduli}, so its motive is defined in the sense of Behrend--Dhillon \cite{behrenddhillon2007} or Ekedahl \cite{Ekedahl2009} and lives in $\KStck{k}$.
\end{enumerate}

When $p$ is a closed point of $X$, we analogously define the local (or \textbf{punctual}) versions $\Quot_{d,n}^\mot(X,p)$, $Q^\mot_{d,X,p}(t)$, $\Coh_n^\mot(X,p)$, $\Zhat_{X,p}^\mot(t)$ by requiring the zero-dimensional sheaves to be supported at $p$. These local series only depend on the completed local ring $\Ohat_{X,p}$. The global series $Q^\mot_{d,X}(t)$ and $\Zhat^\mot_{X}(t)$ are determined by the local ones at all closed points $p\in X$. This local-to-global principle can be written in terms of the notion of power structures due to \cite{guseinzade2004power}. If $k=\Fq$, the point-count version of the local-to-global principle can be more elementarily understood as Euler products (\cite[Proposition 4.2]{huang2023mutually} and Proposition \ref{prop:euler_quot}). The essential reason behind the local-to-global principle is simple: a zero-dimensional sheaf on $X$ is determined by its stalk at each point in its support. 

When $d=1$, the Quot scheme $\Quot_{d,n}(X)$ is the Hilbert scheme $\Hilb_n(X)$ of $n$ points on $X$, and the series $Q_{1,X}^\mot (t)$ is widely studied in the literature and is often called the \emph{\tuparens{motivic} Hilbert zeta function}. When $X$ is a smooth curve, the Hilbert zeta function is nothing but the motivic zeta function\footnote{When $k=\Fq$, taking the $\Fq$-point count specializes $Z^\mot_X(t)$ to the usual local zeta function \cite[Appendix C]{hartshorneAG}, where Weil's conjectures apply.}
\begin{equation}
Z^\mot_X(t):=\sum_{n\geq 0} [\Sym^n(X)] t^n \in \KVar{k}[[t]],
\end{equation}
which is rational and (when $X$ is projective) satisfies a functional equation analogous to the one in Weil's conjectures \cite{kapranov2000, litt2015zeta}. When $X$ is a smooth surface, it is well known that $\Hilb_n(X)$ is a desingularization of $\Sym^n(X)$, and the Hilbert zeta function is given by G\"{o}ttsche's formula \cite{goettsche2001motive} in terms of the motivic zeta function, which depends on a local result by Ellingsrud and Str\o{}mme \cite{ellingsrudstromme1987homology} using the Bia\l{}ynicki-Birula decomposition. See \cite{gns2017} for related research on singular surfaces.

For $d>1$ and $X$ is a smooth curve, the motive of $\Quot_{d,n}(X)$ is given in 1989 by Bifet \cite{bifet1989} in terms of the motivic zeta function of $X$:
\begin{equation}\label{eq:quot_smooth_curve}
Q_{d,X}^\mot(t)=Z_X^\mot(t)Z_X^\mot(\L t)\dots Z_X^\mot(\L^{d-1}t)\text{, if $X$ is a smooth curve.}
\end{equation}

In light of the local-to-global principle, the equivalent local formula is
\begin{equation}\label{eq:quot_smooth_point}
Q_{d,\A^1,0}^\mot(t)=\frac{1}{(1-t)(1-\L t)\dots (1-\L^{d-1}t)},
\end{equation}
where $(\A^1,0)$ is the origin on an affine line. (We only list the exact form of these two formula among other known formulas, since they will be helpful later in the introduction section.) Both formulas are in fact essentially equivalent to formulas due to Solomon \cite{solomon1977zeta} in 1977 that count full-rank lattices of $\Z^d$ and $\Zp^d$, as is observed and explained in \cite{huangjiang2022a}. Bifet's formula is later generalized to arbitrary vector bundles \cite{bfp2020motive} (in place of the trivial vector bundle $\O_X^{\oplus d}$), and the nested Quot schemes \cite{monavariricolfi2022}. Other works along this line consider high-rank-quotient versions of the Quot scheme \cite{bcs2008, chen2001hyperquot, stromme1987parametrized} (where (equivariant) Chow rings are often computed as well) or the case where $X$ is a smooth surface \cite{jop2021, op2021quot}.

The motive of the stack $\Coh_n(X)$, from an elementary point of view, is essentially counting commuting matrices \eqref{eq:stack_quotient}, at least when $X$ is affine and $k=\Fq$. When $X$ is a smooth curve, we have an infinite product formula for $\Zhat^\mot_X(t)$ in terms of $Z^\mot_X(t)$, essentially discovered by Cohen and Lenstra \cite{cohenlenstra1984heuristics}. When $X$ is a smooth surface, an infinite product formula for $\Zhat^\mot_X(t)$ can be found by ``globalizing'' a local result of Feit and Fine \cite{feitfine1960pairs} on counting pairs of commuting matrices. The globalization is treated in \cite{bryanmorrison2015motivic} using power structures and more elementarily in \cite{huang2023mutually} using Euler products.

The most active part of the story, however, is the Hilbert zeta function $Q_{1,X}^\mot(t)$ for a reduced singular curve $X$, which the present paper attempts to generalize. We partially follow the exposition in the lecture note \cite{gks2021link} and references therein. Let $X$ be a reduced singular curve over $k$, and for simplicity, we assume \textbf{for the rest of the paper} that $X$ has a normalization $\pi:\ttilde X\to X$ such that $\ttilde X$ is a finite disjoint union of geometrically connected smooth curves defined over $k$, and each singular point $p$ of $X$ is a $k$-point with $\pi^{-1}(p)$ consisting of only $k$-points. Whenever we talk about the analytic isomorphism class of a curve singularity $p$, we assume it has a model in such a curve $X$. Then the Hilbert zeta function of $X$ (or $(X,p)$) satisfies the following properties:
\begin{enumerate}
\item (Rationality; \cite{brv2020motivic}) The series $Q_{1,X}^\mot(t)$ is rational in $t$. Moreover, if $\ttilde X$ denotes the normalization of $X$, then $Q_{1,X}^\mot(t)/Q_{1,\ttilde X}^\mot(t)$ is a polynomial in $t$ that only depends on the singularities of $X$. Equivalently, if $p$ is a $k$-point of $X$, then the local factor $Q_{1,X,p}^\mot(t)$ is rational with denominator $(1-t)^s$, where $s$ is the number of branches through $p$. 

\begin{definition}
Define
\begin{equation}
\NQ_{1,X,p}^\mot(t):=(1-t)^s Q_{1,X,p}^\mot(t),
\end{equation}
where $s$ is the branching number of $p$. By the rationality above, $\NQ_{1,X,p}^\mot(t)$ is a polynomial in $t$. If the only singularity of $X$ is $p$, then we have
\begin{equation}
\NQ_{1,X,p}^\mot(t)=\frac{Q_{1,X}^\mot(t)}{Q_{1,\ttilde X}^\mot(t)},
\end{equation}
where $\ttilde X$ is the normalization of $X$.
\end{definition}

\item (Functional Equation; \cite{goettscheshende2014refined, maulikyun2013macdonald, pandharipandethomas2010}) If $X$ is Gorenstein, geometrically connected and projective with arithmetic genus $g_a$, then
\begin{equation}
Q_{1,X}^\mot(t) = (\L t^2)^{g_a-1} Q_{1,X}^\mot(\L^{-1}t^{-1})
\end{equation}
as rational functions in $t$. The proof uses the fact that the dualizing sheaf on a Gorenstein curve is a line bundle, and expresses $Q_{1,X}^\mot(t)$ in terms of a certain stratification of the compactified Jacobian $\bbar J(X)$ in the sense of Altman--Kleiman \cite{altmankleiman1, altmankleiman2}. The equivalent local statement is that the polynomial $\NQ_{1,X,p}(t)$ in $t$ has degree $2\delta$ and satisfies 
\begin{equation}
\NQ_{1,X,p}^\mot(t)=(\L t^2)^{\delta}\NQ_{1,X,p}^\mot(\L^{-1}t^{-1}),
\end{equation}
where $\delta=\dim_k \ttilde \O_{X,p}/\O_{X,p}$ is the $\delta$-invariant of the singularity $p$, where $\ttilde \O_{X,p}$ is the integral closure of $\O_{X,p}$. 

\item (Exact formulas and Oblomkov--Rasmussen--Shende conjecture \cite{ors2018homfly}) If $p$ is a planar singularity on a reduced curve $X$ over $k=\C$, the Hilbert scheme $\Hilb_n(X,p)$ is expected to have a affine cell decomposition, and \cite{ors2018homfly} conjectured an exact formula for $Q_{1,X,p}^\mot(t)$ in terms of the HOMFLY homology of the associated algebraic link.  
The ORS conjecture is verified for the singularity $\C[[t^m,t^n]], \gcd(m,n)=1$ with one Puiseux pair  \cite{gorskymazin2013} and the singularity $\C[[t^{nd},t^{md}+at^{md+1}+\dots]],\gcd(m,n)=1,a\neq 0$ with two Puiseux pairs \cite{gmo2022generic}. See also a related computation \cite{kivinen2020unramified} for the affine Springer fiber associated to the multibranch singularity $\set{x^n=y^{dn}}$. In these examples, the relevant spaces do have affine cell decompositions.

As for planar singularities in general, the Euler-characteristic specialization of the ORS conjecture is proposed by \cite{oblomkovshende2012} and is proved by Maulik \cite{maulik2016stable}. A recent result \cite[Corollary 1.11]{kivinentsai2022} by Kivinen and Tsai implies that $\Hilb_n(X,p)$ for any planar singularity is polynomial-count in the sense of Katz \cite{hausel2008mixed}, i.e., the point counts of $\Hilb_n(X,p)$ are governed by its weight polynomial; moreover, the weight polynomial has nonnegative integer coefficients. The method is based on harmonic analysis and representation theory, and \emph{not} stratification; in particular, it does not imply that the motive of $\Hilb_n(X,p)$ is a polynomial in $\L$. 
\end{enumerate}

\begin{remark}
Many of the cited results or conjectures in the literature are about various quantities that are almost (but not precisely) equivalent to $Q_{1,X,p}^\mot(t)$. These versions vary in the following aspects:

Two spaces closely related to $\Hilb_n(X,p)$ are the compactified Jacobians \cite{maulikyun2013macdonald, piontkowski2007topology} and the type-A affine Springer fibers \cite{gkm2004homology, gkm2006purity}.

Three notions closely related related to the motive are the point count over $\Fq$, the Poincar\'e polynomial and the weight polynomial; all of these are equivalent for a variety has an affine cell decomposition. In general, the toolsets of approaches suitable for each of these notions are different. 

The notion of Euler characteristic is strictly coarser than all of the above; for a variety with a cell decomposition, the Euler characteristic records the number of cells but not their dimensions. 
\end{remark}

Having discussed $Q_{1,X}^\mot(t)$ for reduced singular curves, we now move on to discuss $\Zhat_{X}^\mot(t)$. In \cite{huang2023mutually}, the first author discovered an analogous ``rationality'' phenomenon for $\Zhat_{X,p}^\mot(t)$, where $(X,p)$ is a node singularity $\set{xy=0}$. The ``rationality'' is put in quote because the ``denominator'' is in general an infinite product and the ``numerator'' is in general an infinite series in $\KStck{k}[[t]]$. Lacking a more suitable notion of when an infinite series qualifies as a ``numerator'', we work over finite fields and let $\#_q: \KStck{k} \to \Q$ be the natural ring homomorphism induced by taking the $\Fq$-point count. We restate the conjecture formulated by the first author. %and proved it in the nodal case. 

\begin{conjecture}[\cite{huang2023mutually}]
Let $X$ be a reduced curve over $\Fq$, and let $\ttilde X$ be its normalization. Then
\begin{equation}
\#_q \parens*{\frac{\Zhat^\mot_X(t)}{\Zhat^\mot_{\ttilde X}(t)}} \in \Q[[t]]
\end{equation}
is a power series in $t$ with infinite radius of convergence.
\label{conj:cohen_lenstra}
\end{conjecture}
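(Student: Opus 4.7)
Using the local-to-global factorization of Proposition \ref{prop:euler_quot} and its coherent-sheaf analogue \cite[Proposition 4.2]{huang2023mutually}, I would write $\#_q \Zhat^\mot_X(t)$ as an Euler product over closed points of $X$, and similarly for $\ttilde X$. Because $\ttilde X \to X$ is an isomorphism off the finite singular locus $S \subset X$, the ratio collapses to a \emph{finite} product
\[
\#_q \parens*{\frac{\Zhat^\mot_X(t)}{\Zhat^\mot_{\ttilde X}(t)}} \;=\; \prod_{p \in S} \frac{\#_q \Zhat^\mot_{X, p}(t^{\deg p})}{\prod_{\tilde p \mapsto p} \#_q \Zhat^\mot_{\ttilde X, \tilde p}(t^{\deg \tilde p})}.
\]
It thus suffices to prove that each local factor has infinite radius of convergence. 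After enlarging $\Fq$ so that $p$ and each $\tilde p$ become $\Fq$-rational, and using that $t \mapsto t^r$ preserves infinite-radius convergence, the problem reduces to the local statement: for any reduced curve singularity $(X, p)$ with $\deg p = 1$ and with $s$ rational branches through $p$, the series
\[
F_{X, p}(t) \;:=\; \#_q \Zhat^\mot_{X, p}(t) \Big/ \bigl(\#_q \Zhat^\mot_{\A^1, 0}(t)\bigr)^{s}
\]
has infinite radius of convergence in $\Q[[t]]$.

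\textbf{Step 2: Stabilization of Quot under $d \to \infty$.}
Decomposing $\#_q [\Quot_{d, n}(X, p)]$ over isomorphism classes of length-$n$ quotients $[M]$ yields
\[
\#_q [\Quot_{d, n}(X, p)] \;=\; \sum_{[M]} q^{dn}\, \frac{\prod_{i=0}^{\mu(M) - 1} (1 - q^{i-d})}{|\Aut(M)|},
\]
where $\mu(M) \leq n$ is the minimal number of generators of $M$. The correction factor tends to $1$ uniformly for fixed $n$, so
\[
\#_q [\Coh_n(X, p)] \;=\; \lim_{d \to \infty} q^{-dn}\, \#_q [\Quot_{d, n}(X, p)].
\]
Summing over $n$ identifies $\#_q \Zhat^\mot_{X, p}(t)$ as the coefficientwise $d \to \infty$ limit of $\#_q Q^\mot_{d, X, p}(q^{-d} t)$. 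Hence proving infinite radius for $F_{X, p}(t)$ reduces to providing uniform-in-$d$ control of the Quot-level ratio
\[
\NQ^\mot_{d, X, p}(t) \;:=\; Q^\mot_{d, X, p}(t) \Big/ \bigl(Q^\mot_{d, \A^1, 0}(t)\bigr)^{s},
\]
a natural extension to $d \geq 1$ of the $\NQ^\mot_{1, X, p}(t)$ of \cite{brv2020motivic}, where $Q^\mot_{d, \A^1, 0}(t) = 1/\prod_{i=0}^{d-1}(1 - \L^i t)$ is Bifet's formula \eqref{eq:quot_smooth_point}.

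\textbf{Step 3: Structural rationality and the main obstacle.}
The key input still required is the $d \geq 1$ analogue of the rationality theorem of \cite{brv2020motivic}: namely, $\NQ^\mot_{d, X, p}(t)$ should be a polynomial in $t$ with coefficients in $\KVar{\Fq}$, whose degree in $t$ is bounded uniformly by an invariant of the singularity (conjecturally of order $2d\delta$, generalizing the Gorenstein functional equation at $d = 1$). Granting this, each coefficient of $F_{X, p}(t)$ is a $d \to \infty$ limit of polynomial expressions of $t$-degree bounded independently of $n$, yielding at least geometric decay of the coefficients of $F_{X, p}(t)$ in $n$, and hence infinite radius of convergence. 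The main obstacle is precisely establishing this structural rationality in the required generality. The present paper does so only for the cusp and only up to $d = 3$, via Gröbner-basis stratification. Because the category of finite-length modules over a general singular curve is wild already for the cusp, classification is unavailable, and one must proceed structurally: plausible avenues include Hall-algebra identities arising from the short exact sequence $0 \to \O_{X, p} \to \ttilde \O_{X, p} \to \ttilde\O_{X, p}/\O_{X, p} \to 0$ of $\O_{X, p}$-modules (with cokernel of length $\delta$), or an inductive stratification of $\Quot_{d, n}(X, p)$ by the colengths of pullback to the normalization.
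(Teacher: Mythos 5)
This statement is Conjecture \ref{conj:cohen_lenstra}, quoted from \cite{huang2023mutually}; the paper does not prove it (it is known only for nodal singularities, and the present paper supplies evidence for the cusp conditional on Conjecture \ref{conj:cusp_exact_formula}(b)). Your proposal is also not a proof, and you say so yourself in Step~3, so the honest verdict is that there is a genuine gap rather than a completed argument.

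Your Steps 1 and 2 are correct and essentially reproduce machinery already in the paper: the Euler-product reduction to a finite product over singular points is \cite[Proposition 4.2]{huang2023mutually} (cf.\ Proposition \ref{prop:euler_quot}), and your formula $\#_q[\Quot_{d,n}(X,p)]=\sum_{[M]}q^{dn}\prod_{i=0}^{\mu(M)-1}(1-q^{i-d})/\abs{\Aut M}$ is exactly Proposition \ref{prop:Fibercounting} summed over isomorphism classes, so the coefficientwise limit $\#_q[\Coh_n(X,p)]=\lim_{d\to\infty}q^{-dn}\#_q[\Quot_{d,n}(X,p)]$ is sound. (Note the paper instead organizes this as the finite decomposition $\Zhat_R(t)=\sum_{d\geq 0}\Zhat_{d,R}(t)$ via Corollary \ref{cor:series_conversion}, rather than as a $d\to\infty$ limit, but the two are equivalent repackagings of the same fiber count.)

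The gap is in Step 3, and it is twofold. First, the "key input" you require --- $t$-polynomiality of $\NQ^\mot_{d,X,p}(t)$ with degree $2d\delta$ for all $d$ and all reduced singularities --- is precisely Conjecture \ref{conj:quot}, which is open; the paper establishes the polynomiality only for the cusp (Theorem \ref{thm:quot_cusp}) and the degree bound only for $d\leq 3$. Second, and more seriously, even granting that input your deduction does not go through: "geometric decay of the coefficients" yields a \emph{finite} positive radius of convergence, not an infinite one, and knowing merely that $\NQ^\mot_{d,X,p}(t)$ is a polynomial of $t$-degree $\leq 2d\delta$ gives no bound at all on the size of $\lim_d a_{d,j}q^{-dj}$ for the coefficient of $t^j$, since one must take $d\to\infty$ along with $j$. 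In the cases where the conjecture is known or conjectured explicitly, the infinite radius comes from super-geometric decay of the form $q^{-n^2}$ in the numerator (compare the factor $\L^{-n^2}$ in Conjecture \ref{conj:cusp_exact_formula}(b)), which is extracted from the actual coefficients --- e.g.\ via the functional equation $a_{d,2d\delta-j}=q^{d^2\delta-dj}a_{d,j}$ together with explicit formulas --- not from the degree bound alone. Until you supply a quantitative estimate of that strength, uniform in $d$, the final convergence claim is unsupported.
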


The first author proved that Conjecture \ref{conj:cohen_lenstra} holds when $X$ has only nodal singularities $\set{xy=0}$, by expressing the ``numerator'' as an explicit infinite series in $\L^{-1}$ and $t$. As a consequence of this explicit expression, he showed that for any real number $q>1$, its specialization at $\L\mapsto q$ has infinite radius of convergence. His method is based on the observation that counting pairs of matrices with $AB=BA=0$ over a finite field turns out to be achievable by elementary means. 

Now we discuss $Q_{d,X}^\mot(t)$, the focus of the present paper. We propose that the rationality statement and the functional equation for $Q_{d,X}^\mot(t)$ should take the following form, generalizing the known results for $d=1$. We state both the global and the local versions below for ease of reference, and their equivalence can be directly verified using \eqref{eq:quot_smooth_curve} and \eqref{eq:quot_smooth_point}. 

\begin{conjecture} \label{conj:quot}
Let $X$ be a reduced singular curve over a field $k$, and $d\geq 0$. Then we have the following properties for $Q^\mot_{d,X}(t)\in \KVar{k}[[t]]$.
\begin{enumerate}
\item \tu{(Rationality)} The series $Q^\mot_{d,X}(t)$ is rational in $t$. Moreover, if $\ttilde X$ is the normalization of $X$, then $Q_{d,X}^\mot(t)/Q_{d,\ttilde X}^\mot(t)$ is a polynomial in $t$ that only depends on $d$ and the singularities of $X$. Equivalently, if $p$ is a $k$-point of $X$, then the local factor $Q_{d,X,p}^\mot(t)$ is rational with denominator $\parens[\big]{(1-t)(1-\L t)\dots (1-\L^{d-1}t)}^s$, where $s$ is the number of branches through $p$.

\begin{definition}
Define a power series
\begin{equation}\label{eq:nquot_def}
\NQ_{d,X,p}^\mot(t):=\parens[\big]{(1-t)(1-\L t)\dots (1-\L^{d-1}t)}^s Q_{d,X,p}^\mot(t)\in \KVar{k}[[t]],
\end{equation}
where $s$ is the branching number of $p$. The rationality conjecture is equivalent to saying that $\NQ_{d,X,p}^\mot(t)$ is a polynomial in $t$. If the only singularity of $X$ is $p$, then we have
\begin{equation}
\NQ_{1,X,p}^\mot(t)=\frac{Q_{1,X}^\mot(t)}{Q_{1,\ttilde X}^\mot(t)},
\end{equation}
where $\ttilde X$ is the normalization of $X$. This equality is unconditional on the rationality conjecture.
\end{definition}

\item \tu{(Functional Equation)} If $X$ is Gorenstein and projective with arithmetic genus $g_a$, then
\begin{equation}\label{eq:quot_func_eq_global}
Q_{d,X}^\mot(t) = (\L^{d^2} t^{2d})^{g_a-1} Q_{d,X}^\mot(\L^{-d}t^{-1})
\end{equation}
as rational functions in $t$. Equivalently, $\NQ_{d,X,p}(t)$ is a polynomial in $t$ of degree $2\delta$ and it satisfies
\begin{equation}\label{eq:quot_func_eq_punctual}
\NQ_{d,X,p}^\mot(t)=(\L^{d^2} t^{2d})^{\delta}\NQ_{d,X,p}^\mot(\L^{-d}t^{-1}),
\end{equation}
where $\delta$ is the $\delta$-invariant of the singularity $p$. 
\end{enumerate}
\end{conjecture}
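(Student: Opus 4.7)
The plan is to attack Conjecture \ref{conj:quot} by reducing to the punctual statement at a singularity $p$ via the local-to-global principle stated after (1.2), and then handling rationality and the functional equation separately. For Part (a), it suffices to show that each punctual factor $Q^\mot_{d,X,p}(t)$ is rational with denominator $\bigl(\prod_{i=0}^{d-1}(1-\L^i t)\bigr)^{s}$; Bifet's formula \eqref{eq:quot_smooth_curve} then produces the conjectured normalization denominator globally. For Part (b), the global functional equation \eqref{eq:quot_func_eq_global} will follow from the punctual one \eqref{eq:quot_func_eq_punctual} combined with \eqref{eq:quot_smooth_curve} and the Kapranov--Litt motivic functional equation for the smooth projective curve $\ttilde X$.

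For \textbf{rationality} at the cusp (where $s=1$ and $\Ohat_{X,p} \cong k[[t^2,t^3]] \subset k[[t]]$), I would stratify the punctual Quot scheme by Gröbner strata. A length-$n$ quotient of $\Ohat_{X,p}^{\oplus d}$ is equivalent to a $k[[t^2,t^3]]$-submodule $L$ of codimension $n$; fix a term order compatible with the embedding into $k[[t]]^{\oplus d}$ and define the initial module $\mathrm{in}(L)$. Each finite-codimension monomial submodule $L_0$ gives a stratum $\{L : \mathrm{in}(L) = L_0\}$. I expect each such stratum to be an affine cell whose dimension is computable from the staircase of $L_0$, yielding an affine cell decomposition of $\Quot_{d,n}(X,p)$. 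Summing over monomial submodules $L_0$, which are parametrized by tuples of numerical semigroup ideals with appropriate compatibility conditions, should produce the sought-after rational form.

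For the \textbf{functional equation} I would look for an involution on length-$n$ submodules of $\Ohat_{X,p}^{\oplus d}$ induced by the Matlis duality available for any Gorenstein local ring. For $d=1$ this reduces to the ideal-to-coideal duality underlying the Pandharipande--Thomas and Maulik--Yun proofs. At the level of Gröbner strata, this duality should induce a bijective matching of cells whose dimensions shift by exactly the amount predicted by \eqref{eq:quot_func_eq_punctual}; making this precise and scheme-theoretic on the Quot scheme would give the functional equation. A natural alternative is to work instead with the ``dual'' moduli problem of finitely generated submodules with chosen $d$-dimensional generating space, on which Matlis duality acts more transparently, and then match to the Quot scheme.

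The \textbf{main obstacle} is controlling the Gröbner strata combinatorially as $d$ grows. For $d=1$ the strata are indexed by semigroup ideals and enumeration is manageable, but for $d>1$ the monomial submodules of $k[[t^2,t^3]]^{\oplus d}$ form a much richer combinatorial family, and it is not \emph{a priori} clear that their motivic contributions collapse into the predicted clean form with denominator $\prod_{i=0}^{d-1}(1-\L^i t)$ and a numerator of degree $2 d \delta$ with the palindromic symmetry \eqref{eq:quot_func_eq_punctual}. Extracting the conjecturally clean closed form hinted at in the abstract, and verifying the palindromic symmetry of the numerator, will likely require identifying a hidden combinatorial duality on these strata reflecting the underlying Matlis duality on $\Ohat_{X,p}$; this is where the main creative work is needed.
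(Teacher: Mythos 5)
The statement you attack is stated in the paper as a conjecture, not a theorem; the paper establishes it only in the special case of the cusp singularity $\{x^2=y^3\}$ (Theorem~\ref{thm:quot_cusp}): part (a) for all $d\geq 0$, and part (b) only for $0\leq d\leq 3$. Within that scope, your Gr\"obner-strata plan for rationality matches the paper's method at a high level, but you make one assumption that fails for $d>1$ and you miss the mechanism that actually yields rationality. The Gr\"obner strata of $\Quot_{d,n}(X,p)$ are \emph{not} affine cells in general: after splitting off a trivial affine-space factor (Lemma~\ref{lem:decomposition}), the ``central'' stratum $\Hilbnil(\alpha)$ is parametrized by a variety of strictly upper-triangular matrix tuples satisfying $X^2=Y^3$, $XY=YX$ plus further vanishing constraints (Lemma~\ref{lem:staircase}), and whether $[\Hilbnil(\alpha)]$ is even a polynomial in $\L$ is an open question (Conjecture~\ref{conj:L_rationality_strata}). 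The paper extracts $t$-rationality \emph{without} knowing these stratum motives: the crucial ingredient, absent from your plan, is that the stratum contributions eventually stabilize under the spiral raising operators $\gamma_{(1)},\dots,\gamma_{(d)}$ (Lemmas~\ref{lem:stability} and~\ref{lem:orbit}), so the index set of strata decomposes into finitely many stable $\Gamma$-orbits, each summing to a geometric series with the predicted denominator (Lemma~\ref{lem:orbit_content}, Theorem~\ref{thm:cusp_formula}). Merely summing cell dimensions over monomial submodules, as you propose, does not obviously collapse into that rational form.

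For the functional equation (b), your Matlis-duality idea is precisely the kind of ``Method (b)'' argument whose $d>1$ difficulties the paper spells out in Remark~\ref{rmk:method_difficulty}: the embeddings of a rank-$d$ torsion-free sheaf $\mathcal{E}$ into $\O_X^d$ are classified only by a Zariski open $U_{\mathcal{E}}\subseteq H^0(X,\mathcal{E}^{\vee})\setminus\{0\}$ whose isomorphism type is hard to control, and the resulting stratification appears not to behave well under Serre duality. The paper proves part (b) only for $d\leq 3$, by explicitly computing the numerator polynomial via the same spiral-orbit decomposition and directly observing its palindromy in \S\ref{subsec:low_d_computation}; no duality argument is carried out. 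So the ``hidden combinatorial duality'' you flag as the main obstacle is a genuine and unresolved gap, and the paper's remark offers concrete reasons to be skeptical of the naive implementation.
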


{
As far as we know, no $d\geq 2$ examples for singular curves have been explored or conjectured. Difficulties of some ideas commonly used for the Hilbert zeta function are discussed in Remark \ref{rmk:method_difficulty}.}

\subsection{Main results}

As the main theorem, we prove the following, which is what leads to the form of the functional equation proposed above. Recall that the cusp singularity has branching number $s=1$ and $\delta$-invariant $\delta=1$.

\begin{theorem}\label{thm:quot_cusp}
If $(X,p)$ is the cusp singularity $\set{x^2=y^3}$, then $\NQ_{d,X,p}(t)$ satisfies Conjecture \ref{conj:quot}(a) for all $d\geq 0$, and Conjecture \ref{conj:quot}(b) for $0\leq d\leq 3$. Moreover, we have
\begin{equation}
\begin{aligned}
\NQ_{0,X,p}^\mot(t)& = 1; \\
\NQ_{1,X,p}^\mot(t)& = 1+\L t^2;\\
\NQ_{2,X,p}^\mot(t)& = 1 + (\L^2+\L^3) t^2+\L^4 t^4;\\
\NQ_{3,X,p}^\mot(t)& = 1 +(\L^3+\L^4+\L^5) t^2 + (\L^6+\L^7+\L^8) t^4+\L^9 t^6.
\end{aligned}
\end{equation}
\end{theorem}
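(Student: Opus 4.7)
The plan is to identify $\Quot_{d, n}(X, p)$ with the moduli of $R$-submodules $N \subseteq R^{\oplus d}$ of $k$-colength $n$, where $R = \Ohat_{X, p} = k[[t^2, t^3]]$, and then stratify this moduli by initial submodules in a Gröbner/standard-basis sense. Viewing $R \subeq \ttilde R = k[[t]]$, the monomials of $R$ are $\set{t^a : a \in S}$ with $S = \set{0, 2, 3, 4, \dots}$ the value semigroup, and monomial submodules of $R^{\oplus d}$ are parametrized by tuples $(J_1, \dots, J_d)$ of semigroup ideals of $S$ satisfying $\sum_j \abs{S \setminus J_j} = n$.

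First, I would fix a position-over-term monomial order on $R^{\oplus d}$ refining $t$-adic degree and set up a standard-basis framework for the non-regular local ring $R$, so that every submodule $N$ acquires a well-defined initial submodule $\tu{in}(N)$. The key technical step is to prove that the resulting stratification $\Quot_{d,n}(X,p) = \bigsqcup_{(J_1,\dots,J_d)} W_{(J_1,\dots,J_d)}$ exhibits each nonempty stratum as an affine space over $k$: lifts of the monomial generators of $\tu{in}(N)$ to elements of $N$ have tails lying in the $k$-span of lower complement monomials, and the $S$-pair/overlap relations (notably those induced by the cusp relation $t^2 \cdot t^3 = t^3 \cdot t^2$) cut out a linear subspace of this tail parameter space. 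The dimension in $\L$ of each stratum is then read off combinatorially from $(J_1, \dots, J_d)$.

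With the stratification established, $[\Quot_{d, n}(X, p)]$ becomes a sum of powers of $\L$ indexed by tuples of semigroup ideals of prescribed total colength. For the cusp, the count of semigroup ideals of colength $n$ stabilizes ($1, 1, 2, 2, 2, \dots$), and careful bookkeeping should show that $Q_{d, X, p}^\mot(t)$ is rational with denominator dividing $(1 - t)(1 - \L t) \cdots (1 - \L^{d - 1} t)$, yielding part~(a) of Conjecture~\ref{conj:quot}. For $d \leq 3$, the enumeration is small enough to carry out explicitly, producing the polynomials $\NQ_{d, X, p}^\mot(t)$ stated. The functional equation \eqref{eq:quot_func_eq_punctual} for $d \leq 3$ then reduces to verifying the palindromy relation $c_m = c_{d - m} \L^{2 d m - d^2}$ on the explicit coefficients, which is a direct term-by-term check (e.g.\ $\L^6 + \L^7 + \L^8 = \L^3 (\L^3 + \L^4 + \L^5)$ for $d = 3$).

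The main obstacle is the second step: adapting Gröbner/standard-basis theory to the non-regular ring $R$ finely enough to show that the strata are honest affine spaces (not merely polynomial-count varieties), so that the computations lift from point counting to genuine equalities in $\KVar{k}$. A secondary difficulty is that the combinatorial bookkeeping for tuples of semigroup ideals and their stratum dimensions grows quickly with $d$, which is presumably why explicit closed forms are pursued only up to $d = 3$, while a structural (but non-explicit) rationality argument handles general $d$.
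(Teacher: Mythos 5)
Your overall plan — identifying $\Quot_{d,n}(X,p)$ with finite-colength submodules of $R^{\oplus d}$ for $R = k[[t^2,t^3]]$, stratifying by initial submodule in a standard-basis sense, and reducing rationality to stabilization of stratum motives — matches the paper's strategy at a high level. But there is a genuine and fatal error in the core technical step. You assert that the Buchberger/overlap relations induced by the cusp cut out a \emph{linear} subspace of the tail-parameter space, so each stratum is an affine space. This is false. In the paper's analysis (Lemma \ref{lem:buchberger_cusp} and Lemma \ref{lem:staircase}), the overlap relations among the two corners $T^{a+2}u_i, T^{a+3}u_i$ of a color-$K$ component yield the matrix equations $X^2 = Y^3$, $XY = YX$, $Z = XW$ on the tail coefficients — quadratic and cubic, not linear. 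Consequently the central Gröbner strata are cut out by nonlinear equations, and are not affine spaces: the variety $V_3$ of strictly upper-triangular pairs with $X^2=Y^3$, $XY=YX$ has motive $3\L^4 - 2\L^3$, which is not a power of $\L$. Indeed, the paper can only \emph{conjecture} that $[\Quot_{d,n}(X,p)]$ is a polynomial in $\L$ (Conjecture \ref{conj:L_rationality}, via Conjecture \ref{conj:L_rationality_strata}), proving it only for $\gamma_{(j)}$-stable leading term data (Theorem \ref{thm:L_rationality_stable}). Your linearity claim would immediately imply $\L$-polynomiality and hence trivialize a statement the authors explicitly flag as open.

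Because the strata are not affine spaces, the bookkeeping you gesture at is much harder than you suppose. The rationality in $t$ is not obtained by counting semigroup ideals of prescribed colength (the ``$1,1,2,2,\dots$'' stabilization); it requires the spiral-shifting operators $\gamma_{(1)},\dots,\gamma_{(d)}$ on the index set of leading-term data, together with a stabilization theorem (Lemma \ref{lem:stability}, Lemma \ref{lem:orbit}, Lemma \ref{lem:is_stable}) stating that the stratum motive transforms predictably under $\gamma_{(j)}$ once enough ``distance'' separates the components. This works precisely because the factor $A(\alpha)$ coming from the nonlinear equations stabilizes when the color-$K$ components are pushed far apart, even though $A(\alpha)$ itself is not a priori a power of $\L$. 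You would also want to factor through the rank-$d$ framed series $H_{d,X,p}(t)$ and the conversion formulas of Theorem \ref{Thm:Main2}, since classifying submodules of $\mathfrak m F$ (so that the monomial submodules are exactly products of $J(a)$'s and $K(a)$'s) avoids edge cases and is what makes the combinatorics tractable.

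Your description of the $d \leq 3$ computations and the functional-equation check is fine in spirit, but the computations themselves rest on knowing the motives of the central strata for pure-$K$ leading-term data of rank $\leq 3$ (Tables \ref{table:pure_K_2} and \ref{table:pure_K_3}), which again requires handling the nonlinear equations explicitly, not a linear-algebra count.
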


The formulas for $\NQ_{2,X,p}^\mot(t)$, $\NQ_{3,X,p}^\mot(t)$ and the $t$-polynomiality of $\NQ_{d,X,p}^\mot(t)$ are new. 

The first step of our proof of Theorem \ref{thm:quot_cusp} involves expressing the motive of $\Quot_{d,n}(X,p)$ in terms of the motives of some related moduli spaces. For $0\leq r\leq \min(d,n)$, consider the scheme $\Quot_{d,n}^r(X,p)$ parametrizing zero-dimensional quotients of $\O_X^{\oplus d}$ of length $n$ supported at $p$ that require exactly $r$ sections to generate. For $0\leq r\leq n$, consider the stack $\Coh_n^r(X,p)$ parametrizing zero-dimensional coherent sheaves of length $n$ supported at $p$ that require exactly $r$ sections to generate. Their rigorous definitions via functors of points are given in Appendix \ref{appendix:motivic}. Consider the generating series
\begin{equation}
H_{d,X,p}^\mot(t):=\sum_{n\geq 0} [\Quot_{d,n+d}^d(X,p)] \, t^n \in \KVar{k}[[t]]
\end{equation}
and
\begin{equation}
\Zhat_{d,X,p}^{\mot}(t)=\sum_{n\geq 0} [\Coh_n^d(X,p)]\, t^n  \in \KStck{k}[[t]].
\end{equation}

We use the $q$-Pochhammer symbol
\begin{align}
(a;q)_n&=(1-a)(1-aq)\dots (1-aq^{n-1})\\
(a;q)_\infty&=(1-a)(1-aq)(1-aq^2)\dots
\end{align}
and consider the $q$-binomial coefficient for $d\geq r$:
\begin{equation}
{d \brack r}_q=\frac{(q;q)_d}{(q;q)_r (q;q)_{d-r}}.
\end{equation}

\begin{theorem}\label{Thm:Main2}
Let $X$ be any $k$-variety and $p$ be a $k$-point of $X$. For any $d\geq r\geq 0$, we have the following identity in $\KStck{k}$:
\begin{equation}\label{eq:coh_in_quot_mot}
[\Coh_n^r(X,p)] = \frac{[\Quot_{d,n}^r(X,p)]}{\mathbb{L}^{d(n-r)}[\Gr(r,d)][\GL_r]},
\end{equation}
where $\Gr(r,d)$ is the Grassmannian variety.

In terms of generating series, we have
\begin{equation}\label{eq:quot_in_hilb_mot}
Q_{d,X,p}^\mot(t)=\sum_{r=0}^d {d \brack r}_{\mathbb{L}} t^r H_{r,X,p}^\mot(\L^{d-r} t)
\end{equation}
and
\begin{equation}
\Zhat_{d,X,p}^\mot(t)=\frac{t^d}{\L^{d^2}(\L^{-1};\L^{-1})_d}\,H_{d,X,p}^\mot(\L^{-d}t).
\end{equation}
\end{theorem}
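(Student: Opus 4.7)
\emph{Overall strategy.} The plan is to first establish the motivic identity (a) via a geometric analysis of the forgetful morphism $\pi \colon \Quot_{d,n}^r(X,p) \to \Coh_n^r(X,p)$ that sends a surjection $\O_X^{\oplus d} \onto F$ to the sheaf $F$, and then to deduce (b) and (c) as algebraic consequences of (a) through straightforward manipulations of generating series.

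\emph{Fiber analysis for (a).} For a fixed $F$ of length $n$ supported at $p$ with $\dim_k(F/\m F) = r$, the fiber of $\pi$ over $F$ is $\Surj(\Ohat_{X,p}^{\oplus d}, F)$. Reducing mod $\m$ produces a surjective morphism $\Surj(\Ohat_{X,p}^{\oplus d}, F) \to \Surj(k^d, F/\m F)$, since by Nakayama's lemma a $d$-tuple in $F^d$ generates $F$ iff its reduction spans $F/\m F$. The fiber of this reduction morphism over a fixed $\bar\phi$ is a torsor for $(\m F)^d$, hence an affine space of motive $\L^{d(n-r)}$. The target $\Surj(k^d, F/\m F)$ is a $\GL_r$-torsor over the Grassmannian $\Gr(d-r, d)$ via the kernel, so has motive $[\Gr(r,d)][\GL_r]$. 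Thus every fiber of $\pi$ has the same motive $[\Gr(r,d)][\GL_r]\L^{d(n-r)}$, independent of $F$.

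\emph{Globalization and main obstacle.} I then promote the fiberwise analysis to a global statement over the stack $\Coh_n^r(X,p)$ by working with the universal family $\mathcal F$, exhibiting $\Quot_{d,n}^r(X,p) \to \Coh_n^r(X,p)$ as a tower of a Zariski-locally trivial $\Gr(r,d)$-bundle, a $\GL_r$-torsor, and an affine bundle of rank $d(n-r)$. Combined with the moduli presentation of $\Coh_n^r(X,p)$ as a quotient stack (from the paper's moduli section) and the multiplicativity of Behrend--Dhillon motives under such clean fibrations, this yields (a). The main obstacle lies precisely in this globalization step: verifying that the three pieces of the fiber decomposition assemble coherently over the stack, which requires identifying $(\m\mathcal F)^{\oplus d}$ as a locally free sheaf on a suitable cover of $\Coh_n^r(X,p)$ and interpreting the Grassmannian and $\GL_r$ pieces as the standard associated bundles to a frame on $\mathcal F/\m\mathcal F$.

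\emph{Algebraic derivation of (b) and (c).} Part (b) follows from the disjoint stratification $\Quot_{d,n}(X,p) = \bigsqcup_{r=0}^{\min(d,n)} \Quot_{d,n}^r(X,p)$: applying (a) at the pairs $(d,r)$ and $(r,r)$ to eliminate $[\Coh_n^r(X,p)]$ gives $[\Quot_{d,n}^r(X,p)] = [\Gr(r,d)] \, \L^{(d-r)(n-r)} \, [\Quot_{r,n}^r(X,p)]$. Summing over $n$ and substituting $m = n - r$ then collapses the double sum to $\sum_{r=0}^{d} [\Gr(r,d)]\, t^r \, H_{r,X,p}^\mot(\L^{d-r}t)$, which is (b) since $[\Gr(r,d)] = {d \brack r}_\L$. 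For (c), specializing (a) to $r = d$ (where $\Gr(d,d)$ is a single point) yields $[\Coh_n^d(X,p)] = [\Quot_{d,n}^d(X,p)]/(\L^{d(n-d)}[\GL_d])$. Summing over $n$ and reindexing $m = n - d$ produces $\Zhat_{d,X,p}^\mot(t) = (t^d/[\GL_d])\,H_{d,X,p}^\mot(\L^{-d}t)$, which matches (c) after invoking the elementary motivic identity $[\GL_d] = \L^{d^2}(\L^{-1};\L^{-1})_d$ (derived by factoring $\L^d - \L^i = \L^i(\L^{d-i}-1)$ and counting exponents).
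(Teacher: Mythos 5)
Your proposed approach to part (a) is genuinely different from the paper's. The paper (Appendix B) passes to the scheme atlas $C_n^r(R)$ and the framed torsor $\mathfrak{Q}$, then analyzes the action of the group $\GL[F]=\Aut_R(R^d)$ on $\Quot$ via the stabilizer group scheme $\mathscr{G}$; the key difficulty there is that $\mathscr{G}$ is not a priori special, so the paper invests real effort (Lemma \ref{lm:ZarstatiU}, a local $\mathfrak{m}$-adapted-basis argument) to produce a Zariski stratification over which $\Fil_2\mathscr{G}$ is $k$-split and hence $\mathscr{G}$ is special. You instead factor the map as a tower: forget the lift of the framing to get an affine bundle, then a $\GL_r$-torsor to the Grassmannian of kernels, then project. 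Over the atlas $C_n^r(R)$ all three pieces are torsors under automatically-special groups — a rank $d(n-r)$ vector bundle, a form of $\GL_r$, and the trivial group — so if set up carefully this route sidesteps the paper's stratification lemma entirely. The paper explicitly warns in Remark \ref{rmk:whyweneedappendixB} that Proposition \ref{prop:Fibercounting} "does not generalize directly to the motivic setting," and your tower decomposition is a reasonable way around that warning, in a sense more elementary than what the paper does.

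Where your write-up falls short is in the "main obstacle" paragraph: the items you flag (local freeness of $(\m\mathcal F)^{\oplus d}$, interpreting the $\Gr$ and $\GL_r$ pieces as associated bundles) are the straightforward parts, while the actual content is nowhere verified. Concretely, you need to (i) construct the intermediate spaces as schemes by pulling back along the $\GL_n$-atlas $C_n^r(R)\to\Coh_n^r(R)$ rather than hand-waving "over the stack"; (ii) show that the inner map $\mathfrak{Q}\to A'$ really is a torsor under the vector bundle $(\m\mathcal M)^{\oplus d}$ (the simply-transitive action of $\Hom_R(R^d,\m\mathcal M)$ on lifts of a fixed $\bar\phi$, via Nakayama), from which Zariski-local triviality follows because $H^1$ of a quasi-coherent sheaf vanishes on affines; and (iii) observe that the middle $\GL_r$-torsor has structure group an inner form of $\GL_r$, hence is Zariski-locally trivial. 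Without these verifications your argument merely restates the fiberwise computation of Proposition \ref{prop:Fibercounting} in geometric language — which, as the paper cautions, is exactly what does not suffice. Parts (b) and (c) are derived correctly and match Corollaries \ref{cor:quot_non_full_rank} and \ref{cor:series_conversion}.
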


The point-count version of Theorem \ref{Thm:Main2} is in fact an elementary observation, which we state in Corollaries \ref{cor:quot_to_coh_const_fiber} and \ref{cor:series_conversion} and give a self-contained proof. However, the motivic version requires some extra care, which we handle in Appendix \ref{appendix:motivic}.

Having Theorem \ref{Thm:Main2}, proving Theorem \ref{thm:quot_cusp} reduces to proving the following result. It is routine to verify that Theorem \ref{thm:cusp_t_rationality} implies Theorem \ref{thm:quot_cusp} given \eqref{eq:quot_in_hilb_mot}.

\begin{theorem}
\label{thm:cusp_t_rationality}
If $(X,p)$ is the cusp singularity $\set{x^2=y^3}$, then $(1-t)(1-\L t)\dots (1-\L^{d-1}t)H_{d,X,p}^\mot(t)$ is a polynomial in $t$, which we denote by $\NH_{d,X,p}^\mot(t)$, and we have
\begin{equation}
\begin{aligned}
\NH_{0,X,p}^\mot(t)& = 1; \\
\NH_{1,X,p}^\mot(t)& = 1+\L t;\\
\NH_{2,X,p}^\mot(t)& = 1 + (\L^2+\L^3) t+\L^4 t^2;\\
\NH_{3,X,p}^\mot(t)& = 1 +(\L^3+\L^4+\L^5) t + (\L^6+\L^7+\L^8) t^2+\L^9 t^3.
\end{aligned}
\end{equation}
\end{theorem}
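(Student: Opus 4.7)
The plan is to work entirely in the completed local ring $R = \hat\O_{X,p} = k[[t^2, t^3]]$ of the cusp, sitting inside its normalization $\tilde R = k[[t]]$, with maximal ideal $\m = (t^2, t^3)$ and value semigroup $S = \{0\} \cup \Z_{\geq 2}$. By Nakayama, $\Quot_{d,n+d}^d(X,p)$ parametrizes $R$-submodules $K \subeq R^{\oplus d}$ of $k$-codimension $n+d$ that are contained in $\m R^{\oplus d}$: the condition $M/\m M \cong k^d$ for the quotient $M = R^{\oplus d}/K$ translates exactly to $K + \m R^{\oplus d} = \m R^{\oplus d}$. So the theorem reduces to computing a motivic generating series for such ``submodule lattices'' inside $\m R^{\oplus d}$.

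I would then set up a Gröbner/standard-basis stratification adapted to the monomials $t^i e_j$ with $i \in S$ and $1 \leq j \leq d$: the leading submodule $\LM(K)$ of any such $K$ is a monomial submodule of the form $\bigoplus_{j=1}^d I_j e_j$, where each $I_j$ is a monomial ideal of $R$ contained in $\m$ (equivalently, a semigroup ideal of $S$ avoiding $0$), and the stratum $\{K : \LM(K) = M\}$ is an affine space over $k$ whose dimension is a combinatorial invariant of $M$ read off from its minimal generators and the standard monomials lying above them. This realizes $\Quot_{d,n+d}^d(X,p)$ as a disjoint union of Gröbner cells indexed by such monomial submodules of colength $n+d$, and hence expresses $H_{d,X,p}^\mot(t)$ as an explicit sum over $d$-tuples $(I_1,\dots,I_d)$ of semigroup ideals of $S \setminus \{0\}$, each contribution being $\L^{\text{cell dim}} t^{\text{colen} - d}$.

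For $d \leq 3$, I would execute this sum by hand. For each such $d$, parametrize each semigroup ideal of $S\setminus\{0\}$ by its finite ``gap'' data, tabulate the cell dimension as an explicit function of that data, and reduce the whole generating series to a finite combination of geometric series in $t$. Matching the result to $\NH_{d,X,p}^\mot(t) / \prod_{i=0}^{d-1}(1-\L^i t)$ should recover the four polynomials stated in the theorem; I would cross-check these against the small-$n$ motives computed directly (e.g.\ length $n+1$ quotients correspond to $\P$-bundles inside $\m/\m^2$-type Grassmannians).

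For general $d$, the goal is only to show that $(1-t)(1-\L t)\cdots(1-\L^{d-1}t)\, H_{d,X,p}^\mot(t)$ lies in $\KVar{k}[t]$, with no explicit formula claimed. The denominator matches exactly the smooth-point series \eqref{eq:quot_smooth_point}, which strongly suggests the strategy of comparing the cusp cell sum with the analogous sum for $\tilde R^{\oplus d}$: decompose each monomial submodule $M$ into an ``infinite tail'' part whose cell geometry matches the smooth $k[[t]]$ case (and which by a Bifet-style computation produces the denominator $\prod_{i=0}^{d-1}(1-\L^i t)$) and a bounded cusp-specific remainder supported on the finite set where $S$ differs from $\N$, which contributes only finitely many nonzero terms in $t$. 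The hard part will be precisely this decomposition: the Gröbner cell dimension depends globally and nonlinearly on the entire shape of $(I_1,\dots,I_d)$, so isolating the ``smooth'' factor requires a careful stabilization argument on the large-degree tails of the semigroup ideals together with a telescoping/cancellation step for the cusp correction. I expect this combinatorial stabilization to be the main obstacle, since for $d \geq 4$ direct enumeration becomes intractable and the polynomiality must come from a structural identity rather than explicit calculation.
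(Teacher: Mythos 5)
Your setup is right — reduce to submodules of $\m R^{\oplus d}$, stratify by leading-term data, and look for a stabilization — but the proposal rests on a false premise that breaks both the $d\le 3$ computations and the general rationality argument.

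The Gr\"obner strata over the cusp are \emph{not} affine cells. Because $R=k[[T^2,T^3]]$ is only a monomial \emph{subring} of $k[[T]]$, two monomials can fail to have a unique least common multiple (e.g.\ $T^2$ and $T^3$ have both $T^5$ and $T^6$ as minimal common multiples). Consequently, whenever a component $I_j$ has two corners (the $K(a)=(T^{a+2},T^{a+3})$ case), the Buchberger criterion imposes genuine, nonlinear closed conditions on the Gr\"obner parameters. Once you work these out, each stratum factors as an affine bundle over a matrix variety $V(\alpha)\subset\Mat^2$ cut out by equations of the shape $X^2=Y^3$, $[X,Y]=0$, plus vanishing conditions; already at $d=3$ some of these have motive $2\L^4-\L^3$ or $3\L^4-2\L^3$, which are not powers of $\L$. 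Your plan of writing each contribution as $\L^{\text{cell dim}}t^{n(\alpha)}$ therefore computes the wrong series, and there is no obvious way to save it without first classifying those constraint varieties (the analogue of Lemmas~\ref{lem:decomposition} and~\ref{lem:staircase} in the paper).

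The second issue is the stabilization step. Separating an ``infinite tail matching $k[[t]]$'' from a bounded cusp correction is essentially the BRV mechanism, and it works at $d=1$ because there the only move is to raise the single level by one. For $d\ge 2$ the stratum motive is \emph{not} eventually periodic under raising one component of $(I_1,\dots,I_d)$ by one level: the correct operators are the spiral-shifting operators $\gamma_{(1)},\dots,\gamma_{(d)}$, which permute-and-raise the components and commute with each other, and the stratum motive is stable under $\gamma_{(j)}$ only once all relevant pairwise ``distances'' between components are large enough (in particular, distance at least $3$ between $K$-type components, reflecting the $T^2,T^3$ structure). This is precisely what produces the denominator $\prod_{i=0}^{d-1}(1-\L^i t)$: each $\gamma_{(j)}$-orbit contributes a geometric series in $\L^{j-1}t$ once inside the stable range. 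Without this combinatorial machinery your telescoping step has nothing to telescope, and the ``bounded remainder'' is not in fact bounded under naive level-raising.

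In short: you would first need the explicit description of the non-affine Gr\"obner strata via the Buchberger constraints (Lemma~\ref{lem:staircase}), and then the spiral-shifting orbit decomposition (Lemma~\ref{lem:orbit_content} and Theorem~\ref{thm:cusp_formula}) in place of the tail/remainder split.
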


\subsection{Further conjectures}
Assume $(X,p)$ is the cusp singularity from now on. We observe from the above data that
\begin{equation}\label{eq:nq_in_nh}
\NQ_{d,X,p}^\mot(t)=\NH_{d,X,p}^\mot(t^2)
\end{equation}
for $0\leq d\leq 3$. The resemblance between $\NH_{d,X,p}^\mot(t)$ and $\NQ_{d,X,p}^\mot(t)$ for $d\leq 3$ is not expected, given that $Q_{d,X,p}^\mot(t)$ is computed as a weighted summation of $H_{r,X,p}^\mot(t)$ over $0\leq r\leq d$. The relation \eqref{eq:nq_in_nh} does not hold for the node singularity $\set{xy=0}$, since for this singularity we have $\NQ^\mot_1(t)=1-t+\L t^2$ and $\NH^\mot_1(t)=1-t+\L t$. It is unclear for what other curve singularities we have an analogue of \eqref{eq:nq_in_nh}.

In fact, the pattern \eqref{eq:nq_in_nh} is so strong that if true for all $d$, it would \emph{overdetermine} the exact formulas of $H_{d,X,p}^\mot(t)$ for all $d$ (the word ``overdetermine'' is in the sense of solving overdetermined linear system); the combinatorics is handled in Proposition \ref{prop:q-series}. We conjecture the following formulas of striking simplicity; Proposition \ref{prop:q-series} shows that Conjecture \ref{conj:quot}(b), Theorem \ref{Thm:Main2} and the conjectural \eqref{eq:nq_in_nh} would imply Conjecture \ref{conj:cusp_exact_formula}.

\begin{conjecture}
\label{conj:cusp_exact_formula}
If $(X,p)$ is the cusp singularity $\set{x^2=y^3}$, then we have
\begin{enumerate}
\item ${\displaystyle
\NQ_{d,X,p}^\mot(t):=\sum_{j=0}^d {d \brack j}_{\L} (\L^d t^2)^j}$.
\item ${\displaystyle\Zhat_{X,p}^\mot(t)=\frac{1}{(\L^{-1}t;\L^{-1})_\infty} \sum_{n=0}^\infty \frac{\L^{-n^2}t^{2n}}{(\L^{-1};\L^{-1})_n}.}$
\end{enumerate}

\end{conjecture}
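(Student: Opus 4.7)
My plan is to first prove an explicit closed-form formula for $\NH_{d,X,p}^\mot(t)$ for all $d$, and then deduce (a) and (b) via the bridge of Theorem~\ref{Thm:Main2} combined with $q$-series identities. Rewriting the conjectured $\NQ_{d,X,p}^\mot(t)$ using $\binom{j+1}{2}+\binom{j}{2}=j^2$ gives the compact form
\[
\NQ_{d,X,p}^\mot(t) = \sum_{j=0}^d \L^{jd}\binom{d}{j}_\L t^{2j},
\]
so the conjectural relation \eqref{eq:nq_in_nh} predicts
\[
\NH_{d,X,p}^\mot(t) = \sum_{j=0}^d \L^{jd}\binom{d}{j}_\L t^j,
\]
which is consistent with the $d\leq 3$ data of Theorem~\ref{thm:cusp_t_rationality}.

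To establish this $\NH$-formula for all $d$, I would extend the Gröbner basis argument used to prove Theorem~\ref{thm:cusp_t_rationality}. Let $A=k[[x,y]]/(x^2-y^3)$ and $\m=(x,y)$. The scheme $\Quot_{d,d+n}^d(X,p)$ parametrizes submodules $N\subseteq\m A^d$ with $A^d/N$ of length $d+n$; stratifying by the initial submodule $\operatorname{in}(N)\subseteq A^d$ with respect to a suitable monomial order (recall that monomials in $A$ are indexed by the numerical semigroup $\{0,2,3,4,\ldots\}$) should present each stratum as an affine space whose dimension is combinatorially computable. A uniform parametrization of admissible initial submodules of $A^d$ together with a uniform formula for the associated cell dimensions should then yield the claimed closed form. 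The presence of $\binom{d}{j}_\L$ and $\L^{jd}$ in the target strongly suggests that the parametrization factors through a pair consisting of a $j$-dimensional subspace of $k^d$ (accounting for $\binom{d}{j}_\L$) and some auxiliary semigroup datum, with $\L^{jd}$ arising as the codimension of a Schubert-like cell.

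Given the formula for $\NH$, part (a) reduces to multiplying \eqref{eq:quot_in_hilb_mot} by $(t;\L)_d$ and substituting $\NH_r(\L^{d-r}t)=\sum_j \L^{jd}\binom{r}{j}_\L t^j$; after expanding the remaining factor $(t;\L)_{d-r}$ via the $q$-binomial theorem, the result is a concrete $q$-Vandermonde/Cauchy-type identity that should be provable by elementary $q$-series algebra. For part (b), plugging the conjectural $H_{d,X,p}^\mot$ into
\[
\Zhat_{X,p}^\mot(t) = \sum_{d\geq 0}\frac{t^d}{\L^{d^2}(\L^{-1};\L^{-1})_d}\,H_{d,X,p}^\mot(\L^{-d}t)
\]
from Theorem~\ref{Thm:Main2} and reindexing via $d=j+m$ yields a double sum; collapsing this to $(\L^{-1}t;\L^{-1})_\infty^{-1}\sum_j \L^{-j^2}t^{2j}/(\L^{-1};\L^{-1})_j$ amounts to a nontrivial $q$-series identity of Watson or Bailey-pair type. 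The principal obstacle is the Gröbner basis step for arbitrary $d$: while the initial-submodule enumeration is tractable for $d\leq 3$ by hand, finding a uniform combinatorial description for all $d$ and identifying the geometric origin of the $\binom{d}{j}_\L$ and $\L^{jd}$ factors is the crux; a secondary potential difficulty is isolating the right classical $q$-series identity for the sum appearing in part (b).
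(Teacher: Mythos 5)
This statement is a \emph{conjecture} in the paper, not a theorem: the paper offers no proof of it, only numerical evidence for $d\le 30$ and a reduction (the lemma in \S\ref{sec:conj}) showing that Conjecture~\ref{conj:cusp_exact_formula} would follow from the purely combinatorial Conjecture~\ref{conj:q-series} together with the motivic version of Conjecture~\ref{conj:hd_pattern}, neither of which is proved. So there is no ``paper's own proof'' to compare against.

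Your reformulation is correct and worth recording: using $(-t;q)_d=\sum_j q^{\binom{j}{2}}{d\brack j}_q t^j$ and $\binom{j+1}{2}+\binom{j}{2}=j^2$, the conjectured formula indeed collapses to $\NQ^\mot_{d,X,p}(t)=\sum_j \L^{jd}{d\brack j}_\L t^{2j}$, and the compatible guess $\NH^\mot_{d,X,p}(t)=\sum_j \L^{jd}{d\brack j}_\L t^j$ matches the $d\le 3$ data of Theorem~\ref{thm:cusp_t_rationality}. But your ``proof'' has the same two gaps the paper flags. First, you propose to establish the closed form of $\NH_d$ for all $d$ ``by extending the Gr\"obner argument'' and cite a hoped-for Schubert-cell interpretation of the factors $\L^{jd}$ and ${d\brack j}_\L$; this is exactly the step the paper is unable to carry out (the Gr\"obner strata are affine varieties cut out by the commuting-matrix equations of Lemma~\ref{lem:staircase}, whose motives are not known to be polynomial in $\L$ for general $\alpha$ — that is Conjecture~\ref{conj:L_rationality_strata}), and the authors state explicitly in Remark~\ref{rmk:method_difficulty} and \S\ref{sec:conj} that they do not know how to make the combinatorics uniform in $d$. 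Second, the $q$-series identities you invoke for parts (a) and (b) are precisely the content of Conjecture~\ref{conj:q-series}; you describe them as ``Vandermonde/Cauchy-type'' and ``Watson/Bailey-pair type'' but do not supply a proof, and the paper likewise leaves them unverified. So your proposal reformulates the conjecture usefully but does not supply a proof of either the geometric input or the $q$-series input, and as such is not essentially different from the paper's own reduction — it just rephrases the two open ingredients.
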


In light of Proposition \ref{prop:q-series}, the existence of a guess in Conjecture \ref{conj:cusp_exact_formula}(a) can be understood as an assertion that Conjecture \ref{conj:quot}(b), Theorem \ref{Thm:Main2} and the conjectural \eqref{eq:nq_in_nh} are consistent in the cusp case. Due to the overdetermined nature of this combination of theorems and conjectures, the mere consistency would be a strong evidence that \eqref{eq:nq_in_nh} is expected to hold for all $d$.

Conjecture \ref{conj:cusp_exact_formula}(b) gives a guess of $\Zhat_{X,p}^\mot(t)$ for the cusp singularity for the first time. Since its ``numerator'' $\displaystyle{\sum_{n=0}^\infty \frac{\L^{-n^2}t^{2n}}{(\L^{-1};\L^{-1})_n}}$ substituted with $\L\mapsto q$ is an entire power series in $t$ for all $q>1$, Conjecture \ref{conj:cusp_exact_formula}(b) would verify the cusp case of Conjecture \ref{conj:cohen_lenstra}. See also \eqref{eq:matrix_count} for a concrete consequence of Conjecture \ref{conj:cusp_exact_formula}(b) in matrix counting.

\

We now take a side step. Note that Conjecture \ref{conj:cusp_exact_formula}(a), if true, would imply that $NQ_{d,X,p}^\mot(t)$ is a polynomial in $\L$ for all $d$, so that $\Quot_{d,n}(X,p)$ is a polynomial in $\L$ for all $d,n$. We formulate a weaker conjecture:

\begin{conjecture}[Weak form of Conjecture \ref{conj:cusp_exact_formula}]
Let $(X,p)$ be the cusp singularity $\set{x^2=y^3}$. Then the motive of $\Quot_{d,n}(X,p)$ is a polynomial in $\L$. \label{conj:L_rationality}
\end{conjecture}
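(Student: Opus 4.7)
The plan is to prove polynomiality in $\L$ by promoting the Gr\"obner-basis analysis behind Theorem \ref{thm:cusp_t_rationality} to an affine cell decomposition of $\Quot_{d,n}(X,p)$. The geometric input is a $\mathbb{G}_m$-action: the cusp $X=\{x^2=y^3\}$ carries a natural action with $\mathrm{wt}(x)=3$ and $\mathrm{wt}(y)=2$, and extending to $\O_X^{\oplus d}$ by choosing generic weights $w_1,\dots,w_d$ on the standard basis yields an action on the projective scheme $\Quot_{d,n}(X,p)$. The fixed locus consists precisely of the finitely many $\mathbb{G}_m$-invariant submodules of $\O_X^{\oplus d}$ of colength $n$, which (by genericity of the $w_i$) are direct sums $\bigoplus_{j=1}^{d} I_j\cdot e_j$ of monomial ideals $I_j\subset\O_X$; these are combinatorially indexed by $d$-tuples of ``staircases'' in $\{0,1\}\times\Z_{\geq 0}$ with respect to the weight basis $\{y^i e_j,\,xy^i e_j\}$, closed under multiplication by $x$ and $y$, with total complementary size $n$.

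Since $\Quot_{d,n}(X,p)$ is projective and the $\mathbb{G}_m$-fixed points are isolated, the Bia\l{}ynicki-Birula attractor decomposition (in its extension to singular projective schemes, e.g.\ by Jelisiejew--Sienkiewicz) gives a locally closed stratification $\Quot_{d,n}(X,p)=\bigsqcup_J Y_J$, and hence $[\Quot_{d,n}(X,p)]=\sum_J[Y_J]$ in $\KVar{k}$. The surjection $\O_{\A^2}^{\oplus d}\onto\O_X^{\oplus d}$ induces a $\mathbb{G}_m$-equivariant closed embedding $\Quot_{d,n}(X,p)\incl\Quot_{d,n}(\A^2,0)$, and each $Y_J$ sits as a $\mathbb{G}_m$-invariant closed subscheme inside the corresponding attractor cell of $\Quot_{d,n}(\A^2,0)$; the latter is an affine space carrying a positive-weight linear $\mathbb{G}_m$-action by the standard Gr\"obner/monomial-ideal analysis generalizing Ellingsrud--Str\o{}mme.

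The heart of the proof---and the main obstacle---is to show that each $Y_J$ is itself an affine space, or at least that $[Y_J]\in\Z[\L]$. Concretely, $Y_J$ is cut out of its ambient affine cell by the closed condition that the universal submodule contain $(x^2-y^3)\cdot\O_{\A^2}^{\oplus d}$. The strategy is to parametrize the ambient cell via an Ellingsrud--Str\o{}mme-type Gr\"obner basis adapted to the staircase $J$, and then verify that the cusp relation eliminates a predictable subset of Gr\"obner coefficients in a triangular, polynomial fashion, so that the surviving free coefficients identify $Y_J$ with an affine space. This refines the Gr\"obner analysis already present in the proof of Theorem \ref{thm:cusp_t_rationality}: rationality in $t$ requires only controlling the length of each stratum, whereas $\L$-polynomiality additionally requires linear independence of the cusp equations among the Gr\"obner parameters. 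The principal technical difficulty is a uniform combinatorial treatment over all staircase types $J$; if this proves intractable directly, a complementary route is induction on $d$ via Theorem \ref{Thm:Main2}, reducing the problem to showing that $\NH_{r,X,p}^{\mot}(t)$ has coefficients in $\Z[\L]$ for all $r\leq d$.
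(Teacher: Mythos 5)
The statement you are trying to prove is \emph{stated as an open conjecture} in the paper (Conjecture \ref{conj:L_rationality}); the authors prove it only for $d\leq 3$ (via the explicit computations of \S\ref{sec:low_d}) and explicitly leave the general case unresolved. In particular, Remark \ref{rmk:method_difficulty} already anticipates and discourages your exact approach: ``It is not clear how to use Method (c) [torus action / Bia\l ynicki-Birula decomposition] in our case since $\Quot_{d,n}(X,p)$ is not smooth in general.''

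The genuine gap is in what you yourself flag as ``the heart of the proof.'' You hope that the cusp relation $x^2=y^3$ cuts the ambient Ellingsrud--Str\o mme-type cell ``in a triangular, polynomial fashion, so that the surviving free coefficients identify $Y_J$ with an affine space,'' and later paraphrase this as requiring ``linear independence of the cusp equations.'' This is false, and the paper's own stratification shows it. The Gr\"obner strata $\Hilb(\alpha)$ (which your BB attractor cells $Y_J$ would essentially reproduce with the paper's ``hlex'' monomial order) are not affine spaces: by Lemma \ref{lem:staircase} and \eqref{eq:strata_factorization}, $[\Hilb(\alpha)]$ is a power of $\L$ times $[V(\alpha|_{K_\alpha})]$, where $V(\alpha)$ is cut out by the genuinely nonlinear matrix equations $X^2=Y^3$, $[X,Y]=0$. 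Already for $d=3$ and $\alpha$ with distance matrix $(3^+,3^+,3^+)$, Table \ref{table:pure_K_3} gives $[V(\alpha)]=3\L^4-2\L^3$, which is not of the form $\L^k$ and hence is not an affine cell. So your cells cannot all be affine spaces, and the claim that the relation eliminates coordinates triangularly fails. The retreat to ``$[Y_J]\in\Z[\L]$'' is precisely the paper's Conjecture \ref{conj:L_rationality_strata}, which they establish only for the most general pure-$K$ strata (Theorem \ref{thm:L_rationality_stable}) via a nontrivial homological argument in \S\ref{sec:staircase}; the general case remains open. Your proposed ``complementary route'' via Theorem \ref{Thm:Main2} does not help either: $\NH_{r,X,p}^\mot(t)$ having coefficients in $\Z[\L]$ is a reformulation of the same unproved statement, not a reduction.
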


It turns out that this statement is implied by Conjecture \ref{conj:L_rationality_strata} about commuting matrices in certain upper triangular matrix algebras determined by posets. This conjecture motivates the following theorem as an important special case we are able to prove. This is an unexpected side product of our main goal and is of separate interest.

\begin{theorem}[Corollary \ref{cor:staircase_formula}]
Consider the \tuparens{reduced} $k$-variety cut out by
\begin{equation}
V_d:=
\set*{
(X,Y)\in \Mat_d(k)^2
\ \vrule \
\begin{array}{l}
\text{$X,Y$ are strictly upper triangular}\\
X^2=Y^3, XY=YX
\end{array}
}.
\end{equation}

Then the motive of $V_d$ is a polynomial in $\L$. Moreover, there is an inductive formula for $[V_d]$ given in Corollary \ref{cor:staircase_formula}.
\label{thm:main_staircase}
\end{theorem}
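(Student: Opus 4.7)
The plan is to exhibit $V_d$ as a disjoint union of affine-space strata and extract a clean recursion. As a first step, I would peel off the last row and column. Writing
\[
X=\begin{pmatrix} X' & \mathbf{x}\\ 0 & 0\end{pmatrix},\quad Y=\begin{pmatrix} Y' & \mathbf{y}\\ 0 & 0\end{pmatrix}
\]
with $X',Y'$ strictly upper triangular of size $d-1$ and $\mathbf{x},\mathbf{y}\in k^{d-1}$, a direct block multiplication shows that $(X,Y)\in V_d$ is equivalent to $(X',Y')\in V_{d-1}$ together with the two linear conditions $X'\mathbf{x}=Y'^2\mathbf{y}$ and $X'\mathbf{y}=Y'\mathbf{x}$ on $(\mathbf{x},\mathbf{y})$. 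Viewing $(X',Y')$ as an $R$-module structure on $M=k^{d-1}$ for $R=k[x,y]/(x^2-y^3)$ with maximal ideal $\m=(x,y)$, the fiber of the projection $\pi:V_d\to V_{d-1}$ over $(X',Y')$ is naturally identified with $\Hom_R(\m, M)$: the pair $(\mathbf{x},\mathbf{y})$ records $(\phi(x),\phi(y))$ for $\phi\in\Hom_R(\m, M)$, and the two linear conditions are exactly the syzygies $y\cdot x=x\cdot y$ and $x\cdot x=y^2\cdot y$ among the generators of $\m$.

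The fiber dimension jumps along proper subvarieties of $V_{d-1}$, so a direct vector-bundle argument fails; instead I would stratify. The natural stratification comes from the Gröbner-basis/staircase machinery developed in the body of the paper: to each $(X',Y')$ assign, via reduction of the standard flag against the cusp relation, an initial monomial ideal in $R$ compatible with the flag, equivalently a ``staircase'' $\lambda$ of size $d-1$. The crucial claim is that each stratum $V_{d-1}^\lambda$ is an affine space of explicitly computable dimension, and that the isomorphism class of the module $M_{X',Y'}$ is constant on $V_{d-1}^\lambda$, so the fiber dimension of $\pi$ depends only on $\lambda$.

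Granted this, the motive of $V_d$ decomposes as
\[
[V_d]=\sum_\lambda [V_{d-1}^\lambda]\cdot \L^{\dim_k \Hom_R(\m, M_\lambda)},
\]
where the sum runs over staircases of size $d-1$ and $M_\lambda$ is the $R$-module corresponding to the stratum. Polynomiality in $\L$ then follows by induction on $d$ starting from $V_0=\Spec k$, and tracking how staircases of $V_d$ restrict to those of $V_{d-1}$ (by removing the box contributed by $e_d$) yields the explicit recursion of Corollary \ref{cor:staircase_formula}.

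The main obstacle is identifying the correct staircase invariant and proving that each stratum $V_{d-1}^\lambda$ is indeed an affine space. This requires Gröbner-basis bookkeeping compatible with both the non-regular relation $x^2=y^3$ and the strict upper-triangular flag, and the combinatorics is governed by the numerical semigroup $\{0,2,3,4,\ldots\}$ of the cusp. Once this technical stratification is in place, the rest of the argument is a finite, mechanical induction on $d$.
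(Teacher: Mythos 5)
Your first step matches the paper: the block decomposition of $(X,Y)$, the translation of the defining equations into the two linear conditions $X'\mathbf{x}=Y'^2\mathbf{y}$, $X'\mathbf{y}=Y'\mathbf{x}$, and the reinterpretation of the fiber as an $R$-linear hom space are all correct (the paper packages the fiber instead as $\ker A_{M}$ for the matrix $A=\bigl[\begin{smallmatrix}x & -y^2\\ -y & x\end{smallmatrix}\bigr]$, which under the presentation $R^2\xrightarrow{A}R^2\to\m\to 0$ is canonically $\Hom_R(\m,M)$, so the two identifications agree). The problem is everything after that.

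Your plan to ``exhibit $V_d$ as a disjoint union of affine-space strata'' on which the fiber dimension is constant cannot succeed. From Corollary \ref{cor:staircase_formula} one computes $[V_3]=3\L^4-2\L^3$ (see Table \ref{table:staircase}); the negative coefficient of $\L^3$ rules out any decomposition of $V_3$ into affine cells, since such a decomposition would force $[V_3]$ to be a sum of pure powers of $\L$ with nonnegative integer coefficients. So the ``crucial claim'' in your proposal — that each stratum $V_{d-1}^\lambda$ is an affine space of explicitly computable dimension, with $\dim_k\Hom_R(\m,M)$ constant on it — is false, and the inductive formula $[V_d]=\sum_\lambda[V_{d-1}^\lambda]\,\L^{\dim\Hom_R(\m,M_\lambda)}$ would produce only nonnegative coefficients, contradicting the actual answer. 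There is also a second difficulty you underestimate: even if such strata existed, the Gröbner/staircase stratification elsewhere in the paper is for submodules of $R^d$, not for $R$-module structures on a flagged $k^d$; the category of finite-length $R$-modules over the cusp is wild, so no discrete ``staircase'' will remember enough to make both (a) the stratum an affine space and (b) the fiber dimension constant, while simultaneously (c) behaving well under the peeling-off map — you would in effect need to classify the modules, which cannot be done.

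The paper avoids all of this by stratifying $V_d$ by the \emph{much coarser} pair of integers $(a,b)=(\dim_k\ker A_{M},\dim_k\im A_{M})$ (a single number, since $a+b=2d$), not by a staircase or module type. The fiber of $V_{d+1}\to V_d$ over a point of $V\sbp{a,b}$ is the affine space $\ker A_M$, but which new stratum $V\sbp{a',b'}$ the extended pair lands in depends on where the extending vector sits in a three-step filtration $W^0\subseteq W^1\subseteq W^2=\ker A_M$; this is Lemma \ref{lem:staircase_strata_induction}. The crucial and genuinely nontrivial input is the homological ``middle-dimensionality'' statement of Lemma \ref{lem:exact_on_cohomology}: the auxiliary chain endomorphism $T$ of the $2$-periodic complex $C^\bullet$ has $T^2$ null-homotopic and acts exactly on cohomology, forcing $\dim W^1 = \tfrac{a+b}{2}$. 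This is what pins down all three filtration pieces from $(a,b)$ alone and yields the recursion of Corollary \ref{cor:staircase_formula}, whose coefficients (being differences of powers of $\L$) are precisely where the negative terms come from. Your proposal has no counterpart of this lemma and no mechanism to control the fiber dimension without classifying the modules.
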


\subsection{Methods and strategies}
We first overview some existing methods to compute the motive of Hilbert and Quot schemes. The following types of methods are all based on classification or stratification (i.e., locally closed decompoisition), as is what the definition of motive requires, though their flavors vary.
\begin{enumerate}
\item Local method: Work on the punctual Quot scheme $\Quot_{d,n}(X,p)$ and solve the commutative algebra problem of classifying ideals of the complete local ring $\Ohat_{X,p}$, or submodules of $\Ohat_{X,p}^{\oplus d}$;
\item Global method: Choose a projective model $X$ and use global properties for torsion free sheaves on $X$, such as Riemann--Roch and Serre duality;
\item Torus action: Choose a model $X$ that admits a ``good enough'' torus action, and make use of Bia\l ynicki-Birula decomposition.
\end{enumerate} 

Method (a) was used in example-based computations of $Q_{1,X,p}^\mot(t)$ for curve singularities (such as \cite{oblomkovshende2012}) and the proof of the general rationality result \cite{brv2020motivic}, to be elaborated later. Method (b) was used in the proof of rationality and the function equation for $Q_{1,X}^\mot(t)$ for Gorenstein curves \cite{goettscheshende2014refined, maulikyun2013macdonald, pandharipandethomas2010}. Method (c) was used in the computation of $Q_{1,\P^2}^\mot(t)$ in \cite{ellingsrudstromme1987homology} and generalizations of $Q_{d,\P^1}^\mot(t)$ in \cite{monavariricolfi2022}.

The main theorem we are proving is the rationality statement of Theorem \ref{thm:cusp_t_rationality}. Our proof belongs to Method (a).\footnote{See Remark \ref{rmk:method_difficulty} for difficulties of Methods (b)(c).} More precisely, we use the theory of Gr\"obner bases over power series rings (also known as ``standard bases'' in the terminology of Hironaka \cite{hironaka1964resolution}) to classify finite-$k$-codimensional submodules of the rank-$d$ free module over $\Ohat_{X,p}=k[[T^2,T^3]]$. We classify by splitting into cases (i.e., ``strata''), and identify a parametrization for each case. In general, the method of Gr\"obner bases would lead to tremendous computational complications, especially those caused by the Buchberger criterion. The complexity is not only in combinatorics (counting the number of parameters in a stratum), but also in completely determining the relation between the parameters in a given stratum. The computation sensitively depends on some artificial choices, such as the monomial order. We use an ``hlex'' monomial order such that the power of $T$ is more important than the ordering of the basis vectors of $k[[T^2,T^3]]^d$, see \S \ref{sec:strata}. Furthermore, we classify submodules of $(T^2,T^3)k[[T^2,T^3]]$ (which directly leads to $H_{d,X,p}^\mot(t)$) instead of submodules of $k[[T^2,T^3]]$ (which directly leads to $Q_{d,X,p}^\mot(t)$). Fortunately, these choices lead to manageable computations, and we are able to determine completely the relation between the parameters in a given stratum (Lemmas \ref{lem:decomposition} and \ref{lem:staircase}). 

We compare our method to the proof of the BRV theorem \cite{brv2020motivic} that $Q_{1,X,p}^\mot(t)$ is rational for any reduced curve singularity $(X,p)$. For simplicity, say $(X,p)$ is a unibranch singularity, and write $\O=\O_{X,p}$ and $\Ohat=\Ohat_{X,p}\cong k[[T]]$. Consider a ``branch-length stratification'' by defining the stratum $\Hilb_n(X,p;e)$ to consist of ideals $I$ of $\O$ such that $\dim_k \O/I=n$ and $\dim_k \Ohat/I\Ohat=e$ (namely, $I\Ohat=(T^e)k[[T]]$). The rationality of $Q_{1,X,p}^\mot(t)$ is then reduced to the following stabilization statement:
\begin{equation}
[\Hilb_{n+1}(X,p;e+1)]=[\Hilb_n(X,p;e)]\text{ for }e\gg 0.
\end{equation}

For multibranch singularities, the proof of the BRV theorem uses induction on the branching number whose induction step is based on a similar stabilization statement.

Our stratification is based on Gr\"obner theory discussed above. We denote our strata by $\Hilb(\alpha)$, where $\alpha$ is any $d$-tuple of nonnegative integers, each decorated by one of two available colors. The datum $\alpha$ encodes a submodule of $(T^2,T^3)k[[T^2,T^3]]$ generated by monomials, and $\Hilb(\alpha)$ consists of submodules of $(T^2,T^3)k[[T^2,T^3]]$ whose leading term submodule corresponds to $\alpha$. We are able to prove that the motive of $\Hilb(\alpha)$ is eventually stable up to a correction factor (Lemma \ref{lem:orbit} and Lemma \ref{lem:is_stable}) with respect to certain ``spiral shifting'' operators $\gamma\sbp 1,\dots,\gamma\sbp d$ that act on the index set of $\alpha$ (illustrated in Figure \ref{fig:spiral}). In particular, the stability is not with respect to raising one of the components of the $d$-tuple by one, but a mixture of permuting and raising the components. The spiral shifting operators originally arise in a similar stratification of the punctual Quot scheme of $(\A^1,0)$ due to the authors \cite{huangjiang2022a}.

To finish the comparison with BRV's proof, note that when $d=1$ (BRV's situation), the only operator $\gamma\sbp 1$ is $n\mapsto n+1$, which is, in combinatorial essence, the same as in BRV's proof. In effect, we answered what the high-$d$ generalization of BRV's stabilization statement should look like: in the case of the cusp singularity, the combinatorics of the corresponding stabilization statement involves the operators $\gamma\sbp j$. 

We conclude with a discussion about how our method is expected to generalize to arbitrary reduced curve singularities $(X,p)$. We likely need a stabilization statement for the unibranch case, and then generalize to the multibranch case using induction. It is expected that the stratification is obtained in a way inspired by Gr\"obner basis, but it is unclear what version of stratification is computationally manageable in general. However, the combinatorial nature of the stabilization seems to be not sensitive to the engineering details of the stratification: Our proof suggests that for a general unibranch singularity $(X,p)$, there should be a similar stratification for $\Quot_{d,n}(X,p)$ with strata indexed by some decorated version of $d$-tuples of integers (so the operators $\gamma\sbp j$ are defined), such that an eventual stabilization statement for stratum motives holds with respect to $\gamma\sbp j$. 

\begin{remark}\label{rmk:method_difficulty}
It is not clear how to use Method (c) in our case since $\Quot_{d,n}(X,p)$ is not smooth in general. {Even though Bia\l ynicki-Birula style decompositions also exist for certain singular varieties, such as a nodal or a cuspidal rational curve}, it is not clear that our $\Quot_{d,n}(X,p)$ is of that type.

The conjectured functional equation for $Q_{d,X,p}^\mot(t)$ for the cusp singularity suggests a potential proof using Method (b), but it is currently not clear how: a direct attempt to generalize the proof of the $d=1$ case in \cite{goettscheshende2014refined, maulikyun2013macdonald, pandharipandethomas2010} would involve identifying the moduli space of rank-$d$ torsion free sheaves on a Gorenstein curve $X$ (as a high-rank generalization of the compactified Jacobian, namely, the compactified $\mathcal{B}un_{\GL_d}$) and putting a suitable stratification on it which behaves well with Serre duality and is simple enough so that one can effectively extract information to determine $Q_{d,X}^\mot(t)$. One essential observation of \cite{goettscheshende2014refined} is that the embeddings of a rank $1$ torsion free sheaf $\mathcal{L}$ into $\mathcal{O}_X$ are classified by $H^0(X,\mathcal{L}^{\vee})-\{0\}$, which leads to a nice stratification of the compactified Jacobian by cohomological invariants. But this fails for higher rank torsion free sheaves, namely, the embeddings of a rank $d$ torsion free sheaf $\mathcal{E}$ into $\mathcal{O}_X^{d}$ is usually only a Zariski open subset $U_\mathcal{E}\subseteq H^0(X,\mathcal{E}^{\vee})-\{0\}$. At this moment, it is hard to give a satisfactory (geometrical or cohomological) characterization of $\mathcal{E}$ with fixed isomorphism class of $U_\mathcal{E}$. One may expect a stratification over the moduli space of rank-$d$ torsion free sheaves depending on the isomorphism classes of $U_\mathcal{E}$. But it seems to behave badly with Serre duality. 

We also give a remark on the complexity of torsion free sheaves (with no restraint on the rank) over a cusp. This is relevant to us because Fourier--Mukai transform establishes an equivalence between degree 0 semistable torsion free sheaves over a cuspidal cubic with the category of torsion sheaves, see \cite{Titus1999, LIYG06}. The innate non-semisimplicity of the category of torsion free sheaves is merely one aspect of its complexity. Even for indecomposable semistable torsion free sheaves, wild phenomena already happen. For example, the category of higher rank torsion free sheaves over a cuspidal curve is wild, in the sense that there are families of indecomposable torsion free sheaves depending on any prescribed number of parameters, see \cite{drozdgreuel2001}. The category of torsion sheaves supported at the singular point is also wild, see \cite{drozd1972}. These two facts are compatible from the theory of Fourier--Mukai transform. These facts suggest that studying $Q_{d,X}^\mot(t)$ from a representation theoretical point of view, or from Fourier--Mukai transform, may be hard. \end{remark}

\begin{remark}
In principle, our method involving Gr\"obner bases can be applied to any singularity $k[[T^m, T^n]]$ with $\gcd(m,n)=1$ with one Puiseux pair, but it is unclear whether the Buchberger criterion is manageable in any case other than $(m,n)=(2,3)$. Our Lemma \ref{lem:decomposition} and Lemma \ref{lem:staircase} seem coincidental and we do not know what their generalizations should look like. It might still be possible, though, to attack the rationality conjecture with an eventual stabilization statement analogous to Lemma \ref{lem:is_stable} without understanding the Gr\"obner strata as explicitly as in Lemmas \ref{lem:decomposition} and \ref{lem:staircase}.
\end{remark}

\subsection{Organization and structure of proof}
We take an unusual approach in presenting the proof, namely, we focus on the set of $k$-points and disregard the geometry when working with various moduli spaces. In particular, we will give a self-contained proof of the point-count version of our theorems that is accessible to readers with background in commutative algebra and basic algebraic geometry. We do so to avoid distractions and highlight the already complicated details in the Gr\"obner stratification and the combinatorics.

Since the proof is based on case-by-case parametrization, most of the proof directly translates to a locally closed stratification, except a few places that require extra care, which we handle in Appendix \ref{appendix:motivic}.

We introduce the preliminaries of the relevant moduli spaces in \S \ref{sec:moduli}, followed by an introduction of the needed theorems about Gr\"obner basis theory in \S \ref{sec:groebner} (with proofs in Appendix \ref{appendix:groebner}). Equipped with these, we give a complete description of our Gr\"obner stratification of $\Quot_{d,n+d}^d(X,p)$ for the cusp in \S \ref{sec:strata}. Next, in \S \ref{sec:combinatorics}, we sort out the combinatorics and finish the proof of the first part of Theorem \ref{thm:cusp_t_rationality}. 

In \S \ref{sec:staircase}, we take a detour to prove Theorem \ref{thm:main_staircase}; the proof in this section is independent of the rest of the paper. In \S \ref{sec:low_d}, we perform explicit computations of $Q_{d,X,p}^\mot(t)$ for the cusp, where $d\leq 3$. This finishes the proof of Theorem \ref{thm:cusp_t_rationality}. Further conjectures, including Conjecture \ref{conj:cusp_exact_formula}, are discussed in \S \ref{sec:conj}.

\subsection*{Acknowledgements}
The authors thank Dima Arinkin, Dori Bejleri, Jim Bryan, Daniel Erman, Asvin G, Eugene Gorsky, Nathan Kaplan and Yifan Wei for fruitful conversations. Huang thanks AMS-Simons Travel Grant. Jiang thanks NSF grant DMS-2100436.

\section{Some moduli spaces}\label{sec:moduli}
Let $k$ be a field and $X$ be a variety over $k$ (a separated $k$-scheme of finite type that is not necessarily smooth, proper or reduced). Let $p$ be a closed point of $X$, and consider the completed local ring $\hhat \O_{X,p}$. We recall the set of $k$-points of several moduli spaces over $X$ (or $(X,p)$), and their point counting over $k=\Fq$. The geometric definitions of these moduli schemes or stacks are given in Appendix \ref{appendix:motivic}.

We will organize the moduli spaces into two types, unframed and framed, following the usual treatment of quiver varieties by Nakajima \cite{nakajima1999hilbert}.

\subsection{Unframed moduli spaces}\label{subsec:uramifiedmodulispcae}
We follow \cite{bryanmorrison2015motivic} and \cite{huang2023mutually}. Recall that $\Coh_n(X)$ is the moduli stack of coherent sheaves $M$ on $X$ supported at $n$ points counting multiplicity (namely, $M$ has finite support and the space of global section has dimension $n$). Point counting of $\Coh_n(X)$ over $k=\Fq$ can be understood via \textbf{stacky sets}. A stacky set\footnote{Equivalently, a stacky set is the set of isomorphism classes of a finite groupoid with finite automorphism groups. We choose to delay the use of groupoids until Appendix \ref{appendix:motivic} to keep the point counting here transparent.} is a finite set where each point $x$ is equipped with a finite group $\Aut x$, called the automorphism group of $x$. The cardinality of a stacky set $\mathcal X$ is defined as
\begin{equation}
\abs{\mathcal X}^\mathrm{stck}:=\sum_{x\in \mathcal X} \frac{1}{\abs{\Aut x}}.
\end{equation}
 
Say $k=\Fq$. We think of the set of $k$-points of $\Coh_n(X)$ as a stacky set consisting of coherent sheaves on $X$ supported at $n$ points counting multiplicity up to isomorphism, each equipped with the automorphism group of the sheaf. When $X=\Spec R$ is affine, $\Coh_n(X)$ is canonically isomorphic to the stacky set of $R$-modules $M$ such that $\dim_k M=n$ up to isomorphism, equipped with the group of $R$-linear automorphisms of $M$. The point count of $\Coh_n(X)$ over $\Fq$ is defined as the stacky cardinality
\begin{equation}
\abs{\Coh_n(X)(\Fq)}:=\sum_{M\in \Coh_n(X)} \frac{1}{\abs{\Aut M}}.
\end{equation}

The stack $\Coh_n(X)$ can be realized as a quotient stack $[C_n(X)/\GL_n]$, where $C_n(X)$ is a well-defined scheme; the geometry is discussed in Appendix \ref{appendix:motivic}. If $X$ is affine, then the set of $k$-points of $C_n(X)$ has an explicit description as follows: say
\begin{equation}
X=\Spec R=\Spec \frac{k[T_1,\dots,T_m]}{(f_1,\dots,f_r)},
\end{equation}
then the set of $k$-points of $C_n(X)$ is the set of commuting matrices satisfying equations of $X$:
\begin{equation}
C_n(X)(k):=\set*{\underline{A}=(A_1,\dots,A_m)\in \Mat_n(k)^m: [A_i,A_j]=0, f_i(\underline{A})=0},
\end{equation}
and $\GL_n$ acts by simutaneous conjugation. For this reason, we call $C_n(X)$ the $n$-th \textbf{commuting variety} over $X$. A coordinate-free description is
\begin{equation}
C_n(X)(k)=\Hom_{\mathrm{AssoAlg}/k}(R,\Mat_n(k)),
\end{equation}
so one may as also think of $C_n(X)(k)$ as ``$X(\Mat_n(k))$'', or $n\times n$ matrix points on $X$.

The stack quotient $\Coh_n(X)=[C_n(X)/\GL_n]$, in terms of point count, reads
\begin{equation}\label{eq:stack_quotient}
\abs{\Coh_n(X)(\Fq)}=\frac{\abs{C_n(X)(\Fq)}}{\abs{\GL_n(\Fq)}}.
\end{equation}

This is in fact an elementary consequence of the orbit-stabilizer theorem.

Now we recall the punctual version of the above. Let $p\in X$ be a closed point, and $R=\Ohat_{X,p}$ be the completed local ring of $X$ at $p$. Let $\Coh_n(X,p)$ be moduli stack of coherent sheaves on $X$ supported at $p$ with multiplicity $n$. Let $\Coh_n(R)$ be the moduli stack of $R$-modules $M$ with $\dim_k M=n$. Then we have a canonical isomorphism $\Coh_n(X,p)\cong \Coh_n(R)$. 

If
\begin{equation}
R=\frac{k[[T_1,\dots,T_m]]}{(f_1,\dots,f_r)},
\end{equation}
with $f_i(0,\dots,0)=0$ for all $i$, define
\begin{equation}
C_n(X,p)=C_n(R):=\set*{\underline{A}=(A_1,\dots,A_m)\in \Nilp_n(k)^m: [A_i,A_j]=0, f_i(\underline{A})=0},
\end{equation}
where $\Nilp_n(k)$ is the variety of $n\times n$ nilpotent matrices over $k$. Then $\Coh_n(X,p)=[C_n(X,p)/\GL_n]$ as a stack quotient. 

The global and the punctual versions of stacks of zero-dimensional sheaves satisfy a local-to-global formula. If $k=\Fq$, the first author defined the \textbf{Cohen--Lenstra series}
\begin{align}
\Zhat_X(t)&:=\sum_{n\geq 0}\abs{\Coh_n(X)(\Fq)} t^n;\label{eq:cohen_lenstra_in_coh}\\
\Zhat_{X,p}(t)&:=\sum_{n\geq 0}\abs{\Coh_n(X,p)(\Fq)} t^n,
\end{align}
and proved an Euler product formula (\cite[Proposition 4.2]{huang2023mutually})
\begin{equation}
\Zhat_X(t)=\prod_{p\in X_\cl} \Zhat_{X,p}(t),
\end{equation}
where $p$ ranges over all closed points of $X$. 

\subsection{Framed moduli spaces}
For $n,d\geq 0$, recall that $\Quot_{d,n}(X)=\Quot_{\O_X^d,n}$ is the Quot scheme parametrizing quotient sheaves of $\O_X^d$ supported at $n$ points counting multiplicity. If $d=0$, then $\Quot_{d,n}(X)$ is a point if $n=0$ and empty if $n>0$. If $d=1$, then $\Quot_{1,n}(X)$ is the Hilbert scheme $\Hilb_n(X)$. The set of $k$-points is described as
\begin{equation}
\Quot_{d,n}(X)(k):=\set*{(M,f) \bigg\vert M\in \Coh_n(X), f: \O_X^d\onto M}\bigg/\sim,
\end{equation}
where $(M_1,f_1)\sim (M_2,f_2)$ if and only if there is an isomorphism $\varphi: M_1\to M_2$ such that $\varphi\circ f_1=f_2$. The surjection $f$ is called a \textbf{$d$-framing} of $M$. By passing to the kernel of the quotient map, we have the following equivalent description
\begin{equation}
\Quot_{d,n}(X)(k)=\set*{\cI\subeq \O_X^d: \O_X^d/\cI \in \Coh_n(X)}.
\end{equation}

Here, $\cI$ is a coherent subsheaf of $\O_X^d$. If $d=1$, then $\cI$ is the ideal sheaf of the closed subscheme of $X$ with structure sheaf $\O_X/\cI$.

The point count of $\Quot_{d,n}(X)$ over $\Fq$ is given by the usual cardinality of $\Quot_{d,n}(X)(\Fq)$.

Let $p\in X$ be a closed point. The $k$-points of the punctual Quot scheme
\begin{equation}
\Quot_{d,n}(X,p)(k):=\set*{(M,f) \bigg\vert M\in \Coh_n(X,p), f: \O_X^d\onto M}\bigg/\sim,
\end{equation}
can be described in terms of the completed local ring $R=\Ohat_{X,p}$ only. Let
\begin{equation}\label{eq:quot_ideal_def}
\Quot_{d,n}(R)(k):=\set*{I\subeq R^d: \dim_{k} R^d/I = n},
\end{equation}
then $\Quot_{d,n}(X,p)(k)\cong \Quot_{d,n}(R)(k)$ canonically. 

As in the unframed case, the point counts of the global and the punctual Quot schemes are related by a local-to-global formula. For any integer $d\geq 0$, consider the \textbf{rank-$d$ Quot zeta function}
\begin{equation}
\begin{aligned}
Q_{d,X}(t)&:=\sum_{n\geq 0} \abs{\Quot_{d,n}(X)(\Fq)} t^n;\\
Q_{d,X,p}(t)&:=\sum_{n\geq 0} \abs{\Quot_{d,n}(X,p)(\Fq)} t^n.
\end{aligned}
\end{equation}

Then we have the following Euler product formula.

\begin{proposition}\label{prop:euler_quot}
Let $X$ be a variety over $k=\Fq$, and $d\geq 0$ be an integer. Then we have
\begin{equation}
Q_{d,X}(t)=\prod_{p\in X_\cl} Q_{d,X,p}(t).
\end{equation}
\end{proposition}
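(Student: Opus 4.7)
The plan is to establish a length-preserving bijection between $\Quot_{d,n}(X)(\mathbb{F}_q)$ and tuples of $\mathbb{F}_q$-points of the punctual Quot schemes indexed by closed points of $X$, and then translate it into the Euler product of generating series. This is the framed analogue of \cite[Proposition 4.2]{huang2023mutually}, and rests on the same underlying principle: any coherent sheaf on $X$ with zero-dimensional support decomposes canonically as a direct sum over the closed points of its support.

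First, using the subsheaf description $\Quot_{d,n}(X)(\mathbb{F}_q) = \set*{\cI \subeq \O_X^d : \dim_{\mathbb{F}_q}\O_X^d/\cI = n}$, I would attach to each $\cI$ the family $\{I_p\}_{p\in X_\cl}$ where $I_p \subeq \widehat{\O}_{X,p}^d$ is the completed stalk of $\cI$ at $p$. Since $\O_X^d/\cI$ has finite support, one has $I_p = \widehat{\O}_{X,p}^d$ for all but finitely many $p$, and the canonical decomposition $\O_X^d/\cI = \bigoplus_p (\O_X^d/\cI)_p$ together with additivity of $\mathbb{F}_q$-dimension gives $\sum_p n_p = n$, where $n_p := \dim_{\mathbb{F}_q}\widehat{\O}_{X,p}^d/I_p$. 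Conversely, given any family $\{I_p\}$ with $I_p = \widehat{\O}_{X,p}^d$ for almost all $p$ and $\widehat{\O}_{X,p}^d/I_p$ finite-dimensional in general, the kernel of the natural map $\O_X^d \to \bigoplus_p \widehat{\O}_{X,p}^d/I_p$ is a coherent subsheaf of $\O_X^d$ with colength $\sum_p n_p$. These two operations are mutually inverse because a finite-colength coherent subsheaf of $\O_X^d$ is determined by its completed stalks at closed points.

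This yields the bijection
\begin{equation*}
\Quot_{d,n}(X)(\mathbb{F}_q) \ \longleftrightarrow \ \bigsqcup_{\substack{(n_p)_{p\in X_\cl} \\ \sum_p n_p = n}} \ \prod_{p\in X_\cl}\Quot_{d,n_p}(X,p)(\mathbb{F}_q),
\end{equation*}
where the outer disjoint union is over tuples of nonnegative integers with finite support summing to $n$. Multiplying by $t^n$, summing over $n\geq 0$, and using that $\abs{\Quot_{d,0}(X,p)(\mathbb{F}_q)} = 1$ to make the infinite product well-defined as a formal power series in $t$, yields the claimed Euler product.

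There is no serious obstacle here; the content is essentially formal once the canonical decomposition of a finite-length sheaf over its support is in hand. The one point to verify carefully is that the correspondence $\cI \leftrightarrow \{I_p\}$ is a bijection in both directions, which follows from exactness of completion on finitely generated $\O_{X,p}$-modules together with the fact that a coherent subsheaf of $\O_X^d$ with zero-dimensional cokernel is recovered as the kernel of the natural map into the finite direct sum of its completed stalk-quotients.
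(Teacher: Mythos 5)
Your proof is correct and follows essentially the same approach as the paper's: both rest on the canonical decomposition of a finite-length coherent sheaf over its support, and both establish the bijection $\Quot_{d,n}(X)(\Fq)\cong\bigsqcup_{\sum n_p=n}\prod_p\Quot_{d,n_p}(X,p)(\Fq)$ before translating it into the Euler product. The only cosmetic difference is that the paper phrases the correspondence in terms of the quotient $(M,f)$ and its localizations, whereas you work with the subsheaf $\cI=\ker f$ and its completed stalks; these are equivalent descriptions of the same bijection.
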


\begin{proof}
As in the proof of \cite[Proposition 4.2]{huang2023mutually}, every finite-length coherent sheaf $M$ on $X$ has a unique decomposition $M=\bigoplus_{p\in X_\cl} M_p$ into finite-length coherent sheaves $M_p$ supported at $p$, and all but finitely many $M_p$ are zero. Note that $M_p$ can be identified as a finite-length $\Ohat_{X,p}$-module, and let $n_p$ be the $k$-dimension of $M_p$. 

Suppose we are given $(M,f)\in \Quot_{d,n}(X)(k)$, where $M\in \Coh_n(X)$ and $f:\O_X^d\onto M$. Localized at $p$, we get an element $(M_p,f_p)\in \Quot_{d,n_p}(X,p)(k)$, where $M_p, n_p$ are as above and $f_p:\O_{X,p}^d\onto M_p$ is a surjection of $\O_{X,p}$-modules. 

Conversely, given $(M_p,f_p)\in \Quot_{d,n_p}(X,p)(k)$ for all $p\in X_\cl$ such that all but finitely many $M_p$ are zero, construct $M=\bigoplus_{p\in X_\cl} M_p$. Consider the composition $\O_X^d\to \O_{X,p}^d\map[f_p] M_p$ and the induced map $f:\O_X^d\to \prod_{p\in X_\cl} M_p = M$ (note that a finite direct sum is the direct product). Then we recover $(M,f)\in \Quot_{d,n}(X)(k)$, where $n=\sum n_p$. 

It is routine to check that the above two constructions are inverse to each other. Therefore, we have a bijection
\begin{equation}
\Quot_{d,n}(X)(k)\cong \bigsqcup_{\sum n_p=n} \; \prod_{p\in X_\cl} \Quot_{d,n_p}(X,p)(k),
\end{equation}
which translates to the desired product formula in the generating series.
\end{proof}

\subsection{Forgetting the framing}
The first key step in Theorem \ref{thm:cusp_t_rationality} is to reduce the counting problem on $\Coh_n(X)$ to the counting problem on $\Quot_{d,n}(X)$ using the map $\Quot_{d,n}(X)\to \Coh_n(X)$ defined by forgetting the framing. We need to control the fiber size of the map, and to do so, we need to consider the punctual version. 

For the rest of this section, consider a $k$-variety $X$ and a closed point $p\in X$. Let $R=\Ohat_{X,p}$ and denote by $\m$ the maximal ideal of $R$. For simplicity, assume $p$ is a $k$-point of $X$, so the residue field of $R$ is $k$. 

From now on, the \textbf{dimension} of an $R$-module $M$ always refers to $\dim_k M$. Thus, $\Coh_n(R)$ parametrizes $n$-dimensional modules over $R$. We consider the \textbf{rank} of a finite-dimensional $R$-module $M$, denoted by $\rk M$, to be the minimal number of generators for $M$ as an $R$-module. By Nakayama's lemma, we have
\begin{equation}
\rk M=\dim_k M\otimes_R k=\dim_k M/\m M.
\end{equation}

From now on, we assume $k=\Fq$, and we will always refer to the set of $k$-points when referring to a moduli space. Consider
\begin{equation}
\Coh_n^r(R):=\set{M\in \Coh_n(R): \rk M = r},
\end{equation}
and
\begin{equation}
\Quot_{d,n}^r(R):=\set{(M,f)\in \Quot_{d,n}(R): \rk M=r}.
\end{equation}

We note that $\Coh_n^r(R)$ is empty unless $r\leq n$, and $\Quot_{d,n}^r(R)$ is empty unless $r\leq \mathrm{min}\set{d,n}$. We have $\Coh_n(R)=\bigsqcup_{r=0}^n \Coh_n^r(R)$ and $\Quot_{d,n}(R)=\bigsqcup_{r=0}^{\mathrm{min}\set{d,n}} \Quot_{d,n}^r(R)$. 

The rank will play a crucial role in determining the fiber size of the forgetful map $\Quot_{d,n}(R)\to \Coh_n(R)$. 

\begin{remark}\label{rmk:motivicstrata}
We briefly mention the geometric (scheme or stack theoretical) aspects of these moduli spaces. There is an upper semi-continuous function in Zariski topology on $\Quot_{d,n}(R)$ defined as $\rk(x)= \dim_k M/\mathfrak{m}M$, where $M$ is the module corresponding to the point $x$. This implies that, at least up to reduced structures, $\Quot_{d,n}^{\geq r}(R)$ is a closed subscheme of $\Quot_{d,n}(R)$ and $\Quot_{d,n}^{\geq r}(R)$ is a locally closed subscheme. Therefore we have a Zariski stratification $\Quot_{d,n}(R)=\bigsqcup_{r=0}^{\mathrm{min}\set{d,n}} \Quot_{d,n}^r(R)$.

Similarly, one can define an upper semi-continuous function $\rk$ on $\Coh_n(R)$. At least up to reduced structures, $\Coh_{n}^{\geq r}(R)$ is a closed substack of $\Coh_{n}(R)$ and $\Coh_{n}^{r}(R)$ is a locally closed substack. Therefore we have a Zariski stratification $\Coh_n(R)=\bigsqcup_{r=0}^n \Coh_n^r(R)$.

We will only use these observations in Appendix~\ref{appendix:motivic}.
\end{remark}

%Recall the $q$-binomial coefficient
%\begin{equation}
%{d \brack r}_q:=\abs{\Gr(r,d)(\Fq)}=\sum_{\lambda\subeq r\times (d-r)}q^{\abs{\lambda}}= \frac{(q;q)_d}{(q;q)_r (q;q)_{d-r}},
%\end{equation}
%where $\Gr(r,d)(\Fq)$ is the Grassmannian variety parametrizing $r$-dimensional subspaces of $\Fq^d$. 

\begin{proposition}\label{prop:Fibercounting}
Assume the above setting and $d\geq r$. Then the forgetful map $\Phi:\Quot_{d,n}^r(R)\to \Coh_n^r(R)$ is surjective with preimage cardinality
\begin{equation}\label{eq:Fibercounting}
\abs{\Phi^{-1}(M)}=\frac{1}{\abs{\Aut M}}\abs{\Surj(\Fq^d,\Fq^r)}\, q^{d(n-r)}
\end{equation}
for any $M\in \Coh_n^r(R)$. 
\end{proposition}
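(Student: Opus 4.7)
The plan is to identify the fibers of $\Phi$ with equivalence classes of surjections out of $R^d$, count these surjections via Nakayama's lemma, and then use the freeness of the automorphism action.

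First I would unwind the definitions. Fix $M \in \Coh_n^r(R)$. A $k$-point of $\Phi^{-1}(M)$ is an equivalence class $[(M',f')]$ with $M' \cong M$; choosing a representative with $M'=M$, the fiber is identified with
\[
\Surj_R(R^d, M) \big/ \Aut_R(M),
\]
where $\Aut_R(M)$ acts by post-composition. This action is free: if $\varphi \circ f = f$ with $f$ surjective, then $\varphi = \mathrm{id}_M$. So $|\Phi^{-1}(M)| = |\Surj_R(R^d, M)|/|\Aut_R(M)|$. For surjectivity of $\Phi$, since $r = \rk M \le d$, I would pick generators $m_1, \dots, m_r$ of $M$ and send the first $r$ standard basis vectors of $R^d$ to them and the rest to $0$; this produces a surjection $R^d \twoheadrightarrow M$, so $\Phi$ hits $M$.

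Next I would count $|\Surj_R(R^d,M)|$ using Nakayama. An $R$-linear map $f:R^d \to M$ is specified by the tuple $(f(e_1),\dots,f(e_d)) \in M^d$. By Nakayama, $f$ is surjective iff the reduction $\bar f: k^d \to M/\m M$ is surjective, and $M/\m M \cong k^r$ since $\rk M = r$. I would stratify the tuples in $M^d$ by their image tuple in $(M/\m M)^d$: for each tuple $(\bar m_1, \dots, \bar m_d)$ in $(M/\m M)^d$ that spans $M/\m M$, the number of lifts to $M^d$ is $|\m M|^d$. Since $|\m M| = |M|/|M/\m M| = q^n/q^r = q^{n-r}$, I get
\[
|\Surj_R(R^d, M)| \;=\; |\Surj_k(k^d, k^r)| \cdot q^{d(n-r)}.
\]

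Combining these, $|\Phi^{-1}(M)| = \frac{1}{|\Aut M|}\, |\Surj(\mathbb{F}_q^d,\mathbb{F}_q^r)| \cdot q^{d(n-r)}$, which is the claimed formula. I do not expect any real obstacle here: the proof is just Nakayama plus an orbit–stabilizer bookkeeping. The only step requiring a moment of care is verifying that the $\Aut(M)$-action on $\Surj(R^d,M)$ is free (so that dividing by $|\Aut M|$ is legitimate), and this follows immediately from the surjectivity of $f$.
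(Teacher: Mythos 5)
Your proof is correct and follows essentially the same route as the paper's: identify $\Phi^{-1}(M)$ with $\Surj(R^d,M)/\Aut M$, note the action is free, and count $\Surj(R^d,M)$ via Nakayama's lemma by reducing mod $\m$. The only cosmetic differences are that you phrase the count as fibers of $M^d \to (M/\m M)^d$ rather than as the product decomposition $\Surj(k^d,M/\m M)\times(\m M)^d$, and you explicitly supply the (routine) surjectivity of $\Phi$, which the paper leaves implicit.
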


\begin{proof}
Fix an $R$-module $M$ whose isomorphism class lies in $\Coh_n^r(R)$. Then $\Phi^{-1}(M)$ consists of surjections $f: R^d\onto M$ up to the equivalence $f_1 \sim f_2$ if and only if there is $\phi\in \Aut M$ such that $\phi\circ f_2=f_1$. In other words, we have
\begin{equation}
\Phi^{-1}(M)=\Surj(R^d,M)/\Aut M,
\end{equation}
where $\Surj(R^d,M)$ is the set of $R$-linear surjections from $R^d$ to $M$, and $\Aut M$ acts on $\Surj(R^d,M)$ by composition. We note that $\Aut M$ acts on $\Surj(R^d,M)$ freely: if $\phi\circ f=f$ and $f$ is a surjection, we must have $\phi=\mathrm{id}$. Therefore, we have $\abs{\Phi^{-1}(M)}=\abs{\Surj(R^d,M)}/\abs{\Aut M}$. It suffices to determine $\abs{\Surj(R^d,M)}$.

Determining an $R$-linear map from $R^d$ to $M$ is equivalent to choosing a $d$-tuple $(v_1,\dots,v_d)$ of elements of $M$. By Nakayama's lemma, the map associated to $(v_1,\dots,v_d)$ is surjective if and only if the mod $\m$ reductions of $v_1,\dots,v_d$ generate $M/\m M$ as a $k$-vector space. Hence,
\begin{equation}
\Surj(R^d,M)=\Surj(k^d,M/\m M) \times (\m M)^d.
\end{equation}

Noting that $\dim_k M/\m M=r$ and $\dim_k M=n$, we have $\dim_k \m M=n-r$, so
\begin{align}
\abs{\Surj(R^d,M)} &= \abs{\Surj(k^d,k^r)}\cdot \abs{\m M}^d \\
&= \abs{\Surj(\Fq^d,\Fq^r)}\, q^{d(n-r)}.
\end{align}
\end{proof}

\begin{corollary}\label{cor:quot_to_coh_const_fiber}
If $d\geq r$, we have
\begin{equation}
\abs{\Coh_n^r(R)(\Fq)}=\frac{\abs{\Quot_{d,n}^r(R)(\Fq)}}
{
q^{d(n-r)}\abs{\Surj(\Fq^d,\Fq^r)}
}.
\end{equation}

Note that $\abs{\Coh_n^r(R)(\Fq)}$ is a stacky point count.
\end{corollary}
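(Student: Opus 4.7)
The plan is to deduce this identity directly from Proposition \ref{prop:Fibercounting} by summing the fiber-size formula over the base of the forgetful map. First I would partition $\Quot_{d,n}^r(R)(\Fq)$ along $\Phi: \Quot_{d,n}^r(R) \to \Coh_n^r(R)$, writing
\[
\abs{\Quot_{d,n}^r(R)(\Fq)} \;=\; \sum_{M} \abs{\Phi^{-1}(M)},
\]
where the sum ranges over isomorphism classes $M \in \Coh_n^r(R)(\Fq)$; this is a genuine partition since $\Phi$ was shown to be surjective in Proposition \ref{prop:Fibercounting}. Next I would substitute the fiber formula
\[
\abs{\Phi^{-1}(M)} \;=\; \frac{1}{\abs{\Aut M}}\,\abs{\Surj(\Fq^d,\Fq^r)}\, q^{d(n-r)}.
\]
Since the factor $\abs{\Surj(\Fq^d,\Fq^r)}\, q^{d(n-r)}$ is independent of $M$, it pulls out of the sum, leaving
\[
\abs{\Quot_{d,n}^r(R)(\Fq)} \;=\; \abs{\Surj(\Fq^d,\Fq^r)}\, q^{d(n-r)}\, \sum_M \frac{1}{\abs{\Aut M}}.
\]

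The residual sum $\sum_M 1/\abs{\Aut M}$ is, by the convention recalled in \S\ref{subsec:uramifiedmodulispcae}, exactly the stacky cardinality $\abs{\Coh_n^r(R)(\Fq)}$. Rearranging yields the claimed identity. There is no serious obstacle once Proposition \ref{prop:Fibercounting} is in hand; the whole argument is bookkeeping. The one conceptual point worth flagging is that the appearance of the stacky count on the right-hand side is forced by the fiber of $\Phi$ depending on $M$ only through the reciprocal of $\abs{\Aut M}$: this is precisely the factor that converts the ordinary cardinality on the $\Quot$ side into the stacky cardinality on the $\Coh$ side, explaining why the corollary fails with the naive (non-stacky) count.
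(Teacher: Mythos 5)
Your proof is correct and is essentially the argument the paper intends (the paper states the corollary without a separate proof, treating it as an immediate rearrangement of Proposition~\ref{prop:Fibercounting}). Summing the fiber-size formula over isomorphism classes in $\Coh_n^r(R)(\Fq)$ and recognizing $\sum_M 1/\abs{\Aut M}$ as the stacky count is exactly the intended bookkeeping.
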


Since the left-hand side does not depend on $d$, the count $\abs{\Quot_{r,n}^r(R)}$ determines $\abs{\Quot_{d,n}^r(R)}$ for all $d\geq r$. 

\begin{corollary}\label{cor:quot_non_full_rank}
If $d\geq r$, we have
\begin{equation}
\abs{\Quot_{d,n}^r(R)}=q^{(n-r)(d-r)} \abs{\Gr(r,d)(\Fq)}\cdot \abs{\Quot_{r,n}^r(R)},
\end{equation}
where $\Gr(r,d)(\Fq)$ is the Grassmannian parametrizing $r$-dimensional subspaces of $\Fq^d$.
\end{corollary}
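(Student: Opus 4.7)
The plan is to apply Corollary \ref{cor:quot_to_coh_const_fiber} in two ways: once for the given $d$, and once for the degenerate case $d=r$ (which is allowed since the hypothesis $d \geq r$ is satisfied in both instances). Both applications compute the same stacky count $\abs{\Coh_n^r(R)(\Fq)}$, so equating the two expressions and solving for $\abs{\Quot_{d,n}^r(R)}$ gives
\[
\abs{\Quot_{d,n}^r(R)} = q^{(d-r)(n-r)} \cdot \frac{\abs{\Surj(\Fq^d,\Fq^r)}}{\abs{\Surj(\Fq^r,\Fq^r)}} \cdot \abs{\Quot_{r,n}^r(R)}.
\]
Since $\Surj(\Fq^r,\Fq^r) = \GL_r(\Fq)$, the remaining task is to identify $\abs{\Surj(\Fq^d,\Fq^r)}/\abs{\GL_r(\Fq)}$ with $\abs{\Gr(r,d)(\Fq)}$.

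This last identification is a standard enumeration. A surjection $\Fq^d \onto \Fq^r$ is uniquely determined by the pair consisting of its kernel (a $(d-r)$-dimensional subspace of $\Fq^d$) together with the induced isomorphism of the quotient with $\Fq^r$. The kernel ranges over $\Gr(d-r,d)(\Fq)$ and the isomorphism ranges over $\GL_r(\Fq)$, so
\[
\abs{\Surj(\Fq^d,\Fq^r)} = \abs{\Gr(d-r,d)(\Fq)}\cdot \abs{\GL_r(\Fq)},
\]
and the $q$-binomial symmetry $\abs{\Gr(d-r,d)(\Fq)} = \abs{\Gr(r,d)(\Fq)}$ completes the derivation. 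I do not anticipate any obstacle here; the argument is a short bookkeeping computation downstream of the preceding corollary.
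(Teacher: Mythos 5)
Your proposal is correct and takes essentially the same approach as the paper: compare Corollary \ref{cor:quot_to_coh_const_fiber} at $d$ and at $r$, cancel the common stacky count $\abs{\Coh_n^r(R)(\Fq)}$, and simplify. The paper compresses the final identification of $\abs{\Surj(\Fq^d,\Fq^r)}/\abs{\GL_r(\Fq)}$ with $\abs{\Gr(r,d)(\Fq)}$ into a single line, whereas you spell out the kernel--isomorphism bijection and the Grassmannian duality; this is the same computation, just made explicit.
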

\begin{proof}
Comparing Corollary \ref{cor:quot_to_coh_const_fiber} for $d:=d$ and $d:=r$, we have
\begin{align}
\frac{\abs{\Quot_{d,n}^r(R)}}{\abs{\Quot_{r,n}^r(R)}} 
&= \frac{q^{d(n-r)}\abs{\Surj(\Fq^d,\Fq^r)}}{q^{r(n-r)}\abs{\Surj(\Fq^r,\Fq^r)}} \\
&= q^{(n-r)(d-r)} \abs{\Gr(r,d)(\Fq)}.
\end{align}
\end{proof}

The above suggests that it suffices to study $\abs{\Quot_{d,n}^r(R)}$ for $r=d$. We introduce the notation
\begin{equation}
\Hilb_{d,n}(R):=\Quot_{d,n+d}^d(R)
\end{equation}
for $d,n\geq 0$. 

Recall from \eqref{eq:quot_ideal_def} that
\begin{equation}
\Quot_{d,n+d}^d(R)=\set{I\subeq R^d: \dim_k R^d/I=n+d, \rk_k R^d/I=d}.
\end{equation}

Note that
\begin{equation}
\rk_k \frac{R^d}{I} = \dim_k \frac{R^d}{I}\otimes_R \frac{R}{\m} = \dim_k \frac{R^d}{\m R^d+I} \leq \dim_k \frac{R^d}{\m R^d} = d,
\end{equation}
where the equality holds if and only if $I\subeq \m R^d$. Hence, we have a simplified description, to be used in \S \ref{sec:strata}:
\begin{equation}\label{eq:hilb_ideal_def}
\Hilb_{d,n}(R) = \set{I\subeq \m R^d: \dim_k \m R^d/I = n}.
\end{equation}
\begin{remark}\label{rmk:whyweneedappendixB} We warn the readers that the arguments in Proposition~\ref{prop:Fibercounting} and its corollaries do not generalize directly to the motivic setting. Even though the formula (\ref{eq:Fibercounting}) is true pointwise, we need it to hold stratum-wise in order to treat the motivic case. However, it is not clear how one can find a stratification over which the formula can generalize. We refer the reader to Appendix~\ref{appendix:motivic} for a treatment in the motivic case. 
\end{remark}

\subsection{Some generating series}
Let $k=\Fq$, $X$ be a $k$-variety, $p\in X(k)$, and $R=\Ohat_{X,p}$. Recall the (local) rank-$d$ Quot zeta function
\begin{equation}
Q_{d,R}(t)=Q_{d,X,p}(t):=\sum_{n\geq 0} \abs{\Quot_{d,n}(R)(\Fq)} t^n.
\end{equation}

We also define the \textbf{rank-$d$ framed Cohen--Lenstra series}
\begin{equation}
H_{d,R}(t)=H_{d,X,p}(t):=\sum_{n\geq 0} \abs{\Hilb_{d,n}(R)(\Fq)} t^n
\end{equation}
and the \textbf{rank-$d$ (unframed) Cohen--Lenstra series}
\begin{equation}
\Zhat_{d,R}(t)=\Zhat_{d,X,p}(t):=\sum_{n\geq 0} \abs{\Coh_n^d(R)(\Fq)} t^n.
\end{equation}

The following corollary states that the data of $Q_{d,R}(t)$ for all $d\geq 0$, $H_{d,R}(t)$ for all $d\geq 0$ and $\Zhat_{d,R}(t)$ for all $d\geq 0$ determine each other. It is the combinatorial repackaging of Corollary \ref{cor:quot_to_coh_const_fiber}. We recall the $q$-Pochhammer symbol
\begin{equation}
(a;q)_n=(1-a)(1-aq)\dots (1-aq^{n-1})
\end{equation}
and the $q$-binomial coefficient for $d\geq r$
\begin{equation}
{d \brack r}_q=\frac{(q;q)_d}{(q;q)_r (q;q)_{d-r}}=\abs{\Gr(r,d)(\Fq)}.
\end{equation}

The formulas \eqref{eq:zhat_in_hilb} and \eqref{eq:quot_in_hilb} below are the point count version of Theorem \ref{Thm:Main2}, but \eqref{eq:hilb_in_quot} will not be used in the rest of the paper.

\begin{corollary}\label{cor:series_conversion}
~
\begin{gather}
\Zhat_{d,R}(t)=\dfrac{t^d}{q^{d^2}(q^{-1};q^{-1})_d} H_{d,R}(tq^{-d});\label{eq:zhat_in_hilb}\\
Q_{d,R}(t)=\sum_{r=0}^d {d \brack r}_q t^r H_{r,R}(tq^{d-r});\label{eq:quot_in_hilb}\\
H_{d,R}(t)=t^{-d}\sum_{r=0}^d (-1)^{d-r} q^{-{d-r\choose 2}} {d \brack r}_{q^{-1}} Q_{r,R}(tq^{d-r}).\label{eq:hilb_in_quot}
\end{gather}
\end{corollary}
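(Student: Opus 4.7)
The plan is to derive all three identities as combinatorial repackagings of the pointwise counts in Corollaries \ref{cor:quot_to_coh_const_fiber} and \ref{cor:quot_non_full_rank}, together with the reindexing $\Hilb_{d,n}(R)=\Quot_{d,n+d}^d(R)$. Since the three identities are equivalent in pairs by linear algebra over $\Q[[t]]$, I will establish \eqref{eq:quot_in_hilb} and \eqref{eq:zhat_in_hilb} directly from these corollaries, and then obtain \eqref{eq:hilb_in_quot} by $q$-binomial inversion.

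For \eqref{eq:quot_in_hilb}, I would decompose $\Quot_{d,n}(R)=\bigsqcup_{r=0}^{\min(d,n)}\Quot_{d,n}^r(R)$ by rank. Corollary \ref{cor:quot_non_full_rank} gives $|\Quot_{d,n}^r(R)|=q^{(n-r)(d-r)}{d\brack r}_q|\Quot_{r,n}^r(R)|$, and unraveling the definition yields $|\Quot_{r,n}^r(R)|=|\Hilb_{r,n-r}(R)|$. Substituting into the defining series of $Q_{d,R}(t)$, exchanging the order of summation, and substituting $m=n-r$ collects the factor $q^{(n-r)(d-r)}t^{n-r}$ into $H_{r,R}(tq^{d-r})$, giving the claim.

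For \eqref{eq:zhat_in_hilb}, I would apply Corollary \ref{cor:quot_to_coh_const_fiber} with $r=d$. Using $|\Surj(\Fq^d,\Fq^d)|=|\GL_d(\Fq)|=q^{d^2}(q^{-1};q^{-1})_d$ and $|\Quot_{d,n}^d(R)|=|\Hilb_{d,n-d}(R)|$, the exponent collapses via $q^{d(n-d)}\cdot q^{d^2}=q^{dn}$, so that $|\Coh_n^d(R)|=q^{-dn}(q^{-1};q^{-1})_d^{-1}|\Hilb_{d,n-d}(R)|$. Summing over $n\geq d$ and shifting by $m=n-d$ gives the prefactor $t^d q^{-d^2}/(q^{-1};q^{-1})_d$ multiplying $H_{d,R}(tq^{-d})$, as claimed.

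Finally, \eqref{eq:hilb_in_quot} is the $q$-binomial inversion of \eqref{eq:quot_in_hilb}. Viewing \eqref{eq:quot_in_hilb} as an upper unitriangular transformation on the vector $(H_{r,R})_{r\geq 0}$ with the shifting of arguments tracked, I would substitute the proposed right-hand side into \eqref{eq:quot_in_hilb}, swap the order of summation, and reduce the verification to the single identity
\begin{equation*}
\sum_{j=0}^{D}(-1)^{j}q^{\binom{j}{2}+j(1-D)}{D \brack j}_{q}=0\quad (D\geq 1),
\end{equation*}
which is the $q$-binomial theorem $\prod_{k=0}^{D-1}(1-xq^{k})=\sum_{j}(-1)^{j}q^{\binom{j}{2}}{D\brack j}_{q}x^{j}$ evaluated at $x=q^{1-D}$ (the product vanishes because of the factor $1-q^{0}$). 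The only mildly delicate point will be the bookkeeping of $q$-weights: in particular, I will need the identity ${d\brack r}_{q^{-1}}=q^{-r(d-r)}{d\brack r}_{q}$ to align \eqref{eq:hilb_in_quot} with the standard inversion formula. Once these substitutions are done carefully, the argument is mechanical.
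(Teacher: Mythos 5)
Your proof is correct and follows the same route as the paper for all three identities: decompose by rank and apply Corollaries~\ref{cor:quot_to_coh_const_fiber} and~\ref{cor:quot_non_full_rank} for~\eqref{eq:zhat_in_hilb} and~\eqref{eq:quot_in_hilb}, then invert the resulting unitriangular system for~\eqref{eq:hilb_in_quot}. The one small divergence is in the last step: after the change of variables $h_d(t)=t^dq^{-d^2}H_{d,R}(tq^{-d})$, $q_d(t)=Q_{d,R}(tq^{-d})$ (which both arguments use, together with ${d\brack r}_q=q^{r(d-r)}{d\brack r}_{q^{-1}}$), the paper simply cites the known inversion formula for the $q$-Pascal matrix, whereas you re-derive it by substituting back and invoking the $q$-binomial theorem at a vanishing specialization. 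Both are sound; your version is more self-contained at the cost of a short computation, and the vanishing identity you wrote down is indeed the instance of the $q$-binomial theorem that arises.
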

\begin{proof}
The formula \eqref{eq:zhat_in_hilb} follows from Corollary \ref{cor:quot_to_coh_const_fiber} with $r=d$ and
\begin{equation}
\abs{\Surj(\Fq^d,\Fq^d)}=\abs{\GL_d(\Fq)}=q^{d^2}(q^{-1};q^{-1})_d.
\end{equation}

The formula \eqref{eq:quot_in_hilb} follows from Corollary \ref{cor:quot_non_full_rank} and $\abs{\Quot_{d,n}(R)}=\sum_{r=0}^d \abs{\Quot_{d,n}^r(R)}$.

We now prove \eqref{eq:hilb_in_quot} using a $q$-Pascal inversion formula. Observe that \eqref{eq:quot_in_hilb} can be restated as
\begin{equation}\label{eq:quot_in_hilb_pascal}
q_d(t)=\sum_{r=0}^d {d \brack r}_{q^{-1}} h_r(t),
\end{equation}
where
\begin{align}
h_d(t)&:=t^d q^{-d^2} H_{d,R}(tq^{-d});\\
q_d(t)&:=Q_{d,R}(tq^{-d}).
\end{align}

Here, we have used the fact that ${d \brack r}_q=q^{r(d-r)} {d \brack r}_{q^{-1}}$.

It is well known (see for instance \cite{ernst2006}) that the $q$-Pascal matrix $P_{ij}={i \brack j}_{q^{-1}}$ has the inversion formula
\begin{equation}
[P^{-1}]_{ij}=(-1)^{i-j} q^{-{i-j \choose 2}} {i \brack j}_{q^{-1}}.
\end{equation}

It follows from \eqref{eq:quot_in_hilb_pascal} that
\begin{equation}
h_d(t)=\sum_{r=0}^d (-1)^{d-r} q^{-{d-r \choose 2}} {d \brack r}_{q^{-1}} q_r(t).
\end{equation}

Evaluating $h_d(tq^d)$, we get \eqref{eq:hilb_in_quot} as desired. 
\end{proof}

By definition, the rank-$d$ Cohen--Lenstra series gives a further decomposition of the Cohen--Lenstra series previously considered by the first author in \cite{huang2023mutually}:
\begin{equation}
\Zhat_R(t)=\sum_{d\geq 0} \Zhat_{d,R}(t).
\end{equation}

Thus, point counting on the punctual Quot scheme allows the study of $\Zhat_R(t)$ one piece at a time.

\section{Gr\"obner bases for monomial subrings}\label{sec:groebner}
In the classification problem of submodules of a free module $R^d$, a Gr\"obner basis theory is aimed at fulfilling the following goals:
\begin{itemize}
\item Provide a division algorithm;
\item Give a canonical generating set (\emph{reduced Gr\"obner basis}) for each submodule of $R^d$;
\item A criterion (\emph{Buchberger criterion}) to test whether a generating set is a reduced Gr\"obner basis. 
\end{itemize}

A Gr\"obner basis theory is classifically known if $R$ is a polynomial ring $k[T_1,\dots,T_N]$ over a field (see for instance \cite{adamsloustaunau, beckerweispfenning, mishrayap}) or a power series ring $k[[T_1,\dots,T_N]]$ over a field (developed by Hironaka \cite{hironaka1964resolution} under the terminology of ``standard bases''; see also \cite{becker1990stability}). To approach Theorem \ref{thm:cusp_t_rationality}, we shall develop a Gr\"obner basis theory for the ring $\Fq[[u,v]]/(u^2-v^3)$. The key observation is that $\Fq[[u,v]]/(u^2-v^3)$ is isomorphic to $\Fq[[T^2,T^3]]$, a subring of $\Fq[[T]]$.\footnote{Geometrically speaking, by doing this, we are using the fact that the cusp singularity is unibranch.} It turns out that the Gr\"obner basis theory for this subring can be developed in the same way as the classical theory. 

For the rest of this section, we will give the relevant statements in the general setting of monomial subrings, after setting up some basic terminology. The statements apply to both the polynomial ring and the power series ring with almost no modifications, but the rest of this paper only needs the theory for the power series ring. For this reason, we only give the proof for the power series ring; moreover, since the proof completely follows the classical treatment, we leave it to Appendix \ref{appendix:groebner}.

\subsection{Notation setup}
Let $k$ be a field and $\Omega$ be either a polynomial ring $k[T_1,\dots,T_N]$ or a power series ring $k[[T_1,\dots,T_N]]$. A \textbf{monomial subring} of $\Omega$ is a $k$-subalgebra (topologically) generated\footnote{If $\Omega$ is the power series ring, then the monomial subring topologically generated by monomials $\mu_1,\dots,\mu_r$ is $R=k[[\mu_1,\dots,\mu_r]]$. Note that $R$ is $(T_1,\dots,T_N)$-adic closed in $\Omega$.} by finitely many monomials. Let $R$ be a monomial subring. Fix $d\geq 1$, and let
\begin{equation}
F=R^d=Ru_1\oplus \dots Ru_d
\end{equation}
be a free $R$-module with distinguished basis $\set{u_1,\dots,u_d}$. Note that $R$ is Noetherian, so every submodule of $F$ is finitely generated. A \textbf{monomial} of $F$ is a monomial of $R$ multiplied by a basis vector $u_i$. 

A \textbf{monomial order} on $F$ is a total order $<$ on monomials of $F$ such that for any monomial $\tau\neq 1$ of $R$ and monomials $\mu<\nu$ of $F$, we have
\begin{itemize}
\item $\mu < \tau \mu$;
\item $\tau \mu < \tau \nu$.
\end{itemize}

In this section, we work with a fixed monomial order. If $\mu<\nu$, we say $\mu$ is lower than $\nu$ and $\nu$ is higher than $\mu$. We recall from standard literature that $<$ is always a well ordering. 

For two monomials $\mu,\nu$ of $F$, we define $\mu\prec \nu$ if $\mu<\nu$ and $\Omega$ is the power series ring, or $\mu>\nu$ and $\Omega$ is the polynomial ring. If $\mu\prec \nu$, we say $\mu$ is \textbf{before} $\nu$ and $\nu$ is \textbf{behind} $\mu$.

Any nonzero element $f$ of $F$ can be uniquely written as $f=\sum_i c_i \mu_i$, where $c_i\in k^\times$ and $\mu_i$ are distinct monomials of $F$. We say $c_i\mu_i$ to be a \textbf{term} of $f$. For terms $a\mu$ and $b\nu$ where $a,b\in k^\times$, we say $a\mu \prec b\nu$ if $\mu\prec \nu$. The \textbf{leading term} of $f\in F\setminus \set{0}$, denoted by $\LT(f)$, is the \textbf{foremost} (i.e., smallest in the $\prec$ ordering) term of $f$. Note that when $\Omega$ is the power series ring, $f$ may have infinitely many terms, but well-ordering ensures that a unique leading term exists. The \textbf{leading monomial} of $f\in F\setminus \set{0}$, denoted by $\LM(f)$, is the monomial associated to the leading term of $f$. 

By convention, we say $\LT(0)=0$, and $0\succ \mu$ for any monomial $\mu$. Therefore, we can talk about $\LT(f)\succeq \LT(g)$ even when $f$ or $g$ is zero. If $g=0$, this condition forces $f=0$. If $g\neq 0$, this condition means $f$ has no term before $\LT(g)$, which holds as well if $f=0$. 

We adopt the notation $(\cdot)$ to refer to the submodule of $F$ generated by a collection of elements. Given a submodule $M$ of $F$, the \textbf{leading submodule} of $M$ is defined as
\begin{equation}
\LT(M):=(\LT(f): f\in M),
\end{equation}
the submodule generated by leading terms of elements of $M$. A \textbf{monomial submodule} is a submodule of $F$ generated by monomials. The leading submodule of any submodule is a monomial submodule. 

\begin{lemma}[Lemma {\ref{a_lem:dimension_same_as_leading}}]
Let $M$ be a submodule of $F$. Then there is an isomorphism of $k$-vector spaces
\begin{equation}
\frac{F}{M}\cong \frac{F}{\LT(M)}.
\end{equation}

In fact, the set of monomials outside $\LT(M)$ is a $k$-basis for $F/M$.
\label{lem:dimension_same_as_leading}
\end{lemma}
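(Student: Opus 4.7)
The plan is to prove the stronger assertion that the monomials of $F$ not lying in $\LT(M)$ descend to a $k$-basis of $F/M$; the claimed isomorphism $F/M \cong F/\LT(M)$ then follows immediately, because the same set of monomials manifestly forms a $k$-basis of $F/\LT(M)$ (any monomial submodule is $k$-spanned by the monomials it contains, and here we quotient by exactly such a submodule).

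Linear independence modulo $M$ is the easier half. If a nonzero $k$-linear combination $f = \sum_i c_i \mu_i$ with $\mu_i \notin \LT(M)$ and $c_i \in k^\times$ happened to lie in $M$, then $\LT(f)$ would be a scalar multiple of the foremost $\mu_j$ appearing in the sum; by definition of $\LT(M)$ this forces $\mu_j \in \LT(M)$, a contradiction. Well-ordering of $\prec$ is exactly what is needed to guarantee that a foremost term exists even when $f$ has infinitely many terms in the power-series case.

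For spanning I would set up an iterative Buchberger-style reduction modulo $M$. Given $f \in F$, inspect $\LT(f)$ at each stage: if $\LM(f) \notin \LT(M)$, peel that term off into a running remainder $r$ and continue with $f$ minus that term; if $\LM(f) \in \LT(M)$, then because $\LT(M)$ is a monomial submodule of $F$, $\LM(f)$ is divisible (as a monomial of $F$) by the leading monomial of some $g \in M$, and I can subtract a suitable scalar-monomial multiple of $g$ to annihilate the foremost term of $f$. In either case the foremost term of the new $f$ strictly follows the previous one in $\prec$, while the running remainder stays supported on monomials outside $\LT(M)$ and the running ``$M$-part'' stays in $M$.

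The main obstacle, and the only real difference between the polynomial and power-series settings, is justifying convergence of this procedure. In the polynomial case, $\prec$ is $>$, so the leading monomials form a strictly $<$-decreasing sequence and terminate after finitely many steps by well-ordering, exactly as in classical Gröbner theory. In the power-series case, the leading monomials form a strictly $<$-increasing sequence; the compatibility of $<$ with the $(T_1,\dots,T_N)$-adic filtration implicit in the axiom $\mu < \tau\mu$ combined with well-ordering forces the $\m$-adic orders of these leading monomials to diverge. I would then argue that the running remainder and the running element of $M$ are both $\m$-adic Cauchy sequences in $F$, and that their limits give a decomposition $f = m + r$ with $m \in M$ (using that $M$ is $\m$-adically closed in $F$, which follows from Artin--Rees applied to the finite free module $F$ over the Noetherian ring $R$) and $r$ a convergent $k$-linear combination of monomials outside $\LT(M)$. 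This completes the spanning step and, together with the earlier linear independence, establishes the claimed basis.
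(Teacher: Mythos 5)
Your proof is correct but takes a genuinely different route from the paper's. The paper first proves the division algorithm for a finite Gr\"obner basis $G$ of $M$ (Proposition \ref{a_prop:division_algorithm}) and the uniqueness of remainders (Lemma \ref{a_lem:unique_remainder}), then defines the remainder map $r_G\colon F\to\Span_k\Delta$ and shows it is a $k$-linear surjection with kernel $M$, the additivity of $r_G$ being the key technical step. You instead reduce $f$ against the whole submodule $M$ one leading term at a time, bypassing the division algorithm and finite generating sets entirely. Your linear-independence argument is the same leading-term observation the paper uses to prove remainder-uniqueness, but your spanning step is self-contained: successive leading monomials are strictly $\prec$-increasing, so their $\m$-adic orders diverge (this is simply because there are finitely many monomials of each degree --- the appeal to the axiom $\mu<\tau\mu$ and to well-ordering is not really the operative ingredient here, though it does no harm), hence the running remainder and the running $M$-part are Cauchy, and the latter's limit lands in $M$ because $M$ is closed in the complete module $F$. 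For that closedness you cite Artin--Rees; more precisely the relevant fact is the Krull intersection theorem $\bigcap_n \m^n(F/M)=0$, of which Artin--Rees is the standard ingredient, and it does require noting that $R=k[[\mu_1,\dots,\mu_r]]$ is a quotient of a power series ring and hence a complete Noetherian local ring. The trade-off is clear: your argument is shorter and more self-contained for this one lemma and covers the polynomial case in the same breath, but it imports the complete-local-Noetherian machinery, whereas the paper's route avoids that and builds the division algorithm that it reuses for the Buchberger criterion.
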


Each monomial submodule $M$ has a unique minimal generating set consisting of monomials. We denote this minimal generating set by $C(M)$, and call it the set of \textbf{corners} of $M$. The set of corners of $M$ is always finite because $R$ is Noetherian. The set $C(M)$ is also the set of minimal monomials in $M$, ordered by divisibility. 

\subsection{Gr\"obner basis theory}
We will provide a division algorithm, a definition of reduced Gr\"obner bases and a Buchberger criterion in the above setting of monomial subrings. We first give an overview of what is different in the subring setting. The division algorithm and the definition of reduced Gr\"obner bases are the same as in the cases of the full polynomial ring or power series ring, except that divisibility must take place in the subring. For example, if $R=k[[T^2,T^3]]$, then $T^2$ does not divide $T^3$ in $R$. The Buchberger criterion now needs to consider a finite set of minimal common multiples rather than a (unique) least common multiple. For example, every two monomials in a polynomial ring or a power series ring have a least common multiple, but in $k[[T^2,T^3]]$, the monomials $T^2$ and $T^3$ have two mutually indivisible minimal common multiples: $T^5$ and $T^6$. 

\begin{definition}\label{def:divisible}
A monomial $\mu\in F$ \textbf{divides} a monomial $\nu\in F$ if there is a monomial $\tau\in R$ such that $\nu=\tau\mu$. Such a monomial $\tau$ must be unique, and we write
\begin{equation}
\frac{\nu}{\mu}:=\tau.
\end{equation}
\end{definition}

We now state the division algorithm, which is needed not only in the development of the theory, but also when applying the Buchberger criterion in \S \ref{sec:strata}. 

\begin{proposition}
[Division algorithm, Proposition {\ref{a_prop:division_algorithm}}]
Let $f,g_1,\dots,g_h$ be elements of $F$. Then there is a \tuparens{not necessarily unique} expression 
\begin{equation}
f=r+\sum_{i=1}^h q_i g_i
\end{equation}
with $r\in F, q_i\in R$, such that
 \begin{itemize}
  \item No term of $r$ is divisible by $\LT(g_i)$ for any $i$.
  \item $\LT(q_i g_i)\succeq \LT(f)$ for any $i$, namely, no $q_i g_i$ contains terms before \tuparens{i.e., $\prec$} $\LT(f)$. 
\label{prop:division_algorithm}
 \end{itemize}

Such an expression is called a \textbf{division expression} of $f$ by $g_1,\dots,g_h$, and $r$ is a \textbf{remainder} of $f$ divided by $g_1,\dots,g_h$.

Moreover, a possible division expression is given by the na\"ive algorithm of repetitively killing terms of $f$ using $\LT(g_i)$.
\end{proposition}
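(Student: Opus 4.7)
The plan is to prove the proposition by formalizing the naïve reduction algorithm mentioned in its last sentence, adapted to the subring setting. I will maintain an ``active'' element $f_t \in F$ with $f_0 = f$, together with partial quotients $q_i \in R$ and a partial remainder $r \in F$, all initialized to zero, and enforce the loop invariant $f = f_t + r + \sum_i q_i g_i$. At each step, I inspect $\LT(f_t) = c\mu$: if $\mu$ is divisible by $\LM(g_i)$ for some $i$ in the sense of Definition~\ref{def:divisible}, I pick such an $i$ (say the smallest), let $\tau = \mu/\LM(g_i) \in R$ and $c_i$ be the coefficient of $\LT(g_i)$, update $f_{t+1} := f_t - (c/c_i)\tau\, g_i$ and add $(c/c_i)\tau$ to $q_i$; otherwise, I move the single term $c\mu$ into $r$ and set $f_{t+1} := f_t - c\mu$.

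The main technical lemma I need is that $\LT(f_{t+1}) \succ \LT(f_t)$ at every step. In the reduction branch, the element $(c/c_i)\tau\, g_i$ has leading term exactly $c\mu$, because multiplying the term expansion of $g_i$ by the scalar $c/c_i$ and the monomial $\tau$ preserves the $\prec$-ordering on its terms; so the $c\mu$-term cancels in $f_{t+1}$ and every surviving term is $\succ c\mu$. In the remainder branch, $c\mu$ is simply deleted. From this invariant, the two output properties will follow directly: the terms entering $r$ are precisely those for which no $\LT(g_i)$ was available to divide them, so no term of $r$ is divisible by any $\LT(g_i)$; and each contribution $(c/c_i)\tau$ to $q_i$ satisfies $\LT\bigl((c/c_i)\tau\, g_i\bigr) = c\mu = \LT(f_t) \succeq \LT(f_0)$, so summing gives $\LT(q_i g_i) \succeq \LT(f)$.

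The main obstacle is convergence. When $\Omega$ is the polynomial ring, the argument is easy: $f_0$ has finitely many terms, each step of the algorithm either cancels the foremost term or moves it into $r$, and the $\prec$-monotonicity above shows that only finitely many monomials ever occupy the foremost slot, so the loop terminates with $f_t = 0$. When $\Omega = k[[T_1,\dots,T_N]]$, the algorithm may run for infinitely many steps, and I must show that the partial sums defining $q_i$ and $r$ converge in the $(T_1,\dots,T_N)$-adic topology. For this I will invoke compatibility of the monomial order with the $\mathfrak{m}$-adic filtration: the axiom $\mu < \tau\mu$ for $\tau \neq 1$ forces $1$ to be the foremost monomial, and combined with the well-ordering property cited from the standard literature, each $\prec$-initial segment of monomials is finite, so the strictly $\prec$-increasing sequence $\LT(f_t)$ has $\mathfrak{m}$-adic valuation tending to infinity. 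Consequently $f_t \to 0$ adically, the telescoping series $\sum_t (c/c_i)\tau\, g_i$ and $\sum_t c\mu$ converge in $R$ and $F$ respectively, and the limits $q_i \in R$ and $r \in F$ inherit the two required properties by passage to the limit.
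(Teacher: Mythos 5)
Your implementation of the naïve algorithm is genuinely different from the paper's. You process the leading term of the active element one at a time, either reducing it via some $g_i$ or moving it into $r$, and track convergence via the strict $\prec$-increase of $\LT(f_t)$. The paper instead cycles through $g_1,\dots,g_h$ in round-robin fashion, subtracting at each step the \emph{entire} component of the running remainder divisible by $\LT(g_{j})$, and proves convergence by a ``lowest monomial reintroduced infinitely often'' contradiction that exploits the finiteness of the set of divisors of a fixed monomial. Your bookkeeping is simpler and closer to the textbook presentation; the paper's is more intricate but sidesteps the need to analyze the behavior of the leading term.

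However, there is a real error in your convergence step. You assert that the axiom $\mu < \tau\mu$ together with well-ordering forces every $\prec$-initial segment of monomials to be finite. This is false. For instance, lexicographic order on $k[[x,y]]$ with $x < y$ satisfies $\mu < \tau\mu$ for all $\tau\neq 1$, is a well-order, and yet the initial segment $\set{\mu : \mu \prec y} = \set{1, x, x^2, \dots}$ is infinite. Finiteness of initial segments is precisely Assumption~\ref{assumption} (order type $\omega$), which the paper imposes only for the Buchberger criterion (Proposition~\ref{prop:buchberger}), not for the division algorithm. If your proof genuinely needed this assumption, the proposition as stated would not be established in the generality required.

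The gap is repairable without invoking Assumption~\ref{assumption}: for any $N$ there are only finitely many monomials of $F$ of total $\m$-adic degree $\leq N$, and a strictly $\prec$-increasing sequence visits each monomial at most once, so only finitely many terms of the sequence $\LT(f_t)$ can have degree $\leq N$. Hence the $\m$-adic degree of $\LT(f_t)$ tends to infinity, the contributions to $q_i$ and $r$ have degrees tending to infinity, and the partial sums converge. With this correction your argument is valid, and arguably gives a more streamlined proof than the paper's.
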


Next, we define the notion of (reduced) Gr\"obner bases and state the uniqueness theorem.

\begin{definition}\label{def:groebner}
A finite generating set $\set{g_1,\dots,g_h}$ of a submodule $M\subeq F$ is called a \textbf{Gr\"obner basis} for $M$ if
\begin{equation}
\LT(M)=(\LT(g_1),\dots,\LT(g_h)).
\end{equation}

A finite collection $G$ of elements of $F$ is called a Gr\"obner basis if $G$ is a Gr\"obner basis for the submodule generated by $G$. Equivalently, $\set{g_i}_i$ is a Gr\"obner basis if and only if
\begin{enumerate}
\item $\LT((g_i)_i) = (\LT(g_i))_i$.
\end{enumerate}

A Gr\"obner basis $G=\set{g_1,\dots,g_h}$ is \textbf{reduced} if
\begin{enumerate}
\setcounter{enumi}{1}
\item $\LT(g_1),\dots,\LT(g_h)$ are monomials and are mutually indivisible;
\item No nonleading term of $g_i$ is divisible by $\LT(g_j)$, for any $i,j$.
\end{enumerate}
\end{definition}

\begin{remark}
Note that $i,j$ are allowed to be equal in the condition (c), so $g_i$ must not have terms divisible by the leading term (other than the leading term itself). 
\end{remark}

\begin{lemma}[Lemma {\ref{a_lem:unique_remainder}}]
Let $f,g_1,\dots,g_h\in F$. If $G=\set{g_1,\dots,g_h}$ is a Gr\"obner basis, then the remainder of $f$ in a division expression by $G$ is unique \tuparens{even though the division expressions may not be unique}. Moreover, the remainder of $f$ by $G$ is zero if and only if $f\in (g_1,\dots,g_h)$. 
\label{lem:unique_remainder}
\end{lemma}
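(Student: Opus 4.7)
The plan is to handle the two assertions separately, both relying on the defining property of a Gröbner basis, namely $\LT(M) = (\LT(g_1), \dots, \LT(g_h))$ where $M = (g_1, \dots, g_h)$. In both arguments the crucial observation is a contrapositive one: if a nonzero element lies in $M$, then its leading monomial must be divisible by some $\LM(g_i)$, because $\LT(M)$ is the monomial submodule generated by the $\LT(g_i)$.

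For the uniqueness of the remainder, I would suppose we have two division expressions
\begin{equation*}
f = r + \sum_{i=1}^h q_i g_i = r' + \sum_{i=1}^h q_i' g_i,
\end{equation*}
so that $r - r' = \sum_i (q_i' - q_i) g_i \in M$. If $r - r' \neq 0$, then $\LM(r - r') \in \LT(M) = (\LT(g_i))_i$, hence $\LM(r-r')$ is divisible by some $\LM(g_j)$ (using Definition \ref{def:divisible} and the fact that a monomial lies in a monomial submodule iff it is divisible by one of the generators). But every monomial appearing in $r - r'$ appears in $r$ or in $r'$, and by the defining condition of a remainder, no such monomial is divisible by any $\LT(g_j)$. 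This is a contradiction, so $r = r'$.

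For the second claim, one direction is immediate: if $r = 0$ in some division expression, then $f = \sum_i q_i g_i \in M$. For the converse, suppose $f \in M$ and apply Proposition \ref{prop:division_algorithm} to produce a division expression $f = r + \sum_i q_i g_i$. Then $r = f - \sum_i q_i g_i \in M$. If $r \neq 0$, applying the same observation as before, $\LM(r)$ would have to be divisible by some $\LM(g_j)$, contradicting the condition on the remainder. Hence $r = 0$, and by the uniqueness part, every remainder of $f$ by $G$ is zero.

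There is no real obstacle here; the proof is a direct application of the Gröbner property plus the definition of a division expression. The only point that requires a little care is the passage from "$\LM(r-r') \in \LT(M)$" to "$\LM(r-r')$ is divisible by some $\LM(g_j)$", which is the statement that a monomial lying in a monomial submodule is divisible by one of its monomial generators; this is a basic fact about monomial submodules of $F$ over the monomial subring $R$ and should already be available from the setup in \S\ref{sec:groebner}.
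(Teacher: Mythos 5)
Your proof is correct and follows essentially the same argument as the paper's: show that $r - r'$ lies in $M$ with no term divisible by any $\LT(g_i)$, deduce $r - r' = 0$ from the Gröbner property (its leading term would otherwise be a monomial in $\LT(M)$ divisible by some $\LT(g_i)$), and then reuse the same observation on $r$ itself for the second assertion. No gaps.
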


\begin{proposition}[Proposition {\ref{a_prop:reduced_unique}}]
Every submodule $M$ of $F$ has a unique reduced Gr\"obner basis.
\label{prop:reduced_unique}
\end{proposition}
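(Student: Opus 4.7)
Our plan is to prove existence and uniqueness separately, anchored on the observation that the leading monomials of any reduced Gr\"obner basis for $M$ must be exactly the corners $C(\LT(M))$: condition (b) forces them to form a minimal monomial generating set of $\LT(M)$, and such a minimal generating set is unique.

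For existence, let $C(\LT(M)) = \{\mu_1, \ldots, \mu_h\}$, and for each $i$ pick any $\tilde g_i \in M$ with leading monomial $\mu_i$, rescaled to have leading coefficient $1$. These already satisfy (a) and (b), but their tails $\tilde g_i - \mu_i$ may contain terms divisible by some $\mu_j$. We apply Proposition \ref{prop:division_algorithm} to each tail using $\{\tilde g_1, \ldots, \tilde g_h\}$ as the divisors, obtaining an expression $\tilde g_i - \mu_i = r_i + \sum_j q_{ij}\tilde g_j$; the defining property $\LT(q_{ij}\tilde g_j) \succeq \LT(\tilde g_i - \mu_i) \succ \mu_i$ forces every term of $r_i$ to be strictly behind $\mu_i$, while by construction $r_i$ has no term divisible by any $\mu_j$. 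Set $g_i := \mu_i + r_i = \tilde g_i - \sum_j q_{ij}\tilde g_j \in M$; then $\LT(g_i) = \mu_i$ and (c) holds. To close (a), i.e., $(g_1,\ldots,g_h) = M$, take any $f \in M$ and apply the division algorithm again by $\{g_i\}$: the remainder $r$ lies in $M$ and has no term divisible by any $\mu_j$, so $\LT(r) \in \LT(M) = (\mu_j)$ would force divisibility, whence $r = 0$ and $f \in (g_i)$.

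For uniqueness, suppose $\{g_i\}$ and $\{g_i'\}$ are two reduced Gr\"obner bases. By the opening observation, both leading-monomial sets equal $C(\LT(M))$, so after reindexing we have $\LT(g_i) = \LT(g_i') = \mu_i$ for each $i$. Consider $g_i - g_i' \in M$. Its leading $\mu_i$-terms cancel, so the monomial of $\LT(g_i - g_i')$, if nonzero, must appear as a non-leading term in $g_i$ or in $g_i'$ with distinct coefficients. Condition (c) applied to that basis says this monomial is not divisible by any $\mu_j$, yet $g_i - g_i' \in M$ forces $\LT(g_i - g_i') \in \LT(M) = (\mu_j)$, a contradiction. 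Hence $g_i = g_i'$.

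The main obstacle lies in the existence step, specifically in engineering the reduction of tails so that the leading term $\mu_i$ is preserved and the reduced object stays inside $M$. The key trick is to apply the division algorithm only to the tail $\tilde g_i - \mu_i$ rather than to $\tilde g_i$ itself, and then to use the strict inequality $\LT(\tilde g_i - \mu_i) \succ \mu_i$ together with the division algorithm's bound to conclude that all correction terms stay strictly behind $\mu_i$. In the power series setting this also sidesteps any worry about infinite reduction processes, since the well-ordering and convergence are already absorbed into Proposition \ref{prop:division_algorithm}.
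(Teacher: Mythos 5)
Your proof is correct and takes essentially the same route as the paper: anchor the leading monomials at the corners $C(\LT(M))$, reduce tails to obtain condition (c) of Definition \ref{def:groebner}, and derive uniqueness by considering the difference of two reduced bases. The only organizational difference is that the paper first invokes Lemma \ref{lem:generating_if_groebner} to certify that the initial generators $\{\tilde g_i\}$ already form a Gr\"obner basis (so that the remainder of $\mu_i$ by them is unique), whereas you reduce the tails by the unverified set $\{\tilde g_i\}$ and then re-derive the generating/Gr\"obner property for the reduced set $\{g_i\}$ directly — effectively re-proving that lemma in-line; both work.
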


As a result, the set of all submodules of $F$ is in one-to-one correspondence with the set of reduced Gr\"obner bases. Note that a finite collection $G=\set{g_1,\dots,g_h}$ is a reduced Gr\"obner basis if and only if the conditions (a)(b)(c) of Definition \ref{def:groebner} hold. 

We introduce the following nonstandard terminology for convenience in handling the conditions (b)(c).

\begin{definition}\label{def:prebasis}
A finite collection $G=\set{g_1,\dots,g_h}$ of elements of $F$ is called a \textbf{reduced pre-Gr\"obner basis} (or simply a \textbf{prebasis}) if the conditions (b)(c) of Definition of \ref{def:groebner} hold for $G$. In this case, we also say $G$ is a prebasis associated to the monomial submodule $M$, where $M:=(\LT(g_1),\dots,\LT(g_h))$.
\end{definition}

On the other hand, the Buchberger criterion below determines when the condition (a) holds for any finite collection of elements of $F$ (not necessarily a prebasis). We need an additional assumption on the monomial order, but it is satisfied in the setting of \S \ref{sec:strata}.

\begin{assumption}\label{assumption}
If $\Omega$ is a polynomial ring, no further assumption is required. 

If $\Omega$ is a power series ring, we assume that the monomial order is of order type $\omega$, namely, the well-ordered set of all monomials of $F$ under $<$ is isomorphic to the set of natural numbers under the natural ordering. Equivalently, any bounded subset of monomials is finite. 
\end{assumption}

We also need to address the issue that the least common multiple of two monomials may not exist.

\begin{definition}[LCM-set]
For two monomials $\mu_1,\mu_2$ of $F$, we define $\LCM(\mu_1,\mu_2)$ to be the set of all minimal elements (ordered by divisibility) among the monomials that are common multiples of $\mu_1,\mu_2$. The elements of $\LCM(\mu_1,\mu_2)$ are called the \textbf{minimal common multiples} of $\mu_1$ and $\mu_2$. 
\end{definition}

The LCM-set of $\mu_1,\mu_2$ is nonempty if and only if $\mu_1,\mu_2$ involve the same basis vector $u_i$ of $F$. As an equivalent definition, $\LCM(\mu_1,\mu_2)$ is the set of minimal monomial generators for the submodule $R \mu_1 \cap R\mu_2$ of $F$. Therefore, $\LCM(\mu_1,\mu_2)$ must be finite since $R$ is Noetherian. 

\begin{proposition}[Buchberger criterion]
Assume that Assumption \ref{assumption} holds. Let $G=\set{g_1,\dots,g_h}$ be a finite collection of nonzero elements of $F$. Assume without loss of generality that $g_i$ has leading coefficient $1$, namely, $\LT(g_i)=\LM(g_i)$ is a monomial, denoted by $\mu_i$. Then $G$ is a Gr\"obner basis if and only if for any $i\neq j$ and $\nu\in \LCM(\mu_i,\mu_j)$, the element
\begin{equation}
S^{\nu}_{ij}(g_i, g_j) := \frac{\nu}{\mu_i} g_i - \frac{\nu}{\mu_j} g_j
\end{equation}
leaves a remainder zero in some \tuparens{or equivalently, every} division expression by $g_1,\dots,g_h$, in the sense of Proposition \ref{prop:division_algorithm}.
\label{prop:buchberger}
\end{proposition}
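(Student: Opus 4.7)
The plan is to follow the classical Buchberger proof, adapted to handle the LCM-set (in place of a unique LCM) and, in the power series case, to invoke Assumption~\ref{assumption} for termination. The ``only if'' direction is immediate: each $S^{\nu}_{ij}(g_i,g_j)$ lies in $(g_1,\dots,g_h)$, so if $G$ is a Gr\"obner basis, Lemma~\ref{lem:unique_remainder} forces its remainder to be zero. For the ``if'' direction, assume every S-polynomial reduces to zero; the goal is to show that for each nonzero $f\in(g_1,\dots,g_h)$, its leading term $\LT(f)$ is divisible by some $\mu_i$.

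Given any representation $f=\sum_{i=1}^h q_i g_i$, set $\nu^*(q_\bullet)=\min_\prec\{\LT(q_ig_i):i=1,\dots,h\}$ (with $\LT(0)\succ\nu$ for every monomial $\nu$, as in the text). One always has $\nu^*(q_\bullet)\preceq\LT(f)$, and equality immediately supplies an index $i$ with $\mu_i\mid\LT(f)$. The heart of the proof is to show that whenever $\nu^*\prec\LT(f)$, one can modify $(q_i)$ to produce a representation with $\nu^*$ strictly advanced in $\prec$. Let $S=\{i:\LT(q_ig_i)=\nu^*\}$; cancellation among the $\nu^*$-contributions forces $|S|\ge 2$ and $\sum_{i\in S}c_i=0$, where $c_i$ is the leading coefficient of $q_i$ and $\tau_i\mu_i=\nu^*$. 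Fixing some $j_0\in S$, the telescoping identity
\begin{equation*}
\sum_{i\in S}c_i\tau_ig_i\;=\;\sum_{i\in S\setminus\{j_0\}}c_i\,\sigma_i\,S^{\nu_{ij_0}}_{ij_0}(g_i,g_{j_0}),
\end{equation*}
where $\nu_{ij_0}\in\LCM(\mu_i,\mu_{j_0})$ is chosen to divide $\nu^*$ and $\sigma_i=\nu^*/\nu_{ij_0}$, rewrites the cancelling portion as a sum of scalar-monomial multiples of S-polynomials; each summand on the right has leading term $\succ\nu^*$ because $\LT(S^{\nu_{ij_0}}_{ij_0})\succ\nu_{ij_0}$. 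Substituting the assumed zero-remainder expansions of these S-polynomials then produces the desired new representation with $\nu^*$ strictly advanced.

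Termination is where Assumption~\ref{assumption} intervenes in the power series case: monomials $\preceq\LT(f)$ form a finite set, so any strictly $\prec$-advancing sequence of $\nu^*$ values must stabilize at $\LT(f)$ in finitely many steps. (The polynomial case is handled analogously via the noetherian monomial well-ordering.) I expect the main obstacle to be the bookkeeping in the telescoping step: the non-uniqueness of $\LCM(\mu_i,\mu_{j_0})$ demands a careful selection of $\nu_{ij_0}$ (so that $\nu_{ij_0}\mid\nu^*$) and tracking of the coefficients $c_i,\tau_i,\sigma_i$ so that every summand of the new representation really does have leading term strictly after $\nu^*$. Finiteness of each LCM-set (by noetherianity of $R$) keeps this combinatorial bookkeeping manageable, so no substantially new idea beyond the polynomial case should be required.
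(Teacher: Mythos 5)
Your proof is correct, and it takes a genuinely different route from the paper. The paper factors the argument into two independent lemmas: first (Lemma \ref{a_lem:standard_relation}) it shows that the elements $S^\nu_{ij}$ generate the module of relations $\Rel(\mu_1,\dots,\mu_h)$ among the leading monomials, phrased abstractly in terms of the graded module $W_h$; then (Lemma \ref{a_lem:buchberger_on_generating_relations}) it shows that for \emph{any} homogeneous generating set of the relation module whose evaluations reduce to zero, the set $G$ is Gr\"obner, arguing by choosing a representation $P$ with $\deg P$ maximal (possible by Assumption \ref{assumption}) and then constructing $P'$ with $\deg P'\succ\deg P$, a contradiction. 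You instead inline the syzygy-generation step into a direct telescoping manipulation of representations — the familiar Cox--Little--O'Shea style proof — replacing the paper's maximality/contradiction scheme by a strictly-advancing termination argument. The two are logically equivalent: both reduce to the observation that cancellation at $\nu^*$ forces the critical part of the representation to lie in the span of $S$-polynomials, then use Assumption \ref{assumption} for termination. The paper's modular formulation buys a bit more generality (the Buchberger test works for any generating set of relations, not just the specific $S^\nu_{ij}$), and it isolates the purely combinatorial syzygy fact from the analytic convergence issues; your version is more self-contained and avoids the apparatus of graded relation modules. One small point worth surfacing in a full write-up: your identity $\sum_{i\in S}c_i\tau_ig_i=\sum_{i\in S\setminus\{j_0\}}c_i\sigma_iS^{\nu_{ij_0}}_{ij_0}(g_i,g_{j_0})$ needs the accompanying observation that $\tau_i=\sigma_i\cdot(\nu_{ij_0}/\mu_i)$, i.e., that the chain of divisibilities $\mu_i\mid\nu_{ij_0}\mid\nu^*$ lets the common factor $\sigma_i=\nu^*/\nu_{ij_0}$ be pulled out; this is exactly where the nonuniqueness of $\LCM$ is harmless and where your choice of $\nu_{ij_0}$ dividing $\nu^*$ is used.
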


In practice, to prove that $G$ is a Gr\"obner basis, it suffices to provide a division expression with remainder zero for every $i,j,\nu$; to prove that $G$ is not a Gr\"obner basis, it suffices to provide a division expression with a nonzero remainder for some $i,j,\nu$. 

\begin{remark}
Becker \cite{becker1990stability} shows that Assumption \ref{assumption} is unnecessary if $R$ is the full power series ring; his proof uses a stability argument to reduce to the case of order type $\omega$. We have not attempted to generalize Proposition \ref{prop:buchberger} since it is not needed here. 
\end{remark}

Finally, we need a technical lemma to work with Gr\"obner bases that are not necessarily reduced.

\begin{lemma}[Lemma {\ref{a_lem:almost_reduced}}]
Let $G=\set{g_1,\dots,g_h}, G'=\set{g_1',\dots,g_h'}$ be Gr\"obner bases for $M$ with $\LT(g_i)=\LT(g_i')=\mu_i$, and assume $G'$ is reduced. If $g_1-\mu_1$ has no terms divisible by any $\mu_i$, then we must have $g_1=g_1'$. 
\label{lem:almost_reduced}
\end{lemma}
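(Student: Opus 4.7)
The plan is to prove $g_1 = g_1'$ by examining their difference, which lies in $M$, and showing it cannot have any valid leading monomial --- forcing it to be zero.

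First I would unpack the hypotheses. Since $\LT(g_1) = \LT(g_1') = \mu_1$ and $\mu_1$ is a monomial (because $G'$ is reduced, so condition (b) of Definition \ref{def:groebner} guarantees the $\LT(g_i')$ are monomials), both $g_1$ and $g_1'$ have leading coefficient $1$. Thus the $\mu_1$-term cancels in $g_1 - g_1'$, and every remaining term of $g_1 - g_1'$ is a term of either $g_1 - \mu_1$ or $g_1' - \mu_1$. The hypothesis states that no term of $g_1 - \mu_1$ is divisible by any $\mu_i$, and the reducedness of $G'$ (condition (c) of Definition \ref{def:groebner}) gives the same conclusion for $g_1' - \mu_1$. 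Combining, no term of $g_1 - g_1'$ is divisible by any $\mu_i$.

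On the other hand, $g_1 - g_1' \in M$ since both $g_1$ and $g_1'$ lie in $M$. If $g_1 - g_1'$ were nonzero, then because $G$ is a Gr\"obner basis for $M$, we would have $\LM(g_1 - g_1') \in \LT(M) = (\mu_1, \dots, \mu_h)$. As this is a monomial submodule of $F$, the monomial $\LM(g_1 - g_1')$ would have to be divisible by some $\mu_i$, contradicting the conclusion of the previous paragraph. Hence $g_1 = g_1'$.

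The only subtlety worth flagging is recognizing that the hypothesis on $g_1$ is precisely the reduced Gr\"obner basis condition (c) of Definition \ref{def:groebner} applied to the single element $g_1$ --- so $g_1$ is already a ``reduced'' candidate representative of the leading monomial $\mu_1$, and the Gr\"obner property of $G$ forces agreement with the genuinely reduced $g_1'$. Beyond unwinding definitions, there is no real obstacle.
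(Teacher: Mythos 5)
Your proof is correct. You argue directly on the difference $g_1-g_1'$: it lies in $M$, its $\mu_1$-terms cancel, and neither $g_1-\mu_1$ (by hypothesis) nor $g_1'-\mu_1$ (by reducedness of $G'$) has any term divisible by a $\mu_i$, so $g_1-g_1'$ has none; if it were nonzero its leading monomial would lie in $\LT(M)=(\mu_1,\dots,\mu_h)$ and hence be divisible by some $\mu_i$, a contradiction. This is the same mechanism the paper uses to prove uniqueness of the reduced Gr\"obner basis (Proposition~\ref{a_prop:reduced_unique}), deployed directly. The paper's own proof of this lemma instead detours through the reduction algorithm~\eqref{a_eq:reduction}: it observes that $\mu_1 = 1\cdot g_1 + (\mu_1-g_1)$ is a valid division expression by $G$, so $\mu_1-g_1$ is the (unique) remainder of $\mu_1$ by $G$, and matching this with $g_1'=\mu_1-r_1$ from the reduction algorithm forces $g_1=g_1'$. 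Both routes rest on the same core fact (an element of $M$ none of whose terms is divisible by any $\mu_i$ must vanish); yours is slightly more self-contained, as it does not invoke the reduction algorithm or the uniqueness-of-remainder lemma as intermediate machinery.
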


\section{Counting points in Gr\"obner strata}\label{sec:strata}

\subsection{Notation setup}
Fix $d\geq 1$. Let $k=\Fq$, $R=k[[T^2,T^3]]$, $\m=(T^2,T^3)R$ and $F=R^d=Ru_1\oplus \dots \oplus Ru_d$. 

We choose to use the lexicographical order induced by $u_1\prec \dots\prec u_d\prec T$. Note that this monomial order is of order type $\omega$, so Assumption \ref{assumption} holds. For this reason, we prefer to think of it as the homogeneous lexicographical monomial order on $F$ induced by $u_1\prec \dots \prec u_d$ (the position of $T$ does not matter). This choice of monomial order is crucial in our proof.

Our monomial order satisfies that $\mu\in \m F$ and $\nu\succ \mu$ imply $\nu\in \m F$. This ensures that doing Gr\"obner theory never ``leaves'' $\m F$: if $f\in F$ satisfies $\LT(f)\in \m F$, then $f\in \m F$; if $f,g_1,\dots,g_h\in \m F$ and $r$ is a remainder of $f$ divided by $g_1,\dots,g_h$, then $r$ is also in $\m F$. We will often implicitly use this fact later. In the following discussions, the meanings of some notations and terminologies are different from those in \S \ref{sec:groebner}, in that we work within $\m F$ instead of $F$. 

Recall from \eqref{eq:hilb_ideal_def} that $\Hilb_{d,n}(R):=\Quot_{d,n+d}^d(R)$ is the set of $n$-codimensional\footnote{The (co)dimension always refers to the (co)dimension as a $k$-vector (sub)space.} $R$-submodules of $\m F$. Let $\set{M_\alpha: \alpha\in \Xi}$ denote the set of finite-codimensional monomial submodules of $\m F$; we will soon identify the index set $\Xi$ concretely. For any $\alpha\in \Xi$, let $\Hilb(\alpha)$ be the set of all $R$-submodules of $\m F$ whose leading submodule is $M_\alpha$. If we denote by $n(\alpha)$ the $k$-dimension of $\m F/M_\alpha$, then $\Hilb(\alpha)$ is a subset of $\Hilb_{d,n(\alpha)}(R)$ by Lemma \ref{lem:dimension_same_as_leading}. Thus, $\Hilb_{d,n}(R)$ is a disjoint union of $\Hilb(\alpha)$ for all $\alpha\in \Xi, n(\alpha)=n$. We call $\Hilb(\alpha)$ the \textbf{Gr\"obner stratum} indexed by the \textbf{leading term datum} $\alpha$.

Every finite-codimensional monomial submodule of $\m F$ is of the form $M_\alpha=I_1 u_1\oplus \dots \oplus I_d u_d$, where $I_i$ are nonzero\footnote{From now on, every submodule of $\m F$ and every ideal of $R$ is assumed to be finite-codimensional.} proper monomial ideals of $R$. Since There are two types of nonzero proper monomial ideals of $R$:
\begin{itemize}
\item $J(a):=(T^{a+1})$, $a\geq 1$;
\item $K(a):=(T^{a+2},T^{a+3})$, $a\geq 0$.
\end{itemize}

Note that $\dim_k \m/J(a) = \dim_k \m/K(a) = a$. Let $X=\set{J(a) \;(a\geq 1), K(a)\; (a\geq 0)}$ be the set of proper monomial ideals of $R$. Then we may associate to $\alpha$ the tuple $(I_1,\dots,I_d)$, where $I_i\in X$ and $M_\alpha=I_1 u_1\oplus \dots \oplus I_d u_d$. We shall write $\alpha=(I_1,\dots,I_d)$. This way, we identify $\Xi$ as the set $X^d$. 

From now on, we will usually use $I$ or $I_i$ to refer to a general element of $X$, while noting that $J(a)$ and $K(a)$ refer to specific elements of $X$. We will generally use lower-case Greek letters $\alpha,\beta,\dots$ to refer to elements of $\Xi$; we shall keep in mind they are leading term data that index the strata of $\Hilb_{d,*}(R)$. 

We overload the notation $n(\cdot)$ on the set $X$ by defining $n(I)=\dim_k \m/I$ for $I\in X$; therefore, $n(J(a))=n(K(a))=a$. For a monomial submodule $\alpha=(I_1,\dots,I_d)$, we have $n(\alpha)=n(I_1)+\dots+n(I_d)$. 

For a monomial ideal $I\in X$, we define $\Delta(I)$, the set of \textbf{standard monomials} of $I$, to be the set of nonconstant monomials not in $I$; it forms an $k$-basis for $\m/I$. Therefore, we have $\abs{\Delta(I)}=n(I)$ in general, and
\begin{itemize}
\item $\Delta(J(a))=\set{T^2,\dots,T^a}\cup \set{T^{a+2}}$;
\item $\Delta(K(a))=\set{T^2,\dots,T^{a+1}}$.
\end{itemize}

We also define $C(I)$, the set of \textbf{corners} of $I$, to be the minimal monomial generating set of $I$. We also label the corners of $C(I)$ by $\mu^0(I),\dots$ in the following specific way:
\begin{itemize}
\item $C(J(a))=\set{\mu^0(J(a))}=\set{T^{a+1}}$;
\item $C(K(a)):=\set{\mu^0(K(a)), \mu^1(K(a))}:=\set{T^{a+2},T^{a+3}}$ (in this order).
\end{itemize}

For $\alpha=(I_i)_{i=1}^d$, we define the set $\Delta(\alpha)$ of \textbf{standard monomials} of $\alpha$ to be the set of monomials in $\m F$ but not in $M_\alpha$. Then $\Delta(\alpha)=\bigcup_{i=1}^d \Delta(I_i)u_i$ is the set of monomials in $F$ of the form $\nu_i u_i$ where $\nu_i\in \Delta(I_i)$. Similarly, we define the set of \textbf{corners} of $\alpha$ to be the set of minimal monomial generating set of $M_\alpha$. We have $C(\alpha)=\bigcup_{i=1}^d C(I_i)u_i$. 

We now describe the elements of $\Hilb(\alpha)$, where $\alpha=(I_i)_{i=1}^d$. For $c=0$ or $1$, we define
\begin{equation}
\mu_i^c=\mu^c(I_i)u_i,
\end{equation}
recalling that $\mu^0(J(a))=T^{a+1}, \mu^0(K(a))=T^{a+2}$ and $\mu^1(K(a))=T^{a+3}$.

Of course, $\mu_i^1$ only makes sense if $I_i$ is of the form $K(a)$. For any monomial $\mu$ in $F$, we define $\Delta(\alpha)_{\succ \mu}$ to be the set of monomials in $\Delta(\alpha)$ that are higher than $\mu$. Then a reduced pre-Gr\"obner basis associated to $\alpha$ is a set of the form $G=\set{g_i^c}$, where
\begin{equation}
g_i^c \in \mu_i^c + \Span_\Fq \Delta(\alpha)_{\succ \mu_i^c}.
\end{equation}

We remark that the set of $c$ where $\mu_i^c$ or $g_i^c$ makes sense only depends on $\alpha$ and $i$; we will implicitly assume that $c$ lies in this set whenever we use the notation $\mu_i^c$ or $g_i^c$. Note also that the monomial $\mu_i^c$ is determined by $i,c$ and the leading term datum $\alpha$, and that the element $g_i^c$ is determined by $i,c$ and the reduced (pre-)Gr\"obner basis $G$. For this reason, we use the notation $\mu_i^c(\alpha)$ and $g_i^c(G)$ to extract the corresponding monomial of $\alpha$ and the corresponding basis element of $G$.

We have the following Buchberger criterion, restated explicitly in the context of our setting. Note that Assumption \ref{assumption} holds and $\LCM(T^{a+2}u_i,T^{a+3}u_i)=\set{T^{a+5}u_i,T^{a+6}u_i}$. 

\begin{lemma}\label{lem:buchberger_cusp}
Let $\alpha=(I_1,\dots,I_d)\in \Xi$. A reduced pre-Gr\"obner basis $G=\set{g_i^c}$ is a reduced Gr\"obner basis if and only if for any $i$ such that $I_i$ is of the form $K(a)$, each of the elements
\begin{equation}\label{eq:bbgforcusp}
T^3g_i^0-T^2g_i^1,\; T^4 g_i^0-T^3 g_i^1
\end{equation}
leaves a remainder zero when divided by the prebasis $G$. 
\end{lemma}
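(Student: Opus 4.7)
The proof plan is to apply the general Buchberger criterion of Proposition~\ref{prop:buchberger} to the prebasis $G = \{g_i^c\}$ and simplify it using the particular structure of the monomial subring $R = k[[T^2,T^3]]$ and the monomial order chosen in \S\ref{sec:strata}. I would first remark that Assumption~\ref{assumption} is satisfied (the hlex order on $F$ is of order type $\omega$, as already noted at the beginning of \S\ref{sec:strata}), and that conditions (b), (c) of Definition~\ref{def:groebner} are built into the definition of a prebasis. Hence the only condition to verify is (a), which by Proposition~\ref{prop:buchberger} reduces to checking that every $S$-polynomial $S^\nu_{(i,c),(j,c')}(g_i^c, g_j^{c'})$ has a zero remainder after division by $G$.

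The first simplification is to throw away the pairs for which no $S$-polynomial exists. Since every leading monomial $\mu_i^c = \mu^c(I_i) u_i$ involves exactly one basis vector of $F$, two such monomials have a common multiple only when they involve the same $u_i$; so we may restrict to pairs indexed by a single $i$. Moreover, if $I_i = J(a)$ then $G$ contains only one element $g_i^0$ with leading monomial on $u_i$, so there is no pair to consider. Thus only the indices $i$ with $I_i = K(a)$ for some $a \geq 0$ contribute, and for each such $i$ we must handle the single pair $(\mu_i^0,\mu_i^1) = (T^{a+2}u_i, T^{a+3}u_i)$.

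Next I would compute the LCM-set $\LCM(T^{a+2}u_i, T^{a+3}u_i)$ using the description of monomials of $R$ as $\{T^k : k=0 \text{ or } k \geq 2\}$. A monomial of $F$ is a common multiple of $T^{a+2}u_i$ and $T^{a+3}u_i$ precisely when it has the form $T^{a+2+m}u_i$ with both $T^m$ and $T^{m-1}$ in $R$, equivalently $m \geq 3$. Among these, the minimal ones under divisibility in $R$ (noting that $T^{a+5}$ and $T^{a+6}$ are mutually indivisible while every $T^{a+2+m}$ with $m \geq 5$ is divisible by one of them via $T^2$ or $T^3 \in R$) are $T^{a+5}u_i$ and $T^{a+6}u_i$. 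Therefore $\LCM = \{T^{a+5}u_i, T^{a+6}u_i\}$.

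Finally I would write out the two $S$-polynomials explicitly: for $\nu = T^{a+5}u_i$ one gets $S^\nu = \frac{T^{a+5}}{T^{a+2}} g_i^0 - \frac{T^{a+5}}{T^{a+3}} g_i^1 = T^3 g_i^0 - T^2 g_i^1$, and for $\nu = T^{a+6}u_i$ one gets $T^4 g_i^0 - T^3 g_i^1$. These are exactly the elements in~\eqref{eq:bbgforcusp}, and Proposition~\ref{prop:buchberger} (together with the clarifying remark below it that it suffices to exhibit one division expression with zero remainder) gives the desired equivalence. There is no real obstacle in this proof; the only non-routine step is the LCM computation, which is a small calculation in the numerical semigroup $\langle 2,3\rangle$.
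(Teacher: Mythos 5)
Your proposal is correct and matches the paper's reasoning: the paper treats this lemma as a direct instantiation of Proposition~\ref{prop:buchberger}, noting only that Assumption~\ref{assumption} holds and that $\LCM(T^{a+2}u_i,T^{a+3}u_i)=\{T^{a+5}u_i,T^{a+6}u_i\}$, which is precisely the LCM computation in the numerical semigroup $\langle 2,3\rangle$ that you carry out. The remaining observations (pairs restrict to a single basis vector, $J(a)$ indices contribute no pairs, and the two resulting $S$-polynomials are those in~\eqref{eq:bbgforcusp}) are the same routine bookkeeping the paper leaves implicit.
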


Thanks to uniqueness of the reduced Gr\"obner basis (Proposition \ref{prop:reduced_unique}), we can now identify $\Hilb(\alpha)$ with the set of reduced Gr\"obner bases associated to $\alpha$. 

\begin{example}
Let $d=3$ and $\alpha=(I_1,I_2,I_3)=(K(0),K(2),J(2))$. Then
\begin{equation}
\begin{aligned}
M_{\alpha}&=\Span\set{T^2 u_1, T^3 u_1, T^4 u_2, T^5 u_2, T^3 u_3},
\\
C(\alpha)&=\set{T^2 u_1, T^3 u_1, T^4 u_2, T^5 u_2, T^3 u_3}
\\
&=\set{\mu^0(I_1), \mu^1(I_1), \mu^0(I_2), \mu^1(I_2), \mu^0(I_3)}
\\
&=\set{\mu_1^0, \mu_1^1, \mu_2^0, \mu_2^1, \mu_3^0},
\\
\Delta(\alpha)&=\set{T^2 u_2, T^3 u_2, T^2 u_3, T^4 u_3},
\end{aligned}
\end{equation}

A reduced pre-Gr\"obner basis associated to $\alpha$ is of the form $G=\set{g_1^0, g_1^1, g_2^0, g_2^1, g_3^0}$, where
\begin{equation}
\begin{aligned}
g_1^0&\in \mu_1^0 + \Span \Delta(\alpha)_{\succ \mu_1^0} = T^2 u_1 + \Span\set{T^2 u_2, T^3 u_2, T^2 u_3, T^4 u_3},
\\
g_1^1&\in \mu_1^1 + \Span \Delta(\alpha)_{\succ \mu_1^1} = T^3 u_1 + \Span\set{T^3 u_2, T^4 u_3},
\\
g_2^0&\in \mu_2^0 + \Span \Delta(\alpha)_{\succ \mu_2^0} = T^4 u_2 + \Span\set{T^4 u_3},
\\
g_2^1&\in \mu_2^1 + \Span \Delta(\alpha)_{\succ \mu_2^1} = T^5 u_2 + \set{0},
\\
g_3^0&\in \mu_3^0 + \Span \Delta(\alpha)_{\succ \mu_3^0} = T^3 u_3 + \Span\set{T^4 u_3}.
\end{aligned}
\end{equation}

The prebasis $G$ is a Gr\"obner basis if and only if each of the four elements
\begin{equation}
T^3g_i^0-T^2g_i^1,\; T^4 g_i^0-T^3 g_i^1 \; (i=1,2)
\end{equation}
leaves a remainder zero when divided by the prebasis $G$. 
\end{example}

\begin{remark}
If $\alpha$ is of the form $\alpha=(J(a_1),\dots,J(a_d))$, then every reduced pre-Gr\"obner basis associated to $\alpha$ is Gr\"obner because there is only one monomial in $C(\alpha)$ belonging to each basis vector $u_i$, so the assumption of Lemma \ref{lem:buchberger_cusp} vacuously holds. However, if $\alpha$ contains terms of the form $K(a)$, the Buchberger criterion will lead to complications and it is the main difficulty we will address in the rest of the section.
\end{remark}

\subsection{A fiber decomposition of $\Hilb(\alpha)$}

It turns out that in order to count the cardinality of $\Hilb(\alpha)$, it suffices to count the reduced Gr\"obner bases $G$ in $\Hilb(\alpha)$ such that $g_i^0(G)$ is a monomial for all $1\leq i\leq d$. More precisely,
\begin{lemma}\label{lem:decomposition}
Let $d$ and $\alpha\in \Xi$ be given. Fix elements $h_i^0\in \mu_i^0(\alpha)+\Span \Delta(\alpha)_{\succ \mu_i^0(\alpha)}$. Then the number of reduced Gr\"obner bases $G$ such that $g_i^0(G)=h_i^0$ is independent of the choice of $h_i^0$. In particular, if we define
\begin{equation}
\Hilbnil(\alpha):=\set{G\in \Hilb(\alpha): g_i^0(G)=\mu_i^0(\alpha),\; 1\leq i\leq d},
\end{equation}
then we have
\begin{equation}
\abs{\Hilb(\alpha)}=\abs{\Hilbnil(\alpha)} \cdot q^{\sum_i \abs[\big]{\Delta(\alpha)_{\succ \mu_i^0(\alpha)}}}.
\end{equation}
\end{lemma}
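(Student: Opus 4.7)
The second identity follows from the first by partitioning $\Hilb(\alpha) = \bigsqcup_{(h_i^0)} \set{G : g_i^0(G) = h_i^0}$. The index tuples $(h_i^0) \in \prod_i \parens*{\mu_i^0(\alpha) + \Span \Delta(\alpha)_{\succ \mu_i^0(\alpha)}}$ number $q^{\sum_i \abs{\Delta(\alpha)_{\succ \mu_i^0(\alpha)}}}$, and each summand has cardinality $\abs{\Hilbnil(\alpha)}$ by the first claim.

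For the first claim, I plan to construct a bijection between any two fibers $\set{G : g_i^0(G) = h_i^0}$ and $\set{G : g_i^0(G) = \tilde h_i^0}$. Write $\eta_i := \tilde h_i^0 - h_i^0 \in V_i^0 := \Span \Delta(\alpha)_{\succ \mu_i^0(\alpha)}$. The bijection sends $G = \set{g_i^c}$ to $G' = \set{(g_i^c)'}$, where $(g_i^0)' = g_i^0 + \eta_i$ for all $i$ and $(g_i^1)' = g_i^1 + \xi_i$ for a unique correction $\xi_i \in V_i^1 := \Span \Delta(\alpha)_{\succ \mu_i^1(\alpha)}$ (one for each $i$ with $I_i = K(a)$), chosen so that $G'$ is again a reduced Gr\"obner basis. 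Swapping $(h_i^0)$ with $(\tilde h_i^0)$ will produce the inverse bijection.

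The key structural input is Lemma \ref{lem:buchberger_cusp}: for each $i$ with $I_i = K(a)$, the S-polynomials $S_1^{(i)} = T^3 g_i^0 - T^2 g_i^1$ and $S_2^{(i)} = T^4 g_i^0 - T^3 g_i^1$ involve only the pair $(g_i^0, g_i^1)$ at the same index $i$. After the cancellations $T^3 \mu_i^0 = T^2 \mu_i^1$ and $T^4 \mu_i^0 = T^3 \mu_i^1$, these become $S_k^{(i)} = T^{k+2} v_i^0 - T^{k+1} v_i^1$ in terms of the non-leading parts $v_i^c := g_i^c - \mu_i^c$; in particular they are linear in the perturbations. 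Perturbing $v_i^0$ by $\eta_i$ and $v_i^1$ by $\xi_i$ therefore shifts $S_k^{(i)}$ by $T^{k+2} \eta_i - T^{k+1} \xi_i$, and the requirement that the perturbed S-polynomials reduce to zero modulo $G'$ translates into an affine-linear system for the unknowns $\xi_i$.

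The main obstacle is verifying that this system uniquely determines the $\xi_i$. Although the S-polynomials textually decouple across $i$, the division algorithm used in their reduction invokes the entire basis $G'$, so a perturbation at one index can propagate through reductions at other indices. My plan is to track these reductions step by step using the graded lex monomial order (which strictly increases the $T$-degree at each step, keeping the terms produced during reduction in a controlled range) and the specific arithmetic of the subring $R = k[[T^2, T^3]]$, thereby establishing both the solvability of the system and the uniqueness of $\xi_i$.
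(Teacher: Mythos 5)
The partitioning argument in your first paragraph is correct and matches the paper. The strategy of exhibiting a bijection between fibers is also the right idea. However, the particular bijection you propose is not well-defined, because the central claim---that the correction $\xi_i$ is \emph{uniquely} determined by requiring $G'$ to be a reduced Gr\"obner basis---is false.

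To see this concretely, take $d=2$ and $\alpha=(K(0),K(2))$, so that $\mu_1^0=T^2u_1$, $\mu_1^1=T^3u_1$, $\mu_2^0=T^4u_2$, $\mu_2^1=T^5u_2$, $\Delta(\alpha)=\set{T^2u_2,T^3u_2}$. A reduced prebasis is
\begin{equation*}
g_1^0=T^2u_1+aT^2u_2+bT^3u_2,\quad g_1^1=T^3u_1+cT^3u_2,\quad g_2^0=T^4u_2,\quad g_2^1=T^5u_2.
\end{equation*}
The only nontrivial $S$-polynomial is
\begin{equation*}
T^3g_1^0-T^2g_1^1=(a-c)T^5u_2+bT^6u_2,
\end{equation*}
which reduces to zero modulo $g_2^0,g_2^1$ for \emph{every} $(a,b,c)$; the same holds for $T^4g_1^0-T^3g_1^1$. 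So the Buchberger criterion here imposes no constraint whatsoever, and any $\xi_1\in \Span\set{T^3u_2}$ produces a valid reduced Gr\"obner basis. Your map is therefore not well-defined, and the swap-symmetry you invoke to construct the inverse fails. More broadly, when the Buchberger criterion \emph{does} impose constraints (e.g.\ $d=3$, $\alpha=(K(0),K(3),K(6))$, where $\Hilbnil(\alpha)$ is cut out by genuinely nonlinear matrix equations as in Lemma~\ref{lem:staircase}), there is no reason to expect that simply adding $\eta_i$ to $g_i^0$ and a linear correction to $g_i^1$, while leaving the other nonleading terms of each $g_j^c$ untouched, carries one solution set to the other.

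The paper sidesteps all of this by a change of variables: it defines an $R$-module endomorphism $\tau_h$ of $\m F$ by $\tau_h(T^bu_i)=T^bh_i^0/(\mu_i^0/u_i)$, which preserves leading terms by the degree-graded structure of the monomial order and is therefore an automorphism of $\m F$ (upper-triangular modulo $\m^2 F$). Since $\tau_h$ carries division expressions to division expressions, it carries the whole Gr\"obner basis $G$ to a (possibly non-reduced) Gr\"obner basis $\tau_h(G)$ of $\tau_h(M)$, and Lemma~\ref{lem:almost_reduced} then identifies $g_i^0$ of the \emph{reduced} basis of $\tau_h(M)$ as $\tau_h(\mu_i^0)=h_i^0$. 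In particular, the induced bijection on reduced Gr\"obner bases does \emph{not} merely add $\eta_i$ to $g_i^0$: the reduced basis of $\tau_h(M)$ is obtained from $\tau_h(G)$ by further reduction, which changes the nonleading coefficients of all the $g_j^c$ in a way coupled to the $\eta_i$. If you want to repair your argument without $\tau_h$, you would need to allow the non-leading parts of every $g_j^c$ to move, not just $g_i^1$, and to determine the correction not from the Buchberger criterion alone but from the full module identification; at that point you would essentially have rediscovered the change-of-variables approach.
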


From now on, we refer to $\Hilbnil(\alpha)$ as the \textbf{central Gr\"obner stratum} indexed by $\alpha$. 

\proof 
Let $\Hilb(\{h_i^0\};\alpha)$ be the set of submodules equipped with a reduced Gr\"obner basis $G$ such that $g_i^0(G)=h_i^0$. It suffices to construct a bijection $\Hilbnil(\alpha)\rightarrow\Hilb(\{h_i^0\};\alpha)$. The idea is to use a change of variable $u_i\mapsto \frac{h_i^0}{\mu_i^0/u_i}$, so that $\mu_i^0$ would be mapped to $h_i^0$. However, $\frac{h_i^0}{\mu_i^0/u_i}$ may involve terms with $T^1$, in which case it is not an element of $F=k[[T^2,T^3]]^d$. 

To make the idea precise, define an $R$-module endomorphism $\tau_{h}$ of $\mathfrak{m}F$ whose values on generators are given by  
\begin{equation}
\tau_{h}(T^b u_i)=\frac{T^b h_i^0}{\mu_i^0/u_i}, \qquad b\geq 2,
\end{equation}
where we note that the target turns out to lie in $\m F$ due to $\LT(h_i^0)= \mu_i^0$ and the definition of our monomial order on $F$. 

For a submodule $M\in \Hilbnil(\alpha)$, we shall prove that the following map
\begin{equation}
\iota_h:\Hilbnil(\alpha)\rightarrow\Hilb(\{h_i^0\};\alpha),\,\,\,M\mapsto \tau_h(M)
\end{equation}
is well defined and is the bijection we want. To prove that $\tau_h(M)$ is indeed in $\Hilb(\set{h_i^0};\alpha)$, let $G$ and $G'$ be the reduced Gr\"obner basis of $M$ and $\tau_h(M)$, respectively, and we need to show that $g_i^0(G')=h_i^0$. 

The morphism $\tau_h$ carries the criterion (\ref{eq:bbgforcusp}) satisfied by $G$ to $\tau_h(G)$, because $\tau_h$ carries the division expressions involved. (Here, we need to use the elementary observation that $\LT(\tau_h(f))=\LT(f)$ for any $f\in F$.) Therefore $\tau_h(G)$ is a Gr\"obner basis for $\tau_h(M)$. Even though $\tau_h(G)$ may not be reduced, thanks to Lemma \ref{lem:almost_reduced}, we have $g_i^0(G')=\tau_h(\mu_1^0)=h_i^0$ as desired. Hence $\iota_h$ is well defined. 

Note that $\tau_h$ is an automorphism of $\m F$, since modulo $\mathfrak{m}^2F$ it is. Indeed, if we choose an ordered basis of $\mathfrak{m}F/\mathfrak{m}^2F$ to be $\{T^2u_1,\dots,T^2u_d,T^3u_1,\dots,T^3u_d\}$, then the matrix for the reduction of $\tau_{h}$ mod $\mathfrak{m}^2F$ is lower triangular, with 1's on the diagonal. A similar argument as above shows that there is an inverse map 
\begin{equation}
\iota_h^{-1}:\Hilb(\{h_i^0\};\alpha)\rightarrow \Hilbnil(\alpha),\,\,\,M\mapsto \tau_h^{-1}(M).
\end{equation}

Therefore $\iota_h$ is a bijection. $\hfill\square$

\subsection{Explicit parametrization of $\Hilb^0(\alpha)$}
In Lemma \ref{lem:staircase} below, we will show that $\Hilb^0(\alpha)$ can be parametrized by an affine variety cut out by explicit matrix equations. The lemma will show in particular that the $J(a)$ terms in $\alpha$ do not complicate $\abs{\Hilb^0(\alpha)}$ too much. For $\alpha=(I_1,\dots,I_d)$, let $J_\alpha$ (resp., $K_\alpha$) denote the set of indices in $\set{1,\dots,d}$ whose corresponding $I_i$ is of the form $J(a)$ (resp., $K(a)$). Note that an element $\set{g_i^c}$ of $\Hilbnil(\alpha)$ is determined by $\set{g_i^1}_{i\in K_\alpha}$, and we will use $\set{g_i^1}_{i\in K_\alpha}$ thereafter to refer to an element of $\Hilbnil(\alpha)$. Of course, the indices in $J_\alpha$ still play a role as the nonleading terms of $g_i^1, i\in K_\alpha$ could involve $u_j$ for $j\in J_\alpha$. 

Let $\ev{\alpha}{K_{\alpha}}$ be the restriction of $\alpha$ to the index set $K_\alpha$. Working with $\ev{\alpha}{K_{\alpha}}$ is equivalent to working with the corresponding leading term datum in a lower $d$ (the new $d$ value is the size of $K_\alpha$), but we choose to keep the index set as $K_\alpha$ instead of $\set{1,\dots,\abs{K_\alpha}}$ for convenience. Note that $K_\alpha$ inherits the ordering from $[d]=\set{1,\dots,d}$, and this determines the monomial order when working with the index set $K_\alpha$. The same discussion applies to $\ev{\alpha}{J_\alpha}$. 

\begin{lemma}\label{lem:staircase}
There is a bijection between $\Hilbnil(\alpha)$ and four-tuples $(X,Y,Z,W)$ satisfying following conditions
\begin{align}
     \label{eq:stair1}&X,Y\in \Mat_{K_\alpha\times K_\alpha}(k),\\
     \label{eq:stair2}&Z,W\in \Mat_{K_\alpha\times J_\alpha}(k),\\
     \label{eq:stair3}&X^2=Y^3,\\
      \label{eq:stair4}&[X,Y]=0,\\
     \label{eq:stair5}& Z=XW,\\
     \label{eq:stair6}& X_{ij}=0 \text{ if } \mu^0_j\prec T^3\mu^0_i,\\
     \label{eq:stair7}& Y_{ij}=0 \text{ if } \mu^0_j\prec T^2\mu^0_i,\\
      \label{eq:stair8}& Z_{il}=0 \text{ if } \mu^0_l\prec T^3\mu^0_i,\\
     \label{eq:stair9}& W_{il}=0 \text{ if } \mu^0_l\prec \mu^0_i,
   \end{align}
under which $(X,Y,Z,W)$ corresponds to the reduced Gr\"obner basis $G$ such that
\begin{equation}\label{eq:stairG}
   g_i^1=\mu_i^1+T^{-2}\sum_{j\in K_\alpha}(X_{ij}\mu_j^0+Y_{ij}g_j^1)+T^{-2}\sum_{l\in J_\alpha}(Z_{il}+W_{il}T^3)\mu_l^0.
\end{equation}
   
 Moreover, the condition \eqref{eq:stair8} is implied by the rest. 
 \end{lemma}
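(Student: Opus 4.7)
An element of $\Hilbnil(\alpha)$ is determined by the tuple $\{g_i^1\}_{i\in K_\alpha}$, each living in the affine space $\mu_i^1 + \Span_k \Delta(\alpha)_{\succ \mu_i^1}$. My first step is to check that the formula \eqref{eq:stairG} furnishes a bijection between the natural standard-monomial parametrization of these affine spaces and quadruples $(X,Y,Z,W)$ subject only to the sparsity conditions \eqref{eq:stair6}, \eqref{eq:stair7}, \eqref{eq:stair9}. Under the lex order with $u_1\prec \cdots\prec u_d$, the sparsity already forces $X_{ij}=Y_{ij}=0$ whenever $j\le i$, so \eqref{eq:stairG} can be solved recursively for the $g_i^1$ starting from the largest index of $K_\alpha$ and working downward; moreover, within the allowed sparsity pattern each surviving summand $T^{-2}\mu_j^0$, $T^{-2}g_j^1$, $T^{-2}\mu_l^0$, or $T\mu_l^0$ turns out to be a standard monomial of $\Delta(\alpha)$ strictly behind $\mu_i^1$. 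Matching coefficients against each basis vector $u_?$ then identifies the matrix entries with the standard-monomial coefficients of $g_i^1$, giving the bijection at the level of reduced prebases.

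Next, I would apply the Buchberger criterion of Lemma~\ref{lem:buchberger_cusp}, which demands that both $S$-pairs $T^3g_i^0 - T^2g_i^1$ and $T^4g_i^0 - T^3g_i^1$ reduce to zero modulo $G$ for each $i\in K_\alpha$. Using $g_i^0=\mu_i^0$ and the identity $T^2\mu_i^1 = T^3\mu_i^0$, substituting \eqref{eq:stairG} into the first $S$-pair yields
\begin{equation*}
T^3g_i^0 - T^2g_i^1 \;=\; -\sum_{j\in K_\alpha}\bigl(X_{ij}\mu_j^0+Y_{ij}g_j^1\bigr) - \sum_{l\in J_\alpha}\bigl(Z_{il}+W_{il}T^3\bigr)\mu_l^0,
\end{equation*}
which is already an $R$-linear combination of the prebasis elements and so reduces to zero unconditionally. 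For the second $S$-pair, multiply \eqref{eq:stairG} by $T^3$ and use the identities $T\mu_j^0=\mu_j^1$ and $T\mu_j^1=T^2\mu_j^0$ to return everything to the prebasis; the correction $Tg_j^1$ must then be processed by iterating the recursion for $g_j^1$ once more, which is the source of second-order matrix products. Requiring the remainder to vanish, and separating the result by which prebasis element each coefficient sits on, should produce the three matrix identities: $X^2=Y^3$ from the $\mu_k^0$-coefficient with $k\in K_\alpha$, $[X,Y]=0$ from the $g_k^1$-coefficient, and $Z=XW$ from the $\mu_l^0$-coefficient with $l\in J_\alpha$.

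The final assertion, that \eqref{eq:stair8} is implied by the rest, follows immediately from $Z=XW$ together with the sparsity of $X$ and $W$: a nonzero entry $(XW)_{il}$ would demand some $j\in K_\alpha$ with $\mu_j^0 \succeq T^3\mu_i^0$ (by \eqref{eq:stair6} applied to $X_{ij}\ne 0$) and $\mu_l^0 \succeq \mu_j^0$ (by \eqref{eq:stair9} applied to $W_{jl}\ne 0$), forcing $\mu_l^0 \succeq T^3\mu_i^0$ and contradicting the hypothesis of \eqref{eq:stair8}. The main obstacle I anticipate is the bookkeeping inside the second Buchberger check: multiplying \eqref{eq:stairG} by $T^3$ produces $T^{-1}$-shifted sums that must be returned to genuine $R$-combinations using the generator identities, and the various iterated matrix products $X^2$, $Y^3$, $XY$, $YX$, $XW$ must each be attributed to the correct source. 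Once the reduction is organized so that the $u_j$-parts for $j\in K_\alpha$ and the $u_l$-parts for $l\in J_\alpha$ are tracked independently, the three matrix relations should fall out uniformly.
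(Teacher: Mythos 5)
Your first step, claiming that \eqref{eq:stairG} together with the sparsity conditions \eqref{eq:stair6}, \eqref{eq:stair7}, \eqref{eq:stair9} alone furnishes a bijection onto \emph{all} reduced prebases, is where the proof breaks. The formula \eqref{eq:stairG} constrains the support of $g_i^1 - \mu_i^1$ much more tightly than reducedness does: on a target basis vector $u_{j'}$ with $j' \in K_\alpha$, the slots $X_{ij'}$ and $Y_{ij'}$ only contribute at the two degrees $a_{j'}$ and $a_{j'}+1$, and the only way to reach lower degrees on $u_{j'}$ is through the recursion $T^{-2}g_j^1$, whose coefficients are polynomial in entries of $X,Y$ rather than fresh free parameters. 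So whenever $\Delta(\alpha)_{\succ\mu_i^1}$ contains a standard monomial $T^b u_{j'}$ with $b < a_{j'}$ that the recursion cannot produce, your map from sparse quadruples to prebases fails to be surjective.

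A minimal counterexample: take $d=2$ and $\alpha=(K(0),K(4))$. Then $\Delta(\alpha)_{\succ \mu_1^1} = \{T^3u_2, T^4u_2, T^5u_2\}$, so the general prebasis is $g_1^1 = T^3u_1 + aT^3u_2 + bT^4u_2 + cT^5u_2$ with three free parameters, whereas the sparsity conditions leave only $X_{12}$ and $Y_{12}$, two parameters; indeed \eqref{eq:stairG} gives $g_1^1 = T^3u_1 + X_{12}T^4u_2 + Y_{12}T^5u_2$ and never reaches $a\neq 0$. Meanwhile the first $S$-pair is $T^3g_1^0 - T^2g_1^1 = -aT^5u_2 - bT^6u_2 - cT^7u_2$; the last two terms reduce away via $g_2^0=\mu_2^0$ and $g_2^1=\mu_2^1$, but $T^5u_2$ is a standard monomial, so the remainder is $-aT^5u_2$. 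Hence the first $S$-pair does not reduce to zero ``unconditionally'': it vanishes precisely when $a=0$, which is exactly what puts $g_1^1$ into the form \eqref{eq:stairG}.

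Put differently, the implication you used (prebasis of form \eqref{eq:stairG} $\Rightarrow$ first $S$-pair vanishes) is the easy direction; the real content of the first Buchberger check is the converse, which forces a reduced Gr\"obner basis to have the shape \eqref{eq:stairG} in the first place. The paper's proof opens with exactly that: it uses the vanishing of the first $S$-pair to write $g_i^1$ in the form \eqref{eq:stairG} with $k[[T^2]]$-valued coefficients, then invokes reducedness (the non-leading terms lie in $\Span\Delta(\alpha)_{\succ\mu_i^1}$) to cut them down to constants and read off the sparsity conditions. You need to restore this step before moving to the matrix analysis. The remainder of your plan --- iterating \eqref{eq:stairG} inside the second $S$-pair to extract $X^2=Y^3$, $[X,Y]=0$, $Z=XW$ by sorting coefficients, and deriving \eqref{eq:stair8} from \eqref{eq:stair5}, \eqref{eq:stair6}, \eqref{eq:stair9} --- is in line with the paper's argument and looks sound in outline.
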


In particular, 
\begin{equation}
    \abs*{\Hilbnil(\alpha)}=\abs*{\Hilbnil(\ev{\alpha}{K_\alpha})}\cdot {q^{\sum_{i\in K_\alpha}\abs*{C(\alpha|_{J_\alpha})_{\succ \mu_i^0}}}}.
\end{equation}
   
As a result, to compute the size of the (central) Gr\"obner strata in rank $d$, it suffices to understand the central Gr\"obner strata for ``pure-$K$'' leading term data in all ranks $d'\leq d$. 

\proof 
We first show that any reduced Gr\"obner basis can be written into the desired form.  Though unnecessary, we will work in the ambient ring $k[T^{-1}][[T]]$, which contains $k[[T^2,T^3]]$; the only reason is to make formulas containing $T^{-n}$ easier to understand. Given a reduced Gr\"obner basis $G$, the first formula of (\ref{eq:bbgforcusp}) implies that there are power series $X_{ij}(T)$, $Y_{ij}(T)$,$Z_{il}(T)$,$W_{il}(T)\in k[[T^2]]$ such that
\begin{equation}
g_i^1=\mu_i^1+T^{-2}\sum_{j\in K_\alpha}(X_{ij}(T)\mu_j^0+Y_{ij}(T)g_j^1)+T^{-2}\sum_{l\in J_\alpha}(Z_{il}(T)+W_{il}(T)T^3)\mu_l^0.
\end{equation}
Let $X_{ij}=X_{ij}(0)$, $Y_{ij}=Y_{ij}(0)$, $Z_{il}=Z_{il}(0)$ and $W_{il}=W_{il}(0)$. Since $G$ is reduced, $g_i^1\in \mu_i^1+\Span \Delta(\alpha)_{\succ \mu_i^1}$. It follows that $X_{ij}(T), Y_{ij}(T),Z_{il}(T),W_{il}(T)$ are all constants. Therefore $G$ can be put in the form of (\ref{eq:stairG}). Note that 
the coefficients must satisfy (\ref{eq:stair6})$\sim$(\ref{eq:stair9}) since $\mu_i^1=T\mu_i^0$ is the term with lowest degree.

We then use the second formula of (\ref{eq:bbgforcusp}) to show that 
the coefficients of (\ref{eq:stairG}) satisfy (\ref{eq:stair3})$\sim$(\ref{eq:stair5}).  Let $\widetilde{G}=G\cup \{T^3\mu_l^0|l\in J_\alpha\}=\{\mu_i^0,g_i^1|i\in K_\alpha\}\cup \{\mu_l^0,T^3\mu_l^0|l\in J_\alpha\}$. The strategy is looping (\ref{eq:stairG}) around itself to express $T^3g_i^1-T^4\mu_i^0, i\in K_\alpha$ into a $k$-linear combination of elements in $\bigcup_{m\in\mathbb{Z}} T^{2m}\widetilde{G}$. Note that elements in $\bigcup_{m\in\mathbb{Z}} T^{2m}\widetilde{G}$ are linearly independent, hence such expression is unique. Furthermore, $T^3g_i^1-T^4\mu_i^0$ being divisible by $G$ is equivalent to 
\begin{equation}\label{eq:fck0}
    T^3g_i^1-T^4\mu_i^0\in \Span_k \left\{\bigcup_{m\geq 0} T^{2m}\widetilde{G}\right\}.
\end{equation}
Observe that (\ref{eq:stairG}) induces following reduction formulas: 
\begin{align}
   \label{eq:fck1} &T\mu_i^0=g_i^1-T^{-2}\sum_{j\in K_\alpha}(X_{ij}\mu_j^0+Y_{ij}g_j^1)-T^{-2}\sum_{l\in J_\alpha}(Z_{il}\mu_l^0+W_{il}T^3\mu_l^0).\\
     \label{eq:fck2} &Tg_i^1=T^2\mu_i^0+ T^{-2}\sum_{j\in K_\alpha}(X_{ij}(T\mu_j^0)+Y_{ij}(Tg_j^1))+\sum_{l\in J_\alpha}(Z_{il}T^{-4}(T^3\mu_l^0)+W_{il}T^2\mu_l^0).
\end{align}
It is clear that, using these two formulas resursively, one is able to write $Tg_i^1$ into a $k$-linear combination of elements in $\bigcup_{m\in\mathbb{Z}} T^{2m}\widetilde{G}$. Since $T^3g_i^1-T^4\mu_i^0=T^2(Tg_i^1)-T^4\mu_i^0$,  one can also write $T^3g_i^1-T^4\mu_i^0$ as a $k$-linear combination of elements in $\bigcup_{m\in\mathbb{Z}} T^{2m}\widetilde{G}$. 

To compute the linear combination explicitly, write $X_{ij},Y_{ij},Z_{il},W_{il}$ into matrices $X,Y,Z,W$ satisfying (\ref{eq:stair1}), (\ref{eq:stair2}) and (\ref{eq:stair6})$\sim$(\ref{eq:stair9}). Let $\mathbf{J}_{0}$ \textit{resp}. $\mathbf{K}_0$ \textit{resp}. 
 $\mathbf{K}_1$ be the column vector whose entries are $\mu_l^0,l\in J_\alpha$  \textit{resp}. 
 $\mu_i^0,i\in K_\alpha$  \textit{resp}. $g_i^1,i\in K_\alpha$. For $m\in \mathbb{Z}$, let $\mathbf{J}_{m}=T^m\mathbf{J}_{0}$, $\mathbf{K}_{2m}=T^{2m}\mathbf{K}_0$ and $\mathbf{K}_{2m+1}=T^{2m}\mathbf{K}_1$. It follows from definition, formula (\ref{eq:fck1}) and formula (\ref{eq:fck2}) that following recursive formulas are true: 
   \begin{align}
\label{eq:fck3}&T\mathbf{J}_{m}=\mathbf{J}_{m+1},\,\,T^2\mathbf{K}_{m}=\mathbf{K}_{m+2},\\
  \label{eq:fck4}& T\mathbf{K}_{2m}=\mathbf{K}_{2m+1}-
  X\mathbf{K}_{2m-2}-Y\mathbf{K}_{2m-1}-Z\mathbf{J}_{2m-2}-W\mathbf{J}_{2m+1},\\
  \label{eq:fck5}& T\mathbf{K}_{2m+1}=\mathbf{K}_{2m+2}+
  XT\mathbf{K}_{2m-2}+YT\mathbf{K}_{2m-1}+ Z\mathbf{J}_{2m-1}+W\mathbf{J}_{2m+2}.
\end{align}
Now the column vector with entries $T^3g_i^1-T^4\mu_i^0$ is exactly $T\mathbf{K}_3-\mathbf{K}_4$. An iteration using (\ref{eq:fck3})$\sim$(\ref{eq:fck5}) gives the following
\begin{equation}\label{redequality}
   \begin{aligned}
     &T\mathbf{K}_3-\mathbf{K}_4=    Z\mathbf{J}_1+W\mathbf{J}_4+\sum_{m=0}^\infty Y^m(Y\mathbf{K}_{2-2m}+X\mathbf{K}_{1-2m})\\&-  
        \sum_{m=0}^\infty Y^{m}X(Y\mathbf{K}_{-1-2m}+X\mathbf{K}_{-2-2m})+\sum_{m=0}^\infty Y^{m+1}(Z\mathbf{J}_{-1-2m}+W\mathbf{J}_{2-2m})\\&- \sum_{m=0}^\infty Y^{m}X(Z\mathbf{J}_{-2-2m}+W\mathbf{J}_{1-2m}).
   \end{aligned}
\end{equation}
Note that the infinite sums are actually finite since $Y$ is nilpotent. The condition (\ref{eq:fck0}) implies that when $m\geq 0$,
\begin{equation}\label{cond2}
    \left\{\begin{aligned}
    & Y^{m}Z=Y^{m}XW,\,\,(\text{since the coefficient of }\mathbf{J}_{1-2m} \text{ is } 0);\\
    & Y^{m+3}W=Y^{m}XZ,\,\,(\text{since the coefficient of }\mathbf{J}_{-2-2m} \text{ is } 0);\\
    & Y^mXY=Y^{m+1}X,\,\,(\text{since the coefficient of }\mathbf{K}_{-1-2m} \text{ is } 0); \\
    & Y^{m}X^2=Y^{m+3}, \,\,(\text{since the coefficient of }\mathbf{K}_{-2-2m} \text{ is } 0).
\end{aligned}\right.
\end{equation}
Clearly \eqref{cond2} is equivalent to $[X,Y]=0, X^2=Y^3, Z=XW$.
Note that $Z$ is totally determined by $X$ and $W$ and there is no more condition imposed on $W$, therefore $X,Y,Z,W$ satisfy (\ref{eq:stair3})$\sim$(\ref{eq:stair5}). 

Conversely, given $(X,Y,Z,W)$ satisfying (\ref{eq:stair1})$\sim$(\ref{eq:stair9}), one just reverse the computation above to show that $G=\{g_i^1\}$ arising from (\ref{eq:stairG}) forms a reduced Gr\"obner basis. $\hfill\square$\\

%When $\Hilbnil(\alpha)=\Hilbnil(\ev{\alpha}{K_\alpha})$, i.e., $\alpha$ is a pure $K$ stratum, then the point count matches the number of pairs $(X,Y)$ satisfying (\ref{eq:stair1}), (\ref{eq:stair3}), (\ref{eq:stair4}), (\ref{eq:stair6}) and (\ref{eq:stair7}).  

See \S \ref{sec:staircase} for further discussions about the variety cut out by these matrix equations. The motivic versions of Lemmas \ref{lem:decomposition} and \ref{lem:staircase} are treated in Theorem \ref{Thm:groebner_strata_motivic}.

\section{Rationality in the $t$ variable}\label{sec:combinatorics}
Denote $H_d(t)=H_{d,R}(t)$, where $R=\Fq[[T^2,T^3]]$. We now prove the point count version of the first part of Theorem \ref{thm:cusp_t_rationality}, which says $H_d(t)$ is rational with denominator $(1-t)(1-qt)\dots (1-q^{d-1}t)$. Even though the point count on each Gr\"obner stratum is not fully understood, Lemma \ref{lem:staircase} implies that the stratum point count satisfies a stability property (Lemma \ref{lem:stability}) when we ``raise'' the associated leading term datum in a certain way, which is enough to prove the rationality statement. Moreover, we will reach Theorem \ref{thm:cusp_formula}, which provides a finite recipe to compute $H_d(t)$ for a fixed $d$ whenever the point count on certain finitely many strata are understood.

We first explore more structures in the set of all leading term data. 

\subsection{Further structures of leading term data}
Recall that $X=\set{J(a) \;(a\geq 1), K(a)\; (a\geq 0)}$ is the set of nonzero proper monomial ideals of $R=\Fq[[T^2,T^3]]$, and $\Xi=X^d$ is the set of leading term data. 

Given a leading term datum $\alpha=(I_1,\dots,I_d)$, we consider the pairs $\alpha_1:=(1, I_1), \dots, \alpha_d:=(d, I_d)$. Let $i\sbp 1,\dots,i\sbp d$ be the permutation of $[d]$ such that $\mu_{i\sbp 1}^0(\alpha) \prec \dots \prec \mu_{i\sbp d}^0(\alpha)$. We define 
\begin{equation}
\alpha \sbp j := (i\sbp j, I_{i\sbp j}),
\end{equation}
for $j\in [d]$,
or more instructively, we write
\begin{equation}
\alpha\sbp j = I_{i\sbp j}u_{i\sbp j}.
\end{equation}

We write $\alpha=(\alpha\sbp 1, \dots, \alpha\sbp d)$. In other words, we sort the \textbf{components} $I_i u_i$ of $\alpha$ in ascending order according to their $\mu^0$. (Recall that $\mu^0(J(a)u_i)=T^{a+1}u_i$ and $\mu^0(K(a)u_i)=T^{a+2}u_i$.) The sorting order can be read conveniently from
\begin{equation}\label{eq:component_order}
K(0)\sim J(1) \prec K(1) \sim J(2) \prec K(2) \sim J(3) \prec\dots
\end{equation}
and using the index $i$ in $u_i$ as the tiebreaker. For example, if $\alpha=(K(1), J(1), K(0))$, then $\alpha\sbp 1=J(1)u_2, \alpha\sbp 2=K(0)u_3, \alpha\sbp 3=K(1)u_1$. 

Note that we reserve the parenthesized subscript for indices associated to the \textbf{ranking} of a component in the sorting order above, while the normal subscript often refers to the \textbf{seat} of a component. The seat of $I_i u_i$ is $i$. We will also use the notation $i(\cdot)$ to extract the seat. For example, in the example above, $i(\alpha\sbp 2)=3$, and this says $\alpha\sbp 2=\alpha_3=I_3u_3$. The interplay between the concepts of ranking and seat will be important.

In light of the sorting order \ref{eq:component_order}, we define the \textbf{level} of a component by
\begin{equation}
l(J(a)u_i)=a-1, \ l(K(a)u_i)=a.
\end{equation}

We always have $\mu^0(\alpha_i) = T^{l(\alpha_i)+2}u_i$.

We also define the \textbf{color} of a component $\alpha_i$ by
\begin{equation}
c(J(a) u_i) = J, \ c(K(a) u_i) = K,
\end{equation}
where $\set{J,K}$ is the set of all two possible colors. Therefore, we may encode a leading term datum by the levels and colors of its components. More precisely, we have a bijection
\begin{align}
\Xi &\overset{\cong}{\longrightarrow}  \N^d \times \set{J, K}^d \\
\alpha=(I_1,\dots,I_d) & \longmapsto  (l(\alpha),c(\alpha)),
\end{align}
where $l(\alpha):=(l(\alpha_1),\dots,l(\alpha_d))$ is the \textbf{level vector} and $c(\alpha) := (c(\alpha \sbp 1),\dots,c(\alpha\sbp d))$ is the \textbf{color vector}. Here, we choose to arrange the levels in seats and the colors in ranks for future convenience. 

We now define the ``spiral raising'' operators $\gamma\sbp 1, \dots,\gamma\sbp d$ on $\Xi$. We first define the actions of these operators on $\N^d$, following the work \cite{huangjiang2022a} of the authors up to change of notation. Let $x=(l_1,\dots,l_d)\in \N^d$. Similar to the treatment of $\alpha\in \Xi$, we consider the components $x_i=(i,l_i)$. Sorting the components $x_1,\dots,x_d$ first by $l_i$, and then by $i$, we obtain an ascending list $x\sbp 1, \dots, x\sbp d$. Let $i\sbp j=i(x\sbp j)$ be the seat of the $j$-th lowest component. For $1\leq j\leq d$, we define $\gamma\sbp j(x)$ as the following, also illustrated in Figure \ref{fig:spiral}:
\begin{enumerate}
\item We fix the lowest $j-1$ components, namely, 
\begin{equation}
\parens*{\gamma\sbp j(x)}\sbp k := x\sbp k\text{ for }1\leq k \leq j-1;
\end{equation}

\item Consider the set of remaining seats $I=\set{i\sbp j,\dots,i\sbp d}$. We call these seats to be ``available''. Each other component is shifted to the next available seat to the right. Formally speaking, write $I=\set{a_1<\dots<a_{d-j+1}}$. Then
\begin{equation}
\parens*{\gamma\sbp j(x)}_{a_{k+1}} := (a_{k+1},l_{a_k})\text{ for }1\leq k\leq d-j.
\end{equation}

\item The rightmost component with seat in $I$ is shifted to the leftmost seat in $I$, but with level raised by 1. In other words,
\begin{equation}
\parens*{\gamma\sbp j(x)}_{a_1} := (a_1, l_{d-j+1}+1).
\end{equation}
\end{enumerate}

\begin{figure}[h]
\begin{subfigure}[b]{0.5\textwidth}
\begin{tikzpicture}
%nodes
\node (10) at (0,0) {$\cdot$};
\node [right of=10] (20) {$\cdot$};
\node [right of=20] (30) {$\cdot$};
\node [right of=30] (40) {$\cdot$};
\node [right of=40] (50) {$\bullet$};
\node [right of=50] (60) {$\cdot$};
\node [right of=60] (70) {$\cdot$};

\node [above of=10] (11) {$\cdot$};
\node [above of=20] (21) {$\cdot$};
\node [above of=30] (31) {$\bullet$};
\node [above of=40] (41) {$\cdot$};
\node [above of=50] (51) {$\cdot$};
\node [above of=60] (61) {$\cdot$};
\node [above of=70] (71) {$\bullet$};

\node [above of=11] (12) {$\bullet$};
\node [above of=21] (22) {$\cdot$};
\node [above of=31] (32) {$\cdot$};
\node [circle,draw=black] [above of=41] (42) {$\bullet$};
\node [above of=51] (52) {$\cdot$};
\node [above of=61] (62) {$\circ$};
\node [above of=71] (72) {$\cdot$};

\node [above of=12] (13) {$\cdot$};
\node [circle,draw=black] [above of=22] (23) {$\bullet$};
\node [above of=32] (33) {$\cdot$};
\node [above of=42] (43) {$\circ$};
\node [above of=52] (53) {$\cdot$};
\node [circle,draw=black] [above of=62] (63)  {$\bullet$};
\node [above of=72] (73) {$\cdot$};

\node [above of=13] (14) {$\cdot$};
\node [above of=23] (24) {$\circ$};
\node [above of=33] (34) {$\cdot$};
\node [above of=43] (44) {$\cdot$};
\node [above of=53] (54) {$\cdot$};
\node [above of=63] (64) {$\cdot$};
\node [above of=73] (74) {$\cdot$};

%helper nodes
\node [right of=74] (84) {};
\node [left of=13] (03) {};

%label nodes
\node [left of=10, node distance=3ex] (y0) {$0$};
\node [left of=11, node distance=3ex] (y1) {$1$};
\node [left of=12, node distance=3ex] (y2) {$2$};
\node [left of=13, node distance=3ex] (y3) {$3$};
\node [left of=14, node distance=3ex] (y4) {$4$};

\node [below of=10, node distance=3ex] (x1) {$1$};
\node [below of=20, node distance=3ex] (x2) {$2$};
\node [below of=30, node distance=3ex] (x3) {$3$};
\node [below of=40, node distance=3ex] (x4) {$4$};
\node [below of=50, node distance=3ex] (x5) {$5$};
\node [below of=60, node distance=3ex] (x6) {$6$};
\node [below of=70, node distance=3ex] (x7) {$7$};

%arrows
\draw [-to] (23) -- (43);
\draw [-to] (42) -- (62);
\draw [-to, shorten >= 5ex] (63) -- (84);
\draw [-to, shorten <= 5ex] (03) -- (24);
\path[->] (12) edge [loop above] (12);
\path[->] (31) edge [loop above] (31);
\path[->] (50) edge [loop above] (50);
\path[->] (71) edge [loop above] (71);

%columns
\draw [-, shorten <= -2ex, shorten >= -2ex] (20) -- (24);
\draw [-, shorten <= -2ex, shorten >= -2ex] (40) -- (44);
\draw [-, shorten <= -2ex, shorten >= -2ex] (60) -- (64);
\end{tikzpicture}
\end{subfigure}
\begin{subfigure}[b]{0.3\textwidth}
$\bullet$ fixed components

$\odot$ moved from

$\circ$ moved to

$|$ available seats
\end{subfigure}
\caption{$\gamma\sbp{5}(2,3,1,2,0,3,1)=(2,4,1,3,0,2,1)$}
\label{fig:spiral}
\end{figure}
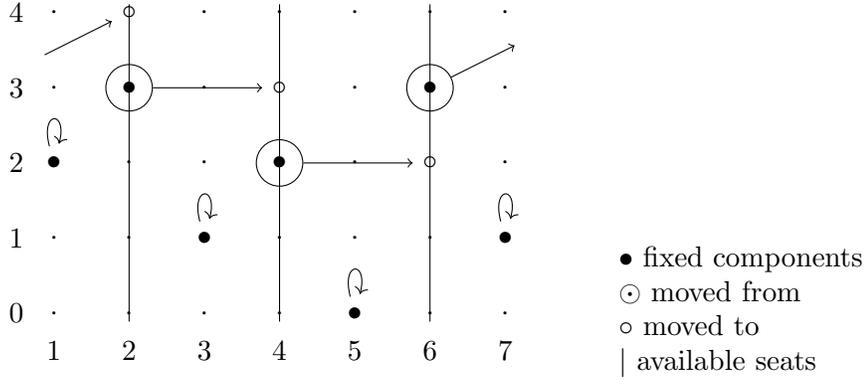

We define the action of $\gamma\sbp j$ on $\Xi$ by spiral raising the level vector and keeping the color vector unchanged. In other words, if $\alpha=(l(\alpha),c(\alpha))$, then
\begin{equation}
\gamma\sbp j(\alpha)=(\gamma\sbp j (l(\alpha)),c(\alpha)).
\end{equation}

\begin{example}
Let $\alpha=(K(1),J(1),K(0))$, and recall that $(\alpha\sbp 1,\alpha\sbp 2,\alpha\sbp 3)=(J(1)u_2, K(0)u_3, K(1)u_1)$. Then $\gamma\sbp 1$ shifts every component to the right
\begin{equation}
\gamma\sbp 1(\alpha)=(J(1)u_3, K(0+1)u_1, K(1)u_2) = (K(1),K(1),J(1)).
\end{equation}

The second operator $\gamma\sbp 2$ shifts all but the lowest component to the right: 
\begin{equation}
\gamma\sbp 2(\alpha)=(J(1)u_2, K(0+1)u_1, K(1)u_3)=(K(1),J(1),K(1)).
\end{equation}

Note that $u_1$ is skipped to $u_3$ because $u_2$ is fixed and thus unavailable. 

The last operator $\gamma\sbp 3$ ``shifts'' the highest component to the right, but since there is only one available seat, it simply rises by 1:
\begin{equation}
\gamma\sbp 3(\alpha)=(J(1)u_2, K(0)u_3, K(1+1)u_1)=(K(2),J(1),K(0)).
\end{equation}
\end{example}

The operators $\gamma\sbp j$ satisfy the following properties.

\begin{proposition}[{\cite[Theorem 1.2]{huangjiang2022a}}]
For all $1\leq j,j'\leq d$, we have
\begin{equation}
\gamma\sbp{j'}\circ \gamma\sbp j=\gamma\sbp j\circ \gamma\sbp{j'}
\end{equation}
as maps from $\N^d$ to $\N^d$.
\label{prop:spiral_commute}
\end{proposition}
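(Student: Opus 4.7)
The plan is to establish a preservation lemma, use it to reduce the general commutativity to commutativity with the simplest operator $\gamma\sbp 1$, and then handle that remaining case via a bijective encoding. Throughout, $i\sbp 1,\dots,i\sbp d$ denotes the seats of $x$ sorted by level, with seat as a tiebreaker.

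First I would prove that $\gamma\sbp j$ fixes both the seat and level of the lowest $j-1$ components of $x$. By construction these components are left alone at the level of the formula; what needs checking is that their ranks in the sort order of $\gamma\sbp j(x)$ remain $1,\dots,j-1$. The new levels at the available seats in $I$ are either a permutation of old levels already $\geq l_{i\sbp j}\geq l_{i\sbp{j-1}}$, or the strictly raised value $l_{a_{d-j+1}}+1>l_{i\sbp{j-1}}$; seat-based tiebreaking handles ties. As a byproduct, this preservation lemma identifies the restriction of $\gamma\sbp j$ to the upper part of $x$ (components of rank $\geq j$) with the full cyclic shift on $\N^{d-j+1}$.

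Assuming $j\leq j'$ without loss of generality, the preservation lemma lets me restrict both $\gamma\sbp j\gamma\sbp{j'}$ and $\gamma\sbp{j'}\gamma\sbp j$ to the upper $d-j+1$ components, on which $\gamma\sbp j$ becomes $\gamma\sbp 1$ and $\gamma\sbp{j'}$ becomes $\gamma\sbp{j'-j+1}$. So the proposition reduces to showing that $\gamma\sbp 1$ commutes with every $\gamma\sbp k$ on every $\N^{d'}$. For this, I would use the bijection between $\N^{d'}$ and $d'$-element transversals of residue classes mod $d'$ in $\N$, given by $(l_1,\dots,l_{d'})\longleftrightarrow M(x):=\set{d'\,l_i+(i-1):1\leq i\leq d'}$. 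The explicit formula $\gamma\sbp 1(l_1,\dots,l_{d'})=(l_{d'}+1,l_1,\dots,l_{d'-1})$ translates under this bijection into the uniform shift $M\mapsto M+1$. The operator $\gamma\sbp k$ corresponds to an operation on $M$ whose defining ingredients (the sort order of $M$, the "wrapping" seat $a_{d'-k+1}$, and the raising by a fixed amount) all transform consistently under $M\mapsto M+1$, yielding $\gamma\sbp k(M+1)=\gamma\sbp k(M)+1$ and hence $\gamma\sbp 1\gamma\sbp k=\gamma\sbp k\gamma\sbp 1$.

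The hard part will be the verification of the last compatibility: the seat index $a_{d'-k+1}$ is defined using residues mod $d'$, while the shift $M\mapsto M+1$ cyclically rotates the residue-to-seat assignment, so the choice of "wrapping" seat is visibly different before and after the shift. The combinatorial content of the argument is that these two sources of change cancel exactly, so that the net change on $M$ is the same translate in both orders. Tracking this cancellation carefully is the core computation, which is essentially the one carried out in \cite[Theorem 1.2]{huangjiang2022a}; I would either quote it directly or reproduce the bookkeeping with the bijection above in hand.
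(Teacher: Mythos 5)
The paper does not prove this proposition; it quotes it from \cite{huangjiang2022a}, so there is no in-paper argument to match yours against. Evaluated on its own terms, your reduction is sound: the preservation lemma (the lowest $j-1$ components keep their seat, level, \emph{and} rank under $\gamma\sbp j$) is correct, and it does allow you to restrict the problem to the $d-j+1$ ``upper'' components, where $\gamma\sbp j$ becomes $\gamma\sbp 1$ and $\gamma\sbp{j'}$ becomes $\gamma\sbp{j'-j+1}$ on a relabeled $\N^{d-j+1}$. The encoding $x\mapsto M(x)=\set{d'l_i+(i-1)}$ under which $\gamma\sbp 1$ is $M\mapsto M+1$ is also correct. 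So the roadmap is right, but you stop short of the payoff: you leave the equivariance $\gamma\sbp k(M+1)=\gamma\sbp k(M)+1$ as a ``core computation'' to be quoted or redone, which means your write-up is, in the end, also just a citation.

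The gap is smaller than you make it sound, and closing it is where the encoding really earns its keep. Describe $\gamma\sbp k$ directly on the transversal $M=\set{m_1<\dots<m_{d'}}$: it fixes $m_1,\dots,m_{k-1}$ and replaces each $m_j$ with $j\geq k$ by the least integer $n>m_j$ such that $n\not\equiv m_i \pmod{d'}$ for every $i<k$. (This is just unwinding ``next available seat to the right, with wraparound adding one level,'' and it reproduces the worked example $\gamma\sbp 2(\set{0,2,4})=\set{0,4,5}$ for $d'=3$.) With that description, equivariance under $M\mapsto M+1$ is immediate, because both the point $m_j$ and the forbidden residue set $\set{m_i \bmod d':i<k}$ translate together: the least $n>m_j+1$ avoiding $\set{(m_i+1)\bmod d'}$ is exactly $1$ plus the least $n>m_j$ avoiding $\set{m_i\bmod d'}$. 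So the ``careful cancellation'' you anticipated is a three-line observation once $\gamma\sbp k$ is phrased this way; if you include that description, your proof is complete and self-contained rather than deferring to \cite{huangjiang2022a}.
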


Therefore, the operators $\gamma\sbp j$ induce an action of the free abelian semigroup $\Gamma:=\N^d$ on the set $\N^d$. For $a=(a_1,\dots,a_d)\in \Gamma$, the action is defined as
\begin{equation}
a\cdot x:=\gamma\sbp 1^{a_1}\circ \dots \circ \gamma\sbp d^{a_d}(x)
\end{equation}
for $x\in \N^d$. The next property states that the action is free and transitive.

\begin{proposition}[{\cite[Theorem 1.3]{huangjiang2022a}}]
Let $\mathbf{0}=(0,\dots,0)\in \N^d$. Then for any $x\in \N^d$, there exists a unique $a\in \Gamma$ such that
\begin{equation}
a\cdot \mathbf{0} = x.
\end{equation}
\label{prop:free_transitive}
\end{proposition}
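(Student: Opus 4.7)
The strategy is to combine a mass-preservation observation, a cardinality comparison, and a constructive induction.

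First, I would verify that each operator $\gamma\sbp j$ increases the total mass $|x| := l_1 + \dots + l_d$ by exactly $1$: by definition, $\gamma\sbp j$ fixes the $j - 1$ lowest components, permutes the levels of the $d - j + 1$ available components cyclically, and adds $1$ to the level at position $a_1$. Thanks to the commutativity already established in Proposition~\ref{prop:spiral_commute}, the orbit map $\Psi \colon \Gamma \to \N^d$ defined by $\Psi(a) := a \cdot \mathbf{0}$ is well-defined and mass-preserving, with $|\Psi(a)| = a_1 + \dots + a_d$. Restricted to each fixed mass level $n$, $\Psi$ is a map between two finite sets of the same cardinality $\binom{n + d - 1}{d - 1}$, so proving surjectivity on each level is equivalent to proving bijectivity on each level, which is exactly the claim.

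I would then prove surjectivity by strong induction on $n = |x|$. The base case $x = \mathbf{0}$ is given by $a = \mathbf{0}$. For the inductive step, given $x \neq \mathbf{0}$ with $|x| = n$, it suffices to exhibit some $j \in \{1, \dots, d\}$ and $x' \in \N^d$ with $|x'| = n - 1$ and $\gamma\sbp j(x') = x$; the inductive hypothesis applied to $x'$ then gives $x' = \Psi(a')$, whence $x = \Psi(a' + e_j)$. To construct such $(j, x')$, I would reverse the spiral motion of $\gamma\sbp j$: identify the seat $a_1$ where the most recent raising step of $\gamma\sbp j$ must have landed (naturally the seat $i\sbp d(x)$ of the strict maximum of $x$, when this maximum is unique), determine the remaining available seats $a_2 < \cdots < a_{d-j+1}$ by matching the fixed lowest $j - 1$ components of $x$ with those of the preimage, and define $x'$ by subtracting $1$ from $l_{a_1}(x)$ and reversing the cyclic shift on the available seats. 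A simple family of test cases to try first is $j = d$ (which lowers the maximum entry by $1$) and $j = 1$ (which reverse-spirals over all seats), as at least one of these tends to succeed.

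The main obstacle will be verifying that some choice of $j \in \{1, \dots, d\}$ always produces a valid $x' \in \N^d$, particularly when the maximum level of $x$ is not attained uniquely and the tiebreak convention becomes nontrivial. By commutativity, multiple valid $(j, x')$ may coexist but any one suffices to carry through the induction, so I expect that a careful case analysis keyed on the multiplicity and the relative seats of the top-level components of $x$ will yield the required construction.
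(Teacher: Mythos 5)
Note first that the paper does not itself prove this proposition; it is quoted verbatim from \cite[Theorem~1.3]{huangjiang2022a}, so there is no in-paper argument to compare against, and your plan has to be judged on its own. Your opening reduction is a genuine and correct simplification: each $\gamma\sbp j$ raises the total level $|x|=l_1+\cdots+l_d$ by exactly $1$, so the orbit map $\Psi(a)=a\cdot\mathbf{0}$ (well-defined by Proposition~\ref{prop:spiral_commute}) carries $\{a\in\Gamma:\sum_j a_j=n\}$ into $\{x\in\N^d:|x|=n\}$, and both sets have cardinality $\binom{n+d-1}{d-1}$, so free transitivity on each mass level reduces to surjectivity.

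The gap is in the surjectivity induction, and it is genuine. The heuristic that the last raise must have landed at $i\sbp d(x)$ is not correct: the raised level always lands at $a_1$, the \emph{leftmost available seat}, which is governed by the ranking of the preimage $x'$, not by where the maximum of $x$ sits. For $d=3$ and $x=(0,1,1)$, the rank-$d$ seat of $x$ is $3$ (seat-index tiebreak), but the unique valid preimage is $x'=(0,1,0)$ under $\gamma\sbp 2$, for which $a_1=2$. The fallback of testing $j=d$ and $j=1$ also fails in general: for $x=(0,2,1)$ (unique maximum, at seat $2$), $\gamma\sbp 1$ is inapplicable since $x_1=0$, while the $\gamma\sbp 3$-candidate $x'=(0,1,1)$ satisfies $\gamma\sbp 3(x')=(0,1,2)\neq x$ because the rank-$3$ component of $x'$ is at seat $3$, not seat $2$; only $j=2$ works. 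There is also an unaddressed subtlety in the step ``matching the fixed lowest $j-1$ components'': the unmoved components of $x'$ need not occupy the $j-1$ lowest ranks of $x$, since the shifted components can acquire low ranks after the spiral, so the matching is not well-defined as stated. You flag these obstacles honestly, but the missing lemma --- that for every $x\neq\mathbf{0}$ at least one $j$ admits a consistent reverse spiral --- is precisely the content of the cited theorem, so the argument as written is incomplete.
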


The above properties imply that one may use the spiral raising operators to traverse the elements of $\Xi$; this way of enumeration will be suitable for our future analysis. We give a complete illustration of the $\Gamma$ action in $d=2$. 

\begin{example}\label{eg:grid_2}
We have the following infinite grid
\begin{equation*}
\begin{tikzcd}[sep=small]
	{(0,0)} & {(0,1)} & {(0,2)} & {(0,3)} & \cdots \\
	{(1,0)} & {(2,0)} & {(3,0)} & {(4,0)} & \cdots \\
	{(1,1)} & {(1,2)} & {(1,3)} & {(1,4)} & \cdots \\
	{(2,1)} & {(3,1)} & {(4,1)} & {(5,1)} & \cdots \\
	\vdots & \vdots & \vdots & \vdots
	\arrow[from=1-1, to=1-2]
	\arrow[from=1-1, to=2-1]
	\arrow[from=2-1, to=3-1]
	\arrow[from=3-1, to=4-1]
	\arrow[from=1-2, to=1-3]
	\arrow[from=1-2, to=2-2]
	\arrow[from=2-1, to=2-2]
	\arrow[from=3-1, to=3-2]
	\arrow[from=4-1, to=4-2]
	\arrow[from=2-2, to=3-2]
	\arrow[from=3-2, to=4-2]
	\arrow[from=1-3, to=1-4]
	\arrow[from=2-2, to=2-3]
	\arrow[from=1-3, to=2-3]
	\arrow[from=1-4, to=2-4]
	\arrow[from=2-3, to=2-4]
	\arrow[from=3-2, to=3-3]
	\arrow[from=2-3, to=3-3]
	\arrow[from=2-4, to=3-4]
	\arrow[from=3-3, to=3-4]
	\arrow[from=3-3, to=4-3]
	\arrow[from=4-2, to=4-3]
	\arrow[from=4-3, to=4-4]
	\arrow[from=3-4, to=4-4]
	\arrow[from=1-4, to=1-5]
	\arrow[from=2-4, to=2-5]
	\arrow[from=3-4, to=3-5]
	\arrow[from=4-4, to=4-5]
	\arrow[from=4-1, to=5-1]
	\arrow[from=4-2, to=5-2]
	\arrow[from=4-3, to=5-3]
	\arrow[from=4-4, to=5-4]
\end{tikzcd}
\end{equation*}
where the vertical arrow is $\gamma\sbp 1$ and the horizontal arrow is $\gamma \sbp 2$. Every tuple in $\N^2$ is found in exactly one place of this grid. 

The grid above, combined with all possible color vectors $(K, K), (K, J), (J, K)$ and $(J, J)$, gives the following grids that altogether list each element of $\Xi$ exactly once. 

\begin{equation*}
\begin{tikzcd}[sep=small]
	{(K(0),K(0))} & {(K(0),K(1))} & {(K(0),K(2))} & {(K(0),K(3))} & \cdots \\
	{(K(1),K(0))} & {(K(2),K(0))} & {(K(3),K(0))} & {(K(4),K(0))} & \cdots \\
	{(K(1),K(1))} & {(K(1),K(2))} & {(K(1),K(3))} & {(K(1),K(4))} & \cdots \\
	{(K(2),K(1))} & {(K(3),K(1))} & {(K(4),K(1))} & {(K(5),K(1))} & \cdots \\
	\vdots & \vdots & \vdots & \vdots
	\arrow[from=1-1, to=1-2]
	\arrow[from=1-1, to=2-1]
	\arrow[from=2-1, to=3-1]
	\arrow[from=3-1, to=4-1]
	\arrow[from=1-2, to=1-3]
	\arrow[from=1-2, to=2-2]
	\arrow[from=2-1, to=2-2]
	\arrow[from=3-1, to=3-2]
	\arrow[from=4-1, to=4-2]
	\arrow[from=2-2, to=3-2]
	\arrow[from=3-2, to=4-2]
	\arrow[from=1-3, to=1-4]
	\arrow[from=2-2, to=2-3]
	\arrow[from=1-3, to=2-3]
	\arrow[from=1-4, to=2-4]
	\arrow[from=2-3, to=2-4]
	\arrow[from=3-2, to=3-3]
	\arrow[from=2-3, to=3-3]
	\arrow[from=2-4, to=3-4]
	\arrow[from=3-3, to=3-4]
	\arrow[from=3-3, to=4-3]
	\arrow[from=4-2, to=4-3]
	\arrow[from=4-3, to=4-4]
	\arrow[from=3-4, to=4-4]
	\arrow[from=1-4, to=1-5]
	\arrow[from=2-4, to=2-5]
	\arrow[from=3-4, to=3-5]
	\arrow[from=4-4, to=4-5]
	\arrow[from=4-1, to=5-1]
	\arrow[from=4-2, to=5-2]
	\arrow[from=4-3, to=5-3]
	\arrow[from=4-4, to=5-4]
\end{tikzcd}
\end{equation*}

\begin{equation*}
\begin{tikzcd}[sep=small]
	{(K(0),J(1))} & {(K(0),J(2))} & {(K(0),J(3))} & {(K(0),J(4))} & \cdots \\
	{(J(2),K(0))} & {(J(3),K(0))} & {(J(4),K(0))} & {(J(5),K(0))} & \cdots \\
	{(K(1),J(2))} & {(K(1),J(3))} & {(K(1),J(4))} & {(K(1),J(5))} & \cdots \\
	{(J(3),K(1))} & {(J(4),K(1))} & {(J(5),K(1))} & {(J(6),K(1))} & \cdots \\
	\vdots & \vdots & \vdots & \vdots
	\arrow[from=1-1, to=1-2]
	\arrow[from=1-1, to=2-1]
	\arrow[from=2-1, to=3-1]
	\arrow[from=3-1, to=4-1]
	\arrow[from=1-2, to=1-3]
	\arrow[from=1-2, to=2-2]
	\arrow[from=2-1, to=2-2]
	\arrow[from=3-1, to=3-2]
	\arrow[from=4-1, to=4-2]
	\arrow[from=2-2, to=3-2]
	\arrow[from=3-2, to=4-2]
	\arrow[from=1-3, to=1-4]
	\arrow[from=2-2, to=2-3]
	\arrow[from=1-3, to=2-3]
	\arrow[from=1-4, to=2-4]
	\arrow[from=2-3, to=2-4]
	\arrow[from=3-2, to=3-3]
	\arrow[from=2-3, to=3-3]
	\arrow[from=2-4, to=3-4]
	\arrow[from=3-3, to=3-4]
	\arrow[from=3-3, to=4-3]
	\arrow[from=4-2, to=4-3]
	\arrow[from=4-3, to=4-4]
	\arrow[from=3-4, to=4-4]
	\arrow[from=1-4, to=1-5]
	\arrow[from=2-4, to=2-5]
	\arrow[from=3-4, to=3-5]
	\arrow[from=4-4, to=4-5]
	\arrow[from=4-1, to=5-1]
	\arrow[from=4-2, to=5-2]
	\arrow[from=4-3, to=5-3]
	\arrow[from=4-4, to=5-4]
\end{tikzcd}
\end{equation*}

\begin{equation*}
\begin{tikzcd}[sep=small]
	{(J(1),K(0))} & {(J(1),K(1))} & {(J(1),K(2))} & {(J(1),K(3))} & \cdots \\
	{(K(1),J(1))} & {(K(2),J(1))} & {(K(3),J(1))} & {(K(4),J(1))} & \cdots \\
	{(J(2),K(1))} & {(J(2),K(2))} & {(J(2),K(3))} & {(J(2),K(4))} & \cdots \\
	{(K(2),J(2))} & {(K(3),J(2))} & {(K(4),J(2))} & {(K(5),J(2))} & \cdots \\
	\vdots & \vdots & \vdots & \vdots
	\arrow[from=1-1, to=1-2]
	\arrow[from=1-1, to=2-1]
	\arrow[from=2-1, to=3-1]
	\arrow[from=3-1, to=4-1]
	\arrow[from=1-2, to=1-3]
	\arrow[from=1-2, to=2-2]
	\arrow[from=2-1, to=2-2]
	\arrow[from=3-1, to=3-2]
	\arrow[from=4-1, to=4-2]
	\arrow[from=2-2, to=3-2]
	\arrow[from=3-2, to=4-2]
	\arrow[from=1-3, to=1-4]
	\arrow[from=2-2, to=2-3]
	\arrow[from=1-3, to=2-3]
	\arrow[from=1-4, to=2-4]
	\arrow[from=2-3, to=2-4]
	\arrow[from=3-2, to=3-3]
	\arrow[from=2-3, to=3-3]
	\arrow[from=2-4, to=3-4]
	\arrow[from=3-3, to=3-4]
	\arrow[from=3-3, to=4-3]
	\arrow[from=4-2, to=4-3]
	\arrow[from=4-3, to=4-4]
	\arrow[from=3-4, to=4-4]
	\arrow[from=1-4, to=1-5]
	\arrow[from=2-4, to=2-5]
	\arrow[from=3-4, to=3-5]
	\arrow[from=4-4, to=4-5]
	\arrow[from=4-1, to=5-1]
	\arrow[from=4-2, to=5-2]
	\arrow[from=4-3, to=5-3]
	\arrow[from=4-4, to=5-4]
\end{tikzcd}
\end{equation*}

\begin{equation*}
\begin{tikzcd}[sep=small]
	{(J(1),J(1))} & {(J(1),J(2))} & {(J(1),J(3))} & {(J(1),J(4))} & \cdots \\
	{(J(2),J(1))} & {(J(3),J(1))} & {(J(4),J(1))} & {(J(5),J(1))} & \cdots \\
	{(J(2),J(2))} & {(J(2),J(3))} & {(J(2),J(4))} & {(J(2),J(5))} & \cdots \\
	{(J(3),J(2))} & {(J(4),J(2))} & {(J(5),J(2))} & {(J(6),J(2))} & \cdots \\
	\vdots & \vdots & \vdots & \vdots
	\arrow[from=1-1, to=1-2]
	\arrow[from=1-1, to=2-1]
	\arrow[from=2-1, to=3-1]
	\arrow[from=3-1, to=4-1]
	\arrow[from=1-2, to=1-3]
	\arrow[from=1-2, to=2-2]
	\arrow[from=2-1, to=2-2]
	\arrow[from=3-1, to=3-2]
	\arrow[from=4-1, to=4-2]
	\arrow[from=2-2, to=3-2]
	\arrow[from=3-2, to=4-2]
	\arrow[from=1-3, to=1-4]
	\arrow[from=2-2, to=2-3]
	\arrow[from=1-3, to=2-3]
	\arrow[from=1-4, to=2-4]
	\arrow[from=2-3, to=2-4]
	\arrow[from=3-2, to=3-3]
	\arrow[from=2-3, to=3-3]
	\arrow[from=2-4, to=3-4]
	\arrow[from=3-3, to=3-4]
	\arrow[from=3-3, to=4-3]
	\arrow[from=4-2, to=4-3]
	\arrow[from=4-3, to=4-4]
	\arrow[from=3-4, to=4-4]
	\arrow[from=1-4, to=1-5]
	\arrow[from=2-4, to=2-5]
	\arrow[from=3-4, to=3-5]
	\arrow[from=4-4, to=4-5]
	\arrow[from=4-1, to=5-1]
	\arrow[from=4-2, to=5-2]
	\arrow[from=4-3, to=5-3]
	\arrow[from=4-4, to=5-4]
\end{tikzcd}
\end{equation*}
\end{example}

\subsection{Spiral raising and Gr\"obner strata}
From Lemma \ref{lem:decomposition} and Lemma \ref{lem:staircase}, the size of $\Hilb(\alpha)$ is the product of three factors:
\begin{equation}\label{eq:strata_factorization}
\abs{\Hilb(\alpha)}=A(\alpha)\cdot B(\alpha)\cdot D(\alpha),
\end{equation}
where
\begin{align}
A(\alpha)&:=\abs{\Hilbnil(\ev{\alpha}{K_\alpha})},\\
B(\alpha)&:=q^{b(\alpha)}:=q^{\sum_{i\in K_\alpha}\abs*{C(\alpha|_{J_\alpha})_{\succ \mu_i^0}}},\\
D(\alpha)&:=q^{\delta(\alpha)}:=q^{\sum_i \abs[\big]{\Delta(\alpha)_{\succ \mu_i^0(\alpha)}}}.
\end{align}

The easiest factor is $B(\alpha)$, which only depends on the color vector. In particular, spiral raising does not change $B(\alpha)$.

\begin{lemma}\label{lem:B_factor}
Let the color vector of $\alpha$ be $(c\sbp 1, \dots, c \sbp d)$. Then
\begin{equation}
b(\alpha)=\#\set{(i,j):1\leq i<j\leq d, c\sbp i=K, c\sbp j=J}.
\end{equation}

In particular, $B(\gamma\sbp j(\alpha))=B(\alpha)$ for any $j\in [d]$.
\end{lemma}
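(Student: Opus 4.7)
The plan is to unfold the definitions and rewrite $b(\alpha)$ as a pairwise count governed by the ranks of components.

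First, I would make the restricted corner set explicit. For each $l\in J_\alpha$, the ideal $I_l$ has the form $J(a_l)$ and thus the unique corner $\mu_l^0=T^{a_l+1}u_l$. Hence $C(\alpha|_{J_\alpha})=\{\mu_l^0: l\in J_\alpha\}$, and
\[
b(\alpha)=\sum_{i\in K_\alpha}\#\{l\in J_\alpha:\mu_l^0\succ \mu_i^0\}=\#\{(i,l)\in K_\alpha\times J_\alpha:\mu_l^0\succ \mu_i^0\}.
\]

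Next, I would reinterpret $\mu_l^0\succ \mu_i^0$ through the ranking of components. By construction $(\alpha\sbp 1,\dots,\alpha\sbp d)$ is the ascending $\prec$-sort of the $\mu_\bullet^0$; because distinct seats yield distinct monomials there are no ties, and $\mu_l^0\succ \mu_i^0$ if and only if $\mathrm{rank}(l)>\mathrm{rank}(i)$. Setting $p=\mathrm{rank}(i)$ and $q=\mathrm{rank}(l)$ and recording colors in rank order via $c\sbp p, c\sbp q$, the displayed count becomes
\[
\#\{(p,q): 1\leq p<q\leq d,\ c\sbp p=K,\ c\sbp q=J\},
\]
which is the asserted formula.

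The final assertion is then immediate from the construction of $\gamma\sbp j$, which modifies only the level vector and leaves the color vector $(c\sbp 1,\dots, c\sbp d)$ unchanged; since $b(\alpha)$ depends on $\alpha$ only through this color vector, $b(\gamma\sbp j(\alpha)) = b(\alpha)$, and hence $B(\gamma\sbp j(\alpha))=B(\alpha)$.

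There is no substantive obstacle here; the statement is really a bookkeeping identity. The only point requiring a bit of care is the verification that the monomial order on $F$, namely the homogeneous lex order induced by $u_1\prec\cdots\prec u_d$, implements exactly the same comparison rule (ascending total $T$-degree, tiebroken by seat) as the one used to define the ranks $\alpha\sbp k$, which follows by directly comparing $\mu_i^0=T^{l(\alpha_i)+2}u_i$ and $\mu_j^0=T^{l(\alpha_j)+2}u_j$.
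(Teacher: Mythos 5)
Your proof is correct and follows the same route as the paper's, which simply notes that $C(\alpha\sbp j)$ is a singleton when $c(\alpha\sbp j)=J$ and that the components are sorted by $\mu^0$; you have merely unfolded those two observations into explicit steps. No gaps.
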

\begin{proof}
This is clear because $C(\alpha\sbp j)=\set{\mu^0(\alpha\sbp j)}$ if $c(\alpha\sbp j)=J$, and the components of $\alpha$ are sorted according to their $\mu^0$. 
\end{proof}

The behavior of $A(\alpha)$ and $D(\alpha)$ under spiral raising depends on the "distances" between components of $\alpha$. We first define a notion of distance between two components $x_i=(i, l_i)$ and $x_j=(j,l_j)$ of an element $x=(l_1,\dots,l_d)$ of $\N^d$. Assume $x_i \prec x_j$, i.e., $l_i+\frac{i}{d}<l_j+\frac{j}{d}$. Then the distance between $x_i$ and $x_j$ is given by
\begin{equation}
\delta(x_i,x_j) := \floor{l_j+\frac{j}{d} - l_i - \frac{i}{d}}.
\end{equation}

We then define $\delta\sbp{ij}(x):=\delta(x\sbp i, x\sbp j)$. We call the data of $\delta\sbp{ij}(x)$ for all $1\leq i<j\leq d$ the \textbf{distance matrix} of $x$.

For $\alpha\in \Xi$, we define the distance matrix of $\alpha$ by $\delta\sbp{ij}(\alpha):=\delta\sbp{ij}(l(\alpha))$. In other words, we simply ignore the color vector.

Equivalently, 
\begin{equation}
\delta(\alpha_i,\alpha_j)=\max\set{N: T^N \mu^0(\alpha_i) \prec \mu^0(\alpha_j)}.
\end{equation}

The following lemma summarizes the effect of $\gamma\sbp j$ on the distance vector of $\alpha$. Since the color vector is immaterial, we state the lemma for any $x\in \N^d$. 

\begin{lemma}[Proof of {\cite[Lemma 5.2]{huangjiang2022a}}]
Let $x\in \N^d$ and $j\in [d]$. Define the set of \textbf{stretches} of $\gamma\sbp j$, denoted by $S\sbp j(x)$, as the following:
\begin{equation}\label{eq:def_stretch}
\set*{(b,h), 1\leq b<j\leq h\leq d \ \vrule
\begin{array}{l}
i(x\sbp h)<i(x\sbp b)<i(\gamma\sbp j(x)\sbp h)\\
\tu{or }i(\gamma\sbp j(x)\sbp h)<i(x\sbp h)<i(x\sbp b)\\
\tu{or }i(x\sbp b)<i(\gamma\sbp j(x)\sbp h)<i(x\sbp h)
\end{array}
}.
\end{equation}

Then we have
\begin{enumerate}
\item For any pair $1\leq b<h\leq d$, the change of distance
\begin{equation}
\delta\sbp{bh}(\gamma\sbp j(x)) - \delta\sbp{bh}(x)
\end{equation}
is $1$ if $(b,h)\in S\sbp j(x)$, and $0$ otherwise.

\item The map $(b,h)\mapsto b$ from $S\sbp j(x)$ to $[j-1]$ is a bijection. In particular, $S\sbp j(x)$ has $j-1$ elements. 
\end{enumerate}
\label{lem:stretch}
\end{lemma}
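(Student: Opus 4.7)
The plan is to introduce the coordinate $\rho(x_i) := l_i + i/d$ on components, so that the rank ordering of $x_1, \ldots, x_d$ matches the total ordering by $\rho$, and for $x_i \prec x_j$ we have $\delta(x_i, x_j) = \floor{\rho(x_j) - \rho(x_i)}$. Unpacking the definition of $\gamma\sbp j$ via the available seats $I = \{a_1 < \cdots < a_{d-j+1}\}$: the movable component $v_k$ at seat $a_k$ translates by $\Delta\rho_k := (a_{k+1}-a_k)/d$ for $k < d-j+1$, while the wrap-around component $v_{d-j+1}$ translates by $\Delta\rho_{d-j+1} := 1 - (a_{d-j+1}-a_1)/d$, each strictly positive, at most $1$, and summing to $1$ by telescoping.

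First I would prove a rank-preservation observation: under $\gamma\sbp j$, each $v_k$ keeps its rank. This reduces to showing that the pairs $(L_k, a_{k+1})$ (or $(L_{d-j+1}+1, a_1)$ in the wrap case) sort in the same relative order as $(L_k, a_k)$ (or $(L_{d-j+1}, a_{d-j+1})$); a short lex check using the monotonicity of seats on $I$ and the fact that the level raise strictly dominates any old level does this. With this in hand, for each $h \in \{j, \ldots, d\}$ the components $x\sbp h$ and $\gamma\sbp j(x)\sbp h$ are the same physical $v_k$, so the seats $i(x\sbp h)$ and $i(\gamma\sbp j(x)\sbp h)$ are just the old and new seats of $v_k$.

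Now fix $b \in [j-1]$ and set $s_b := i(x\sbp b)$ and $y_b := \rho(x\sbp b)$. Since $\gamma\sbp j(x)\sbp b = x\sbp b$, the change $\delta\sbp{bh}(\gamma\sbp j(x)) - \delta\sbp{bh}(x)$ equals $\floor{\rho(v_k) + \Delta\rho_k - y_b} - \floor{\rho(v_k) - y_b}$, which is $0$ or $1$ since $\Delta\rho_k \leq 1$, and equals $1$ iff $v_k$'s motion crosses an integer checkpoint $y_b + N$ for some $N \in \mathbb{Z}$. Passing to fractional parts mod $1$, the checkpoints all have fractional value $s_b/d$, while the motion arcs $(a_k/d, a_{k+1}/d]$ (together with the wrap arc $(a_{d-j+1}/d, 1] \cup [0, a_1/d]$) tile the fractional circle exactly once. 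Since $s_b \notin I$, the value $s_b/d$ avoids every arc endpoint and lies in the interior of exactly one arc, so exactly one $k$, hence exactly one $h$, contributes $+1$. This proves Part (b). For Part (a), unpacking which arc contains $s_b/d$ gives three cases: (a) $a_k < s_b < a_{k+1}$ for some $k \in [1, d-j]$, whose $v_k$ has start seat $a_k$ and end seat $a_{k+1}$, yielding $i(x\sbp h) < i(x\sbp b) < i(\gamma\sbp j(x)\sbp h)$ (condition 1); (b) $s_b < a_1$, caught by the wrap-around with start $a_{d-j+1}$ and end $a_1$, yielding $i(x\sbp b) < i(\gamma\sbp j(x)\sbp h) < i(x\sbp h)$ (condition 3); (c) $s_b > a_{d-j+1}$, again the wrap-around, yielding $i(\gamma\sbp j(x)\sbp h) < i(x\sbp h) < i(x\sbp b)$ (condition 2).

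The hard part will be the rank-preservation observation: without it, the stretch conditions (phrased in terms of ranks before and after, not in terms of physical components) would be misaligned with the motion analysis, since the reranking of movable components under $\gamma\sbp j$ could in principle shuffle which physical component sits at each new rank. Once rank preservation is pinned down, the ``motion arcs tile the circle'' viewpoint makes the rest a uniform, short case analysis.
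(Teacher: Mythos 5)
Your proposal is correct, and it matches the paper's indicated approach (the commentary after the lemma in \S5.2 describes precisely the ``moved component passes the seat of an unmoved one'' picture). Your $\rho$-coordinate with the ``motion arcs tile the fractional circle exactly once'' viewpoint is a clean formalization of that intuition, and your rank-preservation step is indeed essential and not entirely obvious (the $+1$ level raise exactly compensates for the wrap-around back to the smallest available seat $a_1$; a direct check shows that for $v_k$ non-wrap and $v_{d-j+1}$ the wrap component, the sign of $\rho'(v_{d-j+1})-\rho'(v_k)=(L_{d-j+1}-L_k)+1+(a_1-a_{k+1})/d$ agrees with that of $\rho(v_{d-j+1})-\rho(v_k)$ because $1+(a_1-a_{k+1})/d\in(0,1)$). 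One small point you wave off as ``a uniform, short case analysis'' but should make explicit: Part (a) quantifies over \emph{all} pairs $b<h$, so you also need $\delta\sbp{bh}$ unchanged when both $b,h\ge j$ (both moved). The circle-tiling argument as stated only covers $b<j\le h$. The moved–moved case does not follow from tiling alone, but it falls out of the same lex comparison used for rank preservation: writing $\delta(v_k,v_{k'})=(L_{k'}-L_k)-[a_{k'}<a_k]$, the seat-comparison bit $[a_{k'}<a_k]$ equals $[k'<k]$ both before and after the shift when neither wraps (since $a_{k+1}<a_{k'+1}\Leftrightarrow k<k'$), and when one component wraps the extra $+1$ in its level exactly cancels the sign flip of the seat comparison. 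With that filled in, the proof is complete and the map $(b,h)\mapsto b$ being a bijection onto $[j-1]$ is immediate from the uniqueness of the arc containing $s_b/d$.
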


In other words, the spiral raising operator $\gamma\sbp j$ fixes the pairwise distances within the lowest $j-1$ (i.e., unmoved) components and the pairwise distances within the remaining (i.e., moved) components, but stretches the distances between unmoved and moved components. One may understand $\gamma\sbp j$ as raising the moved components ``as a whole'' away from the unmoved components. The crux of the lemma is the elementary observation that the distance between an unmoved component $x\sbp b$ and a moved component $x\sbp h$ is increased by 1 if $x\sbp h$ moves past the seat of $x\sbp b$ (cf. the definition of $S\sbp j(x)$ above), and unchanged otherwise. 

Given a stretch $(b,h)\in S\sbp j(x)$, we say $x\sbp b$ (resp.\ $x\sbp h$) is the lower (resp.\ higher) component of the stretch. We also call $\delta(x\sbp b,x\sbp h)$ the distance of the stretch; in particular, the distance refers to the distance \emph{before} the stretch. Similarly for $\alpha\in \Xi$.

We now return to the analysis of $A(\alpha)$ and $D(\alpha)=q^{\delta(\alpha)}$. 

We say a stretch in $S\sbp j(\alpha)$ to be \textbf{obstructed} if it has distance zero and its lower component is of color $J$. The following lemma says that the increase of $\delta(\alpha)$ caused by $\gamma\sbp j$ is given by the number of unobstructed stretches. Recall that there are $j-1$ stretches in total.

\begin{lemma}\label{lem:D_factor}
Fix $1\leq j\leq d$, and let
\begin{equation}
s=\#\set*{(b,h)\in S\sbp j(\alpha): c(\alpha\sbp b)=J, \ \delta_{bh}(\alpha)=0}
\end{equation}
be the number of obstructed stretches.

Then we have
\begin{equation}
\delta(\gamma\sbp j(\alpha))=\delta(\alpha)+j-1-s.
\end{equation}
\end{lemma}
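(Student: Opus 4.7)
The strategy is to expand $\delta(\alpha)$ as a double sum over pairs of ranks and track how each contribution transforms under $\gamma\sbp j$ using Lemma \ref{lem:stretch}. For ranks $r,s\in [d]$, set
\begin{equation*}
N_{rs}(\alpha):=\abs{\set{\nu\in \Delta(I_{i(\alpha\sbp s)})u_{i(\alpha\sbp s)}:\nu\succ \mu^0(\alpha\sbp r)}},
\end{equation*}
so that, re-indexing the defining sum by rank rather than seat, $\delta(\alpha)=\sum_{r,s=1}^d N_{rs}(\alpha)$. The key asymmetry between colors is that a component $\alpha\sbp s$ of level $l_s$ contributes the common ``lower'' standard monomials $T^2u_{i_s},\dots,T^{l_s+1}u_{i_s}$ (all $\prec\mu^0(\alpha\sbp s)$), and, \emph{only} when $c(\alpha\sbp s)=J$, the single extra monomial $T\cdot \mu^0(\alpha\sbp s)=T^{l_s+3}u_{i_s}$ sitting just above $\mu^0(\alpha\sbp s)$.

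Write $J_s:=1$ if $c(\alpha\sbp s)=J$ and $0$ otherwise. A three-case analysis using the explicit $\Delta$ and the definition of distance yields: when $r<s$, the ``lower'' monomials of $\alpha\sbp s$ that beat $\mu^0(\alpha\sbp r)$ number exactly $\delta\sbp{rs}(\alpha)$, and the extra monomial (if any) always beats $\mu^0(\alpha\sbp r)$, giving $N_{rs}=\delta\sbp{rs}(\alpha)+J_s$; when $r=s$, only the extra monomial can contribute, giving $N_{ss}=J_s$; when $r>s$, no lower monomial of $\alpha\sbp s$ is high enough, while the extra monomial $T\cdot\mu^0(\alpha\sbp s)$ beats $\mu^0(\alpha\sbp r)$ iff $\delta\sbp{sr}(\alpha)=0$, giving $N_{rs}=J_s\cdot\mathbb{1}[\delta\sbp{sr}(\alpha)=0]$. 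Summing,
\begin{equation*}
\delta(\alpha)=\sum_{r<s}\delta\sbp{rs}(\alpha)+\sum_s s\,J_s+\sum_s J_s\,\abs{\set{r>s:\delta\sbp{sr}(\alpha)=0}}.
\end{equation*}

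Now apply $\gamma\sbp j$. The color vector is preserved rank-wise, so the $J_s$ and the ``$s\,J_s$'' term are unaffected. By Lemma \ref{lem:stretch}(a), each distance $\delta\sbp{rs}$ with $r<s$ goes up by $1$ precisely when $(r,s)\in S\sbp j(\alpha)$, and by Lemma \ref{lem:stretch}(b) there are exactly $j-1$ such pairs; hence the first sum increases by $j-1$. The third sum decreases by the number of pairs $(s,r)$ with $r>s$, $\delta\sbp{sr}(\alpha)=0$, $(s,r)\in S\sbp j(\alpha)$, and $J_s=1$, which is exactly the count of obstructed stretches, namely $s$ (in the lemma's notation). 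Combining gives $\delta(\gamma\sbp j(\alpha))-\delta(\alpha)=(j-1)-s$.

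The main subtlety is the $r>s$ case: the extra $J$-monomial $T\cdot\mu^0(\alpha\sbp s)$ sits only a single $T$-multiplication above $\mu^0(\alpha\sbp s)$, so whether it outranks a lower-ranked $\mu^0(\alpha\sbp r)$ is a razor-thin condition governed exactly by $\delta\sbp{sr}(\alpha)=0$. This is precisely the condition encoded by the notion of ``obstructed stretch,'' and it is what produces the combinatorial correction $-s$ on top of the uniform $+(j-1)$ coming from the stretches. Once this case is correctly identified, the remaining computation is purely mechanical summation.
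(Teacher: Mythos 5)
Your proof is correct and follows essentially the same strategy as the paper: both decompose $\delta(\alpha)$ into bilinear contributions indexed over pairs of components, compute each contribution explicitly in terms of distances and colors, and then invoke Lemma~\ref{lem:stretch} to track the uniform $+(j-1)$ change and the $-s$ correction from obstructed stretches. The only cosmetic difference is that you index by rank and write out a clean closed-form expression $\delta(\alpha)=\sum_{r<s}\delta\sbp{rs}(\alpha)+\sum_s s\,J_s+\sum_s J_s\,\abs{\set{r>s:\delta\sbp{sr}(\alpha)=0}}$, whereas the paper indexes by seat and reasons about the unordered two-term sums; the underlying computation is identical.
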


\begin{proof}
Recall that $\delta(\alpha)=\sum_i \abs*{\Delta(\alpha)_{\succ \mu_i^0(\alpha)}}$. We separate the contributions of $\delta(\alpha)$ in terms of basis vectors. For $k\in [d]$, let $\Delta_k(\alpha)$ be the monomials in $\Delta(\alpha)$ involving $u_k$. Thus, 
\begin{equation}
\delta(\alpha)=\sum_{i,k\in [d]} \abs*{\Delta_k(\alpha)_{\succ \mu_i^0(\alpha)}}.
\end{equation}

The observations below can be verified elementarily by writing down the monomials in $\Delta_k(\alpha)$. We first look at the terms with $i=k$. We notice that $\abs*{\Delta_i(\alpha)_{\succ \mu_i^0(\alpha)}}$ is $0$ if $\alpha_i$ is of color $K$, and $1$ if $\alpha_i$ is of color $J$. 

For the terms with $i\neq k$, we consider two cases: $\alpha_i \succ \alpha_k$, and $\alpha_i\prec \alpha_k$. If $\alpha_i\succ \alpha_k$, then
\begin{equation}
\abs*{\Delta_k(\alpha)_{\succ \mu_i^0(\alpha)}}=
\begin{cases}
1,& \text{if }\delta(\alpha_k,\alpha_i) =0\text{ and }c(\alpha_k)=J;\\
0,& \text{otherwise.}
\end{cases}
\end{equation}

If $\alpha_i\prec \alpha_k$, then
\begin{equation}
\abs*{\Delta_k(\alpha)_{\succ \mu_i^0(\alpha)}}=
\begin{cases}
\delta(\alpha_i,\alpha_k)+1,& \text{if }c(\alpha_k)=J;\\
\delta(\alpha_i,\alpha_k),& \text{if }c(\alpha_k)=K.
\end{cases}
\end{equation}

In particular, for $\alpha_i\prec \alpha_k$, the two-term sum
\begin{equation}
\abs*{\Delta_k(\alpha)_{\succ \mu_i^0(\alpha)}}+\abs*{\Delta_i(\alpha)_{\succ \mu_k^0(\alpha)}}
\end{equation}
only depends on $\delta(\alpha_i,\alpha_k)$ and the colors of $\alpha_i$ and $\alpha_k$. Moreover, if $\delta(\alpha_i,\alpha_k)$ is increased by $1$, then the two-term sum is increased by 1 (in general) or unchanged (in the special case where $\delta(\alpha_i,\alpha_k)=0$ before the increase and $c(\alpha_i)=J$). Noting that the criterion for the special case is precisely the criterion for an obstructed stretch, the proof of Lemma \ref{lem:D_factor} is complete.
\end{proof}

The factor $A(\alpha)$ is invariant under $\gamma\sbp j$ if each stretch between color-$K$ components has distance at least $3$.

\begin{lemma}\label{lem:A_factor}
If any $(b,h)\in S\sbp j(\alpha)$ such that $c(\alpha\sbp b)=c(\alpha\sbp h)=K$ satisfies $\delta_{bh}(\alpha)\neq 1,2$, then
\begin{equation}
A(\gamma\sbp j(\alpha))=A(\alpha).
\end{equation}
\end{lemma}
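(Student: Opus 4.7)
The plan is to use Lemma~\ref{lem:staircase} to express $A(\alpha)=|\Hilbnil(\alpha|_{K_\alpha})|$ as the count of pairs $(X,Y) \in \Mat_{K_\alpha\times K_\alpha}(k)^2$ satisfying $X^2=Y^3$, $[X,Y]=0$, and the vanishing constraints \eqref{eq:stair6}, \eqref{eq:stair7} (note that the $Z,W$ blocks are absent when we restrict to $K_\alpha$). The task is then to show that the set of allowed positions for nonzero entries of $X$ and $Y$ is preserved under $\gamma\sbp j$ (after relabeling via rank), since the matrix equations $X^2=Y^3$ and $[X,Y]=0$ are intrinsic.

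First I would convert the monomial-order vanishing conditions into distance conditions. For $i,j\in K_\alpha$, write $\mu_i^0=T^{l(\alpha_i)+2}u_i$ and $\mu_j^0=T^{l(\alpha_j)+2}u_j$. Using our hlex monomial order (compare $T$-degree first, then $u_i$ via $u_1\prec\cdots\prec u_d$), a direct case analysis shows
\begin{equation*}
\mu_j^0 \succeq T^3\mu_i^0 \iff \alpha_i \prec \alpha_j \text{ and } \delta(\alpha_i,\alpha_j)\geq 3,
\end{equation*}
and similarly $\mu_j^0\succeq T^2\mu_i^0 \iff \alpha_i\prec\alpha_j$ and $\delta(\alpha_i,\alpha_j)\geq 2$. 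Hence $X_{ij}$ is free iff the pair $\alpha_i\prec\alpha_j$ in $K_\alpha$ has distance $\geq 3$, and $Y_{ij}$ is free iff the same pair has distance $\geq 2$; all other entries must vanish. The allowed sparsity pattern therefore depends only on which $K$--$K$ pairwise distances clear the thresholds $2$ and $3$.

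Next I would apply Lemma~\ref{lem:stretch}: under $\gamma\sbp j$, the rank-indexed pairwise distances $\delta\sbp{bh}(\alpha)$ are unchanged except on stretches $(b,h)\in S\sbp j(\alpha)$, where they increase by exactly $1$. The hypothesis of the lemma says that every $K$--$K$ stretch $(b,h)\in S\sbp j(\alpha)$ has distance $0$ or $\geq 3$; after $\gamma\sbp j$ these become $1$ and $\geq 4$ respectively. In the $0\to 1$ case, both the ``$\geq 2$?'' and ``$\geq 3$?'' queries remain ``no''; in the $\geq 3\to \geq 4$ case, both remain ``yes''. Non-stretched pairs have unchanged distance. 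So the threshold-crossing answers are identical for $\alpha$ and $\gamma\sbp j(\alpha)$ on every $K$--$K$ pair.

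Finally, since $\gamma\sbp j$ preserves the color vector arranged by rank, the set $K_\alpha^{\mathrm{rank}}:=\set{r:c(\alpha\sbp r)=K}$ equals $K_{\gamma\sbp j(\alpha)}^{\mathrm{rank}}$, and this induces a canonical bijection $\sigma\colon K_\alpha\to K_{\gamma\sbp j(\alpha)}$ (seats identified by rank) that preserves the $\prec$-order on components. Conjugating $(X,Y)$ by the permutation matrix associated to $\sigma$ carries the allowed sparsity pattern and the equations $X^2=Y^3$, $[X,Y]=0$ for $\alpha$ onto those for $\gamma\sbp j(\alpha)$ identically. This yields a bijection between the parametrizing sets, hence $A(\gamma\sbp j(\alpha))=A(\alpha)$. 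The main technical step is the translation in the first paragraph above, which pins down the exact correspondence between the monomial-order constraints from Lemma~\ref{lem:staircase} and the distance thresholds governing spiral raising; once this is clean, the hypothesis ``distance $\neq 1,2$'' is precisely what is needed to avoid crossing a threshold.
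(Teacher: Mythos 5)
Your proof is correct and takes essentially the same approach as the paper: re-index the matrices from Lemma~\ref{lem:staircase} by rank, translate the monomial-order vanishing constraints into the thresholds $\delta\sbp{bh}\geq 3$ (for $X$) and $\delta\sbp{bh}\geq 2$ (for $Y$), and then use Lemma~\ref{lem:stretch} to observe that the hypothesis ``distance $\neq 1,2$'' on $K$--$K$ stretches means no distance crosses either threshold under $\gamma\sbp j$, so the parametrizing variety is unchanged. Your explicit verification of the translation and the rank-preserving relabeling is a slightly more detailed version of the same argument the paper gives.
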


\begin{proof}
Recall from Lemma \ref{lem:staircase} that $A(\alpha)=\abs{\Hilbnil(\ev{\alpha}{K_\alpha})}$ is the number of matrix pairs $X,Y\in \Mat_{K_\alpha\times K_\alpha}(\Fq)$ that satisfy $X^2=Y^3, XY=YX$, and
\begin{align}
X_{ik}&=0 \text{ if } \mu^0_k\prec T^3\mu^0_i,\\
Y_{ik}&=0 \text{ if } \mu^0_k\prec T^2\mu^0_i.
\end{align}

For $h\in [d]$, let $i\sbp h$ denote the seat of $\alpha\sbp h$. Change the indexing of the matrices by defining $X\sbp {bh}=X_{i\sbp b,i\sbp h}$ and similar for $Y\sbp {bh}$. The new index set is $H=\set{h: c(\alpha\sbp h)=K}$. We now understand $X,Y$ as matrices $(X\sbp{bh}), (Y\sbp{bh})$ in $\Mat_{H\times H}(\Fq)$. 

In this new indexing, $X,Y$ are strictly upper triangular, satisfy $X^2=Y^3, XY=YX$, and satisfy
\begin{align}
X\sbp {bh}&=0 \text{ if } \delta\sbp {bh}(\alpha)=0,1,2,\\
Y\sbp{bh}&=0 \text{ if } \delta\sbp{bh}(\alpha)=0,1
\end{align}
for $1\leq b<h\leq d$.

The defining equations for the pair $(X,Y)$ in the new indexing can only change when some $\delta\sbp{bh}(\alpha)$ increases from $1$ to $2$ or from $2$ to $3$. This finishes the proof.
\end{proof}

Putting Lemmas \ref{lem:B_factor}, \ref{lem:D_factor} and \ref{lem:A_factor} into the factorization \eqref{eq:strata_factorization}, we have proved the following property about $\abs{\Hilb(\alpha)}$.

\begin{lemma}\label{lem:stability}
Let $j\in [d]$ and $\alpha\in \Xi$. Assume $S\sbp j(\alpha)$ has no obstructed stretch, and the hypothesis of Lemma \ref{lem:A_factor} holds. Then
\begin{equation}
\abs{\Hilb(\gamma\sbp j(\alpha))} = q^{j-1} \abs{\Hilb(\alpha)}.
\end{equation}
\end{lemma}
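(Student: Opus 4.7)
The plan is to simply assemble the three multiplicative factors in the decomposition
\begin{equation*}
\abs{\Hilb(\alpha)}=A(\alpha)\cdot B(\alpha)\cdot D(\alpha)
\end{equation*}
from \eqref{eq:strata_factorization} and track how each one transforms under $\gamma\sbp j$ using Lemmas \ref{lem:B_factor}, \ref{lem:D_factor}, and \ref{lem:A_factor}. All the substantive work has already been carried out in those three lemmas, so this is essentially a one-line bookkeeping argument.

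First I would invoke Lemma \ref{lem:B_factor}, which gives $B(\gamma\sbp j(\alpha))=B(\alpha)$ unconditionally, since $B$ depends only on the color vector and spiral raising leaves the color vector untouched by construction. Next, the hypothesis of Lemma \ref{lem:A_factor} is assumed, so the same lemma yields $A(\gamma\sbp j(\alpha))=A(\alpha)$; geometrically, no stretch between two $K$-components crosses the critical distance thresholds $1,2$, so the matrix equations cutting out $\Hilbnil(\ev{\alpha}{K_\alpha})$ are preserved. Finally, by Lemma \ref{lem:D_factor}, $\delta(\gamma\sbp j(\alpha))=\delta(\alpha)+(j-1)-s$, where $s$ is the number of obstructed stretches in $S\sbp j(\alpha)$; by hypothesis $s=0$, so $D(\gamma\sbp j(\alpha))=q^{j-1}D(\alpha)$.

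Multiplying the three transformations yields
\begin{equation*}
\abs{\Hilb(\gamma\sbp j(\alpha))}=A(\alpha)\cdot B(\alpha)\cdot q^{j-1}D(\alpha)=q^{j-1}\abs{\Hilb(\alpha)},
\end{equation*}
as claimed. There is no genuine obstacle at this step; the lemma is really a packaging statement that collects the three independent behaviors of $A$, $B$, $D$ under $\gamma\sbp j$ into a single clean stability formula, which will then be the workhorse for the rationality argument in $t$ in the sequel.
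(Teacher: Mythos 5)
Your proof is correct and takes essentially the same approach as the paper, which states Lemma \ref{lem:stability} with the remark ``Putting Lemmas \ref{lem:B_factor}, \ref{lem:D_factor} and \ref{lem:A_factor} into the factorization \eqref{eq:strata_factorization}, we have proved the following property'' — precisely the bookkeeping you carry out.
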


In practice, we shall work with a stronger hypothesis.

\begin{definition}\label{def:stability}
We say $\alpha$ is \textbf{stable} under $\gamma\sbp j$ (or $\gamma\sbp j$-stable) if for all pairs $(b,h)$ such that $1\leq b<j\leq h\leq d$,
\begin{enumerate}
\item Whenever $c(\alpha\sbp b)=J$, we have $\delta\sbp{bh}(\alpha)\geq 1$;
\item Whenever $c(\alpha\sbp b)=c(\alpha\sbp h)=K$, we have $\delta\sbp{bh}(\alpha)\geq 3$.
\end{enumerate}
\end{definition}

The point is that if $\alpha$ is stable under $\gamma\sbp j$, then $\gamma\sbp k(\alpha)$ is also stable under $\gamma\sbp j$ for any $k\in [d]$. Therefore, we may iterate Lemma \ref{lem:stability} and prove the following result.

\begin{lemma}\label{lem:orbit}
Assume $\alpha$ is stable under $\gamma\sbp{j_1},\dots,\gamma\sbp{j_l}$ with $1\leq j_1<\dots<j_l\leq d$. Let
\begin{equation}
\gamma=\gamma\sbp{j_1}^{a_1}\circ \dots \circ \gamma\sbp{j_l}^{a_l}.
\end{equation}

Then
\begin{equation}
\abs{\Hilb(\gamma(\alpha))} = q^{a_1(j_1-1)+\dots+a_l(j_l-1)} \abs{\Hilb(\alpha)}.
\end{equation}
\end{lemma}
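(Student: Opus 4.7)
The plan is to induct on the total exponent $a_1 + \dots + a_l$. The base case $a_1 = \dots = a_l = 0$ is trivial, since then $\gamma$ is the identity and both sides equal $\abs{\Hilb(\alpha)}$. The inductive step combines Lemma \ref{lem:stability} with a preservation-of-stability observation that I will set up first.

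The key preliminary claim is: if $\alpha$ is $\gamma\sbp{j}$-stable, then so is $\gamma\sbp{k}(\alpha)$ for any $k \in [d]$. To see this, I would invoke Lemma \ref{lem:stretch}(a), which says that applying $\gamma\sbp{k}$ changes each pairwise distance $\delta\sbp{bh}$ by either $0$ or $1$; in particular, distances can only increase. Since the conditions in Definition \ref{def:stability} are lower bounds on $\delta\sbp{bh}$ (namely $\geq 1$ or $\geq 3$) attached to pairs of components with specified colors, and since the color vector is preserved by every $\gamma\sbp{k}$ by construction, the inequalities defining stability carry over verbatim from $\alpha$ to $\gamma\sbp{k}(\alpha)$.

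For the inductive step, pick any $i$ with $a_i \geq 1$. By Proposition \ref{prop:spiral_commute} the operators $\gamma\sbp{j}$ all commute, so I may rewrite
\begin{equation*}
\gamma = \gamma\sbp{j_i} \circ \gamma', \qquad \gamma' := \gamma\sbp{j_1}^{a_1}\circ\dots\circ \gamma\sbp{j_i}^{a_i-1}\circ\dots\circ \gamma\sbp{j_l}^{a_l}.
\end{equation*}
The inductive hypothesis applied to $\gamma'$ gives
\begin{equation*}
\abs{\Hilb(\gamma'(\alpha))} = q^{(a_i-1)(j_i-1) + \sum_{r\neq i} a_r(j_r-1)} \abs{\Hilb(\alpha)}.
\end{equation*}
By iterating the preservation claim above, $\gamma'(\alpha)$ is still $\gamma\sbp{j_i}$-stable, so Lemma \ref{lem:stability} applies and yields $\abs{\Hilb(\gamma\sbp{j_i}(\gamma'(\alpha)))} = q^{j_i-1}\abs{\Hilb(\gamma'(\alpha))}$. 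Multiplying the two equalities and collecting exponents gives the desired formula.

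The only nontrivial ingredient is the preservation observation, and this is really an immediate consequence of the monotonicity aspect of Lemma \ref{lem:stretch}. Aside from that, the argument is pure bookkeeping via commutativity of the $\gamma\sbp{j}$'s. I do not anticipate any significant obstacle.
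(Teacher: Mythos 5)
Your proof is correct and takes essentially the same route as the paper's: the paper also observes (just before the lemma) that $\gamma\sbp{j}$-stability is preserved under any $\gamma\sbp{k}$, and then says to iterate Lemma \ref{lem:stability}. You have merely spelled out the preservation step via the monotonicity of distances from Lemma \ref{lem:stretch}(a) and the fact that color vectors (indexed by rank) are unchanged by spiral raising, which is exactly the justification the paper leaves implicit.
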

\begin{proof}
Repeatedly apply Lemma \ref{lem:stability}, noting that its hypothesis always holds.
\end{proof}

\subsection{The generating function}
The main idea to prove the $t$-rationality of
\begin{equation}
H_d(t)=\sum_{\alpha\in \Xi} \abs{\Hilb(\alpha)} \cdot t^{n(\alpha)}
\end{equation}
is to decompose $\Xi$ into a disjoint union of finitely many ``stable orbits'' in the sense of the lemma below, each of which contributes to a rational function in $t$.

For $\alpha\in \Xi$, we denote the \textbf{content} of $\alpha$ (or the \textbf{contribution} of $\alpha$ to $H_d(t)$) by
\begin{equation}
\Cont(\alpha):=\abs{\Hilb(\alpha)} \cdot t^{n(\alpha)}.
\end{equation}

\begin{lemma}\label{lem:orbit_content}
Assume $\beta$ is stable under $\gamma\sbp{j_1},\dots,\gamma\sbp{j_l}$ with $1\leq j_1<\dots<j_l\leq d$. Let $\Gamma'$ be the subsemigroup of $\Gamma$ generated by $\gamma\sbp{j_1},\dots,\gamma\sbp{j_l}$. Consider the orbit $\Gamma'\cdot \beta$. Then we have
\begin{equation}
\sum_{\alpha\in \Gamma'\cdot \beta} \Cont(\alpha) = \frac{1}{(1-q^{j_1-1}t)\dots (1-q^{j_l-1}t)} \Cont(\beta).
\end{equation}
\end{lemma}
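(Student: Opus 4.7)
The plan is to reduce the sum to a product of geometric series by analyzing how the content $\Cont(\alpha)=\abs{\Hilb(\alpha)}\cdot t^{n(\alpha)}$ transforms under each generator $\gamma\sbp{j_k}$ and then iterating across the orbit. The key inputs are Lemma \ref{lem:stability} (which controls the multiplicative change in $\abs{\Hilb(\alpha)}$) and a companion observation about $n(\alpha)$.

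First I would compute the change of $n(\alpha)$ under one spiral raising. Using $n(J(a))=a=l(J(a))+1$ and $n(K(a))=a=l(K(a))$, one can write
\begin{equation*}
n(\alpha)=\sum_i l(\alpha_i) + \#\set{k : c(\alpha\sbp k)=J}.
\end{equation*}
Since $\gamma\sbp j$ preserves the color vector by definition and raises the sum of levels by exactly $1$ (one component's level is raised while the others are permuted among seats), it follows that $n(\gamma\sbp j(\alpha)) = n(\alpha)+1$. Combined with Lemma \ref{lem:stability}, this gives
\begin{equation*}
\Cont(\gamma\sbp{j_k}(\alpha)) = q^{j_k-1}t\cdot \Cont(\alpha)
\end{equation*}
whenever $\alpha$ is stable under $\gamma\sbp{j_k}$.

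Second, I would verify that stability under $\gamma\sbp{j_k}$ is preserved when any $\gamma\sbp{j_{k'}}$ from the generating set is applied. By Lemma \ref{lem:stretch}(a) every pairwise distance $\delta\sbp{bh}$ can only stay the same or grow by $1$, and the color vector indexed by rank is invariant, so each lower-bound inequality in Definition \ref{def:stability} propagates to the image. This licenses iteration of the single-step formula and yields
\begin{equation*}
\Cont(\gamma(\beta)) = \prod_{k=1}^l (q^{j_k-1}t)^{a_k}\cdot \Cont(\beta)
\end{equation*}
for every $\gamma=\gamma\sbp{j_1}^{a_1}\circ\dots\circ\gamma\sbp{j_l}^{a_l}\in \Gamma'$.

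Finally, I would check that distinct exponent tuples $(a_1,\dots,a_l)\in \N^l$ produce distinct orbit elements, so that the left-hand side is indexed precisely by $\N^l$. This is a consequence of Proposition \ref{prop:free_transitive}: the $\Gamma$-action on $\N^d$ is free at $\mathbf{0}$, and by cancellation in $\Gamma=\N^d$ freeness propagates to every base point (if $\gamma\cdot x = \gamma'\cdot x$ and $\eta\cdot \mathbf{0}=x$, then $(\gamma+\eta)\cdot \mathbf{0}=(\gamma'+\eta)\cdot \mathbf{0}$, forcing $\gamma=\gamma'$). Summing the content formula over $\N^l$ is then a product of geometric series giving $\Cont(\beta)\cdot \prod_k(1-q^{j_k-1}t)^{-1}$, which is the claimed identity. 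The only non-bookkeeping step is the second: confirming preservation of stability under the generating subsemigroup. I do not expect this to be a real obstacle given the monotonicity in Lemma \ref{lem:stretch}(a), but it is the conceptual pivot that lets the geometric-series summation close up.
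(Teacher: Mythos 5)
Your proposal is correct and follows the same route the paper takes (via its cited reference): the one-step content scaling $\Cont(\gamma\sbp{j_k}(\alpha)) = q^{j_k-1}t\,\Cont(\alpha)$ from Lemma \ref{lem:stability} together with $n(\gamma\sbp j(\alpha))=n(\alpha)+1$, propagation of stability along the generating subsemigroup (already packaged in the paper's Lemma \ref{lem:orbit}), and freeness of the orbit parametrization from Proposition \ref{prop:free_transitive} feeding a product of geometric series. The only mild redundancy is that you re-derive the preservation of stability, which the paper's Lemma \ref{lem:orbit} already supplies.
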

\begin{proof}
The proof is the same as \cite[Theorem 5.4]{huangjiang2022a}. All we need is Proposition \ref{prop:free_transitive}, Lemma \ref{lem:stability}, and the observation that
\begin{equation}\label{eq:n_plus_1}
n(\gamma\sbp j(\alpha))=n(\alpha)+1
\end{equation}
for any $j\in [d]$. This is because there is exactly one component (the rightmost moved component) whose level is raised by $1$. 
\end{proof}

We give a sufficient condition for the stability of $\alpha$.

\begin{lemma}\label{lem:is_stable}
Any $\alpha\in \Xi$ is $\gamma\sbp 1$-stable. If $j>1$, let $l(\alpha)=a\cdot \mathbf{0}, \; a=(a_1,\dots,a_d)\in \Gamma$ be the unique expression in Proposition \ref{prop:free_transitive}. Then $\alpha$ is $\gamma\sbp j$-stable if $a_j\geq 3(d-j+1)$.
\end{lemma}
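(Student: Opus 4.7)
My plan is to split on $j$. The case $j=1$ is immediate: the defining conditions of $\gamma\sbp 1$-stability (Definition~\ref{def:stability}) are quantified over pairs $(b,h)$ with $1 \leq b < 1 \leq h \leq d$, which is empty. For $j > 1$, I first observe that it suffices to prove the uniform bound $\delta\sbp{bh}(\alpha) \geq 3$ for all $1 \leq b < j \leq h \leq d$, since this implies both conditions (i) and (ii) of Definition~\ref{def:stability} regardless of the color vector $c(\alpha)$.

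To prove this bound, set $x := l(\alpha) \in \N^d$, so $\delta\sbp{bh}(\alpha) = \delta\sbp{bh}(x)$. Using commutativity (Proposition~\ref{prop:spiral_commute}), I reorder the decomposition $x = (\gamma\sbp 1^{a_1} \circ \dots \circ \gamma\sbp d^{a_d})(\mathbf{0})$ as
\begin{equation*}
x = \gamma^* \bigl( \gamma\sbp j^{a_j}(\mathbf{0}) \bigr),
\end{equation*}
where $\gamma^*$ is the composition of $\gamma\sbp k^{a_k}$ for $k \neq j$. By Lemma~\ref{lem:stretch}(a), each application of any $\gamma\sbp k$ changes every distance $\delta\sbp{bh}$ by either $0$ or $+1$, so distances are monotonically non-decreasing under any composition. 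Hence $\delta\sbp{bh}(x) \geq \delta\sbp{bh}(\gamma\sbp j^{a_j}(\mathbf{0}))$, and it remains to establish the bound at the special configuration $\gamma\sbp j^{a_j}(\mathbf{0})$.

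The main technical ingredient is a closed-form description of $\gamma\sbp j^{a_j}(\mathbf{0})$: writing $a_j = n(d-j+1) + r$ with $0 \leq r < d-j+1$, I claim the level vector has $0$ at seats $1, \dots, j-1$, value $n+1$ at seats $j, \dots, j+r-1$, and value $n$ at seats $j+r, \dots, d$. I will prove this by induction on $a_j$; the step is straightforward once one observes that in this structured configuration the $j-1$ lowest components always occupy seats $1, \dots, j-1$ (ties being broken by seat index), so the moved set under $\gamma\sbp j$ is always $\{j, \dots, d\}$ and $\gamma\sbp j$ acts by the explicit cyclic shift in its definition (the component at seat $d$ wraps to seat $j$ with its level raised by $1$, while the others shift right by one seat). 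With the formula established, the hypothesis $a_j \geq 3(d-j+1)$ forces $n \geq 3$, so every moved seat has level $\geq 3$. For $b < j \leq h$, the $b$-th sorted component of $\gamma\sbp j^{a_j}(\mathbf{0})$ is $(b,0)$ and the $h$-th is some $(s, l)$ with $s \in \{j, \dots, d\}$ and $l \geq 3$; since $s - b \in [1, d-1]$ gives $(s-b)/d \in (0,1)$, one computes $\delta\sbp{bh} = \lfloor l + (s-b)/d \rfloor = l \geq 3$, finishing the proof. The only real obstacle is the bookkeeping in the induction, but no subtle computation is required beyond tracking the cyclic shift.
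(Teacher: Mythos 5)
Your proof is correct, and while it shares the same core mechanism with the paper's proof (the ``spiral rotation'' observation that iterating $\gamma\sbp j$ cyclically raises the levels of the moved seats), it organizes the argument quite differently. The paper's proof asserts the uniform identity $\gamma\sbp j^{d-j+1}(x)_i = x_i$ for unmoved seats and $x_i + 1$ for moved seats \emph{for arbitrary} $x$, deduces $\delta\sbp{bh}(\gamma\sbp j^{d-j+1}(\beta)) = \delta\sbp{bh}(\beta) + 1$, and applies this three times via $a_j \geq 3(d-j+1)$. You instead compute the closed form of $\gamma\sbp j^{a_j}(\mathbf{0})$ directly, bound its distances, and then invoke the monotonicity of distances under the remaining operators $\gamma^*$, which follows from Lemma~\ref{lem:stretch}(a). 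Your reorganization has a real technical advantage: starting from $\mathbf{0}$, the invariant that the fixed set stays equal to $\{1,\dots,j-1\}$ is immediate (all levels are zero with seat-index tie-breaking, and moved seats' levels only increase), so the inductive closed form is elementary; by contrast, the paper's claim about $\gamma\sbp j^{d-j+1}(x)$ for general $x$ implicitly relies on the sorted order being preserved throughout the $d-j+1$ iterations, which is true but requires a small argument that the paper omits. The trade-off is that your proof needs the extra ingredient of distance monotonicity under all spiral raisings, but this is already available from Lemma~\ref{lem:stretch}(a) at no cost. Both proofs are valid; yours is arguably slightly more self-contained.
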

\begin{proof}
From the definition \eqref{eq:def_stretch}, there is no stretch for $\gamma\sbp 1$, so any $\alpha$ is vacuously stable. If $j>1$, we notice that for any $x=(x_1,\dots,x_d)\in \N^d$,
\begin{equation}
\gamma\sbp j^{d-j+1}(x)_i = 
\begin{cases}
x_i, & i=i(x\sbp b), b<j;\\
x_i+1, & i=i(x\sbp h), h\geq j.
\end{cases}
\end{equation}

This is because $\gamma\sbp j$ spiral-rotates the $d-j+1$ components $x\sbp j, \dots, x\sbp d$, so when one performs it $d-j+1$ times, these components return to their original seats and rise by one level. 

Therefore, for any $\beta\in \Xi$ and $b<j\leq h$, we have $\delta\sbp{bh}(\gamma\sbp j^{d-j+1}(\beta))=\delta\sbp{bh}(\beta)+1$. Hence for $\alpha$ satisfying the hypothesis, we have $\delta\sbp{bh}(\alpha)\geq 3$ for any $b<j\leq h$, so that $\alpha$ is $\gamma\sbp j$-stable. 
\end{proof}

This allows a (not necessarily minimal) decomposition of $\Xi$ into disjoint union of finitely many stable orbits, and proves the following theorem as a direct corollary.

\begin{theorem}\label{thm:cusp_formula}
For any color vector $c\in \set{J,K}^d$, let $\mathbf{0}_c$ be the element of $\Xi$ with level vector $\mathbf{0}$ and color vector $c$. Let $B$ denote the rectangle
\begin{equation}
\set{(0,b_2,\dots,b_d)\in \Gamma: 0\leq b_j\leq 3(d-j+1)\text{ for all }2\leq j\leq d}.
\end{equation}

For any $b\in B$, let $\Gamma_b$ be the subsemigroup of $\Gamma$ generated by $\gamma\sbp 1$ and all $\gamma\sbp j$ with $b_j=3(d-j)$. Then we have a stable orbit decomposition
\begin{equation}
\Xi=\bigsqcup_{c\in \set{J,K}^d} \bigsqcup_{b\in B} \Gamma_b (b\cdot \mathbf{0}_c).
\end{equation}

In particular, 
\begin{align}
H_d(t)&=\sum_{c\in \set{J,K}^d} \sum_{b\in B}  \frac{1}{\prod_{j: \gamma\sbp j\in \Gamma_b} (1-q^{j-1}t)} \Cont(b\cdot \mathbf{0}_c)\\
&=\sum_{c\in \set{J,K}^d} \sum_{b\in B}  \frac{\abs{\Hilb(b\cdot \mathbf{0}_c)}}{\prod_{j: \gamma\sbp j\in \Gamma_b} (1-q^{j-1}t)} t^{b_2+\cdots+b_d+\#_J(c)},
\end{align}
where $\#_J(c)$ is the number of $J$'s in the color vector $c$.
\end{theorem}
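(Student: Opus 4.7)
The plan is to assemble $H_d(t)$ by partitioning $\Xi$ into a disjoint union of $\Gamma'$-orbits of the form $\Gamma_b\cdot(b\cdot\mathbf{0}_c)$, each satisfying the stability hypothesis of Lemma \ref{lem:orbit_content}, and then summing the rational-function contributions delivered by that lemma. By Proposition \ref{prop:free_transitive}, every $\alpha\in\Xi$ is uniquely of the form $a\cdot\mathbf{0}_c$ for some $(a,c)\in\Gamma\times\{J,K\}^d$, and since every $\gamma\sbp j$ preserves the color vector, it suffices, for each fixed $c$, to exhibit a canonical disjoint decomposition $\Gamma=\bigsqcup_{b\in B}(b+\Gamma_b)$ of the semigroup $\Gamma$ itself.

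The key combinatorial move is a canonical truncation. For each $a=(a_1,\ldots,a_d)\in\Gamma$, the plan is to set $b(a)_1=0$ and to let $b(a)_j$ be the truncation of $a_j$ at the stability threshold furnished by Lemma \ref{lem:is_stable}. Then $b(a)\in B$, and the difference $a-b(a)$ is supported exactly on $\{1\}$ together with the coordinates at which $b(a)_j$ sits at the threshold, which is by definition the index set of the generators of $\Gamma_{b(a)}$; hence $a\in b(a)+\Gamma_{b(a)}$. Conversely, any alternative representation $a=b+a''$ with $b\in B$ and $a''\in\Gamma_b$ is forced to coincide with $(b(a),a-b(a))$, because $a''$ must vanish on every coordinate $j\geq 2$ where $b_j$ is below the threshold (these coordinates do not appear among the generators of $\Gamma_b$); reading off $b_j=a_j$ in that case and $b_j=\tau_j$ otherwise recovers the truncation. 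This simultaneously establishes exhaustiveness and disjointness.

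Once the partition is in place, every element of the orbit $\Gamma_b\cdot(b\cdot\mathbf{0}_c)$ has its $j$-th level-vector coordinate at or above the stability threshold whenever $\gamma\sbp j\in\Gamma_b$, so Lemma \ref{lem:is_stable} confirms the stability hypothesis for each generator of $\Gamma_b$, and Lemma \ref{lem:orbit_content} yields the rational contribution $\Cont(b\cdot\mathbf{0}_c)/\prod_{j:\gamma\sbp j\in\Gamma_b}(1-q^{j-1}t)$. To unpack the $t$-exponent, observe that $n(\mathbf{0}_c)=\#_J(c)$ (since $n(K(0))=0$ and $n(J(1))=1$) and each application of any $\gamma\sbp j$ raises $n$ by $1$ via \eqref{eq:n_plus_1}, so $n(b\cdot\mathbf{0}_c)=\#_J(c)+b_2+\cdots+b_d$. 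Summing the contributions over all $c$ and $b\in B$ yields the stated formula for $H_d(t)$. The principal subtlety is identifying the right canonical form $b(a)$ so that the partition is simultaneously exhaustive, disjoint, and orbit-stable; all the genuinely geometric and representation-theoretic content has already been absorbed into Proposition \ref{prop:free_transitive}, Lemma \ref{lem:is_stable}, and Lemma \ref{lem:orbit_content}, so the remaining work is purely bookkeeping.
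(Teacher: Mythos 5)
Your proof is correct and follows the same route the paper takes implicitly: the paper's own proof addresses only the identity $n(b\cdot\mathbf{0}_c)=b_2+\cdots+b_d+\#_J(c)$, relying on the reader to supply the partition of $\Gamma$ into translates $b+\Gamma_b$ and to invoke Lemma~\ref{lem:is_stable} and Lemma~\ref{lem:orbit_content}, which is precisely what you spell out. One thing worth flagging: you (correctly) truncate at the threshold $3(d-j+1)$ from Lemma~\ref{lem:is_stable}, which matches the upper bound of the rectangle $B$, whereas the theorem as printed defines $\Gamma_b$ by the condition $b_j=3(d-j)$; this appears to be a typographical slip in the statement, since with $3(d-j)$ the translates would overlap (an $a$ with $3(d-j)<a_j\leq 3(d-j+1)$ would admit two representations) and the base point $b\cdot\mathbf{0}_c$ would not be guaranteed $\gamma\sbp j$-stable by Lemma~\ref{lem:is_stable}. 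Your argument uses the consistent choice and is sound; the only minor imprecision is the phrase ``supported exactly on,'' since $a-b(a)$ can vanish on a coordinate where $b(a)_j$ sits at the threshold, but this does not affect the conclusion because $\Gamma_b$ contains $0$.
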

\begin{proof}
The only part that requires additional comment is the last line, which says
\begin{equation}
n(b\cdot \mathbf{0}_c) = b_2+\dots+b_d+\#_J(c).
\end{equation}

By \eqref{eq:n_plus_1}, it suffices to prove that $n(\mathbf{0}_c)=\#_J(c)$. Indeed, $\mathbf{0}_c$ consists of $K(0)$'s and $J(1)$'s, and since $n(K(0))=0, n(J(1))=1$, we are done. 
\end{proof}

The last assertion completes the proof of the rationality part of Theorem \ref{thm:cusp_t_rationality}, noting that all the denominators divide $(1-t)(1-qt)\dots (1-q^{d-1}t)$. Moreover, it provides a finite formula to compute $H_d(t)$ as long as we understand $\Hilbnil(\alpha)$ for any $\alpha$ purely of color $K$ of rank at most $d$. We will carry out the computation for $d\leq 3$ in \S \ref{sec:low_d}. 

\section{Rationality question in the $q$ variable}\label{sec:staircase}
Recall $H_d(t)=H_{d,R}(t)$ where $R=\Fq[[T^2,T^3]]$. We wonder whether whether $H_d(t)$ is rational in $q$ as well. The motivic version of this is precisely Conjecture \ref{conj:L_rationality}.

To approach Conjecture \ref{conj:L_rationality}, we formulate the following conjecture about a commuting matrix variety, which is of independent interest.

\begin{conjecture}\label{conj:L_rationality_strata}
Let $\alpha=(K(a_1),\dots,K(a_d))$ be a pure-$K$ leading term datum, and let $k$ be any field. Consider the affine variety $V(\alpha):=\set{(X,Y)}\subeq \Mat_d(k)^2$ defined by the matrix equation
\begin{align}
     \label{eq:L_stair3}&X^2=Y^3,\\
      \label{eq:L_stair4}&[X,Y]=0,\\
     \label{eq:L_stair6}& X_{ij}=0 \text{ if } \mu^0_j\prec T^3\mu^0_i,\\
     \label{eq:L_stair7}& Y_{ij}=0 \text{ if } \mu^0_j\prec T^2\mu^0_i.
\end{align}

Then the motive $[V(\alpha)]$ of $V(\alpha)$ in the Grothendieck ring $\KVar{k}$ of $k$-varieties is a polynomial in $\L$. 
\end{conjecture}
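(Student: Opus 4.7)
The plan is to induct on $d$, generalizing the strategy behind Theorem~\ref{thm:main_staircase}. The base case $d=1$ is immediate, since the support conditions force $X = Y = 0$, giving $[V(\alpha)] = 1$. For the inductive step, select the topmost component, that is, the index $k = i(\alpha\sbp d)$ so that $\mu_k^0$ is maximal among the $\mu_i^0$. Then $\mu_j^0 \preceq \mu_k^0 \prec T^3 \mu_k^0$ for every $j$, so conditions \eqref{eq:L_stair6} and \eqref{eq:L_stair7} (with $i = k$) force the $k$-th row of both $X$ and $Y$ to vanish. Let $\alpha'$ be the tuple obtained by removing the $k$-th component; the assignment
\begin{equation}
\pi : V(\alpha) \to V(\alpha'), \qquad (X,Y) \mapsto (X', Y'),
\end{equation}
obtained by deleting row $k$ and column $k$, is a well-defined morphism, since the block equations on the $(d-1) \times (d-1)$ part are exactly those cutting out $V(\alpha')$.

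The fiber of $\pi$ over $(X', Y') \in V(\alpha')$ consists of the pair of $k$-th columns $(x,y) \in \A^{|S_X|+|S_Y|}$, where $S_X, S_Y \subseteq [d] \setminus \{k\}$ are the index sets allowed by the support conditions on column $k$. A direct calculation using $X u_k = x$, $Y u_k = y$ shows that the remaining global equations $X^2 = Y^3$ and $[X,Y] = 0$ applied to $u_k$ reduce to the pair of linear constraints
\begin{equation}
X' y = Y' x \quad \text{and} \quad X' x = Y'^2 y
\end{equation}
on $(x,y)$. Hence each fiber of $\pi$ is a linear subspace of affine space, whose dimension is governed by the rank of a linear operator depending on $(X', Y')$.

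Next I would stratify $V(\alpha')$ so that over each stratum the fiber dimension is constant; on such strata $\pi$ becomes a Zariski-locally trivial affine bundle (the fiber being the kernel of a linear map of locally constant rank), and its contribution to $[V(\alpha)]$ is a power of $\L$ times the stratum motive. The main obstacle is that the natural strata---cut out by rank-at-most conditions on linear maps built from $(X', Y')$---are \emph{not} themselves of the form $V(\beta)$, so the bare inductive hypothesis does not apply. To close the induction, I would strengthen the hypothesis to the following: for every pure-$K$ leading term datum $\alpha$ and any finite collection of rank-at-most conditions on linear maps polynomially built from $(X,Y) \in V(\alpha)$, the corresponding locally closed locus has motive in $\Z[\L]$.

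The hard technical step is then verifying that each such enriched locus admits a further fibration over a lower-$d$ enriched locus with affine-space fibers, perhaps after further refinement by the rank of the evaluation map $(x,y) \mapsto (X'y - Y'x,\, X'x - Y'^2 y)$. A conceptual organizing device---such as the contracting $\mathbb{G}_m$-action $t \cdot (X,Y) = (t^3 X, t^2 Y)$, whose unique fixed point at the origin suggests a Białynicki-Birula–type paving, or a quiver-moduli interpretation of $V(\alpha)$ in the spirit of the representation theory of $R = k[[T^2,T^3]]$---would likely be essential to keep the combinatorial bookkeeping tractable and to ensure each enriched stratum decomposes into affine bundles, thereby completing the induction.
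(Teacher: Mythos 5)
This statement is a conjecture in the paper, not a theorem: it remains open. The paper proves only the special case where $\alpha$ is stable under all spiral-raising operators, for which $V(\alpha)\cong V_d$ carries no support constraints at all (Theorem \ref{thm:L_rationality_stable} and \S\ref{sec:staircase}), and the authors explicitly remark that they cannot deduce $\L$-rationality of general $V(\alpha)$ from the methods that succeed for $V_d$, even though $V(\alpha)$ involves fewer variables. Your sketch correctly reconstructs the fibration $V(\alpha)\to V(\alpha')$ by deleting a maximal row/column, correctly writes the fiber as a linear kernel, and correctly identifies the central obstruction: the rank strata of the linear map are not themselves of the form $V(\beta)$, so the naive induction breaks. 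This is exactly the wall the paper hits. But everything you write after that point is a proposal without content. The strengthened hypothesis—that \emph{every} locus cut out by rank-at-most conditions on \emph{any} polynomial linear maps has polynomial motive—is far broader than anything the paper needs (they use a single specific map $A_{M_d}$), and you give no evidence that it is either true or that it closes the induction; closing it is precisely what you flag as the ``hard technical step,'' which you do not attempt.

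You are also missing the ingredient that makes the $V_d$ case work: Lemma \ref{lem:exact_on_cohomology} (exactness of the $T$-sequence on the cohomology of the $2$-periodic complex), which yields the middle-dimensionality $\dim W^1=\tfrac{a+b}{2}$ (Lemma \ref{lem:numerical}(d)). Without it, the fiber strata $W^0\subseteq W^1\subseteq W^2$ would not have dimensions expressible purely in terms of $(a,b)$, the recursion of Corollary \ref{cor:staircase_formula} would not close, and ``stratify by rank'' would not by itself produce affine bundles with dimensions you can track. Any serious attack on the general $V(\alpha)$ would have to find an analogue of that homological middle-dimensionality result that survives the restriction of $(x,y)$ to the subspace prescribed by the support conditions $S_X,S_Y$, and that is precisely where the paper's argument does not extend. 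The suggestions about a $\mathbb{G}_m$-action or a quiver interpretation are reasonable things to try, but $V(\alpha)$ is singular in general, so a Bia\l ynicki--Birula paving is not automatic (cf.\ Remark \ref{rmk:method_difficulty}), and as written these are not arguments.
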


The set of $k$-points of $V(\alpha)$ is precisely $\Hilbnil(\alpha)$. By (the motivic versions of) \eqref{eq:strata_factorization} and Theorem \ref{thm:cusp_formula}, for any $D\geq 0$, Conjecture \ref{conj:L_rationality_strata} for all $0\leq d\leq D$ would imply Conjecture \ref{conj:L_rationality} for $d=D$. In \S \ref{sec:low_d}, we will show that both conjectures hold when $d\leq 3$.

We are able to prove the conjecture for the ``most general'' choices of $\alpha$.

\begin{theorem}\label{thm:L_rationality_stable}
If $\alpha$ is purely of color $K$ and is stable under $\gamma\sbp j$ for all $j\in [d]$ \tuparens{see Definition \ref{def:stability}}, then $[V(\alpha)]$ is a polynomial in $\L$ that does not depend on the specific choice of $\alpha$. Moreover, it can be computed using the inductive formula described in Corollary \ref{cor:staircase_formula}.
\end{theorem}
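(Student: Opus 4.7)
Under the hypothesis that $\alpha=(K(a_1),\dots,K(a_d))$ is stable under $\gamma\sbp j$ for every $j\in[d]$, I first observe that the union over $j=2,\dots,d$ of the stretch sets in \eqref{eq:def_stretch} covers every pair $(b,h)$ with $b<h$, so Definition~\ref{def:stability} forces $\delta\sbp{bh}(\alpha)\geq 3$ for all such pairs. Consequently the constraints \eqref{eq:L_stair6} and \eqref{eq:L_stair7} reduce to requiring only strict upper triangularity of $X$ and $Y$ in the ordering of basis vectors by $\mu^0$. Re-labelling along this ordering identifies $V(\alpha)$ canonically, and independently of $\alpha$, with the single variety $V_d=\{(X,Y)\in\Mat_d(k)^2\mid X,Y\text{ strictly upper triangular},\ X^2=Y^3,\ XY=YX\}$ of Theorem~\ref{thm:main_staircase}. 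This reduces the theorem to proving that $[V_d]$ is a polynomial in $\L$ admitting an inductive description.

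I would proceed by induction on $d$, with base $[V_0]=[V_1]=1$. For $d\geq 2$, consider the forgetful projection $\pi\colon V_d\to V_{d-1}$ sending $(X,Y)$ to its bottom-right $(d-1)\times(d-1)$ block $(X',Y')$. Writing $X=\begin{pmatrix}0 & \vec x^T\\ 0 & X'\end{pmatrix}$ and $Y=\begin{pmatrix}0 & \vec y^T\\ 0 & Y'\end{pmatrix}$, the equations $X^2=Y^3$ and $XY=YX$ split into $(X',Y')\in V_{d-1}$ together with the linear system
\[
\vec x^T X' = \vec y^T (Y')^2, \qquad \vec x^T Y' = \vec y^T X'
\]
on $(\vec x,\vec y)\in k^{2(d-1)}$, whose affine solution space is the fiber $\pi^{-1}(X',Y')$. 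The key recognition is that these two equations coincide with the syzygies of the minimal presentation of the maximal ideal $I=(x,y)$ in $R=k[x,y]/(x^2-y^3)$, so the fiber is naturally isomorphic to $\Hom_R(I,M)$ with $M=k^{d-1}$ the $R$-module determined by $(X',Y')$. Applying $\Hom_R(-,M)$ to $0\to I\to R\to k\to 0$ yields the four-term sequence
\[
0\to\Hom_R(k,M)\to M\to\Hom_R(I,M)\to\operatorname{Ext}^1_R(k,M)\to 0,
\]
which makes the fiber dimension a function only of $(\dim\operatorname{Ann}_M(I),\ \dim\operatorname{Ext}^1_R(k,M))$.

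From here the plan is to stratify $V_{d-1}$ by this pair of numerical invariants: on each stratum, $\pi$ is a vector bundle of constant rank, so its contribution to $[V_d]$ is a power of $\L$ times the stratum's motive. The induction step would then be completed by identifying each stratum, inductively, as a constructible fiber bundle built from lower-rank copies of $V_{d'}$, and assembling the contributions into the explicit recursion promised by Corollary~\ref{cor:staircase_formula}. The main obstacle is precisely this last step: because the category of finite-length modules over the cusp is wild, one cannot stratify $V_{d-1}$ by isomorphism type in any finite way, so the argument must exploit the coarseness of the two numerical invariants together with the rigidity imposed by strict upper triangularity to keep the strata tractable and to express them recursively through the $V_{d'}$ with $d'<d$. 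Carrying this stratification-cum-bookkeeping out precisely, so as to exhibit the inductive formula in closed form, is where the real work lies.
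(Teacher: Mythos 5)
Your first two steps exactly mirror the paper: the stability hypothesis forces $\delta\sbp{bh}(\alpha)\geq 3$ for every $b<h$ (take $j=b+1$ in Definition~\ref{def:stability}), so $V(\alpha)\cong V_d$, and the forgetful projection $V_d\to V_{d-1}$ (you peel off the first row/column, the paper the last; these are conjugate conventions) has as fiber the solution space of the linear system given by the matrix $A=\begin{bmatrix} x & -y^2\\ -y & x\end{bmatrix}$ acting on $M\oplus M$. Your identification of this fiber with $\Hom_R(\m,M)$ (equivalently $\ker A_M$, since $\coker A\cong\m$) is also correct, and it is precisely the observation the paper records in the remark following \eqref{eq:resolution} --- and then \emph{explicitly sets aside}, stating ``we are not able to make use of this observation.''

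The genuine gap is the one you name yourself: ``carrying this stratification-cum-bookkeeping out precisely \dots\ is where the real work lies.'' You propose to stratify $V_{d-1}$ by the pair $\bigl(\dim\Hom_R(k,M),\ \dim\operatorname{Ext}^1_R(k,M)\bigr)$, but you give no mechanism for tracking how this pair changes when one adds a new row/column, and that is the entire content of the theorem. The paper's actual proof replaces your homological-invariant pair by the single invariant $a(M)=\dim_k\ker A_M$ (with $b(M)=2\dim_k M-a(M)$), and proves a delicate structural result that makes the induction close. Concretely: it introduces a second matrix $A'$ (the adjugate), forms the $2$-periodic complex $C^\bullet$, and exhibits a degree-zero chain endomorphism $T$ with $T^2$ chain-homotopic to zero (Lemma~\ref{lem:auxiliary_identities}) such that the induced map on $H^0(C_M^\bullet)$ is exact, i.e.\ $\ker T^0_M=\im T^0_M$ (Lemma~\ref{lem:exact_on_cohomology}). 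This yields a three-step filtration $W^0(M)\subeq W^1(M)\subeq W^2(M)=\ker A_M$ with the crucial middle-dimensionality $\dim W^1=(a+b)/2$ (Lemma~\ref{lem:numerical}), and Lemma~\ref{lem:staircase_strata_induction} then shows that the stratum of the extended module depends \emph{only} on the stratum of $M$ and which of the three pieces $W^0$, $W^1\setminus W^0$, $W^2\setminus W^1$ the new vector lies in. That is what produces the closed recursion of Corollary~\ref{cor:staircase_formula}. Nothing in your proposal supplies an analogue of this filtration or explains why the new stratum should be determined by such coarse data; the wildness of $\Coh(\widehat{\O}_{\mathrm{cusp}})$ that you mention is precisely the obstacle that the $T$-machinery circumvents. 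So the proposal has the correct skeleton (and rediscovers the paper's discarded observation), but the decisive lemma --- the exactness of $T$ on cohomology and the resulting middle-dimensionality --- is absent.
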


A typical choice of $\alpha$ is $\alpha=(K(0),K(3),K(6),\dots,K(3(d-1))$. We will see in a moment that Theorem \ref{thm:L_rationality_stable} is about the solution set of a single matrix equation, regardless of $\alpha$. 

We first change the indexing of the matrices as in the proof of Lemma \ref{lem:A_factor}. As a result, 
\begin{equation}\label{eq:general_staircase}
V(\alpha)\cong 
\set*{
(X,Y)\in \Mat_d(k)^2
\ \vrule \
\begin{array}{l}
\text{$X,Y$ are strictly upper triangular}\\
X^2=Y^3, XY=YX\\
X_{ij} = 0\text{ if }\delta\sbp{ij}(\alpha)<3\\
Y_{ij} = 0\text{ if }\delta\sbp{ij}(\alpha)<2
\end{array}
}.
\end{equation}

Therefore, under the hypothesis of Theorem \ref{thm:L_rationality_stable}, the variety $V(\alpha)$ is isomorphic to the following variety that only depends on $d$:
\begin{equation}\label{eq:upper_triangular}
V_d:=
\set*{
(X,Y)\in \Mat_d(k)^2
\ \vrule \
\begin{array}{l}
\text{$X,Y$ are strictly upper triangular}\\
X^2=Y^3, XY=YX
\end{array}
}.
\end{equation}

For general $\alpha$ purely of color $K$, the variety $V(\alpha)$ is the interesection of $V_d$ and several coordinate hyperplanes that prescribe certain entries of $X$ or $Y$ to be zero. It is somewhat disturbing that while $V(\alpha)$ requires fewer variables than $V_d$ in general, we are not able to deduce the $\L$-rationality of $V(\alpha)$ from the below methods that are successful for $V_d$. 

We now devote the rest of the section to prove Theorem \ref{thm:L_rationality_stable} for $V_d$. 

\subsection{Induction setup for the motive of $V_d$}
We use induction on $d$. Let $(X_{d+1},Y_{d+1})\in \Mat_{d+1}(k)^2$ given by
\begin{equation}\label{eq:induction_rule}
X_{d+1} = \begin{bmatrix}
X_d & z \\
0 & 0
\end{bmatrix}, \quad
Y_{d+1} = \begin{bmatrix}
Y_d & w \\
0 & 0
\end{bmatrix},
\end{equation}
where $X_d, Y_d\in \Mat_d(k)$ and $z,w\in \Mat_{d\times 1}(k)$. By inspecting \eqref{eq:upper_triangular}, the pair $(X_{d+1},Y_{d+1})$ is in $V_{d+1}$ if and only if
\begin{equation}\label{eq:induction}
\begin{aligned}
(X_d,Y_d)&\in V_d; \\
X_d \,z &= Y_d ^2 \,w; \\
X_d \,w &= Y_d \,z.
\end{aligned}
\end{equation}

Thus, we have a morphism $\Phi_d: V_{d+1}\to V_d$ by sending $(X_{d+1},Y_{d+1})$ to $(X_d,Y_d)$, whose fibers are solution spaces of linear equations. Define the constructible function $a: V_d\to \N$ given by $a(X_d,Y_d):=\dim \Phi_d^{-1}(X_d,Y_d)$. In order to compute the motive of $V_{d+1}$, we need to compute the motive of the stratum $a^{-1}(i)$ for each $i\in \N$. Furthermore, in order to carry out the induction, we also need to understand how $a(X_{d+1},Y_{d+1})$ depends on the choice of $(X_d,Y_d)$ and $(z,w)$. This is a challenging task in general, but it turns out that we are able to reinterpret \eqref{eq:induction} as equations on modules over a ring, so that we can exploit tools in homological algebra. 

\subsection{Some homological algebra}
Let $R=k[[x,y]]/(x^2-y^3)$\footnote{It happens that the ring $k[[x,y]]/(x^2-y^3)$ arising from the nature of \eqref{eq:upper_triangular} is isomorphic to the singular ring $k[[T^2,T^3]]$ in the setup.}, $\m=(x,y)R$, and consider the $R$-module $M_d$ whose underlying vector space is $k^d$ and the multiplications by $x$ and $y$ are given by the matrices $X_d$ and $Y_d$, respectively. The condition $(X_d,Y_d)\in V_d$ ensures that the module structure of $M_d$ is well-defined. The module $M_d$ will be where $z$ and $w$ live. Consider the matrix
\begin{equation}
A=\begin{bmatrix}
x & -y^2\\
-y & x
\end{bmatrix}\in \Mat_2(R).
\end{equation}

Then \eqref{eq:induction} can be restated as $(z,w)\in \ker A_{M_d}$, where $A_{M_d}\in \End_R(M_d\oplus M_d)$ is the tensor product of $A:R^2\to R^2$ with $M_d$. We write $a(M_d):=a(X_d,Y_d)=\dim_k \ker A_{M_d}$. We also write $b(M_d)=\dim_k \im A_{M_d}$. By the rank-nullity theorem for $A_{M_d}$, we have $a(M_d)+b(M_d)=2d$ and $a(M_d)=\dim_k \coker A_{M_d}$. 

Recall that we need to understand how $a(M_d)$ varies as $M_d=(X_d,Y_d)\in V_d$ varies. General principle suggests that we shall fix a minimal resolution of $\coker A$:
\begin{equation}\label{eq:resolution}
\cdots \map[A] R^2 \map[A'] R^2  \map[A] R^2 \map \coker A \map 0,
\end{equation}
where
\begin{equation}
A'=\begin{bmatrix}
x & y^2\\
y & x
\end{bmatrix}.
\end{equation}

One can immediately verify $AA'=A'A=0$, so the above is indeed a chain complex. 

\begin{remark}
Incidentally, $\coker A$ is isomorphic to $\m$, so $\coker A_{M_d} \cong \m \otimes_R M_d$, so that $a(M_d)=\dim_k \parens*{\m\otimes_R M_d}$. However, we are not able to make use of this observation. In fact, the rest of the proof below does not even use the fact that \eqref{eq:resolution} is exact, but only that it is a chain complex. 
\end{remark}

Consider the $2$-periodic chain complex
\begin{equation}
C^\bullet: \cdots \map[A] R^2 \map[A'] R^2 \map[A] R^2 \map[A'] \cdots,
\end{equation}
where the degree convention is made such that a part of $C^\bullet$ reads $C^{-1} \map[A'] C^0 \map[A] C^1$. We explore some properties that hold \emph{universally} for $C^\bullet$, i.e., hold for the chain complex $C_M^\bullet:=C^\bullet \otimes_R M$ for any $R$-module $M$. Define the matrices
\begin{equation}
T=\begin{bmatrix}
0 & y \\
1 & 0
\end{bmatrix}, \quad
H=\begin{bmatrix}
0 & 1 \\
0 & 0
\end{bmatrix}, \quad H'=-H
\end{equation}
in $\Mat_2(R)$ and the matrix
\begin{equation}
K=\begin{bmatrix}
0 & 1 & 0 & -y \\
0 & 0 & 1 & 0
\end{bmatrix}
\end{equation}
in $\Mat_{2\times 4}(R)$.

The following matrix identities on $\Mat_2(R)$ or $\Mat_{2\times 4}(R)$ can be verified directly using the relation $x^3=y^2$ on $R$.
\begin{lemma}\label{lem:auxiliary_identities}
~
\begin{enumerate}
\item $[A,T]=[A',T]=0$.
\item $H'A+A'H=HA'+AH'=T^2$.
\item $
AK=H\begin{bmatrix}
A & O_{2\times 2}
\end{bmatrix}-T\begin{bmatrix}
T & -A'
\end{bmatrix}
$.
\item $
\begin{bmatrix}
 I_{2\times 2} & O_{2\times 2}
\end{bmatrix}
-TK = H\begin{bmatrix}
T & -A'
\end{bmatrix} + A'\begin{bmatrix}
O_{2\times 2} & H
\end{bmatrix}
$.
\item Let $g=\mathrm{diag}(1,-1)\in \Mat_2(R)$, and note that $g=g^{-1}$. Then $gAg=A'$ and $gTg=-T$.
\end{enumerate}
\end{lemma}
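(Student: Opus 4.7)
Each identity is a finite matrix equation in $\Mat_2(R)$ or $\Mat_{2 \times 4}(R)$ with $R = k[[x,y]]/(x^2-y^3)$, so the plan is direct entrywise verification. A few structural observations organize the computation and keep it tidy.

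The key preliminary is that $T^2 = yI_2$, hence $T^3 = yT$. The matrices $A$ and $A'$ admit the decomposition $A = xI_2 - T^3$ and $A' = xI_2 + T^3$, from which identity (a) is immediate, since $T$ commutes with any polynomial in $T$. Identity (e) is equally immediate: conjugation by $g = \mathrm{diag}(1,-1)$ fixes diagonal entries and negates off-diagonal ones, which carries $A \mapsto A'$ and $T \mapsto -T$.

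For (b), using $H' = -H$ and $T^2 = yI$, the two assertions reduce to showing $A'H - HA = yI$ and $HA' - AH = yI$, each a one-line $2 \times 2$ computation. The $g$-symmetry from (e) in fact exchanges the two assertions (note that $gHg = -H = H'$), so only one of them needs to be checked. For (c) and (d), I would multiply out both sides column by column, invoking the relation $x^2 = y^3$ where needed. The $2 \times 4$ block shape makes this bookkeeping-heavy but conceptually routine; the main care is with signs, since $T$ and $-A'$ appear with opposite conventions in the $2 \times 4$ concatenations.

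There is no real obstacle; the lemma is purely calculational. Its role is to assemble the algebraic pieces needed for the forthcoming homological induction on $d$: the pair $(H, H')$ will serve as null-homotopies witnessing that multiplication by $T^2 = y$ is nullhomotopic on the $2$-periodic complex $C^\bullet$, and $K$ together with (c)--(d) provides chain-level data for truncating that complex. All conceptual content lies downstream in applying these identities to control $\ker A_{M_d}$, not in verifying them.
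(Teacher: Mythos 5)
Your structural observations are genuinely useful and match the spirit of the paper's laconic ``can be verified directly''. The decomposition $T^2 = yI_2$, $A = xI_2 - T^3$, $A' = xI_2 + T^3$ makes (a) immediate, and the $g$-symmetry correctly cuts the work of (b) in half (conjugation by $g$ exchanges $A \leftrightarrow A'$ and $H \leftrightarrow H'$, hence exchanges the two asserted equalities). Part (e) is trivial. Also, you correctly use $x^2 = y^3$; the paper's remark ``using the relation $x^3 = y^2$'' is a typo.

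However, your claim ``there is no real obstacle'' is premature: \emph{identity (c) as printed is false} in any characteristic $\neq 2$, and your proposed column-by-column check would expose this. Direct computation gives
\begin{equation}
AK = \begin{bmatrix} 0 & x & -y^2 & -xy \\ 0 & -y & x & y^2 \end{bmatrix},
\qquad
H\begin{bmatrix} A & O \end{bmatrix} - T\begin{bmatrix} T & -A' \end{bmatrix}
= \begin{bmatrix} -2y & x & y^2 & xy \\ 0 & -y & x & y^2 \end{bmatrix},
\end{equation}
so the two sides differ by $2y\cdot H\begin{bmatrix} T & -A' \end{bmatrix}$. (The easiest way to see this is: column $1$ of the left side is $A\cdot(0,0)^T = 0$, while column $1$ of the right side is $H\cdot(x,-y)^T - T\cdot(0,1)^T = (-y,0)^T - (y,0)^T = (-2y,0)^T$.) The correct identity is
\begin{equation}
AK = H\begin{bmatrix} A & O \end{bmatrix} + (2yH - T)\begin{bmatrix} T & -A' \end{bmatrix}
= H\begin{bmatrix} A & O \end{bmatrix} - T\begin{bmatrix} T & -A' \end{bmatrix} + 2yH\begin{bmatrix} T & -A' \end{bmatrix}.
\end{equation}
Note also that $K$ cannot be redefined to save the printed (c), because $T$ is injective and thus $K$ is forced by identity (d), which you should verify does hold (it does, and without using $x^2 = y^3$). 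The good news is that the downstream application in the proof of Lemma~\ref{lem:exact_on_cohomology} is unaffected: one only applies (c) to a vector $(u,v)^T$ with $\begin{bmatrix} T & -A' \end{bmatrix}(u,v)^T = 0$, and the correction term $2yH\begin{bmatrix} T & -A' \end{bmatrix}$ vanishes on this locus. So you should either record (c) in the corrected form above, or weaken (c) to a congruence modulo the left ideal generated by $\begin{bmatrix} A & O \end{bmatrix}$ and $\begin{bmatrix} T & -A' \end{bmatrix}$, which is all that is used.
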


From (a), the matrix $T$ induces a chain map $T:C^\bullet\to C^\bullet$. In particular, for any $R$-module $M$, we have the induced chain map $T_M:C_M^\bullet\to C_M^\bullet$ and the induced maps on the cohomology of $C_M^\bullet$:
\begin{equation}
T^0_M: H^0(C_M^\bullet)\to H^0(C_M^\bullet), \quad T^1_M: H^1(C_M^\bullet)\to H^1(C_M^\bullet).
\end{equation}

Lemma \ref{lem:auxiliary_identities}(e) implies that the chain $C_M^\bullet$ together with the chain map $T_M$ ``looks the same'' in every degree:

\begin{lemma}\label{lem:iso_to_shift}
The matrix $g:C^i=R^2\to C^{i+1}=R^2$ induces a chain isomorphism $g: C^\bullet \to C^\bullet[1]$, where $(C^\bullet[1])^i=C^{i+1}$. Moreover, we have a commutative diagram of chain complexes:
\begin{equation}
\begin{tikzcd}
	{C^\bullet} & {C^\bullet[1]} \\
	{C^\bullet} & {C^\bullet[1]}
	\arrow["{T}", from=1-1, to=2-1]
	\arrow["{-T[1]}", from=1-2, to=2-2]
	\arrow["{g}", from=1-1, to=1-2]
	\arrow["{g}", from=2-1, to=2-2]
\end{tikzcd}
\end{equation}
\end{lemma}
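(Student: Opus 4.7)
The plan is to reduce both assertions directly to the matrix identities $gAg = A'$ and $gTg = -T$ from Lemma~\ref{lem:auxiliary_identities}(e); the statement is designed so that no new computation is really needed, just the right bookkeeping.

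First I would spell out the conventions. By $2$-periodicity and the given placement $C^{-1} \xrightarrow{A'} C^0 \xrightarrow{A} C^1$, the differential $d_C^n$ equals $A$ when $n$ is even and $A'$ when $n$ is odd. I would adopt the unsigned shift convention $d_{C[1]}^n = d_C^{n+1}$, so that $T[1]$ acts as the same matrix $T$ in every degree and the square in the lemma is literally a square of matrices in $\Mat_2(R)$. That $g$ is a degreewise isomorphism is immediate since $g^2 = I_2$.

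To verify that $g$ is a chain map $C^\bullet \to C^\bullet[1]$, I would check $d_C^{n+1} \circ g = g \circ d_C^n$ in the two cases determined by the parity of $n$. These read $A' g = g A$ and $A g = g A'$, respectively; both follow from $g A g = A'$ together with $g^2 = I_2$, i.e.\ from part (e). For the commutative square, since $T$ is already a chain map by part (a) and is represented by the same matrix in every degree, commutativity reduces to the single identity $-T g = g T$ in $\Mat_2(R)$, which rearranges to $gTg = -T$ --- again part (e).

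The main thing to get right is the shift convention: with the signed convention $d_{C[1]}^n = -d_C^{n+1}$, the map $g$ would fail to be a chain map on the nose (one would need an alternating sign $(-1)^n g$ instead), so I would flag this choice explicitly so the reader is not confused by the absence of signs. Beyond that, there is no real obstacle; the lemma is a repackaging of Lemma~\ref{lem:auxiliary_identities}(e) at the level of the $2$-periodic complex $C^\bullet$.
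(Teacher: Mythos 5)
Your proof is correct and takes essentially the same route as the paper: both reduce the chain-map property and the square to the matrix identities $gAg = A'$ and $gTg = -T$ of Lemma~\ref{lem:auxiliary_identities}(e), and both obtain the isomorphism from $g^2 = I_2$ together with $2$-periodicity. Your explicit flagging of the unsigned shift convention $d_{C[1]}^n = d_C^{n+1}$ is a sound clarification of a point the paper leaves implicit.
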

\begin{proof}
The fact that $g$ is a chain map and that the diagram commutes is just a restatement of Lemma \ref{lem:auxiliary_identities}(e). The chain map $g$ is a chain isomorphism because $g^2:C^\bullet\to C^\bullet[2]=C^\bullet$ is the identity map, where the equal sign reflects the $2$-periodicity of the chain complex $C^\bullet$.
\end{proof}

Lemma \ref{lem:auxiliary_identities}(b)(c)(d) implies a property of the chain map $T$ that holds universally:

\begin{lemma}\label{lem:exact_on_cohomology}
For any $R$-module $M$, the sequence
\begin{equation}
\dots \map[T^i_M]H^i(C_M^\bullet)\map[T^i_M]H^i(C_M^\bullet)\map[T^i_M] \dots
\end{equation}
is exact for $i=0,1$. 
\end{lemma}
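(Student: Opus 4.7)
The plan is to derive both conclusions --- that the displayed sequence is a complex and that it is exact --- directly from the three algebraic identities in Lemma~\ref{lem:auxiliary_identities}(b)(c)(d), applied after tensoring with $M$. For the complex property $(T^i_M)^2 = 0$ on $H^i$, I would exhibit a chain homotopy from $T^2$ to zero on $C^\bullet$: with the differential being $A, A', A, A', \ldots$ in successive degrees, the two equations $A'H + H'A = T^2$ and $HA' + AH' = T^2$ from (b) say exactly that the degree $-1$ maps alternating $H, H', H, H', \ldots$ provide such a homotopy. This survives $\otimes_R M$, so $T^2$ acts as zero on $H^*(C_M^\bullet)$, confirming that the sequence is a complex.

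For exactness at $H^0$, suppose $x \in C_M^0$ is a cocycle (so $A_M x = 0$) whose class is killed by $T^0_M$, i.e.\ $T_M x = A'_M u$ for some $u \in C_M^{-1}$. The goal is a cocycle $y \in C_M^0$ with $T_M y - x \in \im A'_M$. I would set $y := K_M \begin{bmatrix} x \\ u \end{bmatrix}$. Identity (c) then gives $A_M y = H_M(A_M x) - T_M(T_M x - A'_M u) = 0$, so $y$ is indeed a cocycle, while identity (d) gives $x - T_M y = H_M(T_M x - A'_M u) + A'_M(H_M u) = A'_M(H_M u) \in \im A'_M$. Hence $[x] = T^0_M [y]$ in $H^0$, establishing $\ker T^0_M \subseteq \im T^0_M$.

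To handle the remaining exactness at $H^1$, I would transfer it from $H^0$ via Lemma~\ref{lem:iso_to_shift}: the chain isomorphism $g\colon C^\bullet \to C^\bullet[1]$ intertwines $T$ with $-T[1]$, so induces a group isomorphism $H^0(C_M^\bullet) \cong H^1(C_M^\bullet)$ carrying $T^0_M$ to $-T^1_M$. Exactness of a $T$-sequence is manifestly preserved under negating all the maps, so exactness at $H^1$ follows. The main obstacle is simply locating the preimage $y$ in the middle step, but the matrix $K$ of Lemma~\ref{lem:auxiliary_identities} is engineered precisely for this: (c) forces $y$ to be a cocycle using the two hypotheses $A_M x = 0$ and $T_M x = A'_M u$, and (d) forces $T_M y - x$ to be a coboundary using the same relations. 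The nontrivial content has thus been absorbed into the verification of Lemma~\ref{lem:auxiliary_identities}; the rest is formal.
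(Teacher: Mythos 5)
Your proposal is correct and follows essentially the same route as the paper's own proof: the chain homotopy from Lemma~\ref{lem:auxiliary_identities}(b) gives $(T^i_M)^2=0$, the matrix $K$ together with identities (c) and (d) produces the required preimage in $H^0$, and Lemma~\ref{lem:iso_to_shift} transfers the exactness to $H^1$. The only cosmetic difference is that the paper invokes the shift isomorphism up front before doing the $H^0$ computation, whereas you defer it to the end; the substance and the choice of witness $y=K_M\begin{bmatrix}x\\u\end{bmatrix}$ coincide exactly with the paper's $s=K\begin{bmatrix}u\\v\end{bmatrix}$.
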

\begin{proof}
Lemma \ref{lem:auxiliary_identities}(b) implies that $T^2:C^\bullet\to C^\bullet$ is chain homotopic to zero, with the chain homotopy map given by $H: C^0\to C^1$ from even degrees and $H':C^{-1}\to C^0$ from odd degrees. Therefore, the induced map on the cohomology $(T^i_M)^2: H^i(C_M^\bullet)\to H^i(C_M^\bullet)$ is zero.

To show the exactness, it suffices to work with $i=0$ thanks to Lemma \ref{lem:iso_to_shift} tensored with $M$. It remains to prove $\ker T^0_M \subeq \im T^0_M$, where we recall that $T^0_M: \ker A_M/\im A'_M\to \ker A_M/\im A'_M$. Let $\bbar u\in \ker(T^0_M)\subeq \ker A_M/\im A'_M$. Take a lift $u\in M^2$ with $Au=0$, then $\bbar u\in \ker(T^0_M)$ implies $Tu=A'v$ for some $v\in M^2$. Note that
\begin{equation}
\begin{bmatrix}
A & O_{2\times 2}
\end{bmatrix}
\begin{bmatrix}
u\\
v
\end{bmatrix}
=
\begin{bmatrix}
T & -A'
\end{bmatrix}
\begin{bmatrix}
u\\
v
\end{bmatrix}=0 \in M^2.
\end{equation}

Consider
\begin{equation}
s=K\begin{bmatrix}
u\\
v
\end{bmatrix}.
\end{equation}

Then by Lemma \ref{lem:auxiliary_identities}(c),
\begin{equation}
As=AK\begin{bmatrix}
u\\
v
\end{bmatrix}=H \begin{bmatrix}
A & O_{2\times 2}
\end{bmatrix}
\begin{bmatrix}
u\\
v
\end{bmatrix}-T
\begin{bmatrix}
T & -A'
\end{bmatrix}
\begin{bmatrix}
u\\
v
\end{bmatrix}=0\in M^2.
\end{equation}

By Lemma \ref{lem:auxiliary_identities}(d),
\begin{equation}
\begin{aligned}
u-Ts&=\parens*{\begin{bmatrix}
 I_{2\times 2} & O_{2\times 2}
\end{bmatrix}
-TK} \begin{bmatrix}
u\\
v
\end{bmatrix}\\
&=
H\begin{bmatrix}
T & -A'
\end{bmatrix}\begin{bmatrix}
u\\
v
\end{bmatrix}
+A'\begin{bmatrix}
O_{2\times 2} & H
\end{bmatrix}\begin{bmatrix}
u\\
v
\end{bmatrix}\\
&=
A'\begin{bmatrix}
O_{2\times 2} & H
\end{bmatrix}\begin{bmatrix}
u\\
v
\end{bmatrix}\in \im(A'_M).
\end{aligned}
\end{equation}

Hence $s\in \ker(A_M)$ and $T^0_M(\bbar s)=\bbar u$, so $\bbar u\in \im(T^0_M)$.
\end{proof}

In particular, for any $R$-module $M$, there is a distinguished subspace $\bbar {W_M}\subeq H^0(C_M^\bullet)$ with $\bbar W_M=\ker(T^0_M)=\im(T^0_M)$. It is instructive to think about the filtrations associated to the nilpotent operator $T_M^0$ on $H^0(C^\bullet_M)$:
\begin{equation}\label{eq:nilponent_filtration}
0\subeq \bbar W_M \subeq H^0(C_M^\bullet), \qquad H^0(C_M^\bullet)\map[T^0_M] \bbar W_M \map[T^0_M] 0.
\end{equation}

When $M$ is finite-dimensional over $k$, the rank-nullity theorem implies
\begin{equation}\label{eq:middle_dimensionality}
\dim_k H^0(C_M^\bullet) = 2\dim_k \bbar W_M.
\end{equation}

Taking the preimage of \eqref{eq:nilponent_filtration} with respect to the quotient map $\ker A_M \onto \ker A_M/\im A'_M$, we define
\begin{equation}
W^0(M):=\im A'_M \subeq W^1(M):=T_M^{-1}(\im A'_M) \subeq W^2(M):=\ker A_M.
\end{equation}

Then \eqref{eq:middle_dimensionality} implies $\dim_k W^1(M) = (\dim_k W^0(M)+\dim_k W^2(M))/2$. This middle-dimensionality is the only reason why we need Lemma \ref{lem:exact_on_cohomology}.

We summarize the relations among relevant dimensions associated to $C_M^\bullet$ and $T$ for a finite-dimensional $R$-module $M$. Recall $a(M):=\dim_k \ker A_M$ and $b(M):=\dim_k \im A_M$, and we have $a(M)+b(M)=2\dim_k M$ by the rank-nullity theorem.

\begin{lemma}\label{lem:numerical}
For any $R$-module $M$ with $\dim_k M<\infty$, we have
\begin{enumerate}
\item $\dim_k \ker A'_M = \dim_k \ker A_M = \dim_k W^0(M) = a(M)$;
\item $\dim_k \im A'_M = \dim_k \im A_M = \dim_k W^2(M) = b(M)$.
\item $\dim_k H^0(C_M^\bullet) = \dim_k H^1(C_M^\bullet) = a(M) - b(M)$. 
\item $\dim_k W^1(M) = \dfrac{a(M)+b(M)}{2}$.
\end{enumerate}
\end{lemma}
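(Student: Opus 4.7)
The plan is to reduce every assertion to the rank–nullity theorem applied to $A_M$ and to $T_M^0$, together with the two structural inputs already in hand: the symmetry $A' = gAg$ with $g$ invertible (Lemma \ref{lem:auxiliary_identities}(e)) and the exactness of $T^0_M$ on $H^0(C_M^\bullet)$ (Lemma \ref{lem:exact_on_cohomology}). So there is no genuine obstacle; the content is bookkeeping.

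First I would dispatch (a) and (b) together. The equalities $\dim_k \ker A_M = a(M)$ and $\dim_k \im A_M = b(M)$ are the definitions, and $\dim_k W^2(M) = a(M)$, $\dim_k W^0(M) = b(M)$ are immediate from $W^2(M) = \ker A_M$ and $W^0(M) = \im A'_M$ (there is an evident swap of $W^0$ and $W^2$ between (a) and (b) as stated, which the proof will make transparent). To handle the primed versions, I would observe that $g \in \Mat_2(R)$ in Lemma \ref{lem:auxiliary_identities}(e) is a scalar involution, so it induces a $k$-linear automorphism $g_M \colon M^{\oplus 2} \to M^{\oplus 2}$, and the identity $gAg = A'$ upgrades to $A'_M = g_M A_M g_M$. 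Hence $\ker A'_M = g_M(\ker A_M)$ and $\im A'_M = g_M(\im A_M)$, matching dimensions.

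Next, for (c), the identifications $H^0(C_M^\bullet) = \ker A_M / \im A'_M$ and $H^1(C_M^\bullet) = \ker A'_M / \im A_M$ combined with (a), (b) give $\dim_k H^i(C_M^\bullet) = a(M) - b(M)$ for $i = 0,1$; alternatively, the chain isomorphism $g \colon C^\bullet \to C^\bullet[1]$ of Lemma \ref{lem:iso_to_shift}, tensored with $M$, provides a canonical isomorphism $H^0(C_M^\bullet) \cong H^1(C_M^\bullet)$.

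Finally, for (d), Lemma \ref{lem:exact_on_cohomology} gives $\ker T_M^0 = \im T_M^0 = \bbar W_M$ inside the finite-dimensional space $H^0(C_M^\bullet)$, so rank–nullity for $T_M^0$ yields
\begin{equation}
\dim_k H^0(C_M^\bullet) \;=\; \dim_k \ker T_M^0 + \dim_k \im T_M^0 \;=\; 2\dim_k \bbar W_M,
\end{equation}
hence $\dim_k \bbar W_M = (a(M) - b(M))/2$ by (c). Since $W^1(M)$ is by construction the preimage of $\bbar W_M$ under $\ker A_M \onto \ker A_M / \im A'_M$, we get $\dim_k W^1(M) = \dim_k \im A'_M + \dim_k \bbar W_M = b(M) + (a(M)-b(M))/2 = (a(M)+b(M))/2$, which is (d).
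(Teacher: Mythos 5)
Your proof is correct and follows essentially the same route as the paper: parts (a)--(c) come from the chain isomorphism $g \colon C^\bullet \to C^\bullet[1]$ of Lemma~\ref{lem:iso_to_shift} (your direct appeal to Lemma~\ref{lem:auxiliary_identities}(e) is exactly the mechanism underlying that lemma), and (d) from the exactness of $T^0_M$ on $H^0(C_M^\bullet)$ combined with rank--nullity, which is the ``middle-dimensionality'' observation. You are also right that the lemma as printed has $W^0$ and $W^2$ transposed in (a) and (b): since $W^0(M)=\im A'_M$ and $W^2(M)=\ker A_M$, the correct equalities are $\dim_k W^0(M)=b(M)$ and $\dim_k W^2(M)=a(M)$, and indeed this is the assignment the paper uses immediately afterward in the proof of Lemma~\ref{lem:staircase_strata_induction}.
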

\begin{proof}
Identities (a)(b)(c) are immediate from tensoring Lemma \ref{lem:iso_to_shift} with $M$. The identity (d) is restating the middle-dimensionality observation above. Note that $\dim_k H^0(C_M^\bullet)$ is \emph{a priori} $\dim_k \ker A_M - \dim_k \im A'_M$, so it is necessary to observe (b) to express $\dim_k H^0(C_M^\bullet)$ in terms of the rank and the nullity of $A_M$. 
\end{proof}

\subsection{A stratification on $V_d$}
In light of Lemma \ref{lem:numerical}, we have a well-defined stratification on $V_d$ by
\begin{equation}\label{eq:stratification_staircase}
V_d=\bigsqcup_{\substack{a+b=2d\\a\geq b\geq 0}} V\sbp{a,b}, \quad V\sbp{a,b}:=\set{M_d\in V_d: a(M_d)=a, b(M_d)=b}.
\end{equation}

Recall that a point $(X_{d+1},Y_{d+1})$ of $V_{d+1}$ is determined by a point $(X_d,Y_d)\in V_d$ and a vector $u:=\begin{bmatrix}
z\\
w
\end{bmatrix}\in \ker A_{M_d}$, according to the rule \eqref{eq:induction_rule}. We then determine the stratum that $(X_{d+1},Y_{d+1})$ belongs to in terms of $M_d$ and $u$.

\begin{lemma}\label{lem:staircase_strata_induction}
In the notation above, suppose $M_d\in V\sbp{a,b}$, then
\begin{equation}
(X_{d+1},Y_{d+1}) \in \begin{cases}
V\sbp{a+2,b}, & \text{if }u\in W^0(M_d);\\
V\sbp{a+1,b+1}, & \text{if }u\in W^1(M_d)\setminus W^0(M_d);\\
V\sbp{a,b+2}, & \text{if }u\in W^2(M_d)\setminus W^1(M_d).
\end{cases}
\end{equation}
\end{lemma}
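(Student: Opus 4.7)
The plan is to compute $a(M_{d+1}) := \dim_k \ker A_{M_{d+1}}$ directly; the value of $b(M_{d+1})$ then follows from the rank--nullity identity $a + b = 2\dim_k M$. The key point is that for a fixed $M_d \in V\sbp{a,b}$, the value of $a(M_{d+1})$ depends on $u=(z,w)$ only through its position in the filtration $W^0(M_d) \subseteq W^1(M_d) \subseteq W^2(M_d)$, and this is exactly what governs the trichotomy in the claim.

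First I would parameterize an element of $\ker A_{M_{d+1}}$ by $(s', t', \lambda, \mu) \in M_d^2 \times k^2$, writing $s = s' + \lambda e_{d+1}$ and $t = t' + \mu e_{d+1}$ where $e_{d+1}$ is the new basis vector. A direct block computation using \eqref{eq:induction_rule}, together with the fact that $X_{d+1} e_{d+1} = z$ and $Y_{d+1} e_{d+1} = w$ (so that $Y_{d+1}^2 e_{d+1} = Y_d w$), reduces the equation $A_{M_{d+1}}(s,t)=0$ to the single $M_d^2$-valued identity
\begin{equation*}
A_{M_d}(s', t') \;=\; g\bigl(-\lambda\, u + \mu\, T_{M_d} u\bigr),
\end{equation*}
where $g = \mathrm{diag}(1,-1)$ and $T$ is as in Section 7. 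Using Lemma \ref{lem:auxiliary_identities}(e) in the form $g A_{M_d} = A'_{M_d} g$, and setting $(s'',t'') := g(s',t')$ (a bijection of $M_d^2$ with itself), this becomes
\begin{equation*}
A'_{M_d}(s'', t'') \;=\; -\lambda\, u + \mu\, T_{M_d}\, u.
\end{equation*}
For each $(\lambda,\mu) \in k^2$ for which the right-hand side lies in $\im A'_{M_d} = W^0(M_d)$, the set of $(s'',t'')$ solving this equation is a torsor over $\ker A'_{M_d}$, which has dimension $a(M_d)$ by Lemma \ref{lem:numerical}(a). Hence
\begin{equation*}
a(M_{d+1}) \;=\; a(M_d) + \dim_k N(u), \qquad N(u) := \{(\lambda,\mu) \in k^2 : -\lambda u + \mu T_{M_d} u \in W^0(M_d)\}.
\end{equation*}

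Finally I would split into the three cases. If $u \in W^0(M_d)$, then $T_{M_d} u \in W^0(M_d)$ too (since $T$ commutes with $A'$ and so preserves $\im A'_{M_d}$), giving $N(u)=k^2$ and $a(M_{d+1})=a(M_d)+2$, i.e.\ $M_{d+1} \in V\sbp{a+2,b}$. If $u \in W^1(M_d) \setminus W^0(M_d)$, then $T_{M_d} u \in W^0(M_d)$ but $u \notin W^0(M_d)$, so $N(u)$ is cut out by $\lambda = 0$ and $a(M_{d+1}) = a(M_d)+1$, i.e.\ $M_{d+1} \in V\sbp{a+1,b+1}$. The case $u \in W^2(M_d)\setminus W^1(M_d)$ is the only one needing real work: I need to show that the classes $\bar u$ and $\overline{T_{M_d} u}$ are linearly independent in $W^2(M_d)/W^0(M_d)$, which forces $N(u)=0$ and thus $a(M_{d+1}) = a(M_d)$, i.e.\ $M_{d+1} \in V\sbp{a,b+2}$. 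For this linear independence, suppose $\alpha u + \beta T_{M_d} u \in W^0(M_d)$. If $\beta \neq 0$, applying $T_{M_d}$ and using that $T^2$ vanishes on $H^0(C_{M_d}^\bullet)$ (Lemma \ref{lem:exact_on_cohomology}, equivalently $T_{M_d}^2(W^2) \subseteq W^0$) gives $\alpha T_{M_d} u \in W^0(M_d)$; since $u \notin W^1(M_d)$ means $T_{M_d} u \notin W^0(M_d)$, we obtain $\alpha = 0$, and then $T_{M_d} u \in W^0(M_d)$, contradiction. If $\beta = 0$, then $\alpha u \in W^0(M_d)$ forces $\alpha = 0$. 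I expect this linear-independence step to be the only substantive point; the rest is bookkeeping with the matrix identities of Lemma \ref{lem:auxiliary_identities} and the dimension count of Lemma \ref{lem:numerical}.
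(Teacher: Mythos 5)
Your proof is correct and reaches the same trichotomy as the paper's, but via the dual computation: you compute $a(M_{d+1})=\dim_k\ker A_{M_{d+1}}$, whereas the paper computes $b(M_{d+1})=\dim_k\im A'_{M_{d+1}}$ and then appeals to rank–nullity. The paper's choice is slightly more economical: because the last rows of $A'_{M_{d+1}}$ vanish, its image in $M_d\oplus M_d$ is read off immediately as $\im A'_{M_d}+\Span_k\{u,T_{M_d}u\}$, with no need for the $g$-conjugation step you use to convert the kernel equation into one in $A'_{M_d}$, and no need to invoke Lemma \ref{lem:numerical}(a) for the fiber dimension. The paper also handles all three cases uniformly by quoting Lemma \ref{lem:cyclic_subspace} (the number of nonzero elements of $\{\bar u,T^0_{M_d}\bar u\}$ equals the dimension of their span, since $T^0_{M_d}$ is square-zero), whereas your case analysis for $u\in W^2\setminus W^1$ re-derives that linear-independence fact by hand from $T^2_{M_d}(W^2)\subseteq W^0$. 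Both routes are valid; yours is a bit longer but makes the mechanism behind the cyclic-subspace lemma fully explicit.
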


Noting that the dimensions of $W^0(M_d),W^1(M_d), W^2(M_d)$ are $b, (a+b)/2, a$ respectively, Lemma \ref{lem:staircase_strata_induction} immediately implies the following inductive formula for $V\sbp{a,b}$, which concludes the proof of Theorem \ref{thm:L_rationality_stable}.

\begin{corollary}\label{cor:staircase_formula}
The motive of $V\sbp{a,b}$ can be computed inductively by
\begin{equation}
\begin{gathered}
[V\sbp{0,0}]=1, [V\sbp{a,b}]=0\text{ unless }a\geq b\geq 0, a\equiv b\pmod 2;\\
[V\sbp{a,b}]=\L^b [V\sbp{a-2,b}]+ (\L^{\frac{a+b-2}{2}}-\L^{b-1}) [V\sbp{a-1,b-1}] + (\L^a - \L^{\frac{a+b-2}{2}}) [V\sbp{a,b-2}].
\end{gathered}
\end{equation}

The motive of $V_d$ for $d\geq 0$ is given by
\begin{equation}
[V_d]=\sum_{b=0}^d [V_{2d-b,b}].
\end{equation}
\end{corollary}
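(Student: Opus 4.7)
The plan is to combine the projection $\Phi_d\colon V_{d+1}\to V_d$ coming from~\eqref{eq:induction_rule}, the stratification~\eqref{eq:stratification_staircase}, and Lemma~\ref{lem:staircase_strata_induction}. The latter tells us that over $M_d\in V_{(a,b)}\subseteq V_d$, the fiber $\ker A_{M_d}$ splits into three locally closed pieces $W^0(M_d)$, $W^1(M_d)\setminus W^0(M_d)$, $W^2(M_d)\setminus W^1(M_d)$, mapping respectively to the three strata $V_{(a+2,b)}$, $V_{(a+1,b+1)}$, $V_{(a,b+2)}$ of $V_{d+1}$. Summing these forward contributions into a target stratum $V_{(a',b')}$ with $a'+b'=2(d+1)$ will recover the three terms of the stated recursion after setting $(a,b)=(a'-2,b')$, $(a'-1,b'-1)$, or $(a',b'-2)$.

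The key step is to upgrade the pointwise decomposition of Lemma~\ref{lem:staircase_strata_induction} to a motivic identity in $\KVar{k}$. For this, I would verify that over each stratum $V_{(a,b)}$, the three subspaces $W^0(M_d)=\im A'_{M_d}$, $W^2(M_d)=\ker A_{M_d}$, and $W^1(M_d)=T_{M_d}^{-1}(\im A'_{M_d})$ vary algebraically in $M_d$ (immediate, since each is the kernel or preimage of a polynomial linear map in the entries of $X_d,Y_d$) and have constant ranks $b$, $a$, and $(a+b)/2$ respectively, as recorded in Lemma~\ref{lem:numerical}. A constant-rank subsheaf of a trivial vector bundle is a Zariski-locally-trivial subbundle, so the locally closed pieces $W^0$, $W^1\setminus W^0$, $W^2\setminus W^1$ over $V_{(a,b)}$ have motives $\L^b\cdot[V_{(a,b)}]$, $(\L^{(a+b)/2}-\L^b)\cdot[V_{(a,b)}]$, and $(\L^a-\L^{(a+b)/2})\cdot[V_{(a,b)}]$ respectively.

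Summing these contributions into a target $V_{(a',b')}$ under the reindexing above yields the displayed recursion. The base case $[V_{(0,0)}]=1$ is simply $V_0=\Spec k$; the constraint $a\geq b\geq 0$ is forced by $\dim H^0(C_{M_d})=a-b\geq 0$ via Lemma~\ref{lem:numerical}; and the parity $a\equiv b\pmod 2$ is automatic from $a+b=2d$ being even. The final identity $[V_d]=\sum_{b=0}^d [V_{(2d-b,b)}]$ is just the stratification~\eqref{eq:stratification_staircase} itself. I do not foresee a genuine obstacle here: all the real content has been distilled into Lemma~\ref{lem:staircase_strata_induction} and Lemma~\ref{lem:numerical}, and what remains is the constant-rank verification that legitimizes the affine-bundle computation of motives.
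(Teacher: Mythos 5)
Your proposal is correct and follows essentially the same route as the paper: the inductive step of the Corollary is read off from $\Phi_d\colon V_{d+1}\to V_d$ together with Lemma~\ref{lem:staircase_strata_induction} and the dimension counts of Lemma~\ref{lem:numerical}, and the formula $[V_d]=\sum_b[V_{(2d-b,b)}]$ is the stratification~\eqref{eq:stratification_staircase} itself. The one step you flesh out beyond what the paper states --- that the flags $W^0(M_d)\subseteq W^1(M_d)\subseteq W^2(M_d)$ vary algebraically with constant ranks over each reduced stratum $V_{(a,b)}$, hence form Zariski-locally-trivial subbundles, so the three locally closed pieces contribute $\L^b$, $\L^{(a+b)/2}-\L^b$, and $\L^a-\L^{(a+b)/2}$ times $[V_{(a,b)}]$ --- is exactly the (implicit) justification the paper relies on when it asserts that Lemma~\ref{lem:staircase_strata_induction} ``immediately implies'' the inductive formula, and your reindexing into the target stratum reproduces the stated recursion.
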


\begin{proof}[Proof of Lemma \ref{lem:staircase_strata_induction}]
It suffices to determine $b(M_{d+1})$, and moreover, we shall use the definition $b(M_{d+1})=\dim_k \im A'_{M_{d+1}}$. The matrix for $A'_{M_{d+1}}$ on $M_{d+1}\oplus M_{d+1}=M_d\oplus k \oplus M_d \oplus k$ is
\begin{equation}
A'_{M_{d+1}} = \begin{bmatrix}
X_{d+1} & Y_{d+1}^2 \\
Y_{d+1} & X_{d+1}
\end{bmatrix}
=
\begin{bmatrix}
X_d & z & Y_d^2 & Y_d w\\
0 & 0 & 0 & 0\\
Y_d & w & X_d & z\\
0 & 0 & 0 & 0
\end{bmatrix}.
\end{equation}

Therefore, the image of $A'_{M_{d+1}}$ actually lies in $M_d\oplus M_d$, and is equal to
\begin{equation}
\begin{aligned}
\im A'_{M_{d+1}} &= \im \begin{bmatrix}
X_d & Y_d^2 \\
Y_d & X_d
\end{bmatrix}
+
\Span_k \set*{\begin{bmatrix}
z\\w
\end{bmatrix},\begin{bmatrix}
Y_d w\\z
\end{bmatrix}} 
\\
&= \im A'_{M_d} + \Span_k \set{u, T_{M_d} u}\in M_d\subeq M_d.
\end{aligned}
\end{equation}

We now work mod $\im A'_{M_d}$. Since $u\in \ker A_{M_d}$, we are working in $\ker A_{M_d}/\im A'_{M_d} = H^0(C_{M_d}^\bullet)$. Hence, the increase in the $b$-parameter is given by
\begin{equation}
b(M_{d+1})-b(M_d)=\dim_k \Span_k \set{\bbar u, T^0_{M_d} \bbar u}\subeq H^0(C_{M_d}^\bullet).
\end{equation}

We recall that $T^0_{M_d}$ is a square-zero linear map on $H^0(C_{M_d}^\bullet)$. By a standard fact about nilpotent linear maps (Lemma \ref{lem:cyclic_subspace}), the dimension of $\dim_k \Span_k \set{\bbar u, T^0_{M_d} \bbar u}$ is the number of nonzero elements in $\set{\bbar u, T^0_{M_d} \bbar u}$. This completes the proof.
\end{proof}

\begin{lemma}\label{lem:cyclic_subspace}
Let $V$ be a finite-dimensional vector space over a field $k$, and $T:V\to V$ be a nilpotent linear map. For any vector $v\in V$, consider the sequence $v,Tv,T^2 v,\dots$, which is eventually zero. Then the nonzero elements in the sequence are linearly independent.
\end{lemma}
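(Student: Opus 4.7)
The plan is to prove linear independence by a standard ``apply the highest power of $T$'' trick, exploiting minimality in a hypothetical dependence relation.

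First I would fix notation: let $k$ be the smallest positive integer such that $T^k v = 0$, so that the nonzero elements of the sequence are precisely $v, Tv, \dots, T^{k-1}v$ (if $v = 0$ there is nothing to prove). Then I would suppose for contradiction that these vectors are linearly dependent, writing
\begin{equation}
\sum_{i=0}^{k-1} c_i T^i v = 0
\end{equation}
with not all $c_i$ zero, and let $j$ be the smallest index with $c_j \neq 0$.

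Next I would apply $T^{k-1-j}$ to both sides. For $i > j$ the term $c_i T^{k-1-j} T^i v = c_i T^{k-1+(i-j)} v$ vanishes because $k-1+(i-j) \geq k$ and $T^k v = 0$. The $i = j$ term simplifies to $c_j T^{k-1} v$. Hence the equation collapses to $c_j T^{k-1} v = 0$, and since $T^{k-1} v \neq 0$ by minimality of $k$, we conclude $c_j = 0$, contradicting the choice of $j$.

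There is no real obstacle here; this is a textbook argument about nilpotent cyclic subspaces, and the only thing to be careful about is to identify the correct power of $T$ to apply (namely $T^{k-1-j}$, which kills all higher-indexed terms and leaves exactly one surviving summand). The proof should take only a few lines.
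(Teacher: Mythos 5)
Your proof is correct, but it takes a different (equally standard) route from the paper. You argue directly: assume a linear dependence $\sum_i c_i T^i v = 0$, pick the smallest index $j$ with $c_j \neq 0$, and apply $T^{k-1-j}$ to isolate $c_j T^{k-1}v = 0$, contradicting $T^{k-1}v \neq 0$. The paper instead uses a dimension count: letting $W = \Span_k\{v, Tv, \dots, T^{n-1}v\}$, it observes that $T$ restricts to a nilpotent endomorphism of $W$ whose $(n-1)$-st power is nonzero, and invokes the fact that a nilpotent operator on a $d$-dimensional space satisfies $T^d = 0$; hence $\dim W \geq n$, forcing independence. Your argument is more elementary and self-contained (it needs no prior facts about nilpotency index), at the cost of a couple more lines; the paper's is a one-liner once you have the nilpotency-index bound in hand. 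Both are fine, and neither has a gap.
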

\begin{proof}
Assume $T^{n-1}v\neq 0$ and $T^nv=0$. Let $W=\Span_k \set{v,Tv,\dots, T^{n-1}v}$, so $T$ is a nilpotent endomorphism of $W$. We note that $T^{n-1}$ is not the zero map on $W$ because $T^{n-1}v\neq 0$. Hence $\dim W > n-1$, so that $v,Tv,\dots, T^{n-1}v$ must be linearly independent.
\end{proof}

\begin{example}
Using Corollary \ref{cor:staircase_formula}, we compute the motive of $V_d$ for $d\leq 8$ in Table \ref{table:staircase}. The cases with $d\leq 3$ can be directly verified from the definition \eqref{eq:upper_triangular}; see \S \ref{subsec:pure_k}.

\begin{center}
\begin{table}[h]
\begin{tabular}{|r|l|}
\hline
$d$ & $[V_d]$ \\
\hline
0 & 1\\
1 & 1\\
2 & $\L^2$\\
3 & $3\L^4-2\L^3$\\
4 & $2 \L^8+3 \L^7-5 \L^6+\L^5$\\
5 & $10 \L^{12}-5 \L^{11}-9 \L^{10}+5 \L^9$\\
6 & $5 \L^{18}+21 \L^{17}-30 \L^{16}-9 \L^{15}+15 \L^{14}-\L^{12} $\\
7 & $35 \L^{24}+7 \L^{23}-84 \L^{22}+15 \L^{21}+35 \L^{20}-7 \L^{18}$\\
8 & $14 \L^{32}+112 \L^{31}-112 \L^{30}-162 \L^{29}+113 \L^{28}+70 \L^{27}-7 \L^{26}-28 \L^{25}+\L^{22}$\\
\hline
\end{tabular}

\caption{The motive of $[V_d]$ for $d\leq 8$}
\label{table:staircase}
\end{table}
\end{center}

We observe from the table that as a polynomial in $\L$, the total coefficient of $[V_d]$ is $1$. Indeed, if we substitute $\L\mapsto 1$ in Corollary \ref{cor:staircase_formula}, we have $\ev{[V_{2d,0}]}{\L\mapsto 1}=1$ and $\ev{[V_{a,b}]}{\L\mapsto 1}=0$ if $(a,b)\neq (2d,0)$, so $\ev{[V_d]}{\L\mapsto 1}=1$. Geometrically speaking, this means that when $k=\C$, the Euler characteristic of $V_d$ in the analytic topology is $1$, which is expected because the obvious $\C^\times$-action $t\cdot (X,Y)=(tX,tY)$ on $V_d$ has only one fixed point $(X,Y)=(0,0)$. 
\end{example}

\section{Explicit computations in $d\leq 3$}\label{sec:low_d}

We assume again that $R=k[[T^2,T^3]]$ and $H_d(t):=H_{d,R}(t)$. We compute $H_d(t)$ explicitly for $d\leq 3$ and prove the second part of Theorem \ref{thm:cusp_t_rationality} using Theorem \ref{thm:cusp_formula}. As before, we present our proof in terms of point counting over $k=\Fq$. The same proof implies the motivic formulas in Theorem \ref{thm:cusp_t_rationality} since the motivic version of Theorem \ref{thm:cusp_formula} holds according to Appendix \ref{appendix:motivic}. 

\subsection{Pure-$K$ strata}\label{subsec:pure_k}
When $d\leq 3$ and $\alpha$ is a pure-$K$ leading term datum, it is not hard to describe the set $\Hilbnil(\alpha)$ (in fact the reduced structure of the variety $V(\alpha)$ in \eqref{eq:general_staircase}) explicitly. We give one typical example.

\begin{example}
Let $\alpha=(K(0),K(2),K(9))$. Then $V(\alpha)$ consists of $(X,Y)$ such that
\begin{equation}
X=\begin{bmatrix}
0 & X_{12} & X_{13} \\
0 & 0 & X_{23} \\
0 & 0 & 0
\end{bmatrix},\quad
Y=\begin{bmatrix}
0 & Y_{12} & Y_{13} \\
0 & 0 & Y_{23} \\
0 & 0 & 0
\end{bmatrix},\quad X_{12}=0
\end{equation}
and the matrix equations $X^2=Y^3, XY=YX$ are equivalent to
\begin{equation}
X_{12}X_{23}=0, \quad X_{12}Y_{23}=Y_{12}X_{23}.
\end{equation}

Since $X_{12}=0$, the only requirement is $Y_{12}X_{23}=0$. Thus $[V(\alpha)]=(2\L-1)\L^3$, where $\L^3$ results from the three freely chosen variables $X_{13},Y_{13},Y_{23}$. 
\end{example}

According to \eqref{eq:general_staircase}, the variety $V(\alpha)$ only depends on the distance matrix $\delta\sbp{ij}(\alpha)$ of $\alpha$. Moreover, for each $i<j$, the exact distance $\delta\sbp{ij}(\alpha)$ does not matter; all that matters is whether it is at most $1$ (labeled $1^-$), or $2$, or at least $3$ (labeled $3^+$). For example, the above example corresponds to the case $(\delta\sbp{12}(\alpha),\delta\sbp{23}(\alpha),\delta\sbp{13}(\alpha))=(2,3^+,3^+)$. We provide the motive of $V(\alpha)$ for every possible distance matrix in $d\leq 3$. If $d=0,1$, then $V(\alpha)$ is just a point, so $[V(\alpha)]=1$. If $d=2,3$, see Tables \ref{table:pure_K_2} and \ref{table:pure_K_3}. Note that the last rows of Tables \ref{table:pure_K_2} and \ref{table:pure_K_3} verify the $d=2,3$ entries of Table \ref{table:staircase}.

\begin{table}[h]
\begin{tabular}{|l|l|}
\hline
$\delta\sbp{12}(\alpha)$ & $[V(\alpha)]$\\
\hline
$1^-$ & $1$\\
2 & $\L$\\
$3^+$ & $\L^2$\\
\hline
\end{tabular}
\caption{Pure-$K$ strata in terms of distances, $d=2$}
\label{table:pure_K_2}
\end{table}

\begin{table}[h]
\begin{tabular}{|l|l|}
\hline
$(\delta\sbp{12}(\alpha),\delta\sbp{23}(\alpha),\delta\sbp{13}(\alpha))$ & $[V(\alpha)]$\\
\hline
$(1^-,1^-,1^-)$ & $1$\\
$(1^-,1^-,2)$ & $\L$\\
$(1^-,1^-,3+)$ & $\L^2$\\
$(1^-,2,2)$ & $\L^2$\\
$(1^-,2,3+)$ & $\L^3$\\
$(1^-,3+,3+)$ & $\L^4$\\
$(2,1^-,2)$ & $\L^2$\\
$(2,1^-,3^+)$ & $\L^3$\\
$(2,2,3^+)$ & $\L^4$\\
$(2,3^+,3^+)$ & $2\L^4-\L^3$\\
$(3^+,1^-,3^+)$ & $\L^4$\\
$(3^+,2,3^+)$ & $2\L^4-\L^3$\\
$(3^+,3^+,3^+)$ & $3\L^4-2\L^3$\\
\hline
\end{tabular}
\caption{Pure-$K$ strata in terms of distances, $d=3$}
\label{table:pure_K_3}
\end{table}

\subsection{Computation of $H_d(t)$ for $d\leq 3$}\label{subsec:low_d_computation}
\subsubsection*{Case $d=0$}
Clearly $H_d(t)=Q_d(t)=1$ if $d=0$. 

\

For $d=1,2,3$, we shall compute $H_d(t)$ by first decomposing the set $\Xi$ of leading term data into a disjoint union of stable orbits, and then computing the total contribution of each stable orbit using Lemma \ref{lem:orbit_content}. The existence of a decomposition is guaranteed by Theorem \ref{thm:cusp_formula}, but a more efficient decomposition is often available. We will need to compute the contents of finitely many leading term data along the way. Using \eqref{eq:strata_factorization}, this is always doable as long as we know $\Hilbnil(\alpha)$ for any pure-$K$ leading term data up to rank $3$. From now on, we will provide the quantities $\abs{\Hilb(\alpha)}, A(\alpha),B(\alpha),D(\alpha),\Cont(\alpha)$ as needed without proof.

All formulas in this section hold in the motivic sense as well if we replace $q$ by $\L$, because the previous discussions show that $[V(\alpha)]$ is a polynomial in $\L$ if $\alpha$ is pure-$K$ of rank at most $3$.

\subsubsection*{Case $d=1$} The set $\Xi$ of leading term data is a disjoint union of two stable orbits: $(K(0))$ stable under $\gamma\sbp 1$, and $(J(1))$ stable under $\gamma\sbp 1$. We have $\Cont(K(0))=1$ and $\Cont(J(1))=qt$. Hence
\begin{equation}
H_1(t)=\frac{1}{1-t}+\frac{qt}{1-t}=\frac{1+qt}{1-t}.
\end{equation}

By \eqref{eq:quot_in_hilb}, we have
\begin{equation}
Q_1(t)=1+tH_1(t)=\frac{1+qt^2}{1-t}.
\end{equation}

This matches the well-known formula for the local Hilbert zeta function for the cusp singularity; see for instance \cite[Example 19, $A_2$]{goettscheshende2014refined}.

\subsubsection*{Case $d=2$} We exhaust the elements of $\Xi$ using the four grids in Example \ref{eg:grid_2}. We say an arrow is stable (marked as solid) if the leading term datum at the source is stable under the spiral raising operator the arrow represents, and unstable (marked as dashed) otherwise; see Figure \ref{fig:kk_full_grid}. We introduce the notation $\Xi_c$ to refer to the set of all $\alpha\in\Xi$ with color vector $c(\alpha)=c$. From this grid, we can read that $\Xi\sbp{K,K}$ is a disjoint union of four stable orbits: $(K(0),K(0))$ under $\gamma\sbp 1$, $(K(0),K(1))$ under $\gamma\sbp 1$, $(K(0),K(2))$ under $\gamma\sbp 1$, and $(K(0),K(3))$ under $\gamma\sbp 1$, $\gamma\sbp 2$.

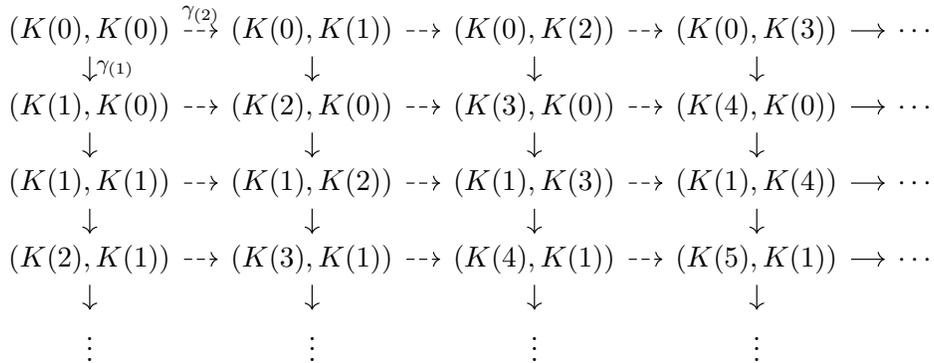
\begin{figure}[h]
\begin{equation*}
\begin{tikzcd}[sep=small]
	{(K(0),K(0))} & {(K(0),K(1))} & {(K(0),K(2))} & {(K(0),K(3))} & \cdots \\
	{(K(1),K(0))} & {(K(2),K(0))} & {(K(3),K(0))} & {(K(4),K(0))} & \cdots \\
	{(K(1),K(1))} & {(K(1),K(2))} & {(K(1),K(3))} & {(K(1),K(4))} & \cdots \\
	{(K(2),K(1))} & {(K(3),K(1))} & {(K(4),K(1))} & {(K(5),K(1))} & \cdots \\
	\vdots & \vdots & \vdots & \vdots
	\arrow["\gamma\sbp 2",dashed,from=1-1, to=1-2]
	\arrow["\gamma\sbp 1",from=1-1, to=2-1]
	\arrow[from=2-1, to=3-1]
	\arrow[from=3-1, to=4-1]
	\arrow[dashed,from=1-2, to=1-3]
	\arrow[from=1-2, to=2-2]
	\arrow[dashed,from=2-1, to=2-2]
	\arrow[dashed,from=3-1, to=3-2]
	\arrow[dashed,from=4-1, to=4-2]
	\arrow[from=2-2, to=3-2]
	\arrow[from=3-2, to=4-2]
	\arrow[dashed,from=1-3, to=1-4]
	\arrow[dashed,from=2-2, to=2-3]
	\arrow[from=1-3, to=2-3]
	\arrow[from=1-4, to=2-4]
	\arrow[dashed,from=2-3, to=2-4]
	\arrow[dashed,from=3-2, to=3-3]
	\arrow[from=2-3, to=3-3]
	\arrow[from=2-4, to=3-4]
	\arrow[dashed,from=3-3, to=3-4]
	\arrow[from=3-3, to=4-3]
	\arrow[dashed,from=4-2, to=4-3]
	\arrow[dashed,from=4-3, to=4-4]
	\arrow[from=3-4, to=4-4]
	\arrow[from=1-4, to=1-5]
	\arrow[from=2-4, to=2-5]
	\arrow[from=3-4, to=3-5]
	\arrow[from=4-4, to=4-5]
	\arrow[from=4-1, to=5-1]
	\arrow[from=4-2, to=5-2]
	\arrow[from=4-3, to=5-3]
	\arrow[from=4-4, to=5-4]
\end{tikzcd}
\end{equation*}
\caption{Stability in color vector $(K,K)$}
\label{fig:kk_full_grid}
\end{figure}

We record the information in a simpler diagram Figure \ref{fig:kk}, where we only keep the starting points of the orbits. We also omit the arrows $\gamma\sbp 1$, with the understanding that every element of $\Xi$ is stable under $\gamma\sbp 1$ (cf. Lemma \ref{lem:is_stable}). The stable arrows can thus be read from the diagram as solid arrows plus $\gamma\sbp 1$. We also label the content of each node. 

\begin{figure}[h]
\[\begin{tikzcd}[sep=small]
	{\begin{matrix}(K(0),K(0))\\1\end{matrix}} & {\begin{matrix}(K(0),K(1))\\qt\end{matrix}} & {\begin{matrix}(K(0),K(2))\\q^3 t^2\end{matrix}} & {\begin{matrix}(K(0),K(3))\\q^5t^3\end{matrix}} & \cdots
	\arrow["{\gamma\sbp 2}", dashed, from=1-1, to=1-2]
	\arrow[dashed, from=1-2, to=1-3]
	\arrow[dashed, from=1-3, to=1-4]
	\arrow[from=1-4, to=1-5]
\end{tikzcd}\]
\caption{Stability and contents in $(K,K)$}
\label{fig:kk}
\end{figure}

From this diagram, we read
\begin{equation}
\begin{aligned}
\sum_{\alpha\in \Xi\sbp{K,K}}\Cont(\alpha)&=\frac{1}{1-t}+\frac{qt}{1-t}+\frac{q^3t^2}{1-t}+\frac{q^5 t^3}{(1-t)(1-qt)}\\
&=\frac{1-q^2t^2+q^3t^2-q^4t^3+q^5t^3}{(1-t)(1-qt)}.
\end{aligned}
\end{equation}

Similar diagrams for other three color vectors are given in Figures \ref{fig:kj}, \ref{fig:jk} and \ref{fig:jj}.

\begin{figure}[h]
\[\begin{tikzcd}[sep=small]
\begin{matrix}
(K(0),J(1))\\q^3 t
\end{matrix} & \cdots
\arrow["\gamma\sbp 2", from=1-1,to=1-2]
\end{tikzcd}
\]
\[\sum_{\alpha\in\Xi\sbp{K,J}} \Cont(\alpha)=\frac{q^3 t}{(1-t)(1-qt)}.\]
\caption{Stability and contents in $(K,J)$}
\label{fig:kj}
\end{figure}

\begin{figure}[h]
\[\begin{tikzcd}[sep=small]
\begin{matrix}
(J(1),K(0))\\q^2 t
\end{matrix} & \begin{matrix}
(J(1),K(1))\\q^2 t^2
\end{matrix}& \cdots
\arrow[dashed, "\gamma\sbp 2", from=1-1,to=1-2]
\arrow[from=1-2, to=1-3]
\end{tikzcd}
\]
\[\sum_{\alpha\in\Xi\sbp{J,K}} \Cont(\alpha)=\frac{q^2 t}{1-t}+ \frac{q^2 t^2}{(1-t)(1-qt)}.\]
\caption{Stability and contents in $(J,K)$}
\label{fig:jk}
\end{figure}

\begin{figure}[h]
\[\begin{tikzcd}[sep=small]
\begin{matrix}
(J(1),J(1))\\q^4 t^2
\end{matrix} & \begin{matrix}
(J(1),J(2))\\q^4 t^3
\end{matrix}& \cdots
\arrow[dashed, "\gamma\sbp 2", from=1-1,to=1-2]
\arrow[from=1-2, to=1-3]
\end{tikzcd}
\]
\[\sum_{\alpha\in\Xi\sbp{J,J}} \Cont(\alpha)=\frac{q^4 t^2}{1-t}+ \frac{q^4 t^3}{(1-t)(1-qt)}.\]
\caption{Stability and contents in $(J,J)$}
\label{fig:jj}
\end{figure}

Summing up, we get
\begin{equation}
H_2(t)=\sum_{\alpha\in \Xi} \Cont(\alpha) = \frac{1+q^2 t+q^3 t+q^4 t^2}{(1-t) (1-qt)},
\end{equation}
where we note that all terms involving $t^3$ are cancelled. 

By \eqref{eq:quot_in_hilb}, we have
\begin{equation}
Q_2(t)=1+(1+q)tH_1(qt)+t^2 H_2(t) = \frac{1+q^2 t^2+q^3 t^2+q^4 t^4}{(1-t) (1-qt)},
\end{equation}
where we note unexpectedly that the numerator of $Q_2(t)$ is the numerator of $H_2(t)$ evaluated at $t\mapsto t^2$.

\subsubsection*{Case $d=3$} We now compute $H_3(t)$ by constructing similar diagrams for all eight color vectors, see Figures \ref{fig:jjj} through \ref{fig:kkk}. We put $\gamma\sbp 2$ on verticle arrows, $\gamma\sbp 3$ on horizontal arrows, and omit $\gamma\sbp 1$ with the understanding that every node is stable under $\gamma\sbp 1$. To save space, we also omit the first component in the grid, which is always $K(0)$ when the first component of the color vector is $K$, and $J(1)$ otherwise. For example, in color $(J,K,J)$, the node $(J(1),J(2),K(0))$ is denoted by $(J(2),K(0))$. For ease of verification, we label the content of each node in the form
\begin{equation}
\Cont(\alpha)=A(\alpha)\cdot B(\alpha)\cdot D(\alpha)t^{n(\alpha)}.
\end{equation}

We note that $A(\alpha)$ is generically unchanged by $\gamma\sbp j$, unless the distance between some $K$ components are stretched from $1$ to $2$ or from $2$ to $3$. The factor $B(\alpha)$ stays the same on each grid, and it only depends on the color vector (Lemma \ref{lem:B_factor}). The factor $D(\alpha)$ is generically multiplied by $q^{j-1}$ under $\gamma\sbp j$, unless some stretches are obstructed (see Lemma \ref{lem:D_factor}). The exponent $n(\alpha)$ is always increased by $1$ under $\gamma\sbp j$. The top-left node $\mathbf{0}_c$ of each color vector $c$ only consists of $K(0)$ and $J(1)$ components, and it has $A(\mathbf{0}_c)=1$, $D(\mathbf{0}_c)=q^{d\#_J(c)}=q^{3\#_J(c)}$, and $n(\mathbf{0}_c)=\#_J(c)$, where $\#_J(c)$ is the number of $J$'s in $c$. One can write down every factor but $A(\alpha)$ by starting from the top-left node and tracing the arrows using the rules above. For $A(\alpha)$, we consult Tables \ref{table:pure_K_2} and \ref{table:pure_K_3}. 

To obtain a rational formula for $H_3(t)$, we decompose each grid into a disjoint union of stable orbits, marked by enclosing frames. Note that our decomposition is different from the one in Theorem \ref{thm:cusp_formula} because we exploit stable arrows not guaranteed by Lemma \ref{lem:is_stable}; we only do so to reduce computation. Note also that the choice of the ``most efficient'' decomposition in Figure \ref{fig:kjk} is noncanonical. The contribution of each orbit to $H_3(t)$ is computed by Lemma \ref{lem:orbit_content}. For example, the top-right box of Figure \ref{fig:jjj} contributes $\dfrac{q^9t^4}{(1-t)(1-q^2t)}$.

\begin{figure}[h ]
% https://q.uiver.app/?q=WzAsMTIsWzAsMCwiXFxiZWdpbnttYXRyaXh9KEooMSksSigxKSlcXFxcMVxcY2RvdCAxXFxjZG90IHFeOXReMyBcXGVuZHttYXRyaXh9Il0sWzEsMCwiXFxiZWdpbnttYXRyaXh9KEooMSksSigyKSlcXFxcMVxcY2RvdCAxXFxjZG90IHFeOXReNCBcXGVuZHttYXRyaXh9Il0sWzAsMSwiXFxiZWdpbnttYXRyaXh9KEooMiksSigxKSlcXFxcMVxcY2RvdCAxXFxjZG90IHFeOXReNCBcXGVuZHttYXRyaXh9Il0sWzEsMSwiXFxiZWdpbnttYXRyaXh9KEooMyksSigxKSlcXFxcMVxcY2RvdCAxXFxjZG90IHFeezEwfXReNSBcXGVuZHttYXRyaXh9Il0sWzIsMCwiXFxjZG90cyJdLFsyLDEsIlxcY2RvdHMiXSxbMCwyLCJcXGJlZ2lue21hdHJpeH0oSigyKSxKKDIpKVxcXFwxXFxjZG90IDFcXGNkb3QgcV45dF41IFxcZW5ke21hdHJpeH0iXSxbMSwyLCJcXGJlZ2lue21hdHJpeH0oSigyKSxKKDMpKVxcXFwxXFxjZG90IDFcXGNkb3QgcV57MTB9dF42IFxcZW5ke21hdHJpeH0iXSxbMiwyLCJcXGNkb3RzIl0sWzAsMywiXFx2ZG90cyJdLFsxLDMsIlxcdmRvdHMiXSxbMiwzXSxbMCwxLCJcXGdhbW1hXFxzYnAgMyIsMCx7InN0eWxlIjp7ImJvZHkiOnsibmFtZSI6ImRhc2hlZCJ9fX1dLFswLDIsIlxcZ2FtbWFcXHNicCAyIiwyLHsic3R5bGUiOnsiYm9keSI6eyJuYW1lIjoiZGFzaGVkIn19fV0sWzIsMywiIiwyLHsic3R5bGUiOnsiYm9keSI6eyJuYW1lIjoiZGFzaGVkIn19fV0sWzEsNF0sWzEsMywiIiwwLHsic3R5bGUiOnsiYm9keSI6eyJuYW1lIjoiZGFzaGVkIn19fV0sWzMsNV0sWzIsNiwiIiwwLHsic3R5bGUiOnsiYm9keSI6eyJuYW1lIjoiZGFzaGVkIn19fV0sWzMsNywiIiwwLHsic3R5bGUiOnsiYm9keSI6eyJuYW1lIjoiZGFzaGVkIn19fV0sWzYsNywiIiwwLHsic3R5bGUiOnsiYm9keSI6eyJuYW1lIjoiZGFzaGVkIn19fV0sWzcsOF0sWzcsMTBdLFs2LDldXQ==
\[\begin{tikzcd}[sep=small, every matrix/.append style={name=m}, /tikz/execute at end picture={
  \node[draw,fit=(m-1-1), inner sep=-0.5ex] {};
  \node[draw,fit=(m-1-2)(m-1-3), inner sep=-0.5ex] {};
  \node[draw,fit=(m-2-1),inner sep=-0.5ex] {};
  \node[draw,fit=(m-2-2)(m-2-3),inner sep=-0.5ex] {};
  \node[draw,fit=(m-3-1)(m-4-1),inner sep=-0.5ex] {};
  \node[draw,fit=(m-3-2)(m-3-3)(m-4-3),inner sep=-0.5ex] {};
}]
	{\begin{matrix}(J(1),J(1))\\1\cdot 1\cdot q^9t^3 \end{matrix}} & {\begin{matrix}(J(1),J(2))\\1\cdot 1\cdot q^9t^4 \end{matrix}} & \cdots \\
	{\begin{matrix}(J(2),J(1))\\1\cdot 1\cdot q^9t^4 \end{matrix}} & {\begin{matrix}(J(3),J(1))\\1\cdot 1\cdot q^{10}t^5 \end{matrix}} & \cdots \\
	{\begin{matrix}(J(2),J(2))\\1\cdot 1\cdot q^9t^5 \end{matrix}} & {\begin{matrix}(J(2),J(3))\\1\cdot 1\cdot q^{10}t^6 \end{matrix}} & \cdots \\
	\vdots & \vdots & {}
	\arrow["{\gamma\sbp 3}", dashed, from=1-1, to=1-2]
	\arrow["{\gamma\sbp 2}"', dashed, from=1-1, to=2-1]
	\arrow[dashed, from=2-1, to=2-2]
	\arrow[from=1-2, to=1-3]
	\arrow[dashed, from=1-2, to=2-2]
	\arrow[from=2-2, to=2-3]
	\arrow[dashed, from=2-1, to=3-1]
	\arrow[dashed, from=2-2, to=3-2]
	\arrow[dashed, from=3-1, to=3-2]
	\arrow[from=3-2, to=3-3]
	\arrow[from=3-2, to=4-2]
	\arrow[from=3-1, to=4-1]
\end{tikzcd}\]
\caption{Stability and contents in $(J,J,J)$}
\label{fig:jjj}
\end{figure}
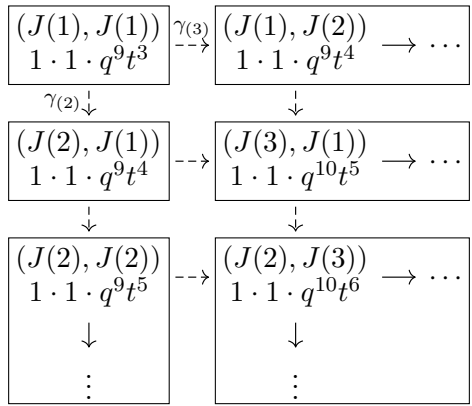

\begin{figure}[h ]
% https://q.uiver.app/?q=WzAsMTIsWzAsMCwiXFxiZWdpbnttYXRyaXh9KEooMSksSygwKSlcXFxcMVxcY2RvdCAxXFxjZG90IHFeNnReMiBcXGVuZHttYXRyaXh9Il0sWzEsMCwiXFxiZWdpbnttYXRyaXh9KEooMSksSygxKSlcXFxcMVxcY2RvdCAxXFxjZG90IHFeNnReMyBcXGVuZHttYXRyaXh9Il0sWzAsMSwiXFxiZWdpbnttYXRyaXh9KEsoMSksSigxKSlcXFxcMVxcY2RvdCAxXFxjZG90IHFeNnReMyBcXGVuZHttYXRyaXh9Il0sWzEsMSwiXFxiZWdpbnttYXRyaXh9KEsoMiksSigxKSlcXFxcMVxcY2RvdCAxXFxjZG90IHFeezd9dF40IFxcZW5ke21hdHJpeH0iXSxbMiwwLCJcXGNkb3RzIl0sWzIsMSwiXFxjZG90cyJdLFswLDIsIlxcYmVnaW57bWF0cml4fShKKDIpLEsoMSkpXFxcXDFcXGNkb3QgMVxcY2RvdCBxXjZ0XjQgXFxlbmR7bWF0cml4fSJdLFsxLDIsIlxcYmVnaW57bWF0cml4fShKKDIpLEsoMikpXFxcXDFcXGNkb3QgMVxcY2RvdCBxXns3fXReNSBcXGVuZHttYXRyaXh9Il0sWzIsMiwiXFxjZG90cyJdLFswLDMsIlxcdmRvdHMiXSxbMSwzLCJcXHZkb3RzIl0sWzIsM10sWzAsMSwiXFxnYW1tYVxcc2JwIDMiLDAseyJzdHlsZSI6eyJib2R5Ijp7Im5hbWUiOiJkYXNoZWQifX19XSxbMCwyLCJcXGdhbW1hXFxzYnAgMiIsMix7InN0eWxlIjp7ImJvZHkiOnsibmFtZSI6ImRhc2hlZCJ9fX1dLFsyLDMsIiIsMix7InN0eWxlIjp7ImJvZHkiOnsibmFtZSI6ImRhc2hlZCJ9fX1dLFsxLDRdLFsxLDMsIiIsMCx7InN0eWxlIjp7ImJvZHkiOnsibmFtZSI6ImRhc2hlZCJ9fX1dLFszLDVdLFsyLDYsIiIsMCx7InN0eWxlIjp7ImJvZHkiOnsibmFtZSI6ImRhc2hlZCJ9fX1dLFszLDcsIiIsMCx7InN0eWxlIjp7ImJvZHkiOnsibmFtZSI6ImRhc2hlZCJ9fX1dLFs2LDcsIiIsMCx7InN0eWxlIjp7ImJvZHkiOnsibmFtZSI6ImRhc2hlZCJ9fX1dLFs3LDhdLFs3LDEwXSxbNiw5XV0=
\[\begin{tikzcd}[sep=small, every matrix/.append style={name=m}, /tikz/execute at end picture={
  \node[draw,fit=(m-1-1), inner sep=-0.5ex] {};
  \node[draw,fit=(m-1-2)(m-1-3), inner sep=-0.5ex] {};
  \node[draw,fit=(m-2-1),inner sep=-0.5ex] {};
  \node[draw,fit=(m-2-2)(m-2-3),inner sep=-0.5ex] {};
  \node[draw,fit=(m-3-1)(m-4-1),inner sep=-0.5ex] {};
  \node[draw,fit=(m-3-2)(m-3-3)(m-4-3),inner sep=-0.5ex] {};
}]
	{\begin{matrix}(J(1),K(0))\\1\cdot 1\cdot q^6t^2 \end{matrix}} & {\begin{matrix}(J(1),K(1))\\1\cdot 1\cdot q^6t^3 \end{matrix}} & \cdots \\
	{\begin{matrix}(K(1),J(1))\\1\cdot 1\cdot q^6t^3 \end{matrix}} & {\begin{matrix}(K(2),J(1))\\1\cdot 1\cdot q^{7}t^4 \end{matrix}} & \cdots \\
	{\begin{matrix}(J(2),K(1))\\1\cdot 1\cdot q^6t^4 \end{matrix}} & {\begin{matrix}(J(2),K(2))\\1\cdot 1\cdot q^{7}t^5 \end{matrix}} & \cdots \\
	\vdots & \vdots & {}
	\arrow["{\gamma\sbp 3}", dashed, from=1-1, to=1-2]
	\arrow["{\gamma\sbp 2}"', dashed, from=1-1, to=2-1]
	\arrow[dashed, from=2-1, to=2-2]
	\arrow[from=1-2, to=1-3]
	\arrow[dashed, from=1-2, to=2-2]
	\arrow[from=2-2, to=2-3]
	\arrow[dashed, from=2-1, to=3-1]
	\arrow[dashed, from=2-2, to=3-2]
	\arrow[dashed, from=3-1, to=3-2]
	\arrow[from=3-2, to=3-3]
	\arrow[from=3-2, to=4-2]
	\arrow[from=3-1, to=4-1]
\end{tikzcd}\]
\caption{Stability and contents in $(J,J,K)$}
\label{fig:jjk}
\end{figure}

\begin{figure}[h ]
% https://q.uiver.app/?q=WzAsMTIsWzAsMCwiXFxiZWdpbnttYXRyaXh9KEsoMCksSigxKSlcXFxcMVxcY2RvdCBxXFxjZG90IHFeNnReMiBcXGVuZHttYXRyaXh9Il0sWzEsMCwiXFxiZWdpbnttYXRyaXh9KEsoMCksSigyKSlcXFxcMVxcY2RvdCBxXFxjZG90IHFeN3ReMyBcXGVuZHttYXRyaXh9Il0sWzAsMSwiXFxiZWdpbnttYXRyaXh9KEooMiksSygwKSlcXFxcMVxcY2RvdCBxXFxjZG90IHFeNnReMyBcXGVuZHttYXRyaXh9Il0sWzEsMSwiXFxiZWdpbnttYXRyaXh9KEooMyksSygwKSlcXFxcMVxcY2RvdCBxXFxjZG90IHFeezh9dF40IFxcZW5ke21hdHJpeH0iXSxbMiwwLCJcXGNkb3RzIl0sWzIsMSwiXFxjZG90cyJdLFswLDIsIlxcYmVnaW57bWF0cml4fShLKDEpLEooMikpXFxcXDFcXGNkb3QgcVxcY2RvdCBxXjZ0XjQgXFxlbmR7bWF0cml4fSJdLFsxLDIsIlxcYmVnaW57bWF0cml4fShLKDEpLEooMykpXFxcXDFcXGNkb3QgcVxcY2RvdCBxXns4fXReNSBcXGVuZHttYXRyaXh9Il0sWzIsMiwiXFxjZG90cyJdLFswLDMsIlxcdmRvdHMiXSxbMSwzLCJcXHZkb3RzIl0sWzIsM10sWzAsMSwiXFxnYW1tYVxcc2JwIDMiLDAseyJzdHlsZSI6eyJib2R5Ijp7Im5hbWUiOiJkYXNoZWQifX19XSxbMCwyLCJcXGdhbW1hXFxzYnAgMiIsMix7InN0eWxlIjp7ImJvZHkiOnsibmFtZSI6ImRhc2hlZCJ9fX1dLFsyLDNdLFsxLDRdLFsxLDMsIiIsMCx7InN0eWxlIjp7ImJvZHkiOnsibmFtZSI6ImRhc2hlZCJ9fX1dLFszLDVdLFsyLDYsIiIsMCx7InN0eWxlIjp7ImJvZHkiOnsibmFtZSI6ImRhc2hlZCJ9fX1dLFszLDcsIiIsMCx7InN0eWxlIjp7ImJvZHkiOnsibmFtZSI6ImRhc2hlZCJ9fX1dLFs2LDddLFs3LDhdLFs3LDEwXSxbNiw5XV0=
\[\begin{tikzcd}[sep=small, every matrix/.append style={name=m}, /tikz/execute at end picture={
  \node[draw,fit=(m-1-1), inner sep=-0.5ex] {};
  \node[draw,fit=(m-1-2)(m-1-3), inner sep=-0.5ex] {};
  \node[draw,fit=(m-2-1)(m-2-3),inner sep=-0.5ex] {};
  \node[draw,fit=(m-3-1)(m-3-3)(m-4-1),inner sep=-0.5ex] {};
}]
	{\begin{matrix}(K(0),J(1))\\1\cdot q\cdot q^6t^2 \end{matrix}} & {\begin{matrix}(K(0),J(2))\\1\cdot q\cdot q^7t^3 \end{matrix}} & \cdots \\
	{\begin{matrix}(J(2),K(0))\\1\cdot q\cdot q^6t^3 \end{matrix}} & {\begin{matrix}(J(3),K(0))\\1\cdot q\cdot q^{8}t^4 \end{matrix}} & \cdots \\
	{\begin{matrix}(K(1),J(2))\\1\cdot q\cdot q^6t^4 \end{matrix}} & {\begin{matrix}(K(1),J(3))\\1\cdot q\cdot q^{8}t^5 \end{matrix}} & \cdots \\
	\vdots & \vdots & {}
	\arrow["{\gamma\sbp 3}", dashed, from=1-1, to=1-2]
	\arrow["{\gamma\sbp 2}"', dashed, from=1-1, to=2-1]
	\arrow[from=2-1, to=2-2]
	\arrow[from=1-2, to=1-3]
	\arrow[dashed, from=1-2, to=2-2]
	\arrow[from=2-2, to=2-3]
	\arrow[dashed, from=2-1, to=3-1]
	\arrow[dashed, from=2-2, to=3-2]
	\arrow[from=3-1, to=3-2]
	\arrow[from=3-2, to=3-3]
	\arrow[from=3-2, to=4-2]
	\arrow[from=3-1, to=4-1]
\end{tikzcd}\]
\caption{Stability and contents in $(J,K,J)$}
\label{fig:jkj}
\end{figure}

\begin{figure}[h ]
\[\begin{tikzcd}[sep=small, every matrix/.append style={name=m}, /tikz/execute at end picture={
  \node[draw,fit=(m-1-1), inner sep=-0.5ex] {};
  \node[draw,fit=(m-1-2), inner sep=-0.5ex] {};
  \node[draw,fit=(m-1-3), inner sep=-0.5ex] {};
  \node[draw,fit=(m-1-4)(m-1-5), inner sep=-0.5ex] {};
  \node[draw,fit=(m-2-1),inner sep=-0.5ex] {};
  \node[draw,fit=(m-2-2),inner sep=-0.5ex] {};
  \node[draw,fit=(m-2-3),inner sep=-0.5ex] {};
  \node[draw,fit=(m-2-4)(m-2-5),inner sep=-0.5ex] {};
  \node[draw,fit=(m-3-1)(m-4-1),inner sep=-0.5ex] {};
  \node[draw,fit=(m-3-2)(m-4-2),inner sep=-0.5ex] {};
  \node[draw,fit=(m-3-3)(m-4-3),inner sep=-0.5ex] {};
  \node[draw,fit=(m-3-4)(m-3-5)(m-4-4),inner sep=-0.5ex] {};
}]
	{\begin{matrix}(K(0),K(0))\\1\cdot 1\cdot q^3t \end{matrix}} & {\begin{matrix}(K(0),K(1))\\1\cdot 1\cdot q^4t^2 \end{matrix}} & {\begin{matrix}(K(0),K(2))\\q\cdot 1\cdot q^6t^3 \end{matrix}} & {\begin{matrix}(K(0),K(3))\\q^2\cdot 1\cdot q^8t^4 \end{matrix}} & \cdots \\
	{\begin{matrix}(K(1),K(0))\\1\cdot 1\cdot q^3t^2 \end{matrix}} & {\begin{matrix}(K(2),K(0))\\1\cdot 1\cdot q^{5}t^3 \end{matrix}} & {\begin{matrix}(K(3),K(0))\\q\cdot 1\cdot q^7t^4 \end{matrix}} & {\begin{matrix}(K(4),K(0))\\q^2\cdot 1\cdot q^9t^5 \end{matrix}} & \cdots \\
	{\begin{matrix}(K(1),K(1))\\1\cdot 1\cdot q^3t^3 \end{matrix}} & {\begin{matrix}(K(1),K(2))\\1\cdot 1\cdot q^{5}t^4 \end{matrix}} & {\begin{matrix}(K(1),K(3))\\q\cdot 1\cdot q^7t^5 \end{matrix}} & {\begin{matrix}(K(1),K(4))\\q^2\cdot 1\cdot q^9t^6 \end{matrix}} & \cdots \\
	\vdots & \vdots & \vdots & \vdots & {}
	\arrow["{\gamma\sbp 3}", dashed, from=1-1, to=1-2]
	\arrow["{\gamma\sbp 2}"', dashed, from=1-1, to=2-1]
	\arrow[dashed, from=2-1, to=2-2]
	\arrow[dashed, from=1-2, to=1-3]
	\arrow[dashed, from=1-2, to=2-2]
	\arrow[dashed, from=2-2, to=2-3]
	\arrow[dashed, from=2-1, to=3-1]
	\arrow[dashed, from=2-2, to=3-2]
	\arrow[dashed, from=3-1, to=3-2]
	\arrow[dashed, from=3-2, to=3-3]
	\arrow[from=3-2, to=4-2]
	\arrow[from=3-1, to=4-1]
	\arrow[dashed, from=1-3, to=1-4]
	\arrow[from=1-4, to=1-5]
	\arrow[from=2-4, to=2-5]
	\arrow[from=3-4, to=3-5]
	\arrow[from=3-4, to=4-4]
	\arrow[from=3-3, to=4-3]
	\arrow[dashed, from=3-3, to=3-4]
	\arrow[dashed, from=2-3, to=2-4]
	\arrow[dashed, from=2-3, to=3-3]
	\arrow[dashed, from=1-3, to=2-3]
	\arrow[dashed, from=1-4, to=2-4]
	\arrow[dashed, from=2-4, to=3-4]
\end{tikzcd}\]
\caption{Stability and contents in $(J,K,K)$}
\label{fig:jkk}
\end{figure}

\begin{figure}[h ]
% https://q.uiver.app/?q=WzAsNixbMCwwLCJcXGJlZ2lue21hdHJpeH0oSigxKSxKKDEpKVxcXFwxXFxjZG90IHFeMlxcY2RvdCBxXjZ0XjIgXFxlbmR7bWF0cml4fSJdLFsxLDAsIlxcYmVnaW57bWF0cml4fShKKDEpLEooMikpXFxcXDFcXGNkb3QgcV4yXFxjZG90IHFeN3ReMyBcXGVuZHttYXRyaXh9Il0sWzEsMSwiXFx2ZG90cyJdLFsyLDAsIlxcY2RvdHMiXSxbMCwxLCJcXHZkb3RzIl0sWzIsMV0sWzAsMSwiXFxnYW1tYVxcc2JwIDMiLDAseyJzdHlsZSI6eyJib2R5Ijp7Im5hbWUiOiJkYXNoZWQifX19XSxbMSwzXSxbMSwyXSxbMCw0LCJcXGdhbW1hXFxzYnAgMiIsMl1d
\[\begin{tikzcd}[sep=small, every matrix/.append style={name=m}, /tikz/execute at end picture={
  \node[draw,fit=(m-1-1)(m-2-1), inner sep=-0.5ex] {};
  \node[draw,fit=(m-1-2)(m-1-3)(m-2-2),inner sep=-0.5ex] {};
}]
	{\begin{matrix}(J(1),J(1))\\1\cdot q^2\cdot q^6t^2 \end{matrix}} & {\begin{matrix}(J(1),J(2))\\1\cdot q^2\cdot q^7t^3 \end{matrix}} & \cdots \\
	\vdots & \vdots & {}
	\arrow["{\gamma\sbp 3}", dashed, from=1-1, to=1-2]
	\arrow[from=1-2, to=1-3]
	\arrow[from=1-2, to=2-2]
	\arrow["{\gamma\sbp 2}"', from=1-1, to=2-1]
\end{tikzcd}\]
\caption{Stability and contents in $(K,J,J)$}
\label{fig:kjj}
\end{figure}

\begin{figure}[h ]
\[\begin{tikzcd}[sep=small, every matrix/.append style={name=m}, /tikz/execute at end picture={
  \node[draw,fit=(m-1-1), inner sep=-0.5ex] {};
  \node[draw,fit=(m-1-2), inner sep=-0.5ex] {};
  \node[draw,fit=(m-1-3),inner sep=-0.5ex] {};
  \node[draw,fit=(m-1-4)(m-1-5)(m-7-4),inner sep=-0.5ex] {};
  \node[draw,fit=(m-2-1),inner sep=-0.5ex] {};
  \node[draw,fit=(m-2-2),inner sep=-0.5ex] {};
  \node[draw,fit=(m-2-3)(m-7-3),inner sep=-0.5ex] {};
  \node[draw,fit=(m-3-1),inner sep=-0.5ex] {};
  \node[draw,fit=(m-3-2),inner sep=-0.5ex] {};
  \node[draw,fit=(m-4-1),inner sep=-0.5ex] {};
  \node[draw,fit=(m-4-2)(m-7-2),inner sep=-0.5ex] {};
  \node[draw,fit=(m-5-1),inner sep=-0.5ex] {};
  \node[draw,fit=(m-6-1)(m-7-1),inner sep=-0.5ex] {};
}]
	{\begin{matrix}(J(1),K(0))\\1\cdot q\cdot q^3t^1 \end{matrix}} & {\begin{matrix}(J(1),K(1))\\1\cdot q\cdot q^4t^2 \end{matrix}} & {\begin{matrix}(J(1),K(2))\\q\cdot q\cdot q^6t^3 \end{matrix}} & {\begin{matrix}(J(1),K(3))\\q^2\cdot q\cdot q^8t^4 \end{matrix}} & \cdots \\
	{\begin{matrix}(K(1),J(1))\\1\cdot q\cdot q^4t^2 \end{matrix}} & {\begin{matrix}(K(2),J(1))\\q\cdot q\cdot q^{5}t^3 \end{matrix}} & {\begin{matrix}(K(3),J(1))\\q^2\cdot q\cdot q^7t^4 \end{matrix}} & {\begin{matrix}(K(4),J(1))\\q^2\cdot q\cdot q^9t^5 \end{matrix}} & \cdots \\
	{\begin{matrix}(J(2),K(1))\\1\cdot q\cdot q^5t^3 \end{matrix}} & {\begin{matrix}(J(2),K(2))\\q\cdot q\cdot q^{6}t^4 \end{matrix}} & {\begin{matrix}(J(2),K(3))\\q^2\cdot q\cdot q^8t^5 \end{matrix}} & {\begin{matrix}(J(2),K(4))\\q^2\cdot q\cdot q^{10}t^6 \end{matrix}} & \cdots \\
	{\begin{matrix}(K(2),J(2))\\q\cdot q\cdot q^6t^4 \end{matrix}} & {\begin{matrix}(K(3),J(2))\\q^2\cdot q\cdot q^7t^5 \end{matrix}} & {\begin{matrix}(K(4),J(2))\\q^2\cdot q\cdot q^9t^6 \end{matrix}} & {\begin{matrix}(K(5),J(2))\\q^2\cdot q\cdot q^{11}t^7 \end{matrix}} & \cdots \\
	{\begin{matrix}(J(3),K(2))\\q\cdot q\cdot q^7t^5 \end{matrix}} & {\begin{matrix}(J(3),K(3))\\q^2\cdot q\cdot q^8t^6 \end{matrix}} & {\begin{matrix}(J(3),K(4))\\q^2\cdot q\cdot q^{10}t^7 \end{matrix}} & {\begin{matrix}(J(3),K(5))\\q^2\cdot q\cdot q^{12}t^8 \end{matrix}} & \cdots \\
	{\begin{matrix}(K(3),J(3))\\q^2\cdot q\cdot q^8t^6 \end{matrix}} & {\begin{matrix}(K(4),J(3))\\q^2\cdot q\cdot q^9t^7 \end{matrix}} & {\begin{matrix}(K(5),J(3))\\q^2\cdot q\cdot q^{11}t^8 \end{matrix}} & {\begin{matrix}(K(6),J(3))\\q^2\cdot q\cdot q^{13}t^9 \end{matrix}} & \cdots \\
	\vdots & \vdots & \vdots & \vdots & {}
	\arrow["{\gamma\sbp 3}", dashed, from=1-1, to=1-2]
	\arrow["{\gamma\sbp 2}"', dashed, from=1-1, to=2-1]
	\arrow[dashed, from=2-1, to=2-2]
	\arrow[dashed, from=1-2, to=1-3]
	\arrow[dashed, from=1-2, to=2-2]
	\arrow[dashed, from=2-2, to=2-3]
	\arrow[dashed, from=2-1, to=3-1]
	\arrow[dashed, from=2-2, to=3-2]
	\arrow[dashed, from=3-1, to=3-2]
	\arrow[dashed, from=3-2, to=3-3]
	\arrow[dashed, from=3-2, to=4-2]
	\arrow[dashed, from=3-1, to=4-1]
	\arrow[dashed, from=1-3, to=1-4]
	\arrow[from=1-4, to=1-5]
	\arrow[from=2-3, to=2-4]
	\arrow[dashed, from=1-3, to=2-3]
	\arrow[from=1-4, to=2-4]
	\arrow[from=2-4, to=2-5]
	\arrow[from=3-4, to=3-5]
	\arrow[from=2-3, to=3-3]
	\arrow[from=2-4, to=3-4]
	\arrow[from=3-3, to=3-4]
	\arrow[from=4-2, to=4-3]
	\arrow[from=3-3, to=4-3]
	\arrow[from=4-3, to=4-4]
	\arrow[from=3-4, to=4-4]
	\arrow[from=4-4, to=4-5]
	\arrow[from=4-4, to=5-4]
	\arrow[from=5-4, to=5-5]
	\arrow[from=5-4, to=6-4]
	\arrow[from=6-4, to=6-5]
	\arrow[from=6-4, to=7-4]
	\arrow[from=6-3, to=7-3]
	\arrow[from=6-2, to=7-2]
	\arrow[from=5-2, to=5-3]
	\arrow[from=4-3, to=5-3]
	\arrow[from=5-3, to=5-4]
	\arrow[from=5-3, to=6-3]
	\arrow[from=6-3, to=6-4]
	\arrow[from=4-2, to=5-2]
	\arrow[from=5-2, to=6-2]
	\arrow[from=6-2, to=6-3]
	\arrow[dashed, from=4-1, to=4-2]
	\arrow[dashed, from=4-1, to=5-1]
	\arrow[dashed, from=5-1, to=5-2]
	\arrow[dashed, from=5-1, to=6-1]
	\arrow[from=6-1, to=7-1]
	\arrow[dashed, from=6-1, to=6-2]
\end{tikzcd}\]
\caption{Stability and contents in $(K,J,K)$}
\label{fig:kjk}
\end{figure}

\begin{figure}[h ]
% https://q.uiver.app/?q=WzAsMTUsWzAsMCwiXFxiZWdpbnttYXRyaXh9KEsoMCksSigxKSlcXFxcMVxcY2RvdCBxXjJcXGNkb3QgcV4zdCBcXGVuZHttYXRyaXh9Il0sWzEsMCwiXFxjZG90cyJdLFswLDEsIlxcYmVnaW57bWF0cml4fShKKDIpLEsoMCkpXFxcXDFcXGNkb3QgcV4yXFxjZG90IHFeNHReMiBcXGVuZHttYXRyaXh9Il0sWzEsMSwiXFxjZG90cyJdLFswLDIsIlxcYmVnaW57bWF0cml4fShLKDEpLEooMikpXFxcXDFcXGNkb3QgcV4yXFxjZG90IHFeNXReMyBcXGVuZHttYXRyaXh9Il0sWzEsMiwiXFxjZG90cyJdLFswLDMsIlxcYmVnaW57bWF0cml4fShKKDMpLEsoMSkpXFxcXDFcXGNkb3QgcV4yXFxjZG90IHFeNnReNCBcXGVuZHttYXRyaXh9Il0sWzEsMywiXFxjZG90cyJdLFswLDQsIlxcYmVnaW57bWF0cml4fShLKDIpLEooMykpXFxcXHFcXGNkb3QgcV4yXFxjZG90IHFeN3ReNSBcXGVuZHttYXRyaXh9Il0sWzAsNSwiXFxiZWdpbnttYXRyaXh9KEooNCksSygyKSlcXFxccVxcY2RvdCBxXjJcXGNkb3QgcV44dF42IFxcZW5ke21hdHJpeH0iXSxbMCw2LCJcXGJlZ2lue21hdHJpeH0oSygzKSxKKDQpKVxcXFxxXjJcXGNkb3QgcV4yXFxjZG90IHFeOXReNyBcXGVuZHttYXRyaXh9Il0sWzAsNywiXFx2ZG90cyJdLFsxLDYsIlxcY2RvdHMiXSxbMSw1LCJcXGNkb3RzIl0sWzEsNCwiXFxjZG90cyJdLFswLDEsIlxcZ2FtbWFcXHNicCAzIl0sWzAsMiwiXFxnYW1tYVxcc2JwIDIiLDIseyJzdHlsZSI6eyJib2R5Ijp7Im5hbWUiOiJkYXNoZWQifX19XSxbMiwzXSxbMiw0LCIiLDAseyJzdHlsZSI6eyJib2R5Ijp7Im5hbWUiOiJkYXNoZWQifX19XSxbNCw1XSxbNCw2LCIiLDAseyJzdHlsZSI6eyJib2R5Ijp7Im5hbWUiOiJkYXNoZWQifX19XSxbNiw4LCIiLDAseyJzdHlsZSI6eyJib2R5Ijp7Im5hbWUiOiJkYXNoZWQifX19XSxbNiw3XSxbOCwxNF0sWzgsOSwiIiwwLHsic3R5bGUiOnsiYm9keSI6eyJuYW1lIjoiZGFzaGVkIn19fV0sWzksMTNdLFs5LDEwLCIiLDAseyJzdHlsZSI6eyJib2R5Ijp7Im5hbWUiOiJkYXNoZWQifX19XSxbMTAsMTJdLFsxMCwxMV1d
\[\begin{tikzcd}[sep=small, every matrix/.append style={name=m}, /tikz/execute at end picture={
  \node[draw,fit=(m-1-1)(m-1-2), inner sep=-0.5ex] {};
  \node[draw,fit=(m-2-1)(m-2-2), inner sep=-0.5ex] {};
  \node[draw,fit=(m-3-1)(m-3-2), inner sep=-0.5ex] {};
  \node[draw,fit=(m-4-1)(m-4-2), inner sep=-0.5ex] {};
  \node[draw,fit=(m-5-1)(m-5-2), inner sep=-0.5ex] {};
  \node[draw,fit=(m-6-1)(m-6-2), inner sep=-0.5ex] {};
  \node[draw,fit=(m-7-1)(m-7-2)(m-8-1), inner sep=-0.5ex] {};
}]
	{\begin{matrix}(K(0),J(1))\\1\cdot q^2\cdot q^3t \end{matrix}} & \cdots \\
	{\begin{matrix}(J(2),K(0))\\1\cdot q^2\cdot q^4t^2 \end{matrix}} & \cdots \\
	{\begin{matrix}(K(1),J(2))\\1\cdot q^2\cdot q^5t^3 \end{matrix}} & \cdots \\
	{\begin{matrix}(J(3),K(1))\\1\cdot q^2\cdot q^6t^4 \end{matrix}} & \cdots \\
	{\begin{matrix}(K(2),J(3))\\q\cdot q^2\cdot q^7t^5 \end{matrix}} & \cdots \\
	{\begin{matrix}(J(4),K(2))\\q\cdot q^2\cdot q^8t^6 \end{matrix}} & \cdots \\
	{\begin{matrix}(K(3),J(4))\\q^2\cdot q^2\cdot q^9t^7 \end{matrix}} & \cdots \\
	\vdots
	\arrow["{\gamma\sbp 3}", from=1-1, to=1-2]
	\arrow["{\gamma\sbp 2}"', dashed, from=1-1, to=2-1]
	\arrow[from=2-1, to=2-2]
	\arrow[dashed, from=2-1, to=3-1]
	\arrow[from=3-1, to=3-2]
	\arrow[dashed, from=3-1, to=4-1]
	\arrow[dashed, from=4-1, to=5-1]
	\arrow[from=4-1, to=4-2]
	\arrow[from=5-1, to=5-2]
	\arrow[dashed, from=5-1, to=6-1]
	\arrow[from=6-1, to=6-2]
	\arrow[dashed, from=6-1, to=7-1]
	\arrow[from=7-1, to=7-2]
	\arrow[from=7-1, to=8-1]
\end{tikzcd}\]
\caption{Stability and contents in $(K,K,J)$}
\label{fig:kkj}
\end{figure}

\begin{figure}[h ]
\[\begin{tikzcd}[sep=small, every matrix/.append style={name=m}, /tikz/execute at end picture={
  \node[draw,fit=(m-1-1), inner sep=-0.5ex] {};
  \node[draw,fit=(m-1-2), inner sep=-0.5ex] {};
  \node[draw,fit=(m-1-3), inner sep=-0.5ex] {};
  \node[draw,fit=(m-2-1), inner sep=-0.5ex] {};
  \node[draw,fit=(m-2-2), inner sep=-0.5ex] {};
  \node[draw,fit=(m-2-3), inner sep=-0.5ex] {};
  \node[draw,fit=(m-3-1), inner sep=-0.5ex] {};
  \node[draw,fit=(m-3-2), inner sep=-0.5ex] {};
  \node[draw,fit=(m-3-3), inner sep=-0.5ex] {};
  \node[draw,fit=(m-4-1), inner sep=-0.5ex] {};
  \node[draw,fit=(m-4-2), inner sep=-0.5ex] {};
  \node[draw,fit=(m-4-3), inner sep=-0.5ex] {};
  \node[draw,fit=(m-5-1), inner sep=-0.5ex] {};
  \node[draw,fit=(m-5-2), inner sep=-0.5ex] {};
  \node[draw,fit=(m-5-3), inner sep=-0.5ex] {};
  \node[draw,fit=(m-6-1), inner sep=-0.5ex] {};
  \node[draw,fit=(m-6-2), inner sep=-0.5ex] {};
  \node[draw,fit=(m-6-3), inner sep=-0.5ex] {};
  \node[draw,fit=(m-1-4)(m-1-5), inner sep=-0.5ex] {};
  \node[draw,fit=(m-2-4)(m-2-5), inner sep=-0.5ex] {};
  \node[draw,fit=(m-3-4)(m-3-5), inner sep=-0.5ex] {};
  \node[draw,fit=(m-4-4)(m-4-5), inner sep=-0.5ex] {};
  \node[draw,fit=(m-5-4)(m-5-5), inner sep=-0.5ex] {};
  \node[draw,fit=(m-6-4)(m-6-5), inner sep=-0.5ex] {};
  \node[draw,fit=(m-7-1)(m-8-1), inner sep=-0.5ex] {};
  \node[draw,fit=(m-7-2)(m-8-2), inner sep=-0.5ex] {};
  \node[draw,fit=(m-7-3)(m-8-3), inner sep=-0.5ex] {};
  \node[draw,fit=(m-7-4)(m-7-5)(m-8-4), inner sep=-0.5ex] {};
}]
	{\begin{matrix}(K(0),K(0))\\1\cdot 1\cdot 1 \end{matrix}} & {\begin{matrix}(K(0),K(1))\\1\cdot 1\cdot q^2t \end{matrix}} & {\begin{matrix}(K(0),K(2))\\q^2\cdot 1\cdot q^4t^2 \end{matrix}} & {\begin{matrix}(K(0),K(3))\\q^4\cdot 1\cdot q^6t^3 \end{matrix}} & \cdots \\
	{\begin{matrix}(K(1),K(0))\\1\cdot 1\cdot qt \end{matrix}} & {\begin{matrix}(K(2),K(0))\\q\cdot 1\cdot q^3t^2 \end{matrix}} & {\begin{matrix}(K(3),K(0))\\q^3\cdot 1\cdot q^5t^3 \end{matrix}} & {\begin{matrix}(K(4),K(0))\\q^4\cdot 1\cdot q^7t^4 \end{matrix}} & \cdots \\
	{\begin{matrix}(K(1),K(1))\\1\cdot 1\cdot q^2t^2 \end{matrix}} & {\begin{matrix}(K(1),K(2))\\q\cdot 1\cdot q^4t^3 \end{matrix}} & {\begin{matrix}(K(1),K(3))\\q^3\cdot 1\cdot q^6t^4 \end{matrix}} & {\begin{matrix}(K(1),K(4))\\q^4\cdot 1\cdot q^8t^5 \end{matrix}} & \cdots \\
	{\begin{matrix}(K(2),K(1))\\q\cdot 1\cdot q^3t^3 \end{matrix}} & {\begin{matrix}(K(3),K(1))\\q^2\cdot 1\cdot q^5t^4 \end{matrix}} & {\begin{matrix}(K(4),K(1))\\q^3\cdot 1\cdot q^7t^5 \end{matrix}} & {\begin{matrix}(K(5),K(1))\\q^4\cdot 1\cdot q^9t^6 \end{matrix}} & \cdots \\
	{\begin{matrix}(K(2),K(2))\\q^2\cdot 1\cdot q^4t^4 \end{matrix}} & {\begin{matrix}(K(2),K(3))\\q^3\cdot 1\cdot q^6t^5 \end{matrix}} & {\begin{matrix}(K(2),K(4))\\q^4\cdot 1\cdot q^8t^6 \end{matrix}} & {\begin{matrix}(K(2),K(5))\\(2q^4-q^3)\cdot 1\cdot q^{10}t^7 \end{matrix}} & \cdots \\
	{\begin{matrix}(K(3),K(2))\\q^3\cdot 1\cdot q^5t^5 \end{matrix}} & {\begin{matrix}(K(4),K(2))\\q^3\cdot 1\cdot q^7t^6 \end{matrix}} & {\begin{matrix}(K(5),K(2))\\q^4\cdot 1\cdot q^9t^7 \end{matrix}} & {\begin{matrix}(K(6),K(2))\\(2q^4-q^3)\cdot 1\cdot q^{11}t^8 \end{matrix}} & \cdots \\
	{\begin{matrix}(K(3),K(3))\\q^4\cdot 1\cdot q^6t^6 \end{matrix}} & {\begin{matrix}(K(3),K(4))\\q^4\cdot 1\cdot q^8t^7 \end{matrix}} & {\begin{matrix}(K(3),K(5))\\(2q^4-q^3)\cdot 1\cdot q^{10}t^8 \end{matrix}} & {\begin{matrix}(K(3),K(6))\\(3q^4-2q^3)\cdot 1\cdot q^{12}t^9 \end{matrix}} & \cdots \\
	\vdots & \vdots & \vdots & \vdots & {}
	\arrow["{\gamma\sbp 3}", dashed, from=1-1, to=1-2]
	\arrow["{\gamma\sbp 2}"', dashed, from=1-1, to=2-1]
	\arrow[dashed, from=2-1, to=2-2]
	\arrow[dashed, from=1-2, to=1-3]
	\arrow[dashed, from=1-2, to=2-2]
	\arrow[dashed, from=2-2, to=2-3]
	\arrow[dashed, from=2-1, to=3-1]
	\arrow[dashed, from=2-2, to=3-2]
	\arrow[dashed, from=3-1, to=3-2]
	\arrow[dashed, from=3-2, to=3-3]
	\arrow[dashed, from=3-2, to=4-2]
	\arrow[dashed, from=3-1, to=4-1]
	\arrow[dashed, from=1-3, to=1-4]
	\arrow[from=1-4, to=1-5]
	\arrow[dashed, from=2-3, to=2-4]
	\arrow[dashed, from=1-3, to=2-3]
	\arrow[dashed, from=1-4, to=2-4]
	\arrow[from=2-4, to=2-5]
	\arrow[from=3-4, to=3-5]
	\arrow[dashed, from=2-3, to=3-3]
	\arrow[dashed, from=2-4, to=3-4]
	\arrow[dashed, from=3-3, to=3-4]
	\arrow[dashed, from=4-2, to=4-3]
	\arrow[dashed, from=3-3, to=4-3]
	\arrow[dashed, from=4-3, to=4-4]
	\arrow[dashed, from=3-4, to=4-4]
	\arrow[from=4-4, to=4-5]
	\arrow[dashed, from=4-4, to=5-4]
	\arrow[from=5-4, to=5-5]
	\arrow[dashed, from=5-4, to=6-4]
	\arrow[from=6-4, to=6-5]
	\arrow[dashed, from=6-4, to=7-4]
	\arrow[dashed, from=6-3, to=7-3]
	\arrow[dashed, from=6-2, to=7-2]
	\arrow[dashed, from=5-2, to=5-3]
	\arrow[dashed, from=4-3, to=5-3]
	\arrow[dashed, from=5-3, to=5-4]
	\arrow[dashed, from=5-3, to=6-3]
	\arrow[dashed, from=6-3, to=6-4]
	\arrow[dashed, from=4-2, to=5-2]
	\arrow[dashed, from=5-2, to=6-2]
	\arrow[from=6-2, to=6-3]
	\arrow[dashed, from=4-1, to=4-2]
	\arrow[dashed, from=4-1, to=5-1]
	\arrow[dashed, from=5-1, to=5-2]
	\arrow[dashed, from=5-1, to=6-1]
	\arrow[dashed, from=6-1, to=7-1]
	\arrow[dashed, from=6-1, to=6-2]
	\arrow[from=7-1, to=8-1]
	\arrow[from=7-2, to=8-2]
	\arrow[from=7-3, to=8-3]
	\arrow[dashed, from=7-1, to=7-2]
	\arrow[dashed, from=7-2, to=7-3]
	\arrow[dashed, from=7-3, to=7-4]
	\arrow[from=7-4, to=8-4]
	\arrow[from=7-4, to=7-5]
\end{tikzcd}\]
\caption{Stability and contents in $(K,K,K)$}
\label{fig:kkk}
\end{figure}

Summing up the contributions of 78 orbits in the eight diagrams, we obtain
\begin{equation}
H_3(t)=\frac{1+q^3t+q^4t+q^5t+ q^6t^2+q^7t^2+q^8t^2+q^9t^3}{(1-t)(1-qt)(1-q^2t)}
\end{equation}
and \eqref{eq:quot_in_hilb} gives
\begin{equation}
Q_3(t)=\frac{1+q^3t^2+q^4t^2 +q^5t^2+ q^6t^4+ q^7t^4+q^8t^4+q^9t^6}{(1-t)(1-qt)(1-q^2t)},
\end{equation}
where we note again that the numerator of $Q_3(t)$ is the numerator of $H_3(t)$ evaluated at $t\mapsto t^2$. 

The simplicity of the formula for $H_3(t)$ is striking. In terms of the $t$-degrees of their contributions to the numerator of $H_3(t)$, individual orbits have contributions up to $t^9$, and the total contribution of a color vector has contribution up to $t^7$, but the $t$-degree of the numerator of $H_3(t)$ is only $3$. See Table \ref{table:color_3} for a breakdown of $H_3(t)$ in each color, where we also mark the number of orbits used in our computation. We multiply the content by $(t;q)_3=(1-t)(1-qt)(1-q^2t)$ to extract the contribution to the numerator of $H_3(t)$.

\begin{table}[h]
{\renewcommand{\arraystretch}{1.25}%
\begin{tabular}{|c|c|p{0.7\textwidth}|}
\hline
$c$ & \# orbits & $(t;q)_3\sum_{\alpha\in \Xi_c}\Cont(\alpha)$ \\
\hline\hline
$(J,J,J)$ & 6 & $q^6 t^2+2 q^6 t^3-q^7 t^3-q^8 t^3+q^6 t^4-q^7 t^4-q^8 t^4+q^9 t^4+q^7 t^5-2 q^8 t^5+q^9 t^5$\\
\hline
$(J,J,K)$ & 6 & $q^9 t^3+2 q^9 t^4-q^{10} t^4-q^{11} t^4+q^9 t^5-q^{10} t^5-q^{11} t^5+q^{12} t^5+q^{10} t^6-2 q^{11} t^6+q^{12} t^6$\\
\hline
$(J,K,J)$ & 4 & $q^7 t^2+q^7 t^3-q^9 t^3+q^7 t^4-q^8 t^4-q^9 t^4+q^{10} t^4$ \\
\hline
$(J,K,K)$ & 12 & $q^3 t+q^3 t^2-q^5 t^2+q^3 t^3-q^4 t^3-q^5 t^3+q^7 t^3-q^9 t^4+q^{10} t^4-q^7 t^5+2 q^8 t^5-q^9 t^5-q^{10} t^6+2 q^{11} t^6-q^{12} t^6$\\
\hline
$(K,J,J)$ & 2 & $q^8 t^2+q^9 t^3-q^{10} t^3$\\
\hline
$(K,J,K)$ & 13 & $q^4 t+q^5 t^2-q^6 t^2-q^6 t^3+q^8 t^3-q^7 t^4+2 q^8 t^4-2 q^9 t^4+q^{11} t^4+q^{11} t^5-q^{12} t^5-q^{10} t^6+2 q^{11} t^6-q^{12} t^6+q^{12} t^7-q^{13} t^7$ \\
\hline
$(K,K,J)$ & 7 & $q^5 t-q^9 t^5+q^{10} t^5-q^{12} t^7+q^{13} t^7$ \\
\hline
$(K,K,K)$ & 28 & $1-q^3 t^2+q^6 t^2-q^3 t^3+q^4 t^3+q^5 t^3-q^6 t^3-q^7 t^3+q^{10} t^3-q^6 t^4+q^7 t^4+q^9 t^4-q^{10} t^4+q^{10} t^6-2 q^{11} t^6+q^{12} t^6$\\
\hline \hline
Total & 78 & $1+q^3t+q^4t+q^5t+ q^6t^2+q^7t^2+q^8t^2+q^9t^3$\\
\hline
\end{tabular}
}
\caption{$H_3(t)$ from each color}
\label{table:color_3}
\end{table}

\subsection{Conjectural formulas for general $d$}\label{sec:conj}

Following the notation of Theorem \ref{thm:cusp_t_rationality}, let $\NH_d(t)=(t;q)_d H_d(t)$ and $\NQ_d(t)=(t;q)_d Q_d(t)$. They are now known to be polynomials in $t$ by Theorem \ref{thm:cusp_formula}. 

We translate Conjecture \ref{conj:quot}(b) together with the conjectural \eqref{eq:nq_in_nh} into a version for $\NH_d(t)$ using \eqref{eq:quot_in_hilb}.

\begin{conjecture}\label{conj:hd_pattern}
We have
\begin{enumerate}
\item $\NH_d(t^2)=\sum_{r=0}^d t^r {d \brack r}_q (t;q)_{d-r} \NH_r(tq^{d-r})$;
\item \tuparens{Functional Equation} $q^{d^2}t^d \NH_d(q^{-2d}t^{-1})=\NH_d(t)$.
\end{enumerate}
\end{conjecture}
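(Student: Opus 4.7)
The plan is to derive both parts from a single conjectured explicit formula for $\NH_d(t)$. Expanding $(-t;q)_d = \sum_j q^{\binom{j}{2}}{d \brack j}_q t^j$ and using the identity $\binom{j+1}{2}+j(d-j)+\binom{j}{2} = jd$, one sees that Conjecture \ref{conj:cusp_exact_formula}(a) combined with \eqref{eq:nq_in_nh} is equivalent to the clean formula
\begin{equation*}
\NH_d(t) = \sum_{j=0}^d q^{jd}{d \brack j}_q t^j. \qquad (\star)
\end{equation*}

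Granting $(\star)$, both parts of Conjecture \ref{conj:hd_pattern} become direct verifications. For (a), I would first multiply \eqref{eq:quot_in_hilb} by $(t;q)_d$, using the factorization $(t;q)_d = (t;q)_{d-r} \cdot (q^{d-r}t;q)_r$, to obtain
\begin{equation*}
\NQ_d(t) = \sum_{r=0}^d t^r {d \brack r}_q (t;q)_{d-r} \NH_r(q^{d-r}t).
\end{equation*}
Thus part (a) becomes the assertion $\NH_d(t^2) = \NQ_d(t)$, which follows from $(\star)$ together with the closed form $\NQ_d(t) = \sum_j q^{jd}{d \brack j}_q t^{2j}$ obtained from Conjecture \ref{conj:cusp_exact_formula}(a). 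For (b), substituting $(\star)$ and reindexing $k = d - j$ using ${d \brack j}_q = {d \brack d-j}_q$ gives
\begin{equation*}
q^{d^2}t^d \NH_d(q^{-2d}t^{-1}) = q^{d^2}\sum_{j=0}^d q^{-jd}{d \brack j}_q t^{d-j} = \sum_{k=0}^d q^{kd}{d \brack k}_q t^k = \NH_d(t).
\end{equation*}

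The whole task therefore reduces to proving $(\star)$. My plan for this is to use the Gröbner stratification of Section \ref{sec:strata} together with the finite recipe of Theorem \ref{thm:cusp_formula}, which expresses $H_d(t)$ as an explicit sum indexed by color vectors $c \in \{J,K\}^d$ and points of a finite rectangle $B \subeq \Gamma$. This requires first establishing Conjecture \ref{conj:L_rationality_strata} so that the pure-$K$ stratum motives $A(\alpha)$ in \eqref{eq:strata_factorization} are polynomials in $\L$ (for which Theorem \ref{thm:L_rationality_stable} already handles the generic range), and then performing the global summation, showing that the total collapses to $\sum_j q^{jd}{d \brack j}_q t^j / (t;q)_d$.

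The main obstacle will be this combinatorial collapse. The explicit calculation in Section \ref{sec:low_d} already requires 78 orbits for $d = 3$, with individual contributions carrying $t$-degrees as high as $2d+1$ and content polynomials in $q$ of high degree; yet enormous cancellation leaves a numerator of degree only $d$ with $d+1$ nonzero terms of the very symmetric form $q^{jd}{d \brack j}_q$. A uniform proof likely demands either a sign-reversing involution on colored leading-term data, or a reorganization of the Theorem \ref{thm:cusp_formula} sum that makes the $q$-binomial structure manifest---for instance by grouping orbits with a fixed $J$-pattern and recognizing each group as a $q$-analogue of a Vandermonde-type identity. Once $(\star)$ is established, parts (a) and (b) follow by the formal manipulations above, yielding in particular a direct proof of Conjecture \ref{conj:quot}(b) for the cusp as a corollary rather than as an input.
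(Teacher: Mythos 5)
Your direct verification of part (b) from $(\star)$ is correct and slightly cleaner than the paper's treatment, which obtains (b) only indirectly as a formal translation of Conjecture~\ref{conj:quot}(b) via \eqref{eq:nq_in_nh}. But the reduction you propose for part (a) contains a genuine logical gap. You assert that ``Conjecture~\ref{conj:cusp_exact_formula}(a) combined with \eqref{eq:nq_in_nh} is equivalent to the clean formula $(\star)$,'' and you then conclude that ``the whole task reduces to proving $(\star)$.'' The implication only goes one way. The conjunction implies $(\star)$, but $(\star)$ by itself is merely a formula for $\NH_d(t)$; the polynomial $\NQ_d(t)$ is then determined unconditionally from the theorem \eqref{eq:quot_in_hilb}, and whether the result equals $\NH_d(t^2)$ --- which, after you multiply \eqref{eq:quot_in_hilb} by $(t;q)_d$, is precisely what Conjecture~\ref{conj:hd_pattern}(a) asserts --- is a highly nontrivial $q$-binomial identity, namely
\begin{equation*}
\sum_{r=0}^d t^r {d \brack r}_q (t;q)_{d-r}\sum_{i=0}^r q^{ir}{r\brack i}_q(tq^{d-r})^i \;=\; \sum_{j=0}^d q^{jd}{d\brack j}_q t^{2j}.
\end{equation*}
This identity is exactly the content of Conjecture~\ref{conj:q-series}(b) applied to the explicit formula, and the paper explicitly states it remains unproven (``we have not made serious attempt''). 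Your verification of (a) quietly imports it when you ``obtain'' the closed form for $\NQ_d$ from Conjecture~\ref{conj:cusp_exact_formula}(a), which $(\star)$ does not deliver. So even granting your ambitious plan to establish $(\star)$ by a full Gr\"obner-strata summation, part (a) would still require a separate combinatorial proof of the $q$-identity above.

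It is also worth noting that your derivation runs opposite to the paper's. The paper presents Conjecture~\ref{conj:hd_pattern} as a faithful translation of the \emph{weaker} pair [Conjecture~\ref{conj:quot}(b)] and [\eqref{eq:nq_in_nh}] through the unconditional identity \eqref{eq:quot_in_hilb}; the overdetermined recursion in part (a) is then what \emph{produces} the explicit formula inductively (the $d=4$ example), with the internal consistency of that process packaged as Conjecture~\ref{conj:q-series}. The paper makes the direction explicit in the lemma after Conjecture~\ref{conj:q-series}: that conjecture together with Conjecture~\ref{conj:hd_pattern} implies Conjecture~\ref{conj:cusp_exact_formula} --- not the other way around. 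Attacking $(\star)$ head-on via Theorem~\ref{thm:cusp_formula} and Conjecture~\ref{conj:L_rationality_strata} is a reasonable strategy for Conjecture~\ref{conj:cusp_exact_formula}, but it is not by itself a proof of Conjecture~\ref{conj:hd_pattern}, and it inherits the exact combinatorial collapse problem that you identify.
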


We have just verified the above conjecture for $d\leq 3$. It turns out Conjecture \ref{conj:hd_pattern} is strong enough to \emph{overdetermine} $\NH_d(t)$ for all $d$. The following example for $d=4$ will demonstrate how to find $\NH_d(t)$ inductively and verify the consitency of Conjecture \ref{conj:hd_pattern} for $d$ up to a given bound. 

Assume Conjecture \ref{conj:hd_pattern}. Conjecture \ref{conj:hd_pattern}(a) then implies that $\NH_d(t)$ has constant term $1$. By Conjecture \ref{conj:hd_pattern}(b), the $t$-leading term of $\NH_d(t)$ is $q^{d^2}t^d$. 

For any commutative ring $A$ with 1, let $A_r[t]$ be the $A$-module of $A$-polynomials of degree at most $r$. Consider the $A$-linear map $\Theta_d:A_d[t]\to A_{2d-1}[t]$ defined by
\begin{equation}
\Theta_d(f(t)):=f(t^2)-t^d f(t).
\end{equation}

Then Conjecture \ref{conj:hd_pattern}(a) can be rewritten as
\begin{equation}\label{eq:nh_induction}
\Theta_d(\NH_d(t)) = \sum_{r=0}^{d-1} t^r {d \brack r}_q (t;q)_{d-r} \NH_r(tq^{d-r}).
\end{equation}

\begin{example}
When $d=4$, based on the known expressions for $d\leq 3$, the equation \eqref{eq:nh_induction} becomes
\begin{equation}\label{eq:nh_induction_4}
\begin{multlined}
\Theta_4(\NH_4(t))=1+(q^4+q^5+q^6+q^7) t^2+(-1+q^8+q^9+2 q^{10}+q^{11}+q^{12}) t^4+\\
(-q^4-q^5-q^6-q^7) t^5+
(-q^8-q^9-2 q^{10}-q^{11}+q^{13}+q^{14}+q^{15}) t^6+(-q^{12}-q^{13}-q^{14}-q^{15}) t^7.
\end{multlined}
\end{equation}

Matching the $t^4$- through $t^7$-coefficients of $\Theta_4(\NH_4(t))$, the only possibility for $\NH_4(t)$ is
\begin{equation}\label{eq:nh_4_formula}
\NH_4(t)=1 + (q^4 + q^5 + q^6 + q^7) t + (q^8 + q^9 + 2 q^{10} + q^{11} + q^{12}) t^2 + (q^{12} + q^{13} + q^{14} + q^{15}) t^3 + q^{16} t^4.
\end{equation}

Here, the $t^4$-coefficient is determined by the functional equation, not by \eqref{eq:nh_induction_4}. One can verify that $\NH_4(t)$ indeed satisfies \eqref{eq:nh_induction_4} and Conjecture \ref{conj:hd_pattern}(b). We thus conclude that \eqref{eq:nh_4_formula} is the unique $\NH_4(t)$ that makes Conjecture \ref{conj:hd_pattern} consistent up to $d\leq 4$.

We also notice that $\NH_4(t)$ is a polynomial in $q,t$, even if we have not assumed so. 
\end{example}

We are able to determine the exact formulas of $\NH_d(t)$ assuming Conjecture \ref{conj:hd_pattern}.

\begin{proposition}\label{prop:q-series}
Assume $A$ is a commutative ring with 1, and $q$ is an element of $A$. Then there are unique polynomials $\NH_d(t)\in A[t]$ for $d\geq 0$ such that
\begin{enumerate}
\item $\NH_0(t)=1$;
\item $\NH_d(t^2)=\sum_{r=0}^d t^r {d \brack r}_q (t;q)_{d-r} \NH_r(tq^{d-r})$;
\item \tuparens{Functional Equation} $\NH_d(t)=\sum_{i=0}^d a_i t^i$, with $a_{d-i}=q^{d(d-2i)} a_i$ for $0\leq i\leq \floor{d/2}$.
\end{enumerate}

Moreover, $\NH_d(t)$ is given by a polynomial $\NH_d(t;q)$ in $t,q$ with nonnegative integer coefficients, defined by the following:
\begin{equation}\label{eq:nh_guess}
\NH_d(t;q):=\sum_{j=0}^d {d \brack j}_{q} (q^d t)^j
\end{equation}

In addition, let $H_d(t;q):=\NH_d(t;q)/(t;q)_d$, and $\Zhat(t;q)=\sum_{d=0}^\infty \dfrac{q^{-d^2}t^d}{(q^{-1};q^{-1})_d} H_{d}(q^{-d}t;q)$ \tuparens{compare \eqref{eq:zhat_in_hilb}}, then
\begin{equation}\label{eq:cohen_lenstra_guess}
\Zhat(t;q)=\frac{1}{(tq^{-1};q^{-1})_\infty} \sum_{n=0}^\infty \frac{q^{-n^2}t^{2n}}{(q^{-1};q^{-1})_n}.
\end{equation}
\end{proposition}

\begin{proof}
The uniqueness part is argued before. To prove the existence, it suffices to define $\NH_d(t;q)$ by \eqref{eq:nh_guess} and show that $\NH_d(t):=\NH_d(t;q)$ satisfies the equations (a)(b)(c) and \eqref{eq:cohen_lenstra_guess}.

The equations (a)(c) are trivial. We now prove (b), starting from the right-hand side. We will use the change of variable $b=r-j$:
\begin{align}
&\sum_{r=0}^d t^r {d \brack r}_q (t;q)_{d-r} \NH_r(tq^{d-r})\\
&=\sum_{0\leq j\leq r\leq d} t^r (q^d t)^j (t;q)_{d-r} {d \brack r}_q {r\brack j}_q \\
&= \sum_{j=0}^d t^{2j}q^{dj} {d \brack j}_q \sum_{b=0}^{d-j} t^b (t;q)_{d-j-b} {d-j \brack b}_q.
\end{align}

Now let $m=d-j$, and it suffices to verify that $\sum_{b=0}^m t^{m-b}(t;q)_b {m \brack b}_q=1$. Letting $a=m-b$, we have
\begin{align}
\sum_{b=0}^m t^{m-b}(t;q)_b {m \brack b}_q &= (q;q)_m \sum_{a+b=m} \frac{t^a}{(q;q)_a}\frac{(t;q)_b}{(q;q)_b}\\
&= (q;q)_m [z^m]\parens*{ \sum_{a=0}^\infty \frac{(tz)^a}{(q;q)_a} \sum_{b=0}^\infty \frac{(t;q)_b}{(q;q)_b}z^b},
\end{align}
where $[z^m]$ denotes extracting the $z^m$-coefficient. 

By identities of Euler \cite[Corollary 2.2]{andrewspartitions} and Cauchy \cite[Theorem 2.1]{andrewspartitions}, we have
\begin{equation}
\sum_{a=0}^\infty \frac{(tz)^a}{(q;q)_a} = \frac{1}{(tz;q)_\infty}
\end{equation}
and
\begin{equation}
\sum_{b=0}^\infty \frac{(t;q)_b}{(q;q)_b}z^b=\frac{(tz;q)_\infty}{(z;q)_\infty}.
\end{equation}

By Euler's identity again, we have
\begin{equation}
[z^m]\parens*{ \sum_{a=0}^\infty \frac{(tz)^a}{(q;q)_a} \sum_{b=0}^\infty \frac{(t;q)_b}{(q;q)_b}z^b}=[z^m]\frac{1}{(z;q)_\infty}=\frac{1}{(q;q)_m},
\end{equation}
so the claim follows, finishing the proof of (b).

Finally, we prove \eqref{eq:cohen_lenstra_guess}, where we will use the change of variable $i=d-j$ and ${d \brack j}_q=q^{ij}{d \brack j}_{q^{-1}}$:

\begin{align}
\Zhat(t;q)&=\sum_{d=0}^\infty \dfrac{q^{-d^2}t^d}{(q^{-1};q^{-1})_d} H_{d}(q^{-d}t;q)\\
&=\sum_{j=0}^\infty \frac{t^{2j}q^{-j^2}}{(q^{-1};q^{-1})_j}\sum_{i=0}^\infty \frac{q^{-i(i+j)}}{(q^{-1};q^{-1})_i(tq^{-1};q^{-1})_{i+j}}.
\end{align}

But 
\begin{equation}
\sum_{i=0}^\infty \frac{q^{-i(i+j)}}{(q^{-1};q^{-1})_i(tq^{-1};q^{-1})_{i+j}} = \frac{1}{(tq^{-1};q^{-1})_\infty}
\end{equation}
using a standard argument involving the $j$-Durfee rectangle of a partition; see for instance \cite{gordonhouton1968ii}. 
\end{proof}

\begin{corollary}
The motivic version of Conjecture \ref{conj:hd_pattern} imply Conjecture \ref{conj:cusp_exact_formula}.
\end{corollary}
\begin{proof}
Apply Proposition \ref{prop:q-series} with $A=\KVar{k}$ and $q=\L$.
\end{proof}

The above lemma suggests a ``conceptual proof'' of Conjecture \ref{conj:cusp_exact_formula} without explicit computations and without knowing in advance whether the relevant motives are polynomials of $\L$. Since Conjecture \ref{conj:cusp_exact_formula} implies Conjecture \ref{conj:L_rationality}, this provides a plan to attack Conjecture \ref{conj:L_rationality} without computing the Gr\"obner strata and proving Conjecture \ref{conj:L_rationality_strata}. Of course, Conjecture \ref{conj:L_rationality} does not imply the stratum-wise polynomiality statement of Conjecture \ref{conj:L_rationality_strata}, so the latter conjecture would not be obsolete even if the former is proved to be true.

\

We make a further remark of the formula \eqref{eq:cohen_lenstra_guess}. Let
\begin{equation}
F(t;q)=\sum_{n=0}^\infty \frac{q^{n^2}t^{n}}{(q;q)_n}
\end{equation}
be a Rogers--Ramanujan series (see \cite[Chapter 7.1]{andrewspartitions}), then $\Zhat_R(t)=\frac{1}{(tq^{-1};q^{-1})_\infty} F(t^2;q^{-1})$, so the Rogers--Ramanujan identities would imply
\begin{align}
\Zhat_R(1)&=\frac{1}{(q^{-1};q^{-1})_\infty (q^{-1};q^{-5})_\infty (q^{-4};q^{-5})_\infty};
\\
\Zhat_R(-1)&=\frac{(q^{-1};q^{-2})_\infty}{(q^{-1};q^{-5})_\infty (q^{-4};q^{-5})_\infty}.
\end{align}

If true, this would provide another evidence for an observation in \cite{huang2023mutually} that $\Zhat_R(\pm 1)$ tend to have special values that admit infinite product formulas, if $R$ is the local ring at a point on a curve. The conjectured formula \eqref{eq:cohen_lenstra_guess} also gives the first guess for the number of matrix pairs $\set{(A,B)\in \Mat_n(\Fq)^2:AB=BA,A^2=B^3}$. Via \eqref{eq:stack_quotient} and \eqref{eq:cohen_lenstra_in_coh}, our guess \eqref{eq:cohen_lenstra_guess} is equivalent to
\begin{equation}\label{eq:matrix_count}
\abs[\big]{\set{(A,B)\in \Mat_n(\Fq)^2:AB=BA,A^2=B^3}}=\sum _{j=0}^{\floor{\frac{n}{2}}} (-1)^j q^{\frac{1}{2} \left(3 j^2-j\right)+n (n-2 j)} \frac{(q;q)_n}{(q;q)_j(q;q)_{n-2j}}.
\end{equation}

Unlike its nodal analog (which now has two proofs \cite{fulmanguralnick2022, huang2023mutually} by counting matrices $AB=BA=0$), the matrix counting problem \eqref{eq:matrix_count} is not known to have a direct approach.

We make another observation about $\NH_d(t;q)$ defined in \eqref{eq:nh_guess}. 

\begin{proposition}
Let $\zeta_r$ be a primitive $r$-th root of unity in $\C$. Then
\begin{equation}
\NH_d(t;\zeta_r)=(1+t^r)^{d/r}=\NH_1(t^r;1)^{d/r}.
\end{equation}
if $r$ divides $d$. Equivalently, for any $i\in \Z$, let $e=\gcd(i,d)$, then we have
\begin{equation}
\NH_d(t;\zeta_d^i)=(1+t^{d/e})^e.
\end{equation}
\end{proposition}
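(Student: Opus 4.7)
The strategy is to first compress the explicit formula \eqref{eq:nh_guess} into a single sum that factors cleanly at roots of unity, and then invoke the $q$-Lucas theorem. Applying the $q$-binomial theorem to $(-t;q)_d = \sum_{j=0}^d q^{\binom{j}{2}} {d \brack j}_q t^j$ identifies $c_j(q) = q^{\binom{j}{2}}{d \brack j}_q$. The three $q$-exponents appearing in the summand of \eqref{eq:nh_guess} collapse via
\begin{equation*}
\binom{j+1}{2} + j(d-j) + \binom{j}{2} = j^2 + j(d-j) = jd,
\end{equation*}
which yields the compact formula
\begin{equation*}
\NH_d(t;q) = \sum_{j=0}^d q^{jd}\,{d \brack j}_q \,t^j.
\end{equation*}

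With this rewrite in hand, the case $r \mid d$ is almost immediate. Writing $d = mr$, the weight $q^{jd}$ becomes $\zeta_r^{jmr} = 1$ for every $j$, so
\begin{equation*}
\NH_d(t;\zeta_r) = \sum_{j=0}^d {d \brack j}_{\zeta_r}\,t^j.
\end{equation*}
The key input is the $q$-Lucas congruence of D\'esarm\'enien: if $q$ is a primitive $r$-th root of unity and $d=ar+b$, $j=cr+e$ with $0 \le b,e < r$, then ${d \brack j}_q = \binom{a}{c}{b \brack e}_q$. In our situation $b = 0$, so the factor ${0 \brack e}_{\zeta_r}$ annihilates every term with $r \nmid j$, and when $j = cr$ one is left with ${mr \brack cr}_{\zeta_r} = \binom{m}{c}$. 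Summing gives
\begin{equation*}
\NH_d(t;\zeta_r) = \sum_{c=0}^m \binom{m}{c} t^{cr} = (1+t^r)^m,
\end{equation*}
which is the first stated identity.

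The ``equivalently'' formulation is then a relabelling: for any $i \in \Z$, $\zeta_d^i$ has multiplicative order $d/\gcd(i,d) = d/e$, hence is a primitive $(d/e)$-th root of unity; applying the first part with $r := d/e$ produces $\NH_d(t;\zeta_d^i) = (1+t^{d/e})^e$. I do not anticipate a serious obstacle: after the closed-form rewrite the proposition reduces cleanly to $q$-Lucas at roots of unity, and the only mildly delicate point is the exponent collapse $\binom{j+1}{2}+j(d-j)+\binom{j}{2} = jd$, which is precisely what conspires with the root-of-unity specialization to trivialize $q^{jd}$.
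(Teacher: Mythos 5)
Your proof is correct, and it takes a genuinely different route from the paper's. The paper works directly from the definition: at $q=\zeta_r$ with $r\mid d$ it reduces the prefactor to $\zeta_r^{-\binom{j}{2}}$, expands $(-t;\zeta_r)_d=(1-(-t)^r)^{d/r}$ to extract $c_{ir}(\zeta_r)=\binom{d/r}{i}(-1)^{i(r-1)}$, and then verifies by a hands-on computation with $\zeta_{2r}$ that $\zeta_r^{-\binom{ir}{2}}(-1)^{i(r-1)}=1$. You instead first use the Gauss $q$-binomial theorem to identify $c_j(q)=q^{\binom{j}{2}}{d\brack j}_q$ and observe the exponent telescoping $\binom{j+1}{2}+j(d-j)+\binom{j}{2}=jd$, obtaining the compact closed form $\NH_d(t;q)=\sum_{j=0}^d q^{jd}{d\brack j}_q t^j$; at $q=\zeta_r$ with $r\mid d$ the weight trivializes and $q$-Lucas finishes it. Your derivation is cleaner and sidesteps the sign/parity bookkeeping entirely, at the modest cost of importing the $q$-Lucas theorem as a black box; the intermediate identity $\NH_d(t;q)=\sum_j q^{jd}{d\brack j}_q t^j$ is also a nice simplification of the paper's definition \eqref{eq:nh_guess} that is independently worth recording.
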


\begin{proof}
We first claim $\NH_d(t;q)$ is equal to a ``shifted Pochhammer":
\begin{equation}
\NH_d(t;q)=\sum_{j=0}^d q^{{j+1\choose 2}+j(d-j)} c_j(q)t^j,
\end{equation}
where $\sum_{j=0}^d c_j(q)t^j=(-t;q)_d$. To verify the claim, we simply note that $c_j(q)=q^{j\choose 2} {d\ brack j}_q$ by the Cauchy binomial theorem $(-t;q)_d=\sum_{j=0}^d q^{j\choose 2} {d\ brack j}_q$.

Assume $r>0$ divides $d$. Putting $q=\zeta_r$ in \eqref{eq:nh_guess}, we get
\begin{equation}\label{eq:nh_rou}
\NH_d(t;\zeta_r)=\sum_{j=0}^d \zeta_r^{{j+1\choose 2}+j(d-j)} c_j(\zeta_r) t^j,
\end{equation}
where $\sum_{j=0}^d c_j(\zeta_r)t^j=(-t;\zeta_r)_d$. Since $r$ divides $d$, we have $\zeta_r^d=1$, so the factor $\zeta_r^{{j+1\choose 2}+j(d-j)}$ is just $\zeta_r^{-{j \choose 2}}$. Using
\begin{equation}
(t;\zeta_r)_d=((1-t)(1-\zeta_r t)\dots (1-\zeta_r^{r-1}t))^{d/r} = (1-t^r)^{d/r}
\end{equation}
and substituting $t\mapsto -t$, we get $(-t;\zeta_r)_d=(1-(-t)^r)^{d/r}$, so that $c_j(\zeta_r)=0$ if $r\nmid j$, and
\begin{equation}
c_{ir}(\zeta_r)={d/r \choose i}(-(-1)^r)^i={d/r \choose i}(-1)^{i(r-1)}
\end{equation}
for $0\leq i\leq d/r$. We have
\begin{align}
\NH_d(t;\zeta_r)&=\sum_{i=0}^{d/r} \zeta_r^{-{ir \choose 2}} {d/r \choose i} (-1)^{i(r-1)} t^{ir}\\
&=\sum_{i=0}^{d/r} \zeta_{2r}^{-2{ir \choose 2}} {d/r \choose i} \zeta_{2r}^{r\cdot i(r-1)} t^{ir} \\
&=\sum_{i=0}^{d/r} \zeta_{2r}^{-ir(ir-1)+ir(r-1)} {d/r \choose i} t^{ir}\\
&=\sum_{i=0}^{d/r} \zeta_{2r}^{-i(i-1)r^2} {d/r \choose i} t^{ir}.
\end{align}

Since $i(i-1)$ is even, $\zeta_{2r}^{-i(i-1)r^2}=1$, so we have
\begin{equation}
\NH_d(t;\zeta_r)=\sum_{i=0}^{d/r} {d/r \choose i} t^{ir} = (1+t^r)^{d/r}.
\end{equation}
\end{proof}

We note a consequence that has potential generalizations outside the cusp case. Let $\NQ_d(t;q)=\NH_d(t^2;q)$ and $Q_d(t;q)=\NQ_d(t;q)/(t;q)_d$. Then $\NQ_d(t;q)$ clearly satisfies an analogous property $\NQ_d(t;\zeta_r)=\NQ_1(t^r;1)^{d/r}$ for $r|d$. Because the $q,t$-polynomial $(t;q)_d$ satisfies a similar property $(t;\zeta_r)_d=(1-t^r)^{d/r}=(t^r;1)_1^{d/r}$ when $r|d$, we have
\begin{equation}\label{eq:cyclic_sieving}
\begin{aligned}
H_d(t;\zeta_r)&=H_1(t^r;1)^{d/r};\\
Q_d(t;\zeta_r)&=Q_1(t^1;1)^{d/r}
\end{aligned}
\end{equation}
if $r$ divides $d$.

The analogous property holds for a smooth rational point on a curve, using $H_{d,\Fq[[T]]}(t)=Q_{d,\Fq[[T]]}(t)=1/(t;q)_d$ and the property for $(t;q)_d$ above. If the formula in Conjecture \ref{conj:cusp_exact_formula} is true, then the $r=1$ case of the proposition above would imply that if $k=\C$ and $X$ is a reduced curve with only cusp singularities, the Euler characteristics of $\Quot_{d,n}(X)$ would satisfy
\begin{equation}\label{eq:euler_char_cyclic_sieving}
\sum_{n=0}^\infty \chi(\Quot_{d,n}(X)) t^n = \parens*{\sum_{n=0}^\infty \chi(\Hilb_n(X)) t^n}^d.
\end{equation}

For a reduced curve $X$ with planar singularities, the right-hand side is known in terms of the HOMFLY polynomial of the associated algebraic link. Therefore, if \eqref{eq:euler_char_cyclic_sieving} is true for $X$, it would give a formula for the Euler characteristics of $\Quot_{d,n}(X)$. The truth of \eqref{eq:euler_char_cyclic_sieving} for the cusp singularity is unknown, and one may attempt it as another weak form of Conjecture \ref{conj:cusp_exact_formula}.

\

We conclude this section with a curious observation about factorizations of $\NH_d(t;q)$ over the rationals, verified for $d\leq 30$ and $t=\pm 1,\pm 2$. In the following statement, we say the constant polynomial $1$ is irreducible as well.

\begin{conjecture}
Define 
\begin{equation}
P_d(t;q):=\begin{cases}
\NH_d(t;q),&\text{ if $d$ is even;}\\
\NH_d(t;q)/(1+q^d t),&\text{ if $d$ is odd.}
\end{cases}
\end{equation}

Then
\begin{enumerate}
\item $P_d(t;q)$ is a polynomial in $\Z[t,q]$ with nonnegative coefficients, and is irreducible \tuparens{or $1$};
\item For any integer $m\neq -1$, the polynomial $P_d(m;q)$ is irreducible in $\Z[q]$;
\item The polynomial $P_d(-1;q)$ is of the form
\begin{equation}
P_d(-1;q)=I_d(q)\prod_{\substack{1\leq r\leq d\\r\text{ odd}}} \Phi_r(q)^{\floor{\frac{d+r-1}{2r}}},
\end{equation}
where $I_d(q)$ is an irreducible polynomial in $\Z[q]$ and $\Phi_r(q)$ is the cyclotomic polynomial of order $r$. 
\end{enumerate}
\end{conjecture}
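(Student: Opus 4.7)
The plan is to first invoke Conjecture \ref{conj:cusp_exact_formula}(a), equivalently \eqref{eq:nh_guess}, to obtain a manageable closed form for $\NH_d(t;q)$. Applying the $q$-binomial theorem $(-t;q)_d=\sum_j q^{\binom{j}{2}}{d \brack j}_q t^j$ and combining exponents via $\binom{j+1}{2}+j(d-j)+\binom{j}{2}=jd$ collapses the sum to
\[
\NH_d(t;q)=\sum_{j=0}^d {d \brack j}_q q^{jd}\,t^j.
\]
After the substitution $u=q^d t$ this is simply $\sum_{j=0}^d {d \brack j}_q u^j$, the generating polynomial of $q$-binomial coefficients. All subsequent analysis takes place in this clean form.

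From this form the divisibility of $\NH_d(t;q)$ by $1+q^d t=1+u$ for odd $d$ is immediate: evaluating at $u=-1$ gives $\sum_j(-1)^j{d \brack j}_q$, which vanishes for $d$ odd by pairing $j\leftrightarrow d-j$ via ${d \brack j}_q={d \brack d-j}_q$. The nonnegativity claim of (a) then reduces to the polynomial division $\sum_j{d \brack j}_q u^j=(1+u)\sum_i b_i(q)u^i$, giving $b_i(q)=\sum_{k=0}^i(-1)^k{d \brack i-k}_q$. I would prove each $b_i\in\Z[q]$ has nonnegative coefficients either via the unimodality and symmetric-chain decomposition of the $q$-binomial sequence (Proctor--O'Hara), or by exhibiting a direct combinatorial model, for instance a lattice-path statistic.

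For part (c), I would evaluate $P_d(-1;q)$ at a primitive $r$-th root of unity $\zeta_r$ (with $r$ odd) using the $q$-Lucas theorem ${d \brack j}_{\zeta_r}=\binom{\lfloor d/r\rfloor}{\lfloor j/r\rfloor}{d\bmod r \brack j\bmod r}_{\zeta_r}$. Each specialization becomes a finite sum whose order of vanishing at $q=\zeta_r$ is computable by successive differentiation, and one then matches this order against the prescribed $\lfloor(d+r-1)/(2r)\rfloor$. The $r=1$ case (order $\lfloor d/2\rfloor$ at $q=1$) follows from differentiating $\sum_j(-1)^j {d \brack j}_q q^{jd}$ directly; the palindromic constraint $\NH_d(t;q)=q^{d^2}t^d\NH_d(q^{-2d}t^{-1};q)$ of Conjecture \ref{conj:q-series}(c), specialized to $t=-1$, halves the required work.

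The main obstacle is the irreducibility content of parts (a), (b), (c): showing $P_d(t;q)$ is irreducible in $\Z[t,q]$, that $P_d(m;q)$ is irreducible in $\Z[q]$ for every integer $m\neq-1$, and that the residual factor $I_d(q)$ in (c) is irreducible. Specializing in $t$ offers no shortcut, since for instance $P_d(t;1)=(1+t)^d$ is maximally reducible; irreducibility here is a genuinely $q$-deformed phenomenon. A uniform argument over all $d$ seems to require identifying $P_d$ with a Hilbert series or character of some irreducible object (a cyclic-sieving polynomial for a transitive action, or the characteristic polynomial of a single operator on an irreducible representation), which is exactly what the unexpected simplicity of the explicit formula hints at but which the Gr\"obner-stratum derivation does not reveal. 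Without such a structural identification, one is reduced to Newton-polygon or mod-$p$ arguments that must be redone for each $d$ and each residual factor; I expect this is where any attempted proof will stall.
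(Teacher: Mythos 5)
The statement you are trying to prove is stated in the paper as a \emph{conjecture}, verified computationally for $d\leq 30$ and $t=\pm 1,\pm 2$; the paper provides no proof, so there is no paper argument to compare against. Your write-up is therefore not a reproduction of anything in the paper, but a genuine (partial) attempt, and should be judged on its own terms.

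Your opening reduction is correct and worth recording: using $(-t;q)_d=\sum_j q^{\binom{j}{2}}{d\brack j}_q t^j$ so that $c_j(q)=q^{\binom{j}{2}}{d\brack j}_q$, and the exponent identity $\binom{j+1}{2}+j(d-j)+\binom{j}{2}=jd$, gives
\[
\NH_d(t;q)=\sum_{j=0}^{d}{d\brack j}_q\,(q^d t)^j,
\]
which matches the paper's explicit data for $d\leq 4$. After $u=q^d t$ this is the $u$-generating polynomial of the $q$-binomial coefficients, and the vanishing at $u=-1$ for $d$ odd follows from ${d\brack j}_q={d\brack d-j}_q$ with $(-1)^j+(-1)^{d-j}=0$. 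This cleanly explains the factor $1+q^dt$ in the conjecture's definition of $P_d$.

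Beyond that, however, the proposal does not close the gaps. The nonnegativity claim in (a) reduces to showing that the alternating partial sums $b_i(q)=\sum_{k=0}^{i}(-1)^k{d\brack i-k}_q$ have nonnegative coefficients; this is plausible (and holds in small cases), but the invocation of Proctor--O'Hara unimodality is not a direct fit, since that theory concerns unimodality \emph{within} a single Gaussian polynomial, not coefficientwise monotonicity of the sequence $\bigl({d\brack j}_q\bigr)_j$, which is what the alternating-sum argument actually needs. More importantly, the irreducibility assertions in (a), (b), and (c) are the real content of the conjecture, and as you yourself note, your proposal has no mechanism for them; $q$-Lucas plus differentiation could in principle pin down the cyclotomic multiplicities in (c), but the irreducibility of the residual factor $I_d(q)$, of $P_d(t;q)$ itself, and of $P_d(m;q)$ for all integers $m\neq -1$ would still be out of reach. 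The proposal is an honest and partially useful sketch, but it is not a proof, and the conjecture remains open.
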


In other words, when $m$ is an integer different from $-1$, the polynomial $\NH_d(m;q)$ in $q$ only has the factorization dictated by the generic factorization of $\NH_d(t;q)$. If $m=-1$, then $\NH_d(m;q)$ has lots of cyclotomic factors of odd degrees, but nothing else. 

\renewcommand{\thesection}{\Alph{section}}
\stepcounter{section}
\setcounter{section}{0}
\renewcommand\theHsection{A-\thesection}
\section{Appendix: Proofs in the Gr\"obner theory for monomial subrings}\label{appendix:groebner}
We prove the statements in \S \ref{sec:groebner} in the setting where $\Omega$ is a power series ring over a field $k$. Note that the lower monomials are leading, i.e., $<\,=\,\prec$. Recall that $R\subeq \Omega$ is a monomial subring, and $F=R u_1\oplus \dots \oplus R u_d$ is a free $R$-module. We have a monomial order $<$ on $F$. 

\begin{proposition}
[Proposition {\ref{prop:division_algorithm}}]
\label{a_prop:division_algorithm}
Let $f,g_1,\dots,g_h$ be elements of $F$. Then there is a \tuparens{not necessarily unique} expression 
\begin{equation}
f=r+\sum_{i=1}^h q_i g_i
\end{equation}
with $r\in F, q_i\in R$, such that
 \begin{itemize}
  \item No term of $r$ is divisible by $\LT(g_i)$ for any $i$.
  \item $\LT(q_i g_i)\succeq \LT(f)$ for any $i$, namely, no $q_i g_i$ contains terms before \tuparens{i.e., $\prec$} $\LT(f)$. 
 \end{itemize}
\label{a_prop:division_algorithm}
\end{proposition}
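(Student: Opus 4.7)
The plan is to execute the na\"ive division algorithm iteratively and then argue convergence using Assumption \ref{assumption} (order type $\omega$) and the fact that $R$ is closed in $\Omega$. Initialize $f_0=f$, $q_i^{(0)}=0$ for all $i$, and $r^{(0)}=0$. At each step $s$, inspect $\LT(f_s)$: if it is divisible in $R$ by some $\LT(g_i)$ (pick the least such $i$, say with leading coefficient normalized so the division yields a scalar multiple of a monomial $\tau_s\in R$), set
\begin{equation}
q_i^{(s+1)} = q_i^{(s)} + c_s\tau_s,\qquad f_{s+1}=f_s - c_s\tau_s g_i,
\end{equation}
with $c_s\in k^\times$ chosen so that $c_s\tau_s\LT(g_i)=\LT(f_s)$; otherwise, move the leading term into the remainder by setting $r^{(s+1)}=r^{(s)}+\LT(f_s)$ and $f_{s+1}=f_s-\LT(f_s)$.

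The key observation is that after each step the leading monomial strictly rises, i.e., $\LM(f_{s+1})\succ \LM(f_s)$: we have explicitly cancelled the leading term of $f_s$, and the subtracted element ($c_s\tau_s g_i$ or $\LT(f_s)$) has leading monomial equal to $\LM(f_s)$, so no cancellation produces a lower term. By Assumption \ref{assumption}, the monomials of $F$ are well-ordered in type $\omega$, so only finitely many monomials lie below any given one; in particular, for every $N$ there are only finitely many monomials outside $\m^N F$, hence $f_s\in \m^N F$ for $s$ sufficiently large. Therefore $f_s\to 0$ in the $(T_1,\dots,T_N)$-adic topology on $F$.

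Next I would show that the partial sums $q_i^{(s)}$ and $r^{(s)}$ are Cauchy in the same topology: at each step the newly added term is a scalar multiple of either $\tau_s\in R$ with $\tau_s\LT(g_i)=\LT(f_s)$, or of $\LT(f_s)$ itself, both of which lie in deeper and deeper powers of $\m$ as $s$ grows. Passing to limits in $\Omega$, define $q_i=\lim_s q_i^{(s)}$ and $r=\lim_s r^{(s)}$; these limits land in $R$ and $F$ respectively because $R$ is $\m$-adic closed in $\Omega$ (each $\tau_s$ is a monomial of $R$ by Definition \ref{def:divisible}). By continuity of addition and multiplication, $f=r+\sum_i q_i g_i$.

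Finally, the two required properties follow by construction: no term of $r$ is divisible by any $\LT(g_i)$, because the only terms transferred to $r$ are precisely those $\LT(f_s)$ for which no divisibility held; and every term contributed to $q_i g_i$ arose from a step in which $\LM(f_s)\succeq \LM(f_0)=\LM(f)$, so $\LT(q_i g_i)\succeq \LT(f)$. The main obstacle I anticipate is purely bookkeeping: tracking that each quotient monomial $\tau_s$ genuinely lies in $R$ (not merely in $\Omega$) and that the convergence arguments respect the subring $R$ rather than just the ambient $\Omega$; both points are handled by Definition \ref{def:divisible} and the $\m$-adic closedness of $R$ in $\Omega$, but they are what makes the proof adapt smoothly from the classical Hironaka setting to our monomial-subring setting.
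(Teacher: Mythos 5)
Your proof is correct under Assumption~\ref{assumption}, but Proposition~\ref{a_prop:division_algorithm} is stated for arbitrary monomial orders, and the paper is careful to point out (after Proposition~\ref{prop:buchberger}) that Assumption~\ref{assumption} is used only in the proof of the Buchberger criterion --- not in the division algorithm. Your ``kill the leading term, one term at a time'' algorithm genuinely fails without the order-type-$\omega$ hypothesis, so this is a real gap, not bookkeeping. Concretely, take $\Omega = k[[T_1,T_2]]$, $R=\Omega$, the lexicographic order with $T_1 \prec T_2$ (so $1 \prec T_1 \prec T_1^2 \prec \cdots \prec T_2 \prec T_1T_2 \prec \cdots$), $g_1 = T_1 - T_1^2$, and $f = T_1 + T_2$. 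Your algorithm processes $\LT(f_s)=T_1^{s+1}$ forever: it never reaches $T_2$, so $r^{(s)}=0$ for all $s$, while $f_s \to T_2 \neq 0$. In the limit one gets $r=0$, $q_1 = 1/(1-T_1)$, and $r + q_1 g_1 = T_1 \neq f$. The claim ``$f_s \to 0$'' therefore requires Assumption~\ref{assumption}, but the proposition does not grant it.

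The paper's proof avoids this obstruction with a different algorithm variant: it cycles through $j=1,\dots,h$ and at each pass removes \emph{all} terms of the running remainder divisible by $\LT(g_j)$ at once, replacing them by the corresponding multiples of $g_j - \LT(g_j)$. Convergence is then established by a well-ordering descent rather than by order type $\omega$: if the process does not converge, some monomial $\mu$ is reintroduced infinitely often; choose $\mu$ minimal. Each reintroduction of $\mu$ must come from killing a term with monomial $\nu \prec \mu$ that divides one of the finitely many monomials $\frac{\mu}{u_{i_0}}\LT(g_1),\dots,\frac{\mu}{u_{i_0}}\LT(g_h)$, so some fixed $\nu\prec\mu$ occurs in $r^{(l)}$ for infinitely many $l$, hence is itself reintroduced infinitely often, contradicting minimality. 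This argument uses only that the monomial order is a well-ordering (true for any monomial order) and that only finitely many monomials divide a given one; it does not need order type $\omega$. To repair your proof, either adopt the paper's ``simultaneous killing, cycling through $j$'' variant and the descent argument, or explicitly restrict the statement to monomial orders satisfying Assumption~\ref{assumption} --- but then you would no longer be proving the proposition as written, and you would need to re-verify that everything downstream of the division algorithm in \S\ref{sec:groebner} still only invokes it under that extra hypothesis.
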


\begin{proof}
Assume $f,g_1,\dots,g_h\in F-\set{0}$ without loss of generality. If no term of $f$ is divisible by any $\LT(g_i)$, when we are done by taking $r=f, q_i=0$. Otherwise, we set $r=r^{(1)}=f, q_i=q_i^{(1)}=0$ to initialize, and we will modify the expression $f=r+\sum q_i g_i$ step by step. The idea is to choose a $g_j$ and kill terms of $r$ \emph{simultaneously} using $\LT(g_j)$, and we cycle around so that each $j$ will be chosen infinitely often. In step $l$, let $j=j_l$ be such that $j\equiv l \mod h$. Let $s$ denote the sum of all terms divisible by $\LT(g_j)$. We modify $q_i$ and $r$ according to
\begin{equation}\label{eq:proof_division_step}
\begin{aligned}
q_j^{(l+1)}&=q_i^{(l)} + \frac{s}{\LT(g_j)},\\
q_i^{(l+1)}&=q_i^{(l)},\quad i\neq j,\\
r^{(l+1)}&=r^{(l)} - \frac{s}{\LT(g_j)} g_j.
\end{aligned}
\end{equation}

The construction preserves the property $f=r+\sum_{i=1}^h q_i g_i$ and $\LT(q_i g_i)\succeq \LT(f)$. If $s\neq 0$, rewriting
\begin{equation}
r^{(l+1)}=r^{(l)} - \frac{s}{\LT(g_j)} \LT(g_j) - \frac{s}{\LT(g_j)} (g_j-\LT(g_j)),
\end{equation}
we may interpret the process to go from $r^{(l)}$ to $r^{(l+1)}$ as, first \emph{kill} the terms of $s$, and then \emph{reintroduce} some terms with $- \frac{s}{\LT(g_j)} (g_j-\LT(g_j))$. Some reintroduced monomials may be equal to monomials appearing in $s$, but we always have
\begin{equation}
\LT\parens*{ -\frac{s}{\LT(g_j)} (g_j-\LT(g_j)) } \succ \LT(s),
\end{equation}
by the defining property of a monomial order. Therefore, the term $\LT(s)$ is killed but not reintroduced.

We claim that $q_i^{(l)}$ and $r^{(l)}$ converge (in the usual power series topology in $\Omega$ or $\Omega^d$) as $l\to \infty$. If not, then there exists a monomial $\mu\in F$ that is reintroduced (i.e., appears in $-\frac{s}{\LT(g_j)} (g_j-\LT(g_j))$) infinitely many times. We may assume $\mu$ is the lowest with such property, by well ordering. Reintroduction of $\mu$ must result from killing a term involving a monomial $\nu$ of $r^{(l)}$ such that
\begin{equation}
\mu = \frac{\nu}{\LT(g_j)} \lambda,
\end{equation}
where $\lambda$ is a nonleading term of $\LT(g_j)$. In particular, $\nu$ precedes $\mu$, and $\nu$ equals $\frac{\mu}{\lambda}\LT(g_j)$. Let $u_{i_0}$ be the basis vector involved in $\mu$. Then $\nu$ divides $\frac{\mu}{u_{i_0}} \LT(g_j)$. We note that there are only finitely many monomials that divide a given monomial, so $\nu$ must belong to a finite list of monomials, namely, monomials dividing one of $\frac{\mu}{u_{i_0}} \LT(g_1),\dots,\frac{\mu}{u_{i_0}} \LT(g_h)$. In particular, there is a monomial $\nu\prec \mu$ that occurs in $r^{(l)}$ for infinitely many $l$'s. By construction, every time $\nu$ occurs, it will be killed immediately, so in order for $\nu$ to appear in $r^{(l)}$ infinitely many times, $\nu$ must be reintroduced infinitely many times. Since $\nu$ is lower than $\mu$, this contradicts the minimality of $\mu$.

Now, let the limits of $q_i^{(l)}$ and $r^{(l)}$ be $q_i^{(\infty)}$ and $r^{(\infty)}$ respectively. Since $R$ is closed in $\Omega$, we have $q_i^{(\infty)}\in R$ and $r^{(\infty)}\in F$. To show that $f=r^{(\infty)}+\sum_{i=1}^h q_i^{(\infty)} g_i$ is a desired division expression, it suffices to show that $r^{(\infty)}$ contains no term divisible by any $\LT(g_i)$. Suppose a monomial $\mu$ involved in $r^{(\infty)}$ is divisible by $\LT(g_j)$. In the above discussion, we see that $\mu$ is only reintroduced finitely many times. After the last time $\mu$ is reintroduced (if any), it will be killed when we deal with $g_j$ next time (which will happen since we cycle through $g_1,\dots,g_h$), if not earlier, and $\mu$ will be never introduced again. Hence $\mu$ does not appear in $r^{(\infty)}$, a contradiction.
\end{proof}

\begin{lemma}[Lemma {\ref{lem:unique_remainder}}]
Let $f,g_1,\dots,g_h\in F$. If $G=\set{g_1,\dots,g_h}$ is a Gr\"obner basis, then the remainder of $f$ in a division expression by $G$ is unique \tuparens{even though the division expressions may not be unique}. Moreover, the remainder of $f$ by $G$ is zero if and only if $f\in (g_1,\dots,g_h)$. 
\label{a_lem:unique_remainder}
\end{lemma}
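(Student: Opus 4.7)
The plan is to follow the classical argument, with care taken to ensure the notion of divisibility (Definition~\ref{def:divisible}) in the subring $R$ behaves as expected on monomial submodules. The key intermediate statement is the following:

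\smallskip
\noindent\textbf{Claim.} If $G = \{g_1,\dots,g_h\}$ is a Gr\"obner basis for $M := (g_1,\dots,g_h)$ and $f \in M$ is nonzero, then $\LM(f)$ is divisible by some $\LM(g_i)$ in the sense of Definition~\ref{def:divisible}.

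\smallskip

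First I would prove this Claim. By the Gr\"obner basis hypothesis, $\LT(f)\in \LT(M)=(\LT(g_1),\dots,\LT(g_h))$, i.e., there is an expression $\LT(f) = \sum_i a_i \LT(g_i)$ with $a_i \in R$. Expanding each $a_i$ as a (possibly infinite) $k$-linear combination of monomials of $R$, every term on the right-hand side is a $k$-scalar multiple of a monomial of the form $\tau_i\, \LM(g_i)$ with $\tau_i$ a monomial of $R$, so it is divisible by $\LM(g_i)$. Since $\LT(f)$ appears (with nonzero coefficient) among the terms on the right after collecting and cancelling, the monomial $\LM(f)$ must equal some such $\tau_i\LM(g_i)$, hence is divisible by $\LM(g_i)$.

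Next I would deduce uniqueness of the remainder. Suppose
\begin{equation*}
f = r + \sum_i q_i g_i = r' + \sum_i q_i' g_i
\end{equation*}
are two division expressions, so $r - r' = \sum_i (q_i'-q_i) g_i \in M$. Since neither $r$ nor $r'$ has a term divisible by any $\LM(g_i)$, the same is true of $r - r'$. If $r - r'$ were nonzero, the Claim applied to $r-r' \in M$ would give some $\LM(g_i)$ dividing $\LM(r-r')$, contradicting the previous sentence. Hence $r = r'$.

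Finally, for the ``zero remainder'' characterization: if $r = 0$ in some division expression then $f = \sum q_i g_i \in M$ trivially. Conversely, if $f \in M$, let $r$ be a remainder produced by Proposition~\ref{a_prop:division_algorithm}; then $r = f - \sum q_i g_i \in M$ as well. If $r \neq 0$, the Claim gives some $\LM(g_i)$ dividing $\LM(r)$, contradicting the definition of a remainder. Thus $r = 0$.

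There is no serious obstacle: the only place requiring caution is the subring setting of the Claim, where I must check that ``divides'' in the monomial-ideal-membership argument is interpreted in the sense of Definition~\ref{def:divisible} (i.e., the quotient monomial $\tau_i$ lies in $R$, not merely in $\Omega$). This is immediate from the fact that the $a_i$ lie in $R$, so that each $a_i$ expands as a sum of monomials of $R$.
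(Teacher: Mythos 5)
Your proposal is correct and follows essentially the same route as the paper's own proof: take $r - r' \in M$, note that no term of it is divisible by any $\LT(g_i)$, and invoke the fact that any monomial lying in a monomial submodule generated by monomials must be divisible by one of those generators. The only difference is that the paper simply cites this last step as ``a property of monomial submodules,'' while you prove it explicitly as your Claim and correctly flag the one place where the subring setting could matter — that the witnessing quotient monomial must lie in $R$, not merely in $\Omega$ — which you settle by noting the coefficients $a_i$ already lie in $R$.
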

\begin{proof}
Let $r$ and $r'$ be two possible remainders of $f$ divided by $G$. Then $r-r'$ lies in the submodule $M=(g_1,\dots,g_h)$. Moreover, since no term of $r$ or $r'$ is divisible by any of $\LT(g_i)$, no term of $r-r'$ is divisible by any of $\LT(g_i)$. If $r-r'$ is nonzero, then it has a leading term $\LT(r-r')$, which lies in $\LT(M)$. Since $g_1,\dots,g_h$ form a Gr\"obner basis, $\LT(M)=(\LT(g_1),\dots,\LT(g_h))$. As a property of monomial submodules, $\LT(r-r')$ must be divisible by one of $\LT(g_i)$, a contradiction. Hence $r-r'=0$, proving the uniqueness of the remainder.

In the second assertion, we only need to prove the ``if'' direction. Let $f\in M=(g_1,\dots,g_h)S$, and let $r$ be the remainder of $f$ divided by $G$. Since $f\in M$, we have $r\in M$, so we may repeat the same argument above, with $r-r'$ replaced by $r$, and conclude that $r=0$.
\end{proof}

\begin{lemma}\label{lem:generating_if_groebner}
Let $M\subeq F$ be a submodule, and $g_1,\dots,g_h\in M$. If $\LT(M)=(\LT(g_1),\dots,\LT(g_h))$, then we must have $M=(g_1,\dots,g_h)$, so that $\set{g_1,\dots,g_h}$ is a Gr\"obner basis for $M$.
\end{lemma}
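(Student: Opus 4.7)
The plan is to combine the division algorithm (Proposition \ref{a_prop:division_algorithm}) with the elementary fact that a monomial belongs to a monomial submodule iff it is divisible by one of the generating monomials. The inclusion $(g_1,\dots,g_h) \subseteq M$ is immediate since each $g_i \in M$, so the content of the lemma is the reverse inclusion $M \subseteq (g_1,\dots,g_h)$.

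Concretely, I would take an arbitrary $f \in M$ and apply Proposition \ref{a_prop:division_algorithm} to write
\begin{equation}
f = r + \sum_{i=1}^h q_i g_i,
\end{equation}
with $q_i \in R$ and $r \in F$ such that no term of $r$ is divisible by any $\LT(g_i)$. Since $f \in M$ and each $g_i \in M$, we have $r = f - \sum q_i g_i \in M$. Assuming for contradiction $r \neq 0$, we obtain $\LT(r) \in \LT(M)$; but by hypothesis $\LT(M) = (\LT(g_1),\dots,\LT(g_h))$ is a monomial submodule, so any monomial lying in it must be divisible by one of its monomial generators. Hence $\LT(r)$ is divisible by some $\LT(g_i)$, which contradicts the defining property of the remainder produced by the division algorithm. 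Therefore $r = 0$ and $f \in (g_1,\dots,g_h)$, giving $M \subseteq (g_1,\dots,g_h)$.

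Once $M = (g_1,\dots,g_h)$ is established, the combination with the hypothesis $\LT(M) = (\LT(g_1),\dots,\LT(g_h))$ is precisely Definition \ref{def:groebner}(a), so $\{g_1,\dots,g_h\}$ is a Gröbner basis for $M$. There is no serious obstacle here: the only small point to be careful about is the standard fact about monomial submodules (a monomial lies in a monomial submodule iff it is divisible by one of the monomial generators), which is an immediate consequence of the way leading terms interact with $R$-linear combinations in $F$. The whole argument is the classical one from polynomial Gröbner basis theory and transfers verbatim to our monomial subring setting because the division algorithm of Proposition \ref{a_prop:division_algorithm} has already been established in that generality.
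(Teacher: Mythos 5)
Your argument is correct and is essentially identical to the paper's proof: both take $f \in M$, apply the division algorithm to get $f = r + \sum q_i g_i$, observe $r \in M$ so $\LT(r) \in \LT(M) = (\LT(g_1),\dots,\LT(g_h))$, and derive a contradiction with the remainder condition unless $r = 0$. The only cosmetic difference is that you spell out the standard monomial-submodule fact explicitly while the paper uses it silently.
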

\begin{proof}
Let $f\in M$, and we shall show that $f\in (g_1,\dots,g_h)$. Let
\begin{equation}
f=\sum_{i=1}^h q_i g_i + r.
\end{equation}
be a division expression of $f$ divided by $g_1,\dots,g_h$. It suffices to show that $r=0$. If not, then $r$ has an initial term $\LT(r)$. Since $r\in M$, we have $\LT(r)\in \LT(M)=(\LT(g_1),\dots,\LT(g_h))$ by the definition of $\LT(M)$. As a result, $\LT(r)$ is divisible by one of $\LT(g_i)$, a contradiction to the requirement of a division expression.
\end{proof}

\begin{proposition}[Proposition {\ref{prop:reduced_unique}}]
Every submodule $M$ of $F$ has a unique reduced Gr\"obner basis.
\label{a_prop:reduced_unique}
\end{proposition}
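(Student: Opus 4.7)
The plan is to establish existence and uniqueness separately. For existence, I would lift the minimal monomial generators of $\LT(M)$ to elements of $M$ to obtain a (typically non-reduced) Gröbner basis, and then apply the division algorithm to the tails to force condition (c) of Definition \ref{def:groebner}. For uniqueness, I would match two reduced Gröbner bases term-by-term by applying Lemma \ref{a_lem:unique_remainder} to their differences.

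For existence, the monomial submodule $\LT(M)$ has a unique minimal monomial generating set $\{\mu_1,\dots,\mu_h\}$ consisting of pairwise indivisible monomials. For each $i$, by definition of $\LT(M)$ there exists $f_i\in M$ with $\LT(f_i)=\mu_i$ (after scaling the leading coefficient to $1$); by Lemma \ref{lem:generating_if_groebner}, $\{f_1,\dots,f_h\}$ is already a Gröbner basis for $M$, so conditions (a) and (b) of Definition \ref{def:groebner} hold at once. To force condition (c), for each $i$ I would apply Proposition \ref{a_prop:division_algorithm} to the element $f_i-\mu_i$ using the divisor set $\{f_1,\dots,f_h\}$ to obtain
\begin{equation*}
f_i-\mu_i = t_i + \sum_{j=1}^{h} q_j f_j,
\end{equation*}
where $t_i$ has no term divisible by any $\mu_j$ and $\LT(q_j f_j)\succeq \LT(f_i-\mu_i)\succ \mu_i$. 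Setting $g_i := \mu_i - t_i = f_i - \sum_j q_j f_j$ gives $g_i\in M$, and since every term of $t_i$ is strictly higher than $\mu_i$ we have $\LT(g_i)=\mu_i$, while $g_i-\mu_i=-t_i$ has no term divisible by any $\mu_j$. Since $g_i\in M$ and $\LT(g_i)=\mu_i$, Lemma \ref{lem:generating_if_groebner} applies again to confirm that $\{g_1,\dots,g_h\}$ is a Gröbner basis for $M$, and it is reduced by construction.

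For uniqueness, let $G=\{g_1,\dots,g_h\}$ and $G'=\{g'_1,\dots,g'_h\}$ be two reduced Gröbner bases for $M$. Their leading monomial sets both coincide with the minimal monomial generating set $\{\mu_1,\dots,\mu_h\}$ of $\LT(M)$, which is uniquely determined, so after reindexing I may assume $\LT(g_i)=\LT(g'_i)=\mu_i$. Then $g_i-g'_i\in M$, and no term of $g_i-g'_i$ is divisible by any $\mu_j$: the leading terms $\mu_i$ cancel, and every other monomial occurring in $g_i-g'_i$ with nonzero coefficient is a nonleading monomial of $g_i$ or of $g'_i$, hence avoids divisibility by any $\mu_j$ by condition (c). Thus $g_i-g'_i = (g_i-g'_i)+\sum_j 0\cdot g_j$ is a valid division expression (the remainder condition holds by the preceding observation, and $\LT(0)=0$ trivially dominates $\LT(g_i-g'_i)$), so the remainder is $g_i-g'_i$; but $g_i-g'_i\in(g_1,\dots,g_h)$, so by Lemma \ref{a_lem:unique_remainder} this unique remainder must be $0$, yielding $g_i=g'_i$.

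The one point that needs care in the power series setting is that the division algorithm used in the existence argument could in principle produce a formally divergent tail, but Proposition \ref{a_prop:division_algorithm} already handles this by guaranteeing convergence of the na\"ive iterative procedure. Every other step amounts to a direct application of the definitions and the preceding lemmas, so I do not anticipate any substantive obstacle beyond correctly bookkeeping the division algorithm.
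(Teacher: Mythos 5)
Your proof takes essentially the same route as the paper: lift the corners of $\LT(M)$ to a Gr\"obner basis via Lemma \ref{lem:generating_if_groebner}, repair the tails by division, and prove uniqueness by subtracting two reduced bases. The only slip is a sign in the existence step: from $f_i-\mu_i=t_i+\sum_j q_j f_j$ one gets $f_i-\sum_j q_j f_j=\mu_i+t_i$, not $\mu_i-t_i$, so you should set $g_i:=\mu_i+t_i$; then $g_i\in M$ follows from that identity, $g_i-\mu_i=t_i$ still has no term divisible by any $\mu_j$, and $\LT(g_i)=\mu_i$ as desired.
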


\begin{proof}
Let $C(\LT(M))=\set{\mu_1,\dots,\mu_h}$ be the set of corners of $\LT(M)$. 

We first prove the existence of a reduced Gr\"obner basis for $M$. By the definition of $\LT(M)$, there are $g_1,\dots,g_h\in M$ such that $\LT(g_i)=\mu_i$. By Lemma \ref{lem:generating_if_groebner}, $M$ is generated by $G:=\set{g_1,\dots,g_h}$ and $G$ is a Gr\"obner basis for $M$.

We now modify $G$. For each $i$, divide $\mu_i$ by $G$, and call the (unique) remainder $r_i$. Define
\begin{equation}\label{a_eq:reduction}
g_i':=\mu_i - r_i;
\end{equation}
note from the division expression that $g_i'$ lies in $M$.

We claim that $g_1',\dots,g_h'$ form a reduced Gr\"obner basis for $M$. Fix $i$. From the division algorithm, we observe that (1) no term of $r_i$ is divisible by any $\mu_j$, and (2) no term of $r_i$ is before $\mu_i$. In particular, $\mu_i$ is not involved in any term of $r_i$, so every term of $r_i$ is strictly behind $\mu_i$. Therefore, we have $\LT(g_i')=\mu_i$, so $g_1',\dots,g_h'$ form a Gr\"obner basis for $M$. It remains to check the requirement (c) of Definition \ref{def:groebner}, namely, non nonleading term of $g_i'$ is divisible by any $\mu_j$. But this is just the observation (1).

Finally, we prove the uniqueness of the reduced Gr\"obner basis. Say $g_1,\dots,g_h$ and $g_1',\dots,g_h'$ are two reduced Gr\"obner bases for $M$ such that $\LT(g_i)=\LT(g_i')=\mu_i$ for all $i$. Fix $i$, and we claim that $g_i'=g_i$. Consider $r=g_i-g_i'$. The leading terms of $g_i$ and $g_i'$ cancel each other, and the nonleading terms of $g_i$ and $g_i'$ are not divisible by any $\mu_j$ (by the requirement (c) of a reduced Gr\"obner basis), so no term of $r$ is divisible by any $\mu_j$. If $r\neq 0$, noting that $r$ is in $M$, we must have $\LT(r)\in \LT(M)$, so $r$ has a term $\LT(r)$ that is divisible by some $\mu_j$, a contradiction. Hence $r=0$ and the proof is complete.
\end{proof}

\begin{lemma}[Lemma {\ref{lem:dimension_same_as_leading}}]
Let $M$ be a submodule of $F$. Then there is an isomorphism of $k$-vector spaces
\begin{equation}
\frac{F}{M}\cong \frac{F}{\LT(M)}.
\end{equation}

In fact, the set of monomials outside $\LT(M)$ is a $k$-basis for $F/M$.
\label{a_lem:dimension_same_as_leading}
\end{lemma}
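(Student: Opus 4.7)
The plan is to produce a direct $k$-linear splitting $F = V \oplus M$, where $V$ is the closed $k$-subspace of $F$ spanned (allowing convergent infinite sums in the power series case) by the set $B$ of all monomials of $F$ that do not lie in $\LT(M)$. Granted such a splitting, the projection $V \to F/M$ is a $k$-linear isomorphism, and the same argument applied to $\LT(M)$ (whose monomial complement is again $B$) gives $V \cong F/\LT(M)$, yielding the desired isomorphism.

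First I would fix a reduced Gr\"obner basis $G = \{g_1,\dots,g_h\}$ for $M$, which exists by Proposition \ref{a_prop:reduced_unique}, with leading monomials $\mu_1,\dots,\mu_h$ forming the corner set of $\LT(M)$. Since $\LT(M)$ is a monomial submodule generated by $\mu_1,\dots,\mu_h$, a monomial $\nu \in F$ lies in $\LT(M)$ if and only if some $\mu_i$ divides $\nu$; equivalently, $\nu \in B$ if and only if no $\mu_i$ divides $\nu$.

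To show $F = V + M$, I would apply the division algorithm (Proposition \ref{a_prop:division_algorithm}): any $f \in F$ has an expression $f = \sum_i q_i g_i + r$ with $r \in F$ having no term divisible by any $\mu_i$. By the above characterization of $B$, every term of $r$ lies in $B$, so $r \in V$, while $f - r \in M$. To show $V \cap M = 0$, suppose $f \in V \cap M$ is nonzero. Then $\LT(f) \in \LT(M)$, so the monomial $\LM(f)$ is divisible by some $\mu_i$ and therefore lies in $\LT(M)$. But $f \in V$ forces every monomial appearing in $f$ (including $\LM(f)$) to lie in $B$, contradicting $\LM(f) \in \LT(M)$.

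The only point that warrants care, and which I anticipate as the main technical nuisance rather than a serious obstacle, is the convergence issue when $\Omega$ is the power series ring: the element $r$ produced by the division algorithm is typically an infinite sum of $B$-monomials, so ``$V$'' must be interpreted as the closure of $\Span_k B$ in the $(T_1,\dots,T_N)$-adic topology on $F$. This is harmless because $r$ is already given as an element of $F$ with well-defined $k$-coefficients on each monomial, and the intersection argument $V \cap M = 0$ works term-by-term at the level of the unique leading monomial, which requires no limiting argument. Once the splitting $F = V \oplus M$ is established, the final sentence of the lemma (that the monomials outside $\LT(M)$ form a $k$-basis for $F/M$) is simply the statement that the composition $\Span_k B \hookrightarrow V \xrightarrow{\sim} F/M$ is an isomorphism when $F/M$ is finite-dimensional, which is the only case used in the main text.
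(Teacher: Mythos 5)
Your proof is correct, and it takes a route that is noticeably cleaner in one respect than the paper's argument. The paper defines the remainder map $r_G: F \to \Span_k \Delta$ and must then separately establish three things: that its kernel is $M$ (citing uniqueness of remainders), that it is surjective, and---the technical step---that it is $k$-linear. The linearity proof in the paper requires a careful combination of division expressions, using the extra condition $\LT(q_i g_i) \succeq \LT(f)$ built into the definition of a division expression. Your approach repackages the same two core ingredients (the division algorithm for spanning, the leading-term argument for trivial intersection) as a direct-sum statement $F = V \oplus M$, after which linearity of the isomorphism $F/M \cong V$ comes for free from the canonical projection, so the explicit linearity verification is avoided entirely. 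The $V \cap M = 0$ argument you give is the same one the paper uses internally to prove uniqueness of remainders (its Lemma A.3), so you are reusing that idea in a way that makes the subsequent bookkeeping disappear. One small point in your favour: you explicitly flag the subtlety about what ``$\Span_k \Delta$'' and ``$k$-basis'' mean when $\Delta$ is infinite (closed span versus Hamel span), which the paper's write-up leaves implicit; as you say, this is harmless in the finite-codimension case that is actually used. Your closing remark that the argument for $F/\LT(M)$ requires running the same division machinery against the monomial submodule $\LT(M)$ (whose reduced Gr\"obner basis is its corner set, with $\LT(\LT(M)) = \LT(M)$) is correct and matches the paper's identification $\Span_k \Delta = F/\LT(M)$.
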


\begin{proof}
Let $G=\set{g_1,\dots,g_h}$ be a Gr\"obner basis for $M$, with $\LT(g_i)$ being monomials $\mu_i$. Let $\Delta$ be the set of monomials of $F$ outside $\LT(M)$, which is equal to the set of monomials not divisible by any $\mu_i$. Consider the map
\begin{equation}
\begin{aligned}
r_G: F &\to \Span_k \Delta, \\
f &\mapsto (f \bmod G),
\end{aligned}
\end{equation}
where $(f\bmod G)$ denotes the remainder of $f$ divided by $G$, which is unique by Lemma \ref{a_lem:unique_remainder}. We claim that $r_G$ is $k$-linear and surjective with kernel $M$, thus inducing a $k$-isomorphism
\begin{equation}
\frac{F}{M}\overset{r_G}{\cong} \Span_k \Delta = \frac{F}{\LT(M)}.
\end{equation}

By the second assertion of Lemma \ref{a_lem:unique_remainder}, the kernel of $r_G$ is $M$. To prove $r_G$ is surjective, let $f\in \Span_k \Delta$. Then the expression $f=f+\sum 0\cdot g_i$ is itself a division expression, so $r_G(f)=f$.

It remains to show that $r_G$ is $k$-linear. It is obvious that $r_G(cf)=c r_G(f)$ for any scalar $c\in k$. We shall show $r_G(x)+r_G(y)+r_G(z)=0$ whenever $x+y+z=0$. 

Without loss of generality, assume $x,y,z\neq 0$ and $\LT(z)\preceq \LT(x),\LT(y)$. Let
\begin{equation}
x=r_G(x)+\sum_{i=1}^h q^x_i g_i
\end{equation}
and
\begin{equation}
y=r_G(y)+\sum_{i=1}^h q^y_i g_i
\end{equation}
be division expressions of $x$ and $y$ with repsect to $G$. Since $\LT(q^x_i g_i)\succeq \LT(x) \succeq \LT(z)$ and $\LT(q^y_i g_i)\succeq \LT(y) \succeq \LT(z)$, the following expression
\begin{equation}
-z=r_G(x)+r_G(y)+\sum_{i=1}^h (q^x_i+q^y_i) g_i
\end{equation}
is a division expression of $z$ with respect to $G$. As a result, $r_G(-z)=r_G(x)+r_G(y)$, so $r_G$ is $k$-linear.
\end{proof}

\begin{lemma}[Lemma {\ref{lem:almost_reduced}}]
Let $G=\set{g_1,\dots,g_h}, G'=\set{g_1',\dots,g_h'}$ be Gr\"obner bases for $M$ with $\LT(g_i)=\LT(g_i')=\mu_i$, and assume $G'$ is reduced. If $g_1-\mu_1$ has no terms divisible by any $\mu_i$, then we must have $g_1=g_1'$. 
\label{a_lem:almost_reduced}
\end{lemma}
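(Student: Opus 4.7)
The plan is to mimic the uniqueness argument from the proof of Proposition~\ref{a_prop:reduced_unique}, where the only nontrivial input used about both bases was that their nonleading terms avoid being divisible by the $\mu_j$'s. Here we only have that hypothesis for $G$ at the index $i=1$ (and it is given to us by assumption), while $G'$ supplies it at all indices (because $G'$ is reduced). That is exactly enough to run the argument for $g_1$ versus $g_1'$.

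Concretely, I would form the difference $r := g_1 - g_1'$. Since $g_1, g_1' \in M$, we have $r \in M$. The leading terms of $g_1$ and $g_1'$ are both equal to $\mu_1$ and hence cancel, so every term of $r$ is a nonleading term of either $g_1$ or $g_1'$ (or a cancellation thereof). By hypothesis, $g_1 - \mu_1$ contains no term divisible by any $\mu_i$; by the reducedness condition (c) of Definition~\ref{def:groebner} applied to $g_1'$, the element $g_1' - \mu_1$ also contains no term divisible by any $\mu_i$. Writing $r = (g_1 - \mu_1) - (g_1' - \mu_1)$, it follows that no term of $r$ is divisible by any $\mu_i$.

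Now suppose for contradiction that $r \neq 0$. Then $\LT(r)$ exists, and since $r \in M$ we have $\LT(r) \in \LT(M)$. But $G$ is a Gröbner basis for $M$, so $\LT(M) = (\mu_1, \ldots, \mu_h)$, which is a monomial submodule; hence every monomial in $\LT(M)$ is divisible by some $\mu_i$, in particular $\LT(r)$ is. This contradicts the previous paragraph. Therefore $r = 0$, i.e., $g_1 = g_1'$.

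There is essentially no obstacle: the only subtlety worth flagging is the observation that terms of $r$ really are controlled by the nonleading parts of $g_1$ and $g_1'$, which requires the leading terms to cancel exactly — this is why we need $\LT(g_1) = \LT(g_1')$ with matching coefficient $1$ (ensured because both are the monomial $\mu_1$, following the convention set up just before Lemma~\ref{lem:buchberger_cusp} and in Definition~\ref{def:groebner}(b)).
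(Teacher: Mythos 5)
Your proof is correct, and it takes a genuinely different (and more direct) route than the paper's. The paper proves Lemma~\ref{a_lem:almost_reduced} by appealing to the reduction algorithm \eqref{a_eq:reduction} and the uniqueness of the remainder (Lemma~\ref{a_lem:unique_remainder}): it reduces $G$ to a reduced Gr\"obner basis (necessarily equal to $G'$ by Proposition~\ref{a_prop:reduced_unique}), notes that $g_1'=\mu_1-r_1$ where $r_1$ is the remainder of $\mu_1$ divided by $G$, and then observes that the hypothesis on $g_1-\mu_1$ makes $\mu_1=1\cdot g_1+(\mu_1-g_1)$ a valid division expression, forcing $r_1=\mu_1-g_1$. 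Your argument instead transplants the ``difference-in-$M$'' argument that the paper itself uses to prove the uniqueness half of Proposition~\ref{a_prop:reduced_unique}: you form $r=g_1-g_1'\in M$, cancel the identical leading monomials, use the hypothesis on $g_1$ together with reducedness of $G'$ (condition~(c)) to see no term of $r$ is divisible by any $\mu_i$, and then use the Gr\"obner property of $G$ (so $\LT(M)=(\mu_1,\dots,\mu_h)$) to conclude $r=0$. This avoids the division algorithm and the remainder-uniqueness lemma entirely, at the cost of not reusing previously established machinery; both proofs are equally rigorous, and yours is arguably more transparent because it exposes exactly which hypotheses are needed and why the conclusion applies only at the index $i=1$.
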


\begin{proof}
We use the algorithm \eqref{a_eq:reduction} to reduce $G$ to a reduced Gr\"obner basis. In particular, $g_1'=\mu_1-r_1$, where $r_1$ is the remainder of $\mu_1$ divided by $G$. However, if we let $r=\mu_1-g_1$, the expression
\begin{equation}
\mu_1 = 1\cdot g_1 + r
\end{equation}
is a division expression because $\LT(g_1)=\mu_1$ and $r=\mu_1-g_1$ has no terms divisible by any $\mu_i$ by our hypothesis. Hence $r_1=r$ and $g_1'=\mu_1-r=g_1$.
\end{proof}

\subsection{Buchberger criterion}
Let $M=(g_1,\dots,g_h)\subeq F$ be a submodule, and assume $\LT(g_i)=\mu_i$ is a monomial. Denote $G=\set{g_1,\dots,g_h}$. We shall prove Proposition \ref{prop:buchberger} that decides when is $G$ a Gr\"obner basis. We shall divide it into two independent steps, Lemma \ref{a_lem:standard_relation} and Lemma \ref{a_lem:buchberger_on_generating_relations}.

Recall that $\LCM(\mu_i,\mu_j)$ is the set of minimal common multiples of $\mu_i$ and $\mu_j$, and the statement of Proposition \ref{prop:buchberger} involves testing all relations of the form
\begin{equation}
S^{\nu}_{ij}(g_i, g_j) := \frac{\nu}{\mu_i} g_i - \frac{\nu}{\mu_j} g_j,
\end{equation}
for all $i,j\in [h]$ and $\mu\in \LCM(\mu_i,\mu_j)$. 

In fact, we will show that the only feature we need about the collection $S^{\nu}_{ij}$ is that they generate the ``module of relations'' among monomials $\mu_1,\dots,\mu_h$. We now make the notion precise.

\begin{definition}
For any finite index set $I$, let $W_I:=\bigoplus_{i\in I} R w_i$. We shall always view elements of $W_I$ as an operator: if $f_i, i\in I$ are elements of any $R$-module $N$, and $\Theta=\sum_{i\in I} a_i w_i$ is an element of $W_I$, then we write
\begin{equation}
\Theta(f_i: i\in I):=\sum_{i\in I} a_i f_i.
\end{equation}

In particular, we have $\Theta=\Theta(w_i: i\in I)$.

If $I=[h]=\set{1,\dots,h}$, then we may abbreviate $W_I$ as $W_h$. If $I\subeq J$, then we shall view $W_I$ as a submodule of $W_J$ via the natural inclusion sending $w_i$ to $w_i$. 

For monomials $\ul{\mu}=\set{\mu_i: i\in I}$ in $F$, we define the \textbf{module of relations} of $\ul{\mu}$ to be
\begin{equation}
\Rel(\ul{\mu})=\set*{\Theta\in W_I: \Theta(\ul{\mu})=0}.
\end{equation}
\end{definition}

We will need a grading on $W_I$ in the following proofs.

\begin{definition}
Let $\ul{\mu}=\set{\mu_i: i\in I}$ be monomials in $F$. Consider a grading on $W_I$ with degrees indexed by monomials $F$, defined by
\begin{equation}
\deg_{\ul{\mu}}(\tau w_i):=\tau \mu_i
\end{equation}
for any monomial $\tau\in R$ and index $i\in I$. We say $\deg_{\ul{\mu}}$ is the grading on $W_I$ induced by $\ul{\mu}$. 

We note that the $R$-linear map $W_I\to F$, $\Theta\mapsto \Theta(\ul{\mu})$ preserves the grading, where $F$ is equipped with the natural grading by monomials in $F$.

We use the notation $[\cdot]_\nu$ to extract the degree-$\nu$ part. This will applies to $W_I$ and $F$ graded by monomials in $F$, and $R$ graded by monomials in $R$.
\end{definition}

Technically speaking, the grading is not in the usual sense: given $\Theta$, there may be infinitely many $\nu$ such that $[\Theta]_\nu$ is nonzero, just like in the case of a power series ring. However, we do have that $\Theta=0$ if and only if $[\Theta]_\nu=0$ for all $\nu$. 

\begin{lemma}~
\begin{enumerate}
\item For monomials $\mu_1,\mu_2$ in $F$, the relation submodule $\Rel(\mu_1,\mu_2)$ is generated by
\begin{equation}
S^{\nu} := \frac{\nu}{\mu_1} w_1 - \frac{\nu}{\mu_2} w_2 \in W_{\set{1,2}}
\end{equation}
for all $\nu\in \LCM(\mu_1,\mu_2)$.

\item Let $\mu_1,\dots,\mu_h$ be monomials in $F$. For each pair $1\leq i<j\leq h$, let $\set*{\Theta_{ij}^\lambda: \lambda\in\Lambda_{ij} }$ be a generating set for $\Rel(\mu_i,\mu_j)$ in $W_{\set{i,j}}$. Then the set 
\begin{equation}
\set*{\Theta_{ij}^\lambda: i<j, \lambda\in \Lambda_{ij}}
\end{equation}
generates $\Rel(\mu_1,\dots,\mu_h)$ in $W_h$, where we view $\Theta_{ij}^\lambda$ as an element of $W_h$ via the natural inclusion $W_{\set{i,j}}\subeq W_h$. 
\end{enumerate}

In consequence, elements of the form
\begin{equation}
S_{ij}^{\nu} := \frac{\nu}{\mu_i} w_i - \frac{\nu}{\mu_j} w_j
\end{equation}
for all $i,j\in [h], \nu\in \LCM(\mu_i,\mu_j)$ generate the module of relations $\Rel(\mu_1,\dots,\mu_h)$ in $W_h$. 
\label{a_lem:standard_relation}
\end{lemma}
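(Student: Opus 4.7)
For part (a), my plan is to exhibit an $R$-linear isomorphism $\psi\colon \Rel(\mu_1, \mu_2) \xrightarrow{\sim} R\mu_1 \cap R\mu_2$ sending $aw_1 + bw_2$ to the common value $a\mu_1 = -b\mu_2$. Well-definedness is just the relation itself; injectivity uses that $\mu_i$ is a nonzero element of the free $R$-module $F$, so $a\mu_i = 0$ forces $a = 0$; surjectivity is immediate from the definition of $R\mu_1 \cap R\mu_2$. Since $\psi(S^\nu) = \nu$ and $\LCM(\mu_1, \mu_2)$ is by definition a set of monomial generators for $R\mu_1 \cap R\mu_2$, the $S^\nu$'s generate $\Rel(\mu_1, \mu_2)$. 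The degenerate case where $\mu_1, \mu_2$ involve distinct basis vectors of $F$ is consistent: both sides vanish.

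For part (b), I would first reduce to the case $\Theta_{ij}^\lambda = S_{ij}^\nu$: each $S_{ij}^\nu$ lies in $\Rel(\mu_i, \mu_j)$ and hence is an $R$-combination of the $\Theta_{ij}^\lambda$'s by hypothesis, so any generating set built from the $S_{ij}^\nu$'s transfers to one built from the $\Theta_{ij}^\lambda$'s. Let $M \subseteq W_h$ be the submodule generated by $\{S_{ij}^\nu : i<j,\ \nu \in \LCM(\mu_i, \mu_j)\}$; the task is to prove $M = \Rel(\mu_1, \dots, \mu_h)$. I would exploit the grading on $W_h$ by monomials of $F$ defined by $\deg(\tau w_i) = \tau \mu_i$. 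The degree-$\lambda$ piece $[W_h]_\lambda$ is a finite-dimensional $k$-space spanned by $\{(\lambda/\mu_i) w_i : \mu_i \mid \lambda\}$, and $[\Rel]_\lambda \subseteq [W_h]_\lambda$ is cut out by the single condition that the scalar coefficients sum to zero. A telescoping argument then shows that $[\Rel]_\lambda$ is $k$-spanned by the differences $(\lambda/\mu_i) w_i - (\lambda/\mu_j) w_j$ with $\mu_i, \mu_j \mid \lambda$; for such a pair, $\lambda$ is a common multiple of $\mu_i, \mu_j$ and hence divisible by some $\nu \in \LCM(\mu_i, \mu_j)$, so the difference equals $(\lambda/\nu)\, S_{ij}^\nu \in M$. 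Thus $[\Rel]_\lambda \subseteq M$ for every $\lambda$.

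The main obstacle is the passage from graded pieces to an arbitrary $\Theta \in \Rel$, which is only a \emph{convergent} (not finite) sum $\Theta = \sum_\lambda \Theta_\lambda$ of its graded pieces. My plan is: for each $\lambda$ pick a $k$-expression $\Theta_\lambda = \sum_{(i,j,\nu)} c_{ij,\lambda}^\nu (\lambda/\nu)\, S_{ij}^\nu$ with $c_{ij,\lambda}^\nu \in k$ and $(i,j,\nu)$ ranging over the fixed finite set of indices, and then interchange summation to obtain
\[
\Theta = \sum_{(i,j,\nu)} r_{ij,\nu}\, S_{ij}^\nu, \qquad r_{ij,\nu} := \sum_{\lambda} c_{ij,\lambda}^\nu\, (\lambda/\nu).
\]
Since the outer sum is finite (there are only finitely many pairs $i<j$ and each $\LCM(\mu_i, \mu_j)$ is finite by Noetherianity), only the convergence of each $r_{ij,\nu}$ in $R$ and the legitimacy of the swap need justification; both follow from the fact that $\deg(\lambda/\nu) \to \infty$ along the $\lambda$'s with $\Theta_\lambda \neq 0$, as $\Theta$ itself is a convergent power series in $W_h$. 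Equivalently, one can invoke Krull's intersection theorem: $M$ is finitely generated over the complete Noetherian local ring $R$, hence closed in $W_h$, and the partial sums $\sum_{\deg \lambda \leq N} \Theta_\lambda$ belong to $M$ and converge to $\Theta$, forcing $\Theta \in M$.
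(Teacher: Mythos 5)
Your proposal is correct. It shares the central idea with the paper's proof --- both exploit the grading on $W_h$ by monomials of $F$ and reduce to showing that each homogeneous piece $[\Theta]_\lambda$ lies in the submodule generated by the candidate generators --- but there are two places where you diverge, and I think both divergences are improvements.

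For part (a), the paper works degree by degree, splitting into the case where $\nu$ is divisible by both $\mu_1,\mu_2$ (where $[\Theta]_\nu$ becomes a single $k$-multiple of some $\frac{\nu}{\nu^\lambda}S^{\nu^\lambda}$) and the case where it is not (where $[\Theta]_\nu = 0$). Your argument, by contrast, exhibits $\Rel(\mu_1,\mu_2)$ directly as isomorphic to $R\mu_1\cap R\mu_2$ under $aw_1+bw_2\mapsto a\mu_1$, after which the claim is nothing more than the definition of $\LCM(\mu_1,\mu_2)$ as the minimal monomial generating set of $R\mu_1\cap R\mu_2$. This is cleaner and avoids any case analysis; the one thing to check is that the inverse map sending $m\in R\mu_1\cap R\mu_2$ to $\frac{m}{\mu_1}w_1-\frac{m}{\mu_2}w_2$ is well-defined, which holds because $\tau u_i\mapsto \tau$ is a well-defined inverse of $R$-scaling on the free summand $Ru_i$.

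For part (b), your degree-by-degree telescoping argument is essentially the paper's (the paper does not first reduce to $S_{ij}^\nu$, but that reduction is harmless once (a) is in hand). What you add, and what the paper leaves implicit, is the passage from ``$[\Theta]_\lambda\in M$ for every $\lambda$'' to ``$\Theta\in M$''. The paper says only ``it suffices to show that $[\Theta]_\nu$ is generated by\ldots'', acknowledging earlier that $\Theta$ may have infinitely many nonzero graded pieces, but without spelling out why this reduction is legitimate. Your closedness argument (a finitely generated submodule $M$ of the finite free module $W_h$ over the complete Noetherian local ring $R$ satisfies $\bigcap_n(M+\m^n W_h)=M$ by Krull intersection applied to $W_h/M$, hence $M$ is closed, hence the convergent sequence of partial sums $\sum_{\deg\lambda\leq N}[\Theta]_\lambda\in M$ forces $\Theta\in M$) fills this gap cleanly. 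The alternative interchange-of-summation route you sketch, rearranging $\sum_\lambda\sum_{(i,j,\nu)} c^{\nu}_{ij,\lambda}(\lambda/\nu)S_{ij}^\nu$ into $\sum_{(i,j,\nu)} r_{ij,\nu}S_{ij}^\nu$ with each $r_{ij,\nu}\in R$ a convergent series, is what the paper's argument tacitly relies on; making either argument explicit is worthwhile.
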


\begin{proof}~
\begin{enumerate}
\item Let $\LCM(\mu_1,\mu_2)=\set{\nu^1,\dots,\nu^l}$, and consider an element $\Theta=a_1 w_1 + a_2 w_2$ of $\Rel(\mu_1,\mu_2)$, where $a_i\in R$. Thus $a_1 \mu_1+a_2 \mu_2=0$ by definition. Use the grading on $W_{\set{1,2}}$ induced by $\mu_1,\mu_2$. Let $\nu$ be a monomial in $F$, then it suffices to show that $[\Theta]_\nu$ is generated by $S^{\nu^\lambda}$, $1\leq \lambda\leq l$. There are two cases:
 \begin{enumerate}
 \item The monomial $\nu$ is divisible by both $\mu_1$ and $\mu_2$.
 
 For $i=1,2$, write $\tau_i=\nu/\mu_i\in R$. Extracting the degree-$\nu$ part of both sides of $a_1 \mu_1+a_2\mu_2=0$, we have
 \begin{equation}
  [a_1]_{\tau_1} \mu_1 + [a_2]_{\tau_2} \mu_2 = 0.
 \end{equation} 
 
 Write $[a_i]_{\tau_i}=c_i \tau_i$ for $i=1,2$, where $c_i\in k$, then we have
 \begin{equation}
 0=c_1 \tau_1 \mu_1 + c_2 \tau_2 \mu_2 = (c_1+c_2) \mu,
 \end{equation}
 and hence $c_2=-c_1$. Therefore,
 \begin{equation}
 [\Theta]_\nu = c_1\tau_1 w_1 + c_2\tau_2 w_2 = c_1 (\frac{\nu}{\mu_1} w_1 - \frac{\nu}{\mu_2} w_2).
 \end{equation}
 
 By the definition of $\LCM(\mu_1,\mu_2)$, there exists $\lambda$ such that $\nu^\lambda$ divides $\nu$. Thus
 \begin{equation}
 [\Theta]_\nu = c_1 \frac{\nu}{\nu^\lambda} \parens*{\frac{\nu^\lambda}{\mu_1} w_1 - \frac{\nu^\lambda}{\mu_2} w_2} = a_1 \frac{\nu}{\nu^\lambda} S^{\nu^\lambda},
 \end{equation}
 so one $S^{\nu^\lambda}$ is enough to generate $[\Theta]_\nu$. 
 
 \item The monomial $\nu$ is not divisible by at least one of $\mu_1$ or $\mu_2$. 
 
 In this case, $[\Theta]_\nu$ has at most one term, so no cancellation can occur in the degree-$\nu$ part of $\Theta(\mu_1,\mu_2)$. As a result, in order to have $[\Theta(\mu_1,\mu_2)]_\nu=0$, we must have $[\Theta]_\nu=0$.
 \end{enumerate}
 
\item Use the grading on $W_h$ induced by $\mu_1,\dots,\mu_h$. Let $\Theta$ be an element of $\Rel(\mu_1,\dots,\mu_h)$, and let $\nu$ be a monomial in $F$. We claim that $[\Theta]_\nu$ is generated by $\Theta_{ij}^\lambda$. Consider the degree-$\nu$ part of the equation $\Theta(\mu_1,\dots,\mu_h)=0$. Let $A$ be the set of $i$ such that $\mu_i$ divides $\nu$. If $\abs{A}\leq 1$, then no cancellation occurs in $[\Theta(\mu_1,\dots,\mu_h)]_\nu$, so we must have $[\Theta]_\nu=0$. From now on, assume without loss of generality that $A=\set{1,\dots,l}$ with $l\geq 2$.

Write $\tau_i=\nu/\mu_i\in R$, then $[\Theta]_\nu$ is of the form
\begin{equation}
[\Theta]_\nu = c_1 \tau_1 w_1 + \dots + c_l \tau_l w_l,
\end{equation}
where $c_i\in k$. The equation $[\Theta(\mu_1,\dots,\mu_h)]_\nu=0$ then implies
\begin{equation}
c_1 \tau_1 \mu_1 + \dots + c_l \tau_l \mu_l = (c_1+\dots+c_l) \tau = 0,
\end{equation}
and thus $c_1+\dots+c_l=0$. It follows that
\begin{align}
[\Theta]_\nu &= -(c_2+\dots+c_l)\tau_1 w_1 + c_2 \tau_2 w_2 + \dots + c_l \tau_l w_l \\
&= c_2 (\tau_2 w_2 - \tau_1 w_1) + \dots + c_l (\tau_l w_l - \tau_1 w_1). 
\end{align}

For $2\leq i\leq l$, we have $\tau_i w_i - \tau_1 w_1 \in \Rel(\mu_1,\mu_i)$, since its evaluation at the pair $(\mu_1,\mu_i)$ is $\tau_i \mu_i - \tau_1 \mu_1 = \mu - \mu = 0$. Because the relations $\Theta_{1i}^\lambda\;(\lambda\in \Lambda_{1i})$ generate $\Rel(\mu_1,\mu_i)$, we conclude that $[\Theta]_\nu$ is generated by the elements $\Theta_{ij}^\lambda$ in $W_h$.
\end{enumerate}
\end{proof}

We now use the above property for the collection $S_{ij}^\nu$ to prove the Buchberger criterion. This is the only step that uses Assumption \ref{assumption} that the monomial order on $F$ is of order type $\omega$. 

\begin{lemma}\label{a_lem:buchberger_on_generating_relations}
Assume Assumption \ref{assumption}. Let $G=\set{g_1,\dots,g_h}$ with $g_i\in F$ and $\LT(g_i)=\mu_i$. Let $\Theta^\lambda$, $\lambda\in \Lambda$ be homogeneous elements of $W_h$ \tuparens*{with respect to the grading induced by $\mu_1,\dots,\mu_h$} that generate $\Rel(\mu_1,\dots,\mu_h)$. If for each $\lambda\in \Lambda$, a possible remainder of $\Theta^\lambda(g_1,\dots,g_h)$ divided by $G$ is zero, then $G$ is a Gr\"obner basis.
\end{lemma}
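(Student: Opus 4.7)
The plan is to mimic the classical Buchberger argument, adapted to the power series monomial subring setting. Fix $f\in M=(g_1,\dots,g_h)$ nonzero; I need to show $\LT(f)$ is divisible by some $\mu_i$. For any expression $f=\sum_i a_i g_i$ with $a_i\in R$, define $\nu(a):=\min_\prec\set{\LT(a_i g_i): a_i g_i\neq 0}$. No term of $f$ can lie $\prec$-foremost to $\nu(a)$ (such a term could only come from some summand $a_i g_i$, contradiction), so $\LT(f)\succeq \nu(a)$. The goal is to produce an expression with equality, as this immediately gives $\LT(f)=\LT(a_i)\mu_i$ for some $i$.

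Among all decompositions of $f$, I would pick one maximizing $\nu(a)$ in the $\prec$ order. Such a maximum is guaranteed to exist: $\nu(a)$ always belongs to $\set{\mu: \mu\preceq \LT(f)}$, which is finite by Assumption \ref{assumption}. Call this optimum $\nu^*$, and argue by contradiction: suppose $\LT(f)\succ \nu^*$ strictly, so cancellation occurs at degree $\nu^*$. Let $I=\set{i: \LT(a_i g_i)=\nu^*}$ and (WLOG each $g_i$ has leading coefficient $1$) write $\LT(a_i)=c_i\tau_i$ with $\tau_i \mu_i=\nu^*$ for $i\in I$. Using that $\LT(a_i)$ is the foremost term of $a_i$ to rule out any contribution to degree $\nu^*$ from non-leading monomials of $g_i$, one finds that the coefficient of $\nu^*$ in $\sum_i a_i g_i$ reduces to $\sum_{i\in I} c_i$, forcing $\sum_{i\in I}c_i=0$. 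Hence $\Theta:=\sum_{i\in I}c_i\tau_i w_i$ lies in $\Rel(\ul\mu)$ and is homogeneous of degree $\nu^*$.

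Now invoke the hypothesis. Since $\set{\Theta^\lambda}_{\lambda\in\Lambda}$ generates $\Rel(\ul\mu)$ and all pieces are homogeneous, extracting the degree-$\nu^*$ part of any expression $\Theta=\sum_\lambda b_\lambda \Theta^\lambda$ yields $\Theta=\sum_\lambda e_\lambda (\nu^*/\nu^\lambda)\Theta^\lambda$ with $e_\lambda\in k$, the sum restricted to those $\lambda$ with $\nu^\lambda\mid \nu^*$. Because $\Theta^\lambda\in\Rel(\ul\mu)$ annihilates the degree-$\nu^\lambda$ part of $\Theta^\lambda(\ul g)$, we have $\LT(\Theta^\lambda(\ul g))\succ \nu^\lambda$ strictly. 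Combining this with the hypothesized zero-remainder expression $\Theta^\lambda(\ul g)=\sum_i q_i^\lambda g_i$ (so $\LT(q_i^\lambda g_i)\succeq \LT(\Theta^\lambda(\ul g))\succ \nu^\lambda$) and multiplying through by the monomials $\nu^*/\nu^\lambda$ (which strictly shifts each leading term), I can rewrite $\Theta(\ul g)=\sum_i \tilde q_i g_i$ with each $\LT(\tilde q_i g_i)\succ \nu^*$. Plugging this into the split $f=\Theta(\ul g)+(\text{remaining summands already having leading terms}\succ \nu^*)$ produces a new expression $f=\sum_i a_i'' g_i$ with $\nu(a'')\succ \nu^*$, contradicting the maximality of $\nu^*$.

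The main obstacle is precisely the first step: guaranteeing that the optimum $\nu^*$ is attained. Without Assumption \ref{assumption}, the chain of improvements could continue indefinitely without stabilizing (one would only obtain a supremum, not a maximum), and the contradiction collapses; this is exactly why the order-type-$\omega$ hypothesis appears. A secondary technicality, namely that the sum defining $\tilde q_i$ is a well-defined element of $R$, is handled by the observation that $\Rel(\ul\mu)$ is finitely generated over the Noetherian ring $R$, so one may assume $\Lambda$ is finite throughout.
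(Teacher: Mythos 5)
Your proof is correct and follows essentially the same route as the paper's Lemma \ref{a_lem:buchberger_on_generating_relations}: you choose a decomposition of $f$ maximizing the leading degree (where Assumption \ref{assumption} supplies finiteness), extract a homogeneous element of $\Rel(\underline{\mu})$ from the cancellation at that degree, and use the zero-remainder hypothesis to push the leading degree strictly higher, contradicting optimality. The only cosmetic difference is that you phrase the argument positively (showing the optimum forces $\nu^* = \LT(f)$, hence divisibility) while the paper frames it as a contradiction after assuming $\LT(f)$ avoids every $\mu_i$; these are the same argument wearing different clothes, and your observation that $\Lambda$ may be taken finite by Noetherianity is a correct housekeeping remark the paper leaves implicit.
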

\begin{proof}
Use the grading of $W_h$ induced by $\mu_1,\dots,\mu_h$. In this proof, we need to consider the notion of \textbf{leading degree} for any nonzero element $\Theta$ of $W_h$:
\begin{equation}
\deg(\Theta)=\mathrm{min}_{\prec} \set{\nu: [\Theta]_\nu\neq 0},
\end{equation}
the lowest monomial $\nu$ in $F$ such that $\Theta$ has a degree-$\nu$ component. Note that $\deg(\Theta)$ exists uniquely by well ordering. 

Suppose $G$ is not Gr\"obner, then there exists an element $f$ of $M=(g_1,\dots,g_h)$ such that $\LT(f)$ is not divisible by any of $\mu_i$. We shall fix $f$ and derive a contradiction from here.

Since $f\in M$, there exists $P\in W_h$ such that $f = P(g_1,\dots,g_h)$. Let $\nu=\deg P$. We note that $f$ is $[P]_\nu (\ul{\mu})$ (which is of the form $c\nu, c\in k$) plus terms higher than $\nu$. Therefore, $\LM(f)\succeq \nu$, and the inequality is strict if and only if $[P]_\nu (\ul{\mu})=0$.  On the other hand, we note that $\LM(f)$ cannot be $\nu$, because $\nu$ must be divisible by one of $\mu_i$ by the definition of the grading induced by $\ul{\mu}$, while $\LT(f)$ is not. Hence $\LM(f)\succ \nu$, so that $[P]_\nu (\ul{\mu})=0$. 

We have concluded that any $P\in W_h$ such that $f = P(g_1,\dots,g_h)$ must satisfy $\deg P \prec \LM(f)$. By Assumption \ref{assumption} that $\prec$ is of order type $\omega$, there are only finitely many monomials below $\LM(f)$. Hence, we may assume $P$ is chosen such that $\nu=\deg P$ is maximized. 

We claim that there is $P'\in W_h$ such that $f = P'(g_1,\dots,g_h)$ and $\deg P' \succ \nu$, which yields a contradiction. To prove the claim, recall that $[P]_\nu (\ul{\mu})=0$, so $[P]_\nu$ lies in the module of relations $\Rel(\mu_1,\dots,\mu_h)$. By our hypothesis, there exists elements $a^\lambda\in R$ such that
\begin{equation}\label{a_eq:proof_buchberger_1}
[P]_\nu = \sum_{\lambda\in \Lambda} a^\lambda \Theta^\lambda.
\end{equation}

Since $\Theta^\lambda$ is homogeneous (say, of degree $\nu^\lambda$), we may assume $a^\lambda$ is homogeneous of degree $\nu/\nu^\lambda$ by taking the degree-$\nu$ part of the right-hand side of \eqref{a_eq:proof_buchberger_1}. In other words, we may assume $a^\lambda$ is of the form $c^\lambda \nu/\nu^\lambda$, where $c^\lambda\in k$. (Here, we have assumed without loss of generality that $\nu^\lambda$ divides $\nu$ for every $\lambda$ in the sum.)

By our hypothesis and the defining requirement of a division expression, for each $\lambda$, there exists $Q^\lambda\in W_h$ such that
\begin{equation}
Q^\lambda(g_1,\dots,g_h)=\Theta^\lambda(g_1,\dots,g_h)
\end{equation}
and $\deg Q^\lambda \succeq \LM(\Theta^\lambda(g_1,\dots,g_h))$. The homogeneity of $\Theta^\lambda$ of degree $\nu^\lambda$ implies that 
\begin{equation}
\Theta^\lambda(g_1,\dots,g_h)=\Theta^\lambda(\ul{\mu}) + (\text{terms strictly higher than $\nu^\lambda$}).
\end{equation}

Since $\Theta^\lambda \in \Rel(\ul{\mu})$, we have $\Theta^\lambda(\ul{\mu})=0$, so $\LM(\Theta^\lambda(g_1,\dots,g_h))\succ \nu^\lambda$. Therefore, we reach an important conclusion
\begin{equation}
\deg Q^\lambda \succ \nu^\lambda = \deg \Theta^\lambda.
\end{equation}

This allows us to replace the ``leading terms'' of $P$ by strictly higher terms, which is what we want. To make it precise, let
\begin{equation}
P'=P+\sum_{\lambda\in \Lambda} a^\lambda \parens*{Q^\lambda - \Theta^\lambda}.
\end{equation}

Then we have $P'(g_1,\dots,g_h)=P(g_1,\dots,g_h)$. Moreover, the terms $-\sum_{\lambda\in \Lambda} a^\lambda \Theta^\lambda$ precisely cancel the degree-$\nu$ part of $P$, while the terms $+\sum_{\lambda\in \Lambda} a^\lambda Q^\lambda$ only involve degrees higher than $\nu$, because
\begin{equation}
\deg(a^\lambda Q^\lambda) \succ \deg(a^\lambda \nu^\lambda) = \nu.
\end{equation}

It follows that $\deg P' \succ \deg P$, so $P'$ is constructed as desired in the claim. We thus finish the proof of Lemma \ref{a_lem:buchberger_on_generating_relations}.
\end{proof}

Combining Lemma \ref{a_lem:standard_relation} and Lemma \ref{a_lem:buchberger_on_generating_relations}, we complete the proof of the ``if'' part of the Buchberger criterion (Proposition \ref{prop:buchberger}). The ``only if'' part is in fact elementary: if $G=\set{g_1,\dots,g_h}$ is a Gr\"obner basis for $M$, then $S^\nu_{ij}(g_i,g_j)$ is in $M$, so its remainder in every division expression by $G$ is zero by Lemma \ref{a_lem:unique_remainder}. Hence the proof of every assertion in \S \ref{sec:groebner} is complete.

\section{Appendix: motivic considerations}\label{appendix:motivic}
We generalize some of our results to the Grothendieck ring of motives for algebraic stacks (for an introduction, see \cite{Ekedahl2009}). To achieve this, we need geometric considerations of moduli spaces. However, no much heavy machinery of algebraic geometry is needed, since to work in the Grothendieck ring of motives, we only need to care about reduced structures. 

This section contains two main topics. The first is a proof of Theorem~\ref{Thm:apdxB1} (or Theorem~\ref{Thm:Main2}), which occupies \S\ref{subsec:functordesp}$\sim$ \S\ref{sub:motiform}. As noted in Remark~\ref{rmk:whyweneedappendixB}, the proof of Proposition~\ref{prop:Fibercounting}
does not naïvely generalize to the motivic setting. In order to prove Theorem~\ref{Thm:apdxB1}, we will need to develop new machinaries. The second is an exposition of Gröbner strata, which we present in \S\ref{subsec:Gbstrata}. Abundant results on Gröbner strata already exist in literature, for example, see \cite{LedererGrobnerstrata}. The results in previous chapters are easily generalizable to the geometric setting. Furthermore,  Lemma~\ref{lem:decomposition} and Lemma~\ref{lem:staircase} can be easily upgraded to their motivic versions. So we actually get motivic decompositions of $[\Quot_{d,n}^d(R)]$ with respect to Gröbner strata. We refer the readers to \S\ref{subsec:grobsubfunctor} for the precise statements. 

\begin{theorem}[A slight modification of Theorem~\ref{Thm:Main2}]\label{Thm:apdxB1}
Let $k$ be an arbitrary field and $R$ be a complete local ring of form $k[[x_1,...,x_m]]/I$. For any $n,d,r$ such that $0\leq r\leq \min\{n,d\}$, the following identity holds in $\KStck{k}$:
\begin{equation}\label{eq:apdxB1}
[\Coh_n^r(R)] = \frac{[\Quot_{d,n}^r(R)][\GL_{d-r}]}{\mathbb{L}^{d(n-d)+(d-r)^2}[\GL_d]}.
\end{equation}
\end{theorem}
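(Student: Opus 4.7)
The plan is to introduce a common rigidification of both sides of~\eqref{eq:apdxB1} and to compute via a ``middle'' scheme. Let $C_n^r(R) \subseteq C_n(R)$ be the locally closed (reduced) stratum classifying $R$-module structures on $k^n$ of rank exactly $r$, cf.\ Remark~\ref{rmk:motivicstrata}, so that $[\Coh_n^r(R)] = [C_n^r(R)]/[\GL_n]$ in $\KStck{k}$. Define the auxiliary scheme
\[
\widetilde{Q} := \{(M, f) : M \in C_n^r(R),\ f\colon R^d \twoheadrightarrow M\},
\]
together with two natural projections: forgetting the rigidification $M=k^n$ gives $\widetilde{Q} \to \Quot_{d,n}^r(R)$ whose fibre at $(M,f)$ is the space of $k$-linear isomorphisms $M \xrightarrow{\sim} k^n$, a $\GL_n$-torsor. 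Since $\GL_n$ is special in Serre's sense, this torsor is Zariski-locally trivial, whence $[\widetilde{Q}] = [\GL_n]\cdot[\Quot_{d,n}^r(R)]$.

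The main work lies in analysing the other projection $\pi\colon \widetilde{Q} \to C_n^r(R)$. Equip $C_n^r(R)$ with the universal family $\mathcal{M} = k^n \times C_n^r(R)$ bearing its universal $R$-action, and let $\m\mathcal{M} \subseteq \mathcal{M}$ be the image of the bundle map $\mathcal{M}^{\oplus m} \to \mathcal{M}$ given by the generators $x_1,\dots,x_m$ of $\m$. The crucial technical claim---precisely the one flagged in Remark~\ref{rmk:whyweneedappendixB}---is that on $C_n^r(R)$ this map has \emph{constant} rank $n-r$, because $\dim_k(M/\m M) = r$ throughout the stratum; hence $\m\mathcal{M}$ is a subbundle of rank $n-r$ and $\overline{\mathcal{M}} := \mathcal{M}/\m\mathcal{M}$ is a rank-$r$ vector bundle. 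This promotes Nakayama's lemma to a family statement: $\widetilde{Q}$ is identified with the open subscheme of $\mathcal{M}^{\oplus d}$ consisting of $d$-tuples whose reduction modulo $\m\mathcal{M}$ generates $\overline{\mathcal{M}}$ fibrewise. As $\mathcal{M}^{\oplus d} \to \overline{\mathcal{M}}^{\oplus d}$ is then a rank-$d(n-r)$ vector bundle, pulling back the open surjection locus $\mathcal{S} \subseteq \overline{\mathcal{M}}^{\oplus d}$ yields $[\widetilde{Q}] = \mathbb{L}^{d(n-r)}\cdot[\mathcal{S}]$; trivializing $\overline{\mathcal{M}}$ on its frame bundle (again using specialness of $\GL_r$) produces $[\mathcal{S}] = [\GL_r][\Gr(r,d)][C_n^r(R)]$.

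Equating the two expressions for $[\widetilde{Q}]$ gives $[\GL_n]\cdot[\Quot_{d,n}^r(R)] = \mathbb{L}^{d(n-r)}[\GL_r][\Gr(r,d)][C_n^r(R)]$. Dividing by $[\GL_n]$ and using $[\Coh_n^r(R)] = [C_n^r(R)]/[\GL_n]$ already recovers the form~\eqref{eq:coh_in_quot_mot} of Theorem~\ref{Thm:Main2}; rewriting $[\GL_r][\Gr(r,d)] = [\GL_d]/([\GL_{d-r}]\mathbb{L}^{r(d-r)})$ together with the identity $d(n-r) - r(d-r) = d(n-d) + (d-r)^2$ converts it into~\eqref{eq:apdxB1}. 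The entire substantive obstacle is concentrated in the subbundle / constant-rank claim for $\m\mathcal{M}$ over the stratum $C_n^r(R)$; once that is secured, the remainder is routine bookkeeping of torsors and vector bundles in $\KStck{k}$.
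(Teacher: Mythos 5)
Your argument is correct and arrives at \eqref{eq:apdxB1}, but by a genuinely different route than the paper. Both proofs start from the incidence correspondence $\mathfrak{Q}$ (your $\widetilde{Q}$) and use specialness of $\GL_n$ to get $[\mathfrak{Q}]=[\GL_n][\Quot_{d,n}^r(R)]$; they diverge in how $\mathfrak{Q}$ is related to $C_n^r(R)$. The paper uses the action of $\GL[F]=\Aut_R(R^d)$ on $\Quot_{d,n}^r(R)$, introduces the stabilizer bundle $\mathscr{G}$, shows (Lemma~\ref{lm:utorsor}) that $\GL[F]$ becomes an fppf $\mathscr{G}$-torsor over $\mathfrak{Q}$ after base change along local sections, computes $[\mathscr{G}]$ from the three-step filtration in Lemma~\ref{lm:stblzer}, and stratifies so that $\mathscr{G}$ is special (Lemmas~\ref{lm:ZarstatiU}--\ref{lm:ZarstatiU2}). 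You bypass all of this by exhibiting $\mathfrak{Q}$ directly as an affine bundle of rank $d(n-r)$ over the open surjection locus in $\overline{\mathcal{M}}^{\oplus d}$, where $\overline{\mathcal{M}}=\mathcal{M}/\m\mathcal{M}$ is the rank-$r$ vector bundle over the reduced stratum $C_n^r(R)$. Your route is shorter, invokes only vector bundles and $\GL_n$-torsors (hence no fppf descent and no stabilizer filtration), and isolates the key geometric input cleanly in the constant-rank claim for $\m\mathcal{M}$. Note, however, that the paper's proof rests on an exactly analogous constant-rank fact --- that $\mathscr{N}\cap\m\underline{F}$ is a rank-$(bd-n+r)$ vector bundle over $\Quot_{d,n}^r(R)$, used in \S\ref{sec:geoQQ} and in Lemma~\ref{lm:stblzer}(a) --- so the difficulty flagged in Remark~\ref{rmk:whyweneedappendixB} is resolved in the same spirit in both arguments, just packaged differently. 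Two small imprecisions in your write-up: trivializing $\overline{\mathcal{M}}$ Zariski-locally requires no specialness of $\GL_r$ (vector bundles are automatically Zariski-locally trivial), and the identity $[\mathcal{S}]=[\GL_r][\Gr(r,d)][C_n^r(R)]$ is cleanest justified by stratifying $C_n^r(R)$ by such a trivializing cover rather than by passing to the frame bundle.
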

The formula (\ref{eq:apdxB1}) recovers (\ref{eq:coh_in_quot_mot}) by setting $R=\widehat{\mathcal{O}}_{X,p}$, and substituting in 
\begin{equation}
    [\Gr(d,r)]=[\GL_d]/\mathbb{L}^{r(d-r)}[\GL_r][\GL_{d-r}].
\end{equation} In practice, we will only use the most important special case $r=d$. 

\subsubsection{Notation and assumptions}\label{subsub:naa} In order to keep notation clean, we will again use $\Coh_n(R)$, $\Coh_n^r(R)$, $\Quot_{d,n}(R)$, $\Quot_{d,n}^r(R)$, $C_n(R)$, and $C_n^r(R)$ to denote the induced reduced structures of the original schemes and stacks, see \S\ref{subsec:functordesp}. In the whole chapter, the symbol $S$ is reserved for denoting a reduced affine $k$-scheme, which will only be used in the situation where we consider affine scheme points of various schemes and stacks. 
\subsubsection{Outline of the proof for Theorem~\ref{Thm:apdxB1}} In \S\ref{subsub:stab}, we will first observe a group action $\GL[F]\curvearrowright\Quot_{d,n}^r(R)$ and introduce an important group scheme $\mathscr{G}$ over $\Quot_{d,n}^r(R)$, which is the ``stablizer bundle" of the $\GL[F]$-action. In \S\ref{sub:QC}, we will construct a Zariski $\GL_n$-torsor  ${\mathfrak{Q}}$ over $\Quot_{d,n}^r(R)$ that sits in the following diagram 
\begin{equation}\label{eq:torsorQ}
    \begin{tikzcd}
                   & {\mathfrak{Q}} \arrow[ld, "\pi"', two heads] \arrow[rd, "\varpi", two heads] &        \\
{\Quot_{d,n}^r(R)} &                                                                                        & C_n^r(R)
\end{tikzcd}
\end{equation}
We will show that $\varpi$ admits Zariski local sections, i.e., we have a Zariski cover $U\rightarrow C_n^r(R)$, and $\sigma:U\rightarrow \mathfrak{Q}$ such that $\varpi\sigma=\text{Id}_{U}$. We then 
show that $\GL[F]_U$ is an fppf $\mathscr{G}_U$-torsor over $\mathfrak{Q}_U$, where $\mathscr{G}_U=\sigma^*\pi^*\mathscr{G}$ is a group scheme over $U$. We then show in Lemma~\ref{lm:ZarstatiU2} that $U$ admits a Zariski stratification over which $\mathscr{G}_U$ is special (in the sense that any fppf torsor is Zariski locally trivial). We then use explicit descriptions of $\GL[F]$ and $\mathscr{G}$ from Lemma~\ref{lm:theGroup} and Lemma~\ref{lm:stblzer} to establish the formula (\ref{eq:apdxB1}).

\subsection{Functor descriptions of several geometric objects}\label{subsec:functordesp} For later reference, we summarize the functor of points for various geometric objects that we are considering. We remind the reader that $C_n(R)$ is the commuting variety (\S\ref{subsec:uramifiedmodulispcae}). Fix $V_n$ to be a vector space of dimension $n$ over $k$. In the following, let $X$ be a reduced $k$-scheme. 
\begin{equation}\label{eq:cohn}
    \Coh_{n}(R)(X)=\left\{\begin{aligned}&\text{groupoid of coherent sheaves }\mathcal{F}\text{ over } X_R,\\  &\text{such that } \mathcal{F} \text{ is flat of rank }n \text{ over } X.  \end{aligned}\right\}.
\end{equation}
\begin{equation}
    \Coh_{n}^r(R)(X)=\left\{\begin{aligned}&\text{groupoid of coherent sheaves }\mathcal{F}\text{ over }X_R,\\  &\text{such that } \mathcal{F} \text{ is flat of rank }n \text{ over } X \text{, and}\\
  &  \mathcal{F}/\mathfrak{m}\mathcal{F}\text{ is flat of rank }r \text{ over } X. \end{aligned}\right\}.
\end{equation}
\begin{equation}
    \Quot_{d,n}(R)(X)=\left\{ N\subseteq \mathcal{O}_{X_R}^d, 
      \text{ such that }
  \mathcal{O}_{X_R}^d/N\in \Coh_n(R)(X).  \right\}.
\end{equation}
\begin{equation}
    \Quot_{d,n}^r(R)(X)=\left\{N\subseteq \mathcal{O}_{X_R}^d, 
      \text{ such that }
  \mathcal{O}_{X_R}^d/N\in \Coh_n^r(R)(X).
      \right\}.
\end{equation}
\begin{equation}\label{eq:funcCn0}
   C_{n}(R)(X)=\left\{\begin{aligned}
   &\text{equivalent classes of pairs }(\mathcal{F},\iota)\text{ where }
    \mathcal{F}\in \Coh_n(R)(X),\\ 
    &\text{ and }\iota: \mathcal{O}_X\otimes  V_{n}\simeq \mathcal{F} \text{ is an isomorphism of } \mathcal{O}_X\text{-modules}.
\end{aligned}\right\}.
\end{equation}
\begin{equation}\label{eq:funcCn}
   C_{n}^r(R)(X)=\left\{\begin{aligned}
    &\text{equivalent classes of pairs }(\mathcal{F},\iota)\text{ where }
    \mathcal{F}\in \Coh_n^r(R)(X),\\ 
    &\text{ and }\iota: \mathcal{O}_X\otimes  V_{n}\simeq \mathcal{F} \text{ is an isomorphism of } \mathcal{O}_X\text{-modules}.
\end{aligned}\right\}.
\end{equation}
Note that 
\begin{equation}
C_n^r(R)=C_n(R)\times_{\Coh_n(R)}\Coh_n^r(R),
\end{equation}
and $\Coh_n^r(R)= [C_n^r(R)/\GL_n]$. 
Furthermore, when $n$ is fixed, there is a large Artin local quotient $R'$ of $R$ such that for all $d$ and $r$, $\Coh_{n}(R)=\Coh_{n}(R')$, $\Coh_{n}^r(R)=\Coh_{n}^r(R')$, $\Quot_{d,n}(R)=\Quot_{d,n}(R')$,
$\Quot_{d,n}(R)=\Quot_{d,n}(R')$, $\Quot_{d,n}^r(R)=\Quot_{d,n}^r(R')$, $C_n(R)=C_n(R')$ and $C_n^r(R)=C_n^r(R')$.

\subsection{Geometry of $\Quot_{d,n}^r(R)$}\label{sec:geoQQ} 
By the last few sentences of \S\ref{subsec:functordesp}, we may and will replace $R$ by a large Artin quotient. Write $\dim_k R=b+1$ and $F={R}^d=\bigoplus_{i=1}^d Ru_i$. If a submodule $N\subseteq F\otimes S$ lies in $\Quot_{d,n}^r(R)(S)$, then the quotient $(F\otimes S)/N$ is a flat $S$-module of rank 
$n$, and $(\mathfrak{m}F\otimes S)\cap N$ is a flat $S$-module of rank $bd-n+r$, so we obtain a locally closed immersion
\begin{equation}\label{eq:embQGr}  \Quot_{d,n}^r(R)\hookrightarrow \Gr(F, bd+d-n)
\end{equation}
sending a submodule $N\in F\otimes S$ to the corresponding module in $F\otimes S$. Indeed, $\Quot_{d,n}^r(R)$ admits a closed embedding into the locally closed subvariety of $ \Gr(F, bd+d-n)$ parametrizing subspaces whose intersections with $\mathfrak{m}F$ have dimension exactly $bd-n+r$. Note that when $d=r$, (\ref{eq:embQGr}) is a closed embedding. Let \begin{equation}\label{eq:Tatu}
0\rightarrow \mathscr{N}\rightarrow \underline{F} \rightarrow \mathscr{Q}\rightarrow 0
\end{equation}
be the restriction of the tautological sequence of universal bundles over $\Gr(F, bd+d-n)$ to $\Quot_{d,n}^r(R)$, i.e., over a point $x\in \Quot_{d,n}^r(R)(S)$ corresponding to an $N\subseteq F\otimes S$, the fiber $\mathscr{N}_x$ is  $N$, while the fiber of $\mathscr{Q}_x$ is $(F\otimes S)/N$. Furthermore, (\ref{eq:Tatu}) is not only a sequence of $\Quot_{d,n}^r(R)$-bundles, but also a sequence of coherent $\Quot_{d,n}^r(R)\times \Spec R$ modules. There is another related construction:
\begin{equation}\label{eq:fibgrass}  \Quot_{d,n}^r(R)\rightarrow \Quot_{d,r}^r(R)=\Gr(F/\mathfrak{m}F, d-r)
\end{equation}
sending $N\subseteq F\otimes S$ to $N/(\mathfrak{m}F\otimes S)$. It is not hard to see that this morphism is surjective with pointwise isomorphic fibers, but we will not use this. The pullback of the tautological sequence of universal bundles over $\Gr(F/\mathfrak{m}F, d-r)$ to $\Quot_{d,n}^r(R)$ will be denoted by
\begin{equation}\label{eq:Tatu2}
0\rightarrow \overline{\mathscr{N}}\rightarrow \underline{F}/\mathfrak{m}\underline{F} \rightarrow \overline{\mathscr{Q}}\rightarrow 0
\end{equation}
It is clear that $\overline{\mathscr{N}}=\mathscr{N}/\mathfrak{m}\underline{F}$ and $\overline{\mathscr{Q}}=\mathscr{Q}/\mathfrak{m}\mathscr{Q}$.

\subsubsection{Group action}\label{subsub:gpact} The most important feature of $\Quot_{d,n}^r(R)$ is that it is equipped with an algebraic group action. For a finite $R$-module $M$, let $\GL[M]$ be the group $\Aut_R(M)$, but viewed as a linear algebraic group over $k$. Then $\GL[F]$ acts on $\Quot_{d,n}^r(R)$, in the sense that any $g\in \GL[F](S)$ acts on $\Quot_{d,n}^r(R)(S)$ by sending a submodule $N\subseteq F\otimes S$ to $g\cdot N$. 

There is a projection $\GL[F]\rightarrow \GL[F/\mathfrak{m}F]$ sending $g$ to $(g\bmod \m)$. Let $U(\mathfrak{m}F)$ be its kernel. The canonical action of $\GL[F/\mathfrak{m}F]$ on $\Gr(F/\mathfrak{m}F, d-r)$ is compatible with the action of $\GL[F]$ on $\Quot_{d,n}^r(R)$ via (\ref{eq:fibgrass}). The structure of $\GL[F]$ is easy to understand:
\begin{lemma}\label{lm:theGroup} Notation as above, the exact sequence
\begin{equation}\label{eq:uGL}
    1\rightarrow U[\mathfrak{m}F]\rightarrow \GL[F]\rightarrow   \GL[F/\mathfrak{m}F]\rightarrow 1,
\end{equation}
admits a canonical splitting. Furthermore, $\GL[F/\mathfrak{m}F]\simeq \GL_d$ and $U[\mathfrak{m}F]$ is a $k$-split unipotent group of dimension $bd^2$ \tuparens{a unipotent group is $k$-split if it is a successive extension of $\mathbb{G}_a$ over $k$}.
\end{lemma}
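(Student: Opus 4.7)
The plan is to identify $\GL[F]$ concretely with $\GL_d$ over the $k$-algebra $R$, and then exhibit all three assertions by elementary manipulations with the $\mathfrak{m}$-adic filtration of $R$.

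First I would use the fixed basis $F = \bigoplus_{i=1}^d R u_i$ to identify $\GL[F]$ with the Weil restriction of $\GL_{d,R}$ to $k$, so that its functor of points is
\begin{equation*}
\GL[F](S) = \GL_d(R \otimes_k S), \qquad \GL[F/\mathfrak{m}F](S) = \GL_d(S),
\end{equation*}
with the reduction map induced by $R \otimes_k S \twoheadrightarrow (R/\mathfrak{m}) \otimes_k S = S$. The assertion $\GL[F/\mathfrak{m}F] \cong \GL_d$ is immediate from this. For the canonical splitting, I would use the fact that $R$ is a complete local Noetherian $k$-algebra, so that the structure map $k \hookrightarrow R$ is canonical \tuparens{and, since we have reduced to an Artin local quotient, this is literally a coefficient-field embedding prescribed by the $k$-algebra structure}. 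This embedding induces natural maps $S \hookrightarrow R \otimes_k S$ and hence $\GL_d(S) \hookrightarrow \GL_d(R \otimes_k S)$ splitting the reduction. No choice is involved once $R$ is fixed as a $k$-algebra, so the splitting is canonical.

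Next I would analyze $U[\mathfrak{m}F]$ by writing its $S$-points as the subgroup of elements of the form $I_d + A$ with $A \in \Mat_d(\mathfrak{m} \otimes_k S)$. Using the $\mathfrak{m}$-adic filtration $R \supset \mathfrak{m} \supset \mathfrak{m}^2 \supset \cdots \supset \mathfrak{m}^N = 0$ \tuparens{finite since $R$ is Artin local after the reduction made at the start of \S\ref{sec:geoQQ}}, I would define closed subgroups
\begin{equation*}
U_i := \ker\bigl( \GL_d(R \otimes_k \,\cdot\,) \longrightarrow \GL_d((R/\mathfrak{m}^i) \otimes_k \,\cdot\,) \bigr),
\end{equation*}
so that $U_1 = U[\mathfrak{m}F]$, $U_N = 1$, and each successive quotient is identified via $I_d + A \mapsto A \bmod \mathfrak{m}^{i+1}$ with the additive group $\Mat_d(\mathfrak{m}^i/\mathfrak{m}^{i+1}) \cong \mathbb{G}_a^{d^2 \dim_k(\mathfrak{m}^i/\mathfrak{m}^{i+1})}$. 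This exhibits $U[\mathfrak{m}F]$ as a successive extension of vector groups, hence as a $k$-split unipotent group. Summing the dimensions over $i \geq 1$ gives
\begin{equation*}
\dim U[\mathfrak{m}F] = d^2 \sum_{i\geq 1} \dim_k(\mathfrak{m}^i/\mathfrak{m}^{i+1}) = d^2 \dim_k \mathfrak{m} = b d^2,
\end{equation*}
as claimed.

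I do not anticipate a genuine obstacle here: the only mildly subtle point is making sure the canonical splitting is compatible with the reduced Artin-local replacement of $R$, which is automatic because that replacement is a $k$-algebra quotient of $R$ and the splitting is defined via the structural inclusion $k \hookrightarrow R$. Everything else reduces to the well-known structure of $\GL_d$ over an Artin local $k$-algebra.
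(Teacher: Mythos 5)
Your proof is correct, and it takes a genuinely different route on the $k$-splitness of $U[\mathfrak{m}F]$ than the paper does. You and the paper agree on the first two assertions: the identification $\GL[F](S)\cong\GL_d(R\otimes_k S)$ (Weil restriction) gives $\GL[F/\mathfrak{m}F]\cong\GL_d$ immediately, and the splitting is induced by the structure map $k\hookrightarrow R$. For the $k$-splitness, the paper argues ``from the socle'': it induces on $b=\dim_k R-1$, choosing a nonzero $v\in R$ with $v\mathfrak{m}=0$, so that $U[vF]\cong\mathbb{G}_a^{d^2}$ is a central subgroup with quotient $U[\mathfrak{m}F']$ for $F'=(R/(v))^d$. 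You instead use the full $\mathfrak{m}$-adic filtration of $R$ at once: the subgroups $U_i=\ker\bigl(\GL_d(R\otimes_k\,\cdot\,)\to\GL_d((R/\mathfrak{m}^i)\otimes_k\,\cdot\,)\bigr)$ form a (normal, indeed central modulo higher pieces) filtration with $U_1=U[\mathfrak{m}F]$, $U_N=1$, and $U_i/U_{i+1}\cong\Mat_d(\mathfrak{m}^i/\mathfrak{m}^{i+1})$, summing to dimension $d^2\dim_k\mathfrak{m}=bd^2$. Your approach is more standard and arguably more transparent, as it uses the canonical filtration on a local Artin ring and requires no choice of socle element; the paper's inductive peeling is a bit more ad hoc but is self-contained in the same way. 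Both arguments are complete and correct, and you correctly addressed the minor point about compatibility with the Artin-local replacement of $R$.
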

\proof It is clear that $\GL[F/\mathfrak{m}F]\simeq \GL_d$. The morphism $\pi: \GL[F]\rightarrow \GL[F/\mathfrak{m}F]$ has a section $\GL[F/\mathfrak{m}F]\hookrightarrow \GL[F]$ induced by the structural morphism $k\hookrightarrow R$. Note that $U[\mathfrak{m}F](S)$ consists of $g$ of form 
\begin{equation}
    g(u_i)\in u_i+\mathfrak{m}F\otimes S,\,\,1\leq i\leq d.
\end{equation}
Therefore, $U[\mathfrak{m}F]$ is abstractly isomorphic to the affine space $\mathbb{A}^{bd^2}$. It is unipotent since for any $g\in U[\mathfrak{m}F](S)$, $(g-\text{Id})^b=0$. The fact that $U[\mathfrak{m}F]$ is $k$-split follows by induction on $b$ (the $k$-dimension of $R$): the assertion is trivial for $b=1$, otherwise there is an element $0\neq v\in R$ such that $v\mathfrak{m}=0$. The subgroup $U[vF]\subseteq U[\mathfrak{m}F]$ whose $S$-points are elements $g$ with $g(u_i)\in u_i+vF\otimes S$ lies in the center of $U[\mathfrak{m}F]$, and is isomorphic to $\mathbb{G}_a^{d^2}$. Let $F'=(R/v)^d$. We have an exact sequence \begin{equation}
    1\rightarrow U[vF]\rightarrow U[\mathfrak{m}F]\rightarrow U[\mathfrak{m}F']\rightarrow 1.
\end{equation} 
By the induction hypothesis, $U[\mathfrak{m}F']$ is $k$-split. It follows that $U[\mathfrak{m}F]$ is $k$-split. $\hfill\square$

\subsubsection{Group schemes over $\Quot_{d,n}^r(R)$}\label{subsub:stab} The constructions in \S\ref{subsub:gpact} can be globalized. They are even more important group schemes over $\Quot_{d,n}^r(R)$. Define the following:\begin{itemize}
    \item $\GL[\underline{F}]:=\Aut_{R}(\underline{F})$, $\GL[\underline{F}/\mathfrak{m}\underline{F}]:=\Aut_{R}(\underline{F}/\mathfrak{m}\underline{F})$ and $\GL[\overline{\mathscr{N}}]:=\Aut_{R}(\overline{\mathscr{N}})$.
    \item $U(\mathfrak{m}\underline{F}):=\ker(\GL[\underline{F}]\rightarrow \GL[\underline{F}/\mathfrak{m}\underline{F}])$. 
    \item $\mathscr{G}$ is the subgroup scheme of $\GL[\underline{F}]$ stabilizing $\mathscr{N}$ and reducing to identity on $\mathscr{Q}$. 
    \item $\Fil_1{\mathscr{G}}:=\ker(\mathscr{G}\rightarrow \GL[\underline{F}/\mathfrak{m}\underline{F}])=U(\mathfrak{m}\underline{F})\cap \mathscr{G}$, i.e., $\Fil_1{\mathscr{G}}$ is the subgroup scheme of  $\GL[\underline{F}]$ that reduces to identity on the whole $\underline{F}/\mathfrak{m}\underline{F}$. 
 \item $\Fil_2\mathscr{G}$ is the subgroup scheme of  $\GL[\underline{F}]$ that reduces to identity on both $\overline{\mathscr{N}}$ and  $\overline{\mathscr{Q}}$. 
\end{itemize}

Note that $\GL[\underline{F}]$ \textit{resp}. $\GL[\underline{F}/\mathfrak{m}\underline{F}]$ \textit{resp}. $U(\mathfrak{m}\underline{F})$ is just a trivial $\GL[F]$ \textit{resp}.  $\GL[F/\mathfrak{m}F]$ \textit{resp}. $U[\mathfrak{m}F]$ bundle over $\Quot_{d,n}^r(R)$, and the split exact sequence (\ref{eq:uGL}) admits an obvious global analogue. We will call $\mathscr{G}$ the \textit{stabilizer bundle}, in the sense that the fiber of $\mathscr{G}$ at an $S$-point is the subgroup of $\GL[F](S)$ that acts trivially on the corresponding module. It will play an important role in studying the action of $\GL[F]$ on $\Quot_{d,n}^r(R)$.
The group scheme $\mathscr{G}$
admits a three step filtration of normal subgroups:
\begin{equation}
    \underline{1} \triangleleft \Fil_1\mathscr{G} \triangleleft \Fil_2\mathscr{G}\triangleleft \Fil_3\mathscr{G}= \mathscr{G}.
\end{equation}
Note that in the special case $d=r$, we have $\mathscr{G}=\Fil_2\mathscr{G}$. 
\begin{lemma}\label{lm:stblzer} The followings are true: 
\begin{enumerate}
    \item $\Fil_1\mathscr{G}$ is a unipotent group scheme of rank $d(bd-n+r)$.
     \item\label{it:stblzer2} 
     $\gr_2\mathscr{G}$ is a subgroup scheme of $\GL[\underline{F}/\mathfrak{m}\underline{F}]$ which is
      Zariski locally isomorphic to $\mathbb{G}_a^{r(d-r)}$.\footnote{By this we mean that there is a Zariksi cover $U$ of $\Quot_{d,n}^r(R)$ such that $\gr_2\mathscr{G}_U\simeq \mathbb{G}_{a,U}^{r(d-r)}$. We adopt the same convention for (d).}
     \item $\Fil_2\mathscr{G}$ is a unipotent group scheme of rank $bd^2-d(n-d)-(d-r)^2$.
      \item $\gr_3\mathscr{G}\simeq\GL[\overline{\mathcal{N}}]$. In particular, it is Zariski locally isomorphic to $\GL_{d-r}$.
\end{enumerate}
\end{lemma}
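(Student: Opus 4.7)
The plan is to describe each graded piece $\gr_i\mathscr{G}$ explicitly by unraveling what it means for an $R$-automorphism $g$ of the trivial rank-$d$ $R$-bundle $\underline{F}$ to lie in $\Fil_i\mathscr{G}$; everything takes place Zariski-locally on $\Quot_{d,n}^r(R)$, where the tautological sequence (\ref{eq:Tatu2}) of vector bundles splits. The first two graded pieces will be identified with Hom-bundles coming from the tautological sequences; the third is the only step requiring a nontrivial construction.

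For (a): any element $g\in\Fil_1\mathscr{G}(S)$ acts as $g(u_i)=u_i+v_i$ with $v_i\in\mathfrak{m}\underline{F}_S$ by Lemma~\ref{lm:theGroup}, and the condition $g\in\mathscr{G}$ (trivial on $\mathscr{Q}$) forces $v_i\in\mathscr{N}\cap\mathfrak{m}\underline{F}$. This yields a canonical identification $\Fil_1\mathscr{G}\simeq(\mathscr{N}\cap\mathfrak{m}\underline{F})^{\oplus d}$ as schemes over $\Quot_{d,n}^r(R)$, which is a vector bundle of rank $d(bd-n+r)$ by a short count from the sequences in \S\ref{sec:geoQQ}. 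Unipotency is immediate since $g-\text{id}$ strictly raises the $\mathfrak{m}$-adic filtration on $\underline{F}$ and $R$ is Artin. For (b): $\gr_2\mathscr{G}\subseteq\GL[\underline{F}/\mathfrak{m}\underline{F}]$ is the subgroup scheme of automorphisms acting as identity on both $\overline{\mathscr{N}}$ and $\overline{\mathscr{Q}}$, which from the split sequence (\ref{eq:Tatu2}) is standardly identified with the bundle $\Hom(\overline{\mathscr{Q}},\overline{\mathscr{N}})$ of rank $r(d-r)$, hence Zariski-locally isomorphic to $\mathbb{G}_a^{r(d-r)}$. Part (c) is then a formality: $\Fil_2\mathscr{G}$ is an extension of $\gr_2\mathscr{G}$ by $\Fil_1\mathscr{G}$, both unipotent, and the ranks add to $d(bd-n+r)+r(d-r)=bd^2-d(n-d)-(d-r)^2$.

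The main obstacle is (d). The restriction map $\mathscr{G}\to\GL[\overline{\mathscr{N}}]$ is well-defined since any $g\in\mathscr{G}$ stabilizes $\mathscr{N}$, and its kernel is exactly $\Fil_2\mathscr{G}$ by definition; the content is to exhibit Zariski-local sections, which will then upgrade to $\gr_3\mathscr{G}\simeq\GL[\overline{\mathscr{N}}]$. Working Zariski-locally on $\Quot_{d,n}^r(R)$, trivialize (\ref{eq:Tatu2}), then lift a basis of $\overline{\mathscr{N}}$ through the surjection $\mathscr{N}\twoheadrightarrow\overline{\mathscr{N}}$ to obtain a free $R$-submodule $\mathcal{N}_0\subseteq\mathscr{N}$ of rank $d-r$, and lift a basis of $\overline{\mathscr{Q}}$ to a free complement $\mathcal{W}\subseteq\underline{F}$, so that $\underline{F}=\mathcal{N}_0\oplus\mathcal{W}$ and $\mathscr{N}=\mathcal{N}_0+(\mathscr{N}\cap\mathfrak{m}\underline{F})$. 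Given $\overline{\psi}\in\GL[\overline{\mathscr{N}}]$, lift to $\psi_0\in\Aut_R(\mathcal{N}_0)$ (possible since $\mathcal{N}_0$ is free and $\det\overline{\psi}$ is already a unit), and extend by identity on $\mathcal{W}$ to obtain $g\in\GL[\underline{F}]$. The key verification is $g(\mathscr{N})\subseteq\mathscr{N}$: for $v\in\mathscr{N}$ with decomposition $v=v_1+v_2\in\mathcal{N}_0\oplus\mathcal{W}$, the difference $g(v)-v=(\psi_0-\text{id})(v_1)$ lies in $\mathcal{N}_0\subseteq\mathscr{N}$, so $g(v)\in\mathscr{N}$. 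This produces the desired Zariski-local section, and the final Zariski-local isomorphism with $\GL_{d-r}$ follows from the Zariski-local triviality of $\overline{\mathscr{N}}$.
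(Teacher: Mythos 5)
Your proof is correct and takes the same route as the paper's: an explicit Zariski-local description of each graded piece after trivializing the tautological bundles. Parts (a), (c) and (d) check out, and your construction in (d) is in fact more carefully spelled out than the paper's direct appeal to the canonical splitting of (\ref{eq:uGL}) --- the decisive point you isolate is that the free summand $\mathcal{N}_0$ must be lifted \emph{inside} $\mathscr{N}$, not inside the constant subspace $\Span_k\set{u_1,\dots,u_d}$; a constant vector whose reduction lies in $\overline{\mathscr{N}}$ generally does not lie in $\mathscr{N}$, so the naive constant lift of an element of $\GL[\overline{\mathscr{N}}]$ need not stabilize $\mathscr{N}$ at all. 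One small addition worth making: the section you construct only needs to be a morphism of schemes over the trivializing open, not a group homomorphism --- since $\mathscr{G}\to\GL[\overline{\mathscr{N}}]$ is a homomorphism with kernel $\Fil_2\mathscr{G}$ by definition, a Zariski-local scheme-theoretic section already forces $\gr_3\mathscr{G}\simeq\GL[\overline{\mathscr{N}}]$ as group schemes.

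Part (b), however, has a genuine gap as written. You identify $\gr_2\mathscr{G}$ with the full subgroup of $\GL[\underline{F}/\mathfrak{m}\underline{F}]$ acting trivially on both $\overline{\mathscr{N}}$ and $\overline{\mathscr{Q}}$ (isomorphic to the bundle $\mathcal{H}om(\overline{\mathscr{Q}},\overline{\mathscr{N}})$), but unwinding the definitions only gives an \emph{inclusion} of $\gr_2\mathscr{G}$ into that subgroup. Equality requires surjectivity: every $h=\mathrm{id}+\overline{f}$ with $\overline{f}\colon\overline{\mathscr{Q}}\to\overline{\mathscr{N}}$ must lift to some $g\in\Fil_2\mathscr{G}$, which in particular means $g-\mathrm{id}$ must land in $\mathscr{N}$ (so that $g$ acts trivially on $\mathscr{Q}$, not merely on $\overline{\mathscr{Q}}$) --- exactly the subtlety you flag in (d), and it does not come for free. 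Fortunately the same $\underline{F}=\mathcal{N}_0\oplus\mathcal{W}$ decomposition handles it: take $g$ to be the identity on $\mathcal{N}_0$ and $\mathrm{id}$ plus the constant lift $\mathcal{W}\to\mathcal{N}_0$ of $\overline{f}$ on $\mathcal{W}$; then $g-\mathrm{id}$ has image in $\mathcal{N}_0\subseteq\mathscr{N}$, so $g\in\Fil_2\mathscr{G}$ and reduces to $h$. Adding this sentence closes the gap and makes (b) and (d) run on the same footing.
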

\proof~\begin{enumerate}
    \item We first describe the functor of points of $\Fil_1\mathscr{G}$. Let $x\in \Quot_{d,n}^r(R)(S)$ corresponds to a submodule $N\subseteq F\otimes S$. Then the fiber of $\Fil_1\mathscr{G}$ over $x$ is 
\begin{equation}\label{eq:uN}
\Fil_1\mathscr{G}|_x=\{g\in \GL[F](S):\,\,g(u_i)\in u_i+N\cap (\mathfrak{m}F\otimes S), 1\leq i\leq d\}.
\end{equation}
This already implies that $\Fil_1\mathscr{G}$ is unipotent. Furthermore, it implies that there is a vector bundle structure  underlying $\Fil_1\mathscr{G}$, which is isomorphic to $ (\mathscr{N}\cap \mathfrak{m}\underline{F})^{\oplus d}$. The assertion follows from the fact that $\rk(\mathscr{N}\cap \mathfrak{m}\underline{F})=bd-n+r$.
\item Let $\mathscr{H}_2$ be the subgroup scheme of $\GL[\underline{F}/\mathfrak{m}\underline{F}]$ fixing the filtration (\ref{eq:Tatu2}) and reducing to identity on both $\overline{\mathscr{Q}}$ and $\overline{\mathscr{N}}$. Then $\gr_2\mathscr{G}\subseteq \mathscr{H}_2$. The canonical splitting of (\ref{eq:uGL}) gives rise to a morphism \begin{equation}
    \mathscr{H}_2\hookrightarrow \GL[\underline{F}/\mathfrak{m}\underline{F}]\rightarrow \GL[\underline{F}]
\end{equation}
whose image lies in $\Fil_2\mathscr{G}$. This implies that $\gr_2\mathscr{G}=\mathscr{H}_2$. Let $\Spec S$ be a Zariksi affine open of $\Quot_{d,n}^r(R)$ over which $\overline{\mathscr{Q}}$ and $\overline{\mathscr{N}}$ are trivialized. We can then pick a section of (\ref{eq:Tatu2}) and write $(\underline{F}/\mathfrak{m}\underline{F})_S=\overline{\mathscr{N}}_S\oplus \overline{\mathscr{Q}}_S$. Then the $S$-points of $\mathscr{H}_2|_S$ can be written as matrices 
\begin{equation}\label{eq:matH2}
    \begin{bmatrix}
        \text{Id}_{\overline{\mathscr{N}}_S} & B\\
        0 & \text{Id}_{\overline{\mathscr{Q}}_S}
    \end{bmatrix},\,\,B\in \mathcal{H}om(\overline{\mathscr{Q}},\overline{\mathscr{N}})(S).
\end{equation}
This particularly implies that $\gr_2\mathscr{G}|_S=\mathbb{G}_{a,S}^{r(d-r)}$. 
\item This follows from (a) and (b). 
\item Let $\mathscr{H}$ be the subgroup scheme of $\GL[\underline{F}/\mathfrak{m}\underline{F}]$ fixing the filtration (\ref{eq:Tatu2}), and reducing to identity on $\overline{\mathscr{Q}}$. We have $\mathscr{G}/\Fil_1\mathscr{G}\subseteq \mathscr{H}$. As in (b), the canonical splitting of (\ref{eq:uGL}) again induces a morphism $\mathscr{H}\rightarrow \GL[\underline{F}]$
whose image lies in $\mathscr{G}$. This implies that $\mathscr{G}/\Fil_1\mathscr{G}=\mathscr{H}$ and $\gr_3\mathscr{G}=\mathscr{H}/\mathscr{H}_2$.

Let $\Spec S$ be a Zariksi affine open of $\Quot_{d,n}^r(R)$ over which $\overline{\mathscr{Q}}$ and $\overline{\mathscr{N}}$ are trivialized. Pick the same splitting of (\ref{eq:Tatu2}) as in (b). Then the $S$-points of $\mathscr{H}|_S$ can be written into matrices 
\begin{equation}
    \begin{bmatrix}
        A & B\\
        0 & \text{Id}_{\overline{\mathscr{Q}}_S}
    \end{bmatrix},\,\,A\in \GL[\overline{\mathscr{N}}](S),\,\,B\in \mathcal{H}om(\overline{\mathscr{Q}},\overline{\mathscr{N}})(S).
\end{equation}
The assertion that $\gr_3\mathscr{G}\simeq\GL[\overline{\mathcal{N}}]$ is clear if one takes quotient of this by (\ref{eq:matH2}).$\hfill\square$\end{enumerate}
\subsection{Relating $\Quot_{d,n}^r(R)$ and $C_n^r(R)$}\label{sub:QC} 
We start by constructing ${\mathfrak{Q}}$ in (\ref{eq:torsorQ}) as the $\GL_n$-torsor corresponding to the bundle $\mathscr{Q}$. More precisely, let $V_n$ be a fixed $k$-vector space of dimension $n$, and $\underline{V_n}$ be the corresponding trivial bundle over $\Quot_{d,n}^r(R)$, then 
\begin{equation}
{\mathfrak{Q}}=\mathcal{I}som_{\Quot_{d,n}^r(R)}(\underline{V_n},{\mathscr{Q}}).
\end{equation} 
Therefore we have a projection $\pi:{\mathfrak{Q}}\rightarrow \Quot_{d,n}^r(R)$. On the other hand, take $X={\mathfrak{Q}}$ and $ \mathcal{F}=\pi^*{\mathscr{Q}}$ in (\ref{eq:funcCn}), since $
\pi^*{\mathfrak{Q}}\rightarrow {\mathfrak{Q}}
$ is a trivial torsor (e.g., there is a global section which comes from the diagonal), $\mathcal{F}\rightarrow {\mathfrak{Q}}$ is a trivial bundle, and is an object in $\Coh_n^r(R)(X)$. Let \begin{equation}
\iota: \mathcal{O}_X\otimes_k V_n\simeq  \mathcal{F}
\end{equation} 
be the isomorphism of $\mathcal{O}_X$-modules induced by the evaluation map \begin{equation}   \underline{V_n}\times_{\Quot_{d,n}^r(R)}  {\mathfrak{Q}}\rightarrow {\mathscr{Q}}.
\end{equation}
The pair $(\mathcal{F}, \iota)$ gives rise to a morphism $\varpi:{\mathfrak{Q}}\rightarrow C_n^r(R)$. 

On the level of $S$-points, $\varpi$ can be easily understood as follows: 
let $x$ be an $S$-point of ${\mathfrak{Q}}$ corresponding to a module $N\subseteq F\otimes S$ with an $S$-isomorphism $\iota: V_n\otimes S\simeq (F\otimes S)/N$, then $\varpi(x)=((F\otimes S)/N,\iota)\in C_n^r(R)(S)$. It is easily seen that $\varpi$ is surjective, and is invariant under $\GL[F]$-action. Indeed, let $g\in\GL[F](S)$, and $x=(N,\iota)$ as above, we have $\varpi g(x)=((F\otimes S)/gN, g\iota)\sim ((F\otimes S)/N, \iota)$. This implies that $\varpi g(x)= \varpi(x)$.
\subsubsection{Local sections} 
There is a Zariski affine cover $U=\Spec D\rightarrow C_n^r(R)$, such that the tautological point of $ C_n^r(R)(U)$ corresponds to (the equivalent class of) a pair \begin{equation}\label{eq:Apair}
  ( (F\otimes D)/ N^{\text{taut}}, \iota^{\text{taut}}).
\end{equation}
Fixing a pair (\ref{eq:Apair}) in the equivalence class, we get a $U$-point $\sigma$ of ${\mathfrak{Q}}$ such that $\varpi \sigma=\text{Id}_{U}$. Therefore, $\sigma$ is a section of $\varpi$ over $U$. Note that such sections are well defined up to an action of $\GL[F](U)$. Let ${\mathfrak{Q}}_U={\mathfrak{Q}}\times_{C_n^r(R)}{U}$. Since $\varpi$ is invariant under $\GL[F]$, the section $\sigma$ induces a morphism of $U$-schemes  \begin{equation}
p:\GL[{F}]_{U}=U\times \GL[F]\xrightarrow{\sigma\times \text{Id}} {\mathfrak{Q}}\times \GL[F]\xrightarrow{\cdot} {\mathfrak{Q}}.
\end{equation}
\begin{lemma}\label{lm:utorsor}
Let $\mathscr{G}$ be the stabilizer bundle over $\Quot_{d,n}^r(R)$ (\S\ref{subsub:stab}) and let $\mathscr{G}_{U}:=\sigma^*\pi^*\mathscr{G}$ be the pullback group scheme over $U$. Then $p$ realizes ${\mathfrak{Q}}$ as the fppf quotient of  $\GL[{F}]_{U}$ by the fppf subgroup $\mathscr{G}_U$. In particular, we have:
\begin{enumerate}
    \item\label{it:tor1} $p$ is an fppf cover.
    \item \label{it:tor2} $\GL[{F}]_{U}$ is an fppf $\mathscr{G}_{U}$-torsor over ${\mathfrak{Q}}_U$, trivialized under the covering $p$.
\end{enumerate} 
\end{lemma}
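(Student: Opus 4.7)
The plan is to view (a) and (b) as two sides of a single assertion: that the right multiplication action of $\mathscr{G}_U$ on $\GL[F]_U$ makes $p$ into an fppf $\mathscr{G}_U$-torsor. Granted this, (b) is immediate and (a) is the fppf surjectivity automatic for any torsor under a flat group scheme. I would first set up the action: for a $T$-point $h$ of $\mathscr{G}_U=\sigma^*\pi^*\mathscr{G}$, regarded via $\sigma$ as an automorphism of $F\otimes \mathcal{O}_T$ stabilizing the tautological submodule $N_0$ at $\sigma(u)$ and inducing the identity on its quotient, one computes $p(gh)=(gh)\sigma(u)=g(h\sigma(u))=g\sigma(u)=p(g)$, so right multiplication preserves the fibers of $p$ and acts freely. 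The identification of $\mathscr{G}_U$ with the pointwise stabilizer of $\sigma(u)$ is tautological from the definition of $\mathscr{G}$ as the subgroup of $\GL[\underline{F}]$ preserving $\mathscr{N}$ and acting trivially on $\mathscr{Q}$.

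The substantial content is to show that $p$ is surjective (scheme-theoretically). Given a $T$-point $(u,y)$ of $\mathfrak{Q}_U$ with $y=(N,\iota)$ and $\sigma(u)=(N_0,\iota_0)$ in the same $\varpi$-fiber, the equality $\varpi(y)=\varpi(\sigma(u))$ in $C_n^r(R)(T)$ unpacks to an $R\otimes \mathcal{O}_T$-linear isomorphism $\phi:(F\otimes \mathcal{O}_T)/N_0\xrightarrow{\sim}(F\otimes \mathcal{O}_T)/N$ with $\phi\circ\iota_0=\iota$. Writing $\psi_0,\psi$ for the quotient maps, we must produce (after a Zariski refinement $T'\to T$) an element $g\in\GL[F](T')$ satisfying $\psi\circ g=\phi\circ\psi_0$. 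Because $F=R^d$ is free, any $R$-linear lift $g$ of $\phi\circ\psi_0$ through $\psi$ automatically has $g(N_0)\subseteq N$ and induces $\phi$ on the quotients; the only issue is to arrange that $g$ is invertible. Such a lift is determined modulo $R$-linear maps $F\otimes\mathcal{O}_{T'}\to N$, which modulo $\mathfrak{m}$ gives the freedom to adjust the reduction $\bar{g}\in\End_k(F/\mathfrak{m}F)\otimes\mathcal{O}_{T'}$ by arbitrary matrices whose rows lie in $\bar{N}:=(N+\mathfrak{m}F)/\mathfrak{m}F$. A short linear-algebra argument shows that this affine space of candidates contains an invertible element at every geometric point of $T$, so Zariski-locally on $T$ we can realize an invertible $\bar g$; nilpotence of $\mathfrak{m}$ (we have replaced $R$ by a large Artin quotient in \S\ref{subsub:naa}) then upgrades invertibility of $\bar g$ to invertibility of $g$ via Nakayama.

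Combining surjectivity with freeness of the action, the fibers of $p$ are precisely the $\mathscr{G}_U$-orbits (if $g_1\sigma(u)=g_2\sigma(u)$ then $g_1^{-1}g_2$ stabilizes $\sigma(u)$, hence lies in $\mathscr{G}_U$), and Lemma~\ref{lm:stblzer} provides the flatness of $\mathscr{G}_U$ over $U$. These together imply that the action map $\mathscr{G}_U\times_U \GL[F]_U\to \GL[F]_U\times_{\mathfrak{Q}_U}\GL[F]_U$, $(h,g)\mapsto(g,gh)$, is an isomorphism and $p$ is faithfully flat of finite presentation, which is exactly the data of an fppf torsor; self-trivialization under the cover $p$ itself is then immediate from the isomorphism above. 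The main obstacle I expect is the surjectivity step, specifically the Zariski-local correction upgrading the partial lift to an honest automorphism of $F$; everything else is formal torsor manipulation, and the freeness of $F=R^d$ together with nilpotence of $\mathfrak{m}$ are what make the correction possible.
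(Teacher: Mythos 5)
Your proposal is correct and its overall strategy matches the paper's: compute $p$ on $S$-points, identify the stabilizer of $\sigma(u)$ with the fiber of $\mathscr{G}_U$, and conclude the torsor structure. The genuine addition in your write-up is the explicit proof that $p$ is an fppf epimorphism. The paper records only the stabilizer/orbit identification and then invokes the cited reference for the remaining formal conclusions, whereas you isolate surjectivity as the substantive input and supply a direct lifting argument: any lift $g$ of $\phi\circ\psi_0$ through $\psi$ satisfies $g(N_0)\subseteq N$ automatically (by freeness of $F$), the ambiguity in the lift reduces mod $\mathfrak{m}$ to all of $\Hom(F/\mathfrak{m}F\otimes\mathcal{O}_T,\bar N)$ since $N\twoheadrightarrow\bar N$, and choosing the adjustment so that $\bar g$ restricts to an isomorphism $\bar N_0\to\bar N$ (both locally free of rank $d-r$) followed by Nakayama --- using the passage to an Artin quotient in \S\ref{subsub:naa} --- makes $g$ invertible. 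This yields a more self-contained proof. One caveat worth flagging: the step ``these together imply $p$ is faithfully flat'' is compressed; flatness does not follow directly from freeness of the action plus pointwise surjectivity, but rather by fppf descent along the trivializing cover that your surjectivity argument produces when applied to the universal point of $\mathfrak{Q}_U$, which is essentially what the paper's cited proposition is packaging.
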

\proof First we check that ${\mathfrak{Q}}_U$ is the fppf quotient. Let $S$ be an fppf affine over $U$ and  $x$ be an $S$-point of $\sigma U$, corresponding to a module $N\subseteq F\otimes S$ with an $S$-isomorphism $\iota: V_n\otimes S\simeq (F\otimes S)/N$. For any $g\in \GL[F](S)$, we have $p(x,g)=((F\otimes S)/gN,g\iota)$. Since $p(x,g)=p(x,g')$ if and only if $g'g^{-1}$ is in the stabilizer of $x$, i.e., $g'g^{-1}$ is in the fiber of $\mathscr{G}_{S}$ over $x$, we obtain the fppf quotient assertion. Once this is established, \ref{it:tor1} and \ref{it:tor2} are just formal properties of fppf quotient, see \cite[Proposition 4.31]{EGMAV}. $\hfill\square$
\subsection{The motivic formula}\label{sub:motiform} We say that a unipotent group scheme over a base $Y$ is $Y$-\textit{split}, or simply \textit{split}, if it is a successive extension of $\mathbb{G}_{a,Y}$. We say a group scheme $\mathcal{G}$ over $Y$ is \textit{special}, if any fppf $\mathcal{G}$-torsor
is Zariski locally trivial. 
It is well-known that a split unipotent group scheme is special, and an extension of special group schemes is special.
\iffalse
\begin{lemma}\label{lm:Specialorsor}
Let $X\rightarrow Y$ be a scheme and $\mathscr{V}\rightarrow Y$ be a $Y$-split unipotent group scheme. Then  $H^1(X_{\text{fppf}},\mathscr{V})\simeq H^1(X_{\text{Zar}},\mathscr{V})$.
\end{lemma}
\proof By definition, there is a central series \begin{equation}
 0\triangleleft \mathscr{V}_{1}\triangleleft \mathscr{V}_{2}\triangleleft...\triangleleft \mathscr{V}_{m}=\mathscr{V}
\end{equation} 
such that $\mathscr{V}_{j}/\mathscr{V}_{j-1}\simeq \mathbb{G}_{a,Y}$. We obtain from this a long exact sequence \begin{equation}H^0(X_{\text{fppf}}, \mathscr{V}/\mathscr{V}_{1})\rightarrow H^1(X_{\text{fppf}}, \mathscr{V}_{1})\rightarrow H^1(X_{\text{fppf}}, \mathscr{V})\rightarrow H^1(X_{\text{fppf}}, \mathscr{V}/\mathscr{V}_1)\rightarrow H^2(X_{\text{fppf}}, \mathscr{V}_1) 
\end{equation}
 By induction on the central series and using the fact that $H^i(X_{\text{fppf}}, \mathbb{G}_{a,Y})=H^i(X_{\text{Zar}}, \mathbb{G}_{a,Y})$ together with five-lemma, we see that $H^1(X_{\text{fppf}}, \mathscr{V})=H^1(X_{\text{Zar}}, \mathscr{V})$. $\hfill\square$
\fi

\begin{lemma}\label{lm:ZarstatiU} Notation as in \S\ref{subsub:stab}. There is a Zariski stratification \begin{equation}
\Quot_{d,n}^r(R)=\bigsqcup_{\beta\in \mathbf{B}} Z_\beta,
    \end{equation}
    such that for each $\beta$, $\Fil_2\mathscr{G}_{Z_\beta}$ is a split $Z_\beta$-unipotent group scheme.  
\end{lemma}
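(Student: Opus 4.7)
The plan is to refine the length-two normal series $1 \triangleleft \Fil_1\mathscr{G} \triangleleft \Fil_2\mathscr{G}$ from Lemma~\ref{lm:stblzer} into a composition series whose graded pieces are additive group schemes of coherent sheaves on $\Quot_{d,n}^r(R)$, and then to stratify the base so that all of these sheaves trivialize simultaneously. Following the reduction at the end of \S\ref{subsec:functordesp}, I would first replace $R$ by a large Artin local quotient, so that $\mathfrak{m}^{b+1}=0$ for some $b\geq 1$.

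To refine inside $\Fil_1\mathscr{G}$, I would introduce, for $1 \leq i \leq b+1$, the subgroup functor $H_i \subseteq \Fil_1\mathscr{G}$ defined by
\[ H_i(S) := \{g \in \Fil_1\mathscr{G}(S) : g(u_j)-u_j \in \mathfrak{m}^i\underline{F}\otimes S \text{ for all }j\}, \]
so that $H_1 = \Fil_1\mathscr{G}$ and $H_{b+1} = 1$. Using the $R$-linearity of $g$, an elementary commutator estimate (expanding $g(v)$ for $v \in \mathfrak{m}^i\underline{F}\otimes S$) would give $[H_i, H_i] \subseteq H_{2i} \subseteq H_{i+1}$ for $i \geq 1$, so each quotient $H_i/H_{i+1}$ is abelian. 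Moreover, the condition $g \in \Fil_1\mathscr{G}$ forces $g(u_j) - u_j \in \mathscr{N}$, and the assignment $g \mapsto \bigl(g(u_j) - u_j \bmod \mathfrak{m}^{i+1}\underline{F}\bigr)_{j}$ should identify $H_i/H_{i+1}$ with the additive group scheme of the coherent sheaf
\[ \mathscr{V}_i := \bigl((\mathscr{N} \cap \mathfrak{m}^i\underline{F})/(\mathscr{N} \cap \mathfrak{m}^{i+1}\underline{F})\bigr)^{\oplus d}. \]
Combined with the top quotient $\Fil_2\mathscr{G}/\Fil_1\mathscr{G} = \gr_2\mathscr{G}$ from Lemma~\ref{lm:stblzer}(b), this furnishes the desired composition series for $\Fil_2\mathscr{G}$.

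The stratification is then routine: since the reduced scheme $\Quot_{d,n}^r(R)$ is Noetherian, upper semicontinuity of ranks applied to each of the finitely many coherent sheaves $\mathscr{V}_1, \dots, \mathscr{V}_b$ together with $\overline{\mathscr{N}}$ and $\overline{\mathscr{Q}}$ yields a finite Zariski stratification on which all of them are locally free of constant rank; after refining into a locally closed stratification $\{Z_\beta\}_{\beta \in \mathbf{B}}$, I may assume every one of them is trivial on each $Z_\beta$. On such a stratum, each graded piece of the composition series of $\Fil_2\mathscr{G}|_{Z_\beta}$ is a power of $\mathbb{G}_{a, Z_\beta}$, so $\Fil_2\mathscr{G}|_{Z_\beta}$ is $Z_\beta$-split unipotent, as required. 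The main obstacle will be the abelian-quotient computation of the second paragraph; once the $\mathfrak{m}$-adic commutator estimate and the identification of $H_i/H_{i+1}$ with $\mathscr{V}_i$ are verified at the level of $S$-points for arbitrary reduced affine test schemes, the rest of the argument is bookkeeping.
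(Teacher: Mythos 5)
Your proposal is correct, but it takes a genuinely different route from the paper's. The paper proves the lemma by working locally: on a small enough affine open $\Spec S$ it arranges, via a ``find a socle element and localize'' argument and Noetherian induction on the closed complement, for $\mathscr{N}\cap\mathfrak{m}\underline{F}$ to admit a basis $\{\omega_1,\dots,\omega_{bd-n+r}\}$ with $\mathfrak{m}\omega_j\subseteq\Span\{\omega_1,\dots,\omega_{j-1}\}$, and then filters $\Fil_1\mathscr{G}$ one basis vector at a time, so the graded pieces are all $\mathbb{G}_a^d$ and the filtration has length $bd-n+r$. You instead use the intrinsic $\mathfrak{m}$-adic filtration $H_i$ of $\Fil_1\mathscr{G}$ (length $b$, with bigger graded pieces $\mathscr{V}_i$), and obtain the stratification via upper semicontinuity of ranks of the intersection sheaves rather than via an open-plus-complement Noetherian induction. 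The commutator estimate $[H_i,H_i]\subseteq H_{2i}$ is correct (write $g=1+A$, $h=1+B$ with $A(u_j),B(u_j)\in\mathfrak{m}^i\underline{F}$; then every degree-$\geq 2$ word in $A,B$ sends $u_j$ into $\mathfrak{m}^{2i}\underline{F}$, and the degree-$\leq 1$ part of $[g,h]-1$ vanishes), and it gives both normality of $H_{i+1}$ in $H_i$ and abelian quotients; the identification $H_i/H_{i+1}\cong\mathscr{V}_i$ is the map $g\mapsto(g(u_j)-u_j)_j$ as you say, and surjectivity is supplied by lifting $(a_j)_j$ to the automorphism $u_j\mapsto u_j+a_j$. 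The one technical point you should spell out is that stratifying by the fiber dimension of the coherent sheaf $\mathscr{N}\cap\mathfrak{m}^i\underline{F}$ does not by itself guarantee that its fibers are the pointwise intersections $N_x\cap\mathfrak{m}^i F_x$; it is cleaner to stratify by the pointwise rank of the bundle map $\mathscr{N}\to\underline{F}/\mathfrak{m}^i\underline{F}$, so that on each (reduced) stratum its kernel is a subbundle with the expected fibers. With that adjustment, the rest is indeed bookkeeping. Your approach is more systematic and would also streamline the splitness argument in Lemma~\ref{lm:theGroup}; the paper's is more elementary and avoids any discussion of intersection sheaves commuting with base change.
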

\proof Note that it suffices to show that $\Fil_1\mathscr{G}_{Z_\beta}$ is split for some Zariski stratification $\{Z_\beta\}$, thanks to Lemma~\ref{lm:stblzer}\ref{it:stblzer2}. Take a nonempty affine open $\Spec S\subseteq \Quot_{d,n}^r(R)$ and view it as an $S$-point of $\Quot_{d,n}^r(R)$. It then corresponds to a submodule $N\subseteq F\otimes S$. Shrinking $S$, we can assume that $N'=N\cap (\mathfrak{m}F\otimes S)$ is free. The group scheme $\Fil_1\mathscr{G}_S$ can be described as the functor 
\begin{equation}
T/S\rightarrow \{g\in U[\mathfrak{m}F](T)|g(u_i)\in u_i+N'\otimes T, 1\leq i\leq d\}.
\end{equation}
Let $\{\omega_i\}_{i=1}^{bd+d-n}$ be a basis of $N'$. There is a nonzero element $a\in N'$ such that $\mathfrak{m}a=0$. Indeed, we know that for any element $b\in N'$, $\mathfrak{m}^{n}b=0$ for sufficiently large $n$. So multiply $b$ by some suitable element $u\in \mathfrak{m}$, we have $ub\neq 0$ but $\mathfrak{m}ub=0$. Let $a=ub$ and express $a=\sum_{i} s_i\omega_i$. There is at least one $s_i$, say $s_1$, which is nonzero. Since $S$ is reduced (recall that we have assumed everything is reduced in \S\ref{subsub:naa}), the localization $S[s_1^{-1}]$ is nonzero. Therefore, $\{a\}\cup \{\omega_i\}_{i=2}^{bd+d-n}$ is a basis of $N'[s_1^{-1}]$. Therefore, after shrinking $S$, we can assume that $N'$ admits a basis $\{\omega_i\}_{i=1}^{bd+d-n}$ such that $\mathfrak{m}\omega_1=0$. 

Replace $N'$ by $N'/\omega_1$ and do the same trick. By induction, we see that after shrinking $S$ sufficiently many times, there is a basis $\{\omega_i\}_{i=1}^{bd+d-n}$ of $N'$ such that $\mathfrak{m}\omega_j\subseteq \Span\{\omega_1,...,\omega_{j-1}\}$.  Define a central series of unipotent group schemes \begin{equation}
 0\triangleleft \mathscr{U}_{1,S}\triangleleft \mathscr{U}_{2,S}\triangleleft...\triangleleft \mathscr{U}_{bd-n+r,S}=\Fil_1\mathscr{G}_S
\end{equation} 
by setting the functors of points as \begin{equation}
   \mathscr{U}_{j,S}: T/S\rightarrow \{g\in U[\mathfrak{m}F](T)|g(u_i)\in u_i+\Span_S\{\omega_1,...,\omega_j\}\otimes_S T, 1\leq i\leq d\}.
\end{equation}
It follows easily from the construction that $\mathscr{U}_{j,S}/\mathscr{U}_{j,S}\simeq \mathbb{G}_{a,S}^{d}$. Therefore $\Fil_1\mathscr{G}_{S}$ is $S$-split. 

Replace $\Quot_{d,n}^r(R)$ by the closed subscheme $\Quot_{d,n}^r(R)-\Spec S$ and repeat the above process. Since $\Quot_{d,n}^r(R)$ is finite type over $k$, this ends in finitely many steps, and yields the desired stratification. $\hfill\square$
\begin{lemma}\label{lm:ZarstatiU2}
Notation as above. There is a Zariski stratification \begin{equation}\label{eq:staev}
U=\bigsqcup_{\gamma\in \mathbf{C}} U_{\gamma},
\end{equation}
such that $\mathscr{G}_{U_\gamma}$ is a special $U_{\gamma}$-group scheme. 
\end{lemma}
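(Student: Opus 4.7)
The strategy is to refine the Zariski stratification of $\Quot_{d,n}^r(R)$ produced by Lemma \ref{lm:ZarstatiU}, pull it back to $U$ via the morphism $\pi\sigma: U \to \Quot_{d,n}^r(R)$, and then invoke the well-known fact that $\GL_n$ and split unipotent group schemes are special, together with the fact that an extension of special group schemes is special.

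First, I would start with the stratification $\Quot_{d,n}^r(R) = \bigsqcup_{\beta\in \mathbf{B}} Z_\beta$ of Lemma \ref{lm:ZarstatiU}, over which $\Fil_2\mathscr{G}$ is $Z_\beta$-split unipotent. On each $Z_\beta$, the locally free sheaf $\overline{\mathscr{N}}$ admits a further Zariski refinement over which it becomes trivial; I would refine $\{Z_\beta\}$ accordingly to obtain a finer Zariski stratification $\{Z_{\beta'}\}$ on which both $\Fil_2\mathscr{G}$ is split unipotent and $\overline{\mathscr{N}}$ is the trivial bundle. By Lemma \ref{lm:stblzer}(d), on each such $Z_{\beta'}$ we then have $\gr_3\mathscr{G}|_{Z_{\beta'}} \simeq \GL_{d-r,\, Z_{\beta'}}$.

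Second, I would examine the short exact sequence
\begin{equation}
1 \longrightarrow \Fil_2\mathscr{G}|_{Z_{\beta'}} \longrightarrow \mathscr{G}|_{Z_{\beta'}} \longrightarrow \GL_{d-r,\, Z_{\beta'}} \longrightarrow 1.
\end{equation}
The kernel is split unipotent, hence special (this reduces to $H^1_{\text{fppf}}(-,\mathbb{G}_a) = H^1_{\text{Zar}}(-,\mathbb{G}_a) = H^1(-,\mathcal{O})$ and a d\'evissage along the central series of the unipotent group). The quotient $\GL_{d-r}$ is special by Hilbert 90 / Serre. Since an extension of special group schemes is special (a standard cohomological argument via the long exact sequence of pointed sets), $\mathscr{G}|_{Z_{\beta'}}$ is special over $Z_{\beta'}$.

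Finally, defining the stratification \eqref{eq:staev} by $U_\gamma := (\pi\sigma)^{-1}(Z_{\beta'})$ as $\beta'$ runs over the refined index set, the group scheme $\mathscr{G}_{U_\gamma} = (\pi\sigma)^*\bigl(\mathscr{G}|_{Z_{\beta'}}\bigr)$ is the base change of a special group scheme; specialness is preserved under base change, since any fppf $\mathscr{G}_{U_\gamma}$-torsor is in particular an fppf torsor under the pullback group scheme, and the Zariski-local trivializability on $Z_{\beta'}$ pulls back to Zariski-local trivializability on $U_\gamma$. There is no serious obstacle here; the proof is essentially a bookkeeping exercise assembling Lemmas \ref{lm:ZarstatiU} and \ref{lm:stblzer} with standard facts about special group schemes. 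The only mild subtlety is the refinement needed to trivialize $\overline{\mathscr{N}}$, which is harmless because it is Zariski-local in nature.
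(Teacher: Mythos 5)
Your proposal is correct and takes essentially the same route as the paper: refine the stratification of Lemma~\ref{lm:ZarstatiU} so that $\overline{\mathscr{N}}$ is trivialized (hence $\gr_3\mathscr{G}\cong\GL_{d-r}$ by Lemma~\ref{lm:stblzer}(d)), pull back along $\pi\sigma$ to $U$, and invoke the fact that split unipotent groups and $\GL_{d-r}$ are special together with closure of specialness under extensions. The paper's proof merely compresses the refinement and pullback into a single sentence, and your observation that specialness is preserved under base change is a correct elaboration of a step the paper leaves implicit.
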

\proof From Lemma~\ref{lm:ZarstatiU} and Lemma~\ref{lm:stblzer}(d), we see that there is a stratification $\{U_\gamma\}$ such that $\Fil_2\mathscr{G}_{U_\gamma}$ is split and $\gr_3\mathscr{G}_{U_\gamma}$ is isomorphic to $\GL_{d-r,U_\gamma}$. It is classical that $\GL_{d-r,U_\gamma}$ is special. Therefore $\mathscr{G}_{U_\gamma}$, as an extension of special group schemes, is special. $\hfill\square$\\[10pt]
\textit{Proof of Theorem~\ref{Thm:apdxB1}}. By 
Lemma~\ref{lm:ZarstatiU2}, there is a stratification (\ref{eq:staev}) on $U$ such that the restriction of $\mathscr{G}_U$ to each stratum is special. It follows from Lemma~\ref{lm:utorsor} and the speciality of $\mathscr{G}_{U_\gamma}$, together with Lemma~\ref{lm:stblzer}, that
\begin{equation}
    [U_{\gamma}][\GL[F]]=[{\mathfrak{Q}}_{U_\gamma}][\GL_{d-r}]\mathbb{L}^{bd^2-d(n-d)-(d-r)^2}.  
\end{equation}
Since various ${\mathfrak{Q}}_{U_\gamma}$ also form a Zariski stratification of ${\mathfrak{Q}}_U$, and $U$ is a Zariski cover of $C_n^r(R)$, we see that 
\begin{equation}
    [C_{n}^r(R)][\GL[F]]=[{\mathfrak{Q}}][\GL_{d-r}]\mathbb{L}^{bd^2-d(n-d)-(d-r)^2}.  
\end{equation}
Since $[{\mathfrak{Q}}]=[\Quot_{d,n}^r(R)][\GL_n]$, $[C_{n}^r(R)]=[\Coh_n^r(R)][\GL_n]$ and $[\GL[F]]=[\GL_d]\mathbb{L}^{bd^2}$ by Lemma~\ref{lm:theGroup}, we obtain the formula (\ref{eq:apdxB1}). $\hfill\square$ \subsection{Gröbner strata}\label{subsec:Gbstrata} As noted at the beginning, the theory of Gr\"{o}bner strata in the Hilbert scheme of points is well developed. Our case is slightly modified in the sense that we work with (1) power series rings, and (2) Gröbner basis for modules rather than ideals. Since we only classify submodules of $F:=R^d$ with finite codimension, one shall expect that the Gröbner theory in our case bears no much difference from the Gröbner theory in the usual polynomial ring case. Moreover, one can even reduce the theory of Gröbner basis for modules to the theory of Gröbner basis for ideals, in the sense that submodules $N\subseteq F$ of rank $r$ and codimension $n$ can be regarded, via square-zero extension, as ideals in the square-zero deformation $R[[u_1,...,u_r]]/(u_iu_j)$. Therefore we can realize $\Quot_{d,n}^d(R)$ as certain subscheme of the Hilbert scheme of points $ \Hilb_n(R[[u_1,...,u_d]]/(u_iu_j))$. Ideally, the Gröbner strata on the latter (the Hilbert scheme) shall restrict to Gröbner strata on $\Quot_{d,n}^d(R)$. These observations and intuitions suggest that the theory of Gröbner strata in our case is not too different from the usual case. 

Let $R$ be a monomial subring of a power series ring over a field $k$.  In this section, we establish a theory of Gröbner strata which is just adequate for stratifying $\Quot_{d,n}^d(R)$. All treatments are similar and parallel to \cite{LedererGrobnerstrata}, hence we will make them as concise as possible.  

\subsubsection{Gröbner basis for submodules of $F\otimes S$} Let $S$ be a finite type algebra over $k$. Then the ring $R\otimes S$ is nothing other than the subring of a power series ring $S[[\underline{x}]]$ generated by certain monomials. A submodule $M\subseteq F\otimes S$ is called \textbf{monic}, if $\LT(M)$ is a monomial submodule. (This notion is exclusive to Gr\"obner theory over a ring, since over a field, any submodule is monic.) A Gröbner basis of a submodule $M$ is a finite subset $G$ of $M$ such $\LT(M)$ agrees with the submodule in $F\otimes S$ generated by $\LT(g)$ for $g \in G$. Since $R\otimes S$ is Noetherian, every submodule $M$ admits a Gröbner basis. Similar to \S\ref{sec:groebner}, a Gr\"obner basis $G=\set{g_1,\dots,g_h}$ is \textbf{reduced} if
\begin{enumerate}
\item $\LT(g_1),\dots,\LT(g_h)$ are monomials and are mutually indivisible;
\item No nonleading term of $g_i$ is divisible by $\LT(g_j)$, for any $i,j$.
\end{enumerate}
Similar to the classical setting (See  \cite[Theorem 2.11]{Asc05} and \cite[Theorem 4]{Wib07}), it is not hard to see that a submodule $M$ admits a reduced Gröbner basis if and only if $M$ is monic. We also note that the Buchberger criterion still holds in this case.

\subsubsection{Gröbner subfunctors and the induced stratification}\label{subsec:grobsubfunctor} Following \S \ref{sec:strata}, let $\set{M_\alpha: \alpha\in \Xi}$ denote the set of finite-codimensional monomial submodules of $\m F$, and let $\Delta(\alpha)$ and $C(\alpha)$ be the set of monomials not contained in $M_\alpha$ and the set of corners, respectively. Let $n(\alpha):=\dim_k \m F/M_\alpha$. Though \S \ref{sec:strata} is specifically written for the cusp, the notation above applies to any monomial subring $R$ of a power series ring over $k$. 

We begin by defining the following two subfunctors of $\Hilb_{d,n}(R)=\Quot_{d,n+d}^d(R)$. Let $\preceq$ be a monomial order over $R$, for each $\alpha \in \Xi$ with $n(\alpha)=n$, we define functors $\Hilb^\Delta(\alpha)$ and $\Hilb(\alpha)$ by their points over affine $k$-algebras
\begin{equation}
    \Hilb^\Delta(\alpha)(S)=\left\{\phi:F\otimes S \twoheadrightarrow Q, \text{ such that $Q$ is free over } S \text{ with basis } \phi(\Delta(\alpha))    \right\}/\sim
\end{equation}
\begin{equation}
\Hilb(\alpha)(S)=\left.\left\{ \begin{aligned}
   &\phi:F\otimes S\twoheadrightarrow Q, \text{ such that } \ker\phi\text{ has a reduced Gröbner basis of form}\\
   & g_\mu= \mu+\sum_{{\nu}\in \Delta(\alpha), \mu\prec \nu} s_{\mu,\nu} \nu, \text{ where }\mu \in C(\alpha).
 \end{aligned} 
\right\}\right/\sim
\end{equation}
Note that there is a bijection between $\Xi$ and the set of standard sets in the sense of \cite{LedererGrobnerstrata}. Therefore the two subfunctors defined here are the direct analogues of the functors $\Hilb^{\Delta}$ and $\Hilb^{\prec\Delta}$ defined in \textit{loc.cit}. It is easy to check that 
$\Hilb^\Delta(\alpha)$ and $\Hilb(\alpha)$ 
are sheaves in the big Zariski site over $k$. Note that the definition of $\Hilb^\Delta(\alpha)$ does not require the monomial order.

\begin{proposition}\label{standardcover}
The followings are true:
\begin{enumerate}
\item $\Hilb^{\Delta}(\alpha)$ are represented by open subschemes of $\Hilb_{d,n}(R)$, and they form an open cover 
\begin{equation}
\Hilb_{d,n}(R)=\bigcup_{\substack{\alpha\in\Xi\\n(\alpha)=n}}\Hilb^{\Delta}(\alpha).
\end{equation}
\item  $\Hilb(\alpha)$  is represented by a closed subscheme of $\Hilb^{\Delta}(\alpha)$. 
\item $\Hilb_{d,n}(R)$ admits a Zariski stratification 
\begin{equation}
\Hilb_{d,n}(R)=\bigsqcup_{\substack{\alpha\in\Xi\\n(\alpha)=n}}\Hilb(\alpha).\end{equation} 
\end{enumerate}
As a result, $[\Hilb_{d,n}(R)]=\sum_{\substack{\alpha\in\Xi\\n(\alpha)=n}}[\Hilb(\alpha)]$ in $K_0(\mathrm{Var}_k)$.
\end{proposition}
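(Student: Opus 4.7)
The plan is to work directly with the functors of points, converting each stated condition into an explicit scheme-theoretic condition via the reduced Gr\"obner theory developed in Section~\ref{sec:groebner}.

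\textbf{Part (a).} I would start from the universal quotient $\phi^{\mathrm{univ}}: \m\underline{F} \twoheadrightarrow \mathscr{Q}^{\mathrm{univ}}$ on $\Hilb_{d,n}(R)$, where $\mathscr{Q}^{\mathrm{univ}}$ is locally free of rank $n$ (since $\Hilb_{d,n}(R)$ parametrizes $I\subseteq \m F$ with $\dim_k \m F/I = n$, so the quotient is flat of constant fiber rank). The $n$ sections $\{\phi^{\mathrm{univ}}(\nu)\}_{\nu\in \Delta(\alpha)}$ induce a map of locally free sheaves $\mathcal{O}^n \to \mathscr{Q}^{\mathrm{univ}}$, and the locus where this map is an isomorphism is the nonvanishing locus of the induced section of $\det\mathscr{Q}^{\mathrm{univ}}$, hence Zariski open. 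This represents $\Hilb^{\Delta}(\alpha)$ as an open subscheme of $\Hilb_{d,n}(R)$. For the cover property, any $k$-point $I \in \Hilb_{d,n}(R)(k)$ has a well-defined leading submodule $M_\alpha:=\LT(I)$ for a unique $\alpha\in\Xi$; by Lemma~\ref{lem:dimension_same_as_leading} the set $\Delta(\alpha)$ descends to a $k$-basis of $\m F/I$, placing $I$ in $\Hilb^\Delta(\alpha)(k)$.

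\textbf{Part (b).} Over $\Hilb^\Delta(\alpha)$ the image $\phi^{\mathrm{univ}}(\mu)$ of each corner $\mu\in C(\alpha)$ expands uniquely as $\phi^{\mathrm{univ}}(\mu) = \sum_{\nu\in\Delta(\alpha)} c_{\mu,\nu}\,\phi^{\mathrm{univ}}(\nu)$, defining regular functions $c_{\mu,\nu}$ on $\Hilb^\Delta(\alpha)$ and a universal family
\[
g_\mu \;:=\; \mu - \sum_{\nu\in\Delta(\alpha)} c_{\mu,\nu}\,\nu \;\in\; \ker \phi^{\mathrm{univ}}.
\]
The subfunctor $\Hilb(\alpha)$ is cut out by two layers of closed conditions. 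First, the shape requirement $c_{\mu,\nu}=0$ for every $\nu\preceq \mu$ defines a closed subscheme $Z_1\subseteq \Hilb^\Delta(\alpha)$ on which $\{g_\mu\}$ becomes a reduced pre-Gr\"obner basis in the sense of Definition~\ref{def:prebasis}. Second, the Buchberger criterion (Proposition~\ref{prop:buchberger}) demands that each $S$-polynomial $S^\nu_{\mu,\mu'}(g_\mu, g_{\mu'})$ has zero remainder under the na\"ive division algorithm of Proposition~\ref{prop:division_algorithm}. Since $\Hilb_{d,n}(R)$ depends only on an Artinian quotient of $R$, these divisions terminate in finitely many steps and the vanishing of each remainder translates into finitely many polynomial equations in the $c_{\mu,\nu}$'s. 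This cuts out a further closed subscheme $Z_2\subseteq Z_1$ which, by uniqueness of the reduced Gr\"obner basis (Proposition~\ref{prop:reduced_unique}) and Lemma~\ref{lem:almost_reduced}, precisely represents $\Hilb(\alpha)$.

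\textbf{Part (c) and main obstacle.} Locally closedness of $\Hilb(\alpha) \subseteq \Hilb_{d,n}(R)$ is immediate from (a) and (b). Disjointness of the strata follows from Proposition~\ref{prop:reduced_unique}: a $k$-point can belong to $\Hilb(\alpha)$ only for the unique $\alpha$ with $M_\alpha = \LT(\ker\phi)$. Coverage then follows from part~(a), and the motivic identity is a consequence of scissor relations in $\KVar{k}$. The subtlest step is making part~(b) scheme-theoretic rather than merely pointwise, namely verifying that the Buchberger reduction run universally over $Z_1$ yields finitely many polynomial equations in the coordinates $c_{\mu,\nu}$. This is standard for polynomial rings but in our setting of a monomial subring of a power series ring one must exploit the Artinian reduction to bound the length of the na\"ive division uniformly on $Z_1$, and then track how the coefficients of the final remainder depend polynomially on the $c_{\mu,\nu}$; this is where I expect the bulk of the technical work to lie.
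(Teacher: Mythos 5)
Your overall architecture (open cover by $\Hilb^{\Delta}(\alpha)$, then cut out $\Hilb(\alpha)$ as a closed subscheme) matches the paper's, and parts (a) and (c) are essentially the paper's argument in dual language: where you use the quotient bundle $\mathscr{Q}^{\mathrm{univ}}$ and the determinant of a map of rank-$n$ bundles, the paper embeds $\Hilb_{d,n}(R)$ into $\Gr(\m F, bd-n)$ and pulls back the open locus where the universal subspace meets $\Span_k\Delta(\alpha)$ trivially. These are equivalent.

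Part (b), however, contains both a redundancy and a gap. You cut out $\Hilb(\alpha)$ in two layers: $Z_1$ by the shape conditions $c_{\mu,\nu}=0$ for $\nu\prec\mu$, and then $Z_2\subseteq Z_1$ by running the Buchberger criterion universally and declaring the remainder coefficients to vanish. The paper (following Lederer) only imposes $Z_1$, and this is not an omission: once $\phi\in\Hilb^{\Delta}(\alpha)(S)$ and the $g_\mu$ have the shape $\mu + \sum_{\nu\succ\mu} c_{\mu,\nu}\nu$, they are automatically a Gr\"obner basis of $\ker\phi$. The argument is a rank count, not a syzygy calculation: division of any $f\in\ker\phi$ by $\{g_\mu\}$ (legitimate because each has monomial leading term with unit coefficient, and terminates because $R$ has been replaced by an Artinian quotient) produces a remainder in $\Span_S\Delta(\alpha)\cap\ker\phi$, which is $0$ since $\phi$ restricted to $\Span_S\Delta(\alpha)$ is an isomorphism onto $Q$. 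So $\ker\phi=(g_\mu)$ and every leading term of $\ker\phi$ is divisible by some $\mu$, i.e.\ $\{g_\mu\}$ is Gr\"obner. Your $Z_2$ therefore equals $Z_1$, and the step you flag as "the bulk of the technical work" can be dropped. Beyond the redundancy, there is a genuine gap in the way you invoke it: Proposition~\ref{prop:buchberger} is stated and proved in Appendix~\ref{appendix:groebner} over a \emph{field}, and you are applying it with coefficients in the ring $\Gamma(Z_1,\O)$. A Buchberger criterion over a general base ring requires separate justification (this is precisely the kind of issue Lederer's paper, cited in the proof, handles). The rank-count argument sidesteps this entirely, which is why the paper's route is preferable.
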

\begin{proof}~
\begin{enumerate}
\item It is clear from the definition that $\Hilb_{d,n}(R)$ is a union of subfunctors $\Hilb({\Delta}(\alpha)), \alpha\in \Xi, n(\alpha)=n$. We now prove the openness. 

Applying \eqref{eq:embQGr} with $r=d$ and $n_{\eqref{eq:embQGr}}=n+d$, we get a locally closed immersion
\begin{equation}
\Hilb_{d,n}(R)\incl \Gr:=\Gr(F,bd-n).
\end{equation}
It is a closed embedding since $d=r$, as is observed before. Consider the subspace $\Span_{k} \Delta(\alpha)\subseteq \mathfrak{m}F$. Let $\mathcal{V}(\alpha)\subseteq\Gr(k)$ be the subset parametrizing subspaces that have trivial intersection with $\Span_{k} \Delta(\alpha)$. Since $\dim\Span_{k} \Delta(\alpha)=n$, the subspaces of $\mathfrak{m}F$ of dimension $bd-n$ (i.e., of the complementary dimension to $\Span_{k} \Delta(\alpha)$) have generically trivial intersection with $\Span_{k} \Delta(\alpha)$. Therefore, $\mathcal{V}(\alpha)$ is open. Now we consider $\mathcal{V}(\alpha)$ as an open subscheme of $\Gr$. Pulling $\mathcal{V}(\alpha)$ back to $\Hilb_{d,n}(R)$ exactly gives $\Hilb^{\Delta}(\alpha)$ which is, in particular, an open subscheme of $\Hilb_{d,n}(R)$.
\item The proof is almost identical to \cite[Theorem 1]{LedererGrobnerstrata}. Let $\phi\in \Hilb^\Delta(\alpha)(S)$. For every $\mu\in C(\alpha)$ there is a unique polynomial of the form 
\begin{equation}
f_\mu=\mu+\sum_{\nu\in \Delta(\alpha)} d_{\mu,\nu} \nu \in \ker\phi.
\end{equation}
Let $J\subseteq S$ be the ideal generated by $d_{\mu,\nu}$ where $\mu$ runs over $C(\alpha)$ and $\nu$ runs over elements of $\Delta$ such that $\nu\prec \mu$. A morphism $\psi:S\rightarrow S'$ in the category of $k$-algebras factors through $S\rightarrow S/J$ if and only if $\ker(\phi\otimes \psi)$  is monic with Gröbner basis $\{\psi(f_\mu)\}$. The same argument as in  \textit{loc.cit} shows that $\Hilb(\alpha)$ is represented by a closed subscheme of $\Hilb^\Delta(\alpha)$. 
\item  This assertion follows directly from (a) and (b), since every field-valued point of $\Hilb_{d,n}(R)$ lies in exactly one stratum $\Hilb(\alpha)$.
\end{enumerate}
\end{proof} 

\begin{warning}
Our notion of stratification does not require that the closure of any stratum is a disjoint union of strata. Since we are computing the motives but not the intersection theory, our notion of Zariski stratification suffices. As is pointed out in \cite{LedererGrobnerstrata}, the general theory of Gr\"obner stratification does not guarantee that Proposition \ref{standardcover}(c) is a stratification in that stronger sense.
\end{warning}
 
\subsubsection{Motivic version of the counting results} Let $R=k[[T^2,T^3]]$ and assume the setup of \S\ref{sec:strata}. Define a Zariski subsheaf $\Hilbnil(\alpha)\subseteq \Hilb(\alpha)$ as 
\begin{equation}
    \Hilbnil(\alpha)(S)=\left.\left\{ \begin{aligned}
   &\phi:F\otimes S \twoheadrightarrow Q, \text{ such that } \ker\phi\text{ has a reduced Gröbner basis of form}\\
   & g_\mu= \mu+\sum_{\mu\in \Delta(\alpha), \mu\prec \nu} s_{\mu,\nu} \nu, \text{ where }\mu\in C(\alpha), \text{ such that }s_{\mu,\nu}=0 \text{ if }\mu=\mu_i^0 .\end{aligned}\right\}\right/\sim
\end{equation} This is the motivic version of the set $\Hilbnil(\alpha)$ defined in Lemma~\ref{lem:decomposition}. Recall that we also defined a variety $V(\alpha)$ in Conjecture \ref{conj:L_rationality_strata} for a pure-$K$ leading term datum $\alpha$. The following is a motivic upgrade of Lemma~\ref{lem:decomposition} and Lemma~\ref{lem:staircase}:
\begin{theorem}\label{Thm:groebner_strata_motivic}
$\Hilbnil(\alpha)$ is represented by a closed subscheme of $\Hilb(\alpha)$. Furthermore, we have the following identities in $K_0(\mathrm{Var}_k)$:
\begin{align}
&[\Hilb(\alpha)]=[\Hilbnil(\alpha)] \cdot \mathbb{L}^{\sum_i \abs[\big]{\Delta(\alpha)_{\succ \mu_i^0(\alpha)}}}.\\
&  [\Hilbnil(\alpha)]=[V(\alpha|_{K_\alpha})]\cdot\mathbb{L}^{\sum_{i\in K_\alpha}\abs*{C(\alpha|_{J_\alpha})_{\succ \mu_i^0}}}.
\end{align}
\end{theorem}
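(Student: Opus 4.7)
The three assertions correspond to the three stages of the proofs of Lemmas \ref{lem:decomposition} and \ref{lem:staircase}, so my plan is to upgrade each stage from the set-theoretic to the functorial/scheme-theoretic setting, making use of the fact that the Gröbner-theoretic arguments (unique reduced Gröbner basis for monic submodules, Buchberger criterion) carry over verbatim to submodules of $F \otimes S$ for any reduced $k$-algebra $S$.

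For the closed-embedding statement, I would use the natural affine-space coordinates $(s_{\mu,\nu})$ with $\mu \in C(\alpha)$ and $\nu \in \Delta(\alpha)_{\succ \mu}$ on $\Hilb^\Delta(\alpha)$ coming from the presentation of \S\ref{subsec:grobsubfunctor}: over an $S$-point of $\Hilb^\Delta(\alpha)$, each corner $\mu$ lifts uniquely to $g_\mu = \mu + \sum_\nu s_{\mu,\nu}\nu \in \ker\phi$. The subfunctor $\Hilbnil(\alpha)$ is the vanishing locus inside $\Hilb(\alpha)$ of the subset of coordinates $\{s_{\mu_i^0, \nu}\}_{i,\,\nu \in \Delta(\alpha)_{\succ \mu_i^0}}$, which is a linear condition and hence cuts out a closed subscheme.

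For the first motivic identity, I would globalize the automorphism $\tau_h$ used in Lemma \ref{lem:decomposition}. Set $\mathbb{A}^N := \Spec k[s_{\mu_i^0, \nu} : 1 \le i \le d,\, \nu \in \Delta(\alpha)_{\succ \mu_i^0}]$, with $N = \sum_i |\Delta(\alpha)_{\succ \mu_i^0}|$, and define the universal tuple $h_i^0 := \mu_i^0 + \sum_\nu s_{\mu_i^0, \nu}\,\nu \in \mathfrak{m}F \otimes \mathcal{O}_{\mathbb{A}^N}$. The formulas of Lemma \ref{lem:decomposition} assemble into a family of $R\otimes\mathcal{O}_{\mathbb{A}^N}$-module automorphisms $\tau$ of $\mathfrak{m}F \otimes \mathcal{O}_{\mathbb{A}^N}$ whose reduction mod $\mathfrak{m}^2 F$ is lower-triangular with unit diagonal, hence is invertible over $\mathbb{A}^N$. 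Pushing forward submodules via $\tau$ yields a morphism $\Hilbnil(\alpha) \times \mathbb{A}^N \to \Hilb(\alpha)$; the inverse is the composition of the coordinate projection $\Hilb(\alpha) \to \mathbb{A}^N$ (reading off the $s_{\mu_i^0, \nu}$) with the pushforward by $\tau^{-1}$. Verifying that both composites are the identity reduces to Lemma \ref{lem:almost_reduced} applied fiberwise, which holds universally because reduced Gröbner bases are unique over any base. The claimed identity in $\KVar{k}$ then follows from $[\mathbb{A}^N] = \mathbb{L}^N$.

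For the second motivic identity, I would promote the bijection of Lemma \ref{lem:staircase} to an isomorphism of schemes. Using the same coordinate realization, $\Hilbnil(\alpha)$ is the closed subscheme of $\Spec k[\{s_{\mu_i^1,\nu}\}]$ cut out by the functorial version of the Buchberger relations \eqref{eq:bbgforcusp}. Over any reduced $S$, these relations are equivalent (by the calculation in Lemma \ref{lem:staircase}, which is purely a matter of matching coefficients against the basis $\bigcup_m T^{2m}\widetilde{G}$) to the matrix equations \eqref{eq:stair1}--\eqref{eq:stair9}. The morphism $\Hilbnil(\alpha) \to V(\alpha|_{K_\alpha}) \times \mathbb{A}^M$ sending $(X,Y,Z,W) \mapsto ((X,Y),W)$, with $M = \sum_{i \in K_\alpha} |C(\alpha|_{J_\alpha})_{\succ \mu_i^0}|$, is then an isomorphism: the inverse reconstructs $Z$ from the formula $Z = XW$ of \eqref{eq:stair5}, with the redundancy of \eqref{eq:stair8} (already noted in Lemma \ref{lem:staircase}) ensuring that $Z$ so defined automatically satisfies all required support conditions. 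Passing to motives yields the second identity.

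The main obstacle is conceptual rather than computational: one must confirm that all of the Gröbner-basis manipulations in \S\ref{sec:strata}, particularly the uniqueness of reduced Gröbner bases and the bijection of Lemma \ref{lem:staircase}, continue to hold when the coefficients lie in an arbitrary reduced $k$-algebra $S$ rather than in a field. Once one grants that monic submodules over $S$ enjoy the same Gröbner theory (as reviewed in \S\ref{subsec:Gbstrata}), all of the above constructions are functorial in $S$, and the two isomorphisms of schemes are produced by writing down the universal formulas. The computations of $N$ and $M$ are then bookkeeping matching the exponents appearing in Lemmas \ref{lem:decomposition} and \ref{lem:staircase}.
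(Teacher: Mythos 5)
Your proposal is correct and follows essentially the same route as the paper: establish the closed embedding by adjoining the vanishing of the coordinates $s_{\mu_i^0,\nu}$ to the ideal cutting out $\Hilb(\alpha)$ inside $\Hilb^\Delta(\alpha)$, and obtain the two motivic identities by upgrading the pointwise bijections of Lemmas \ref{lem:decomposition} and \ref{lem:staircase} to scheme isomorphisms $\Hilb(\alpha)\simeq \Hilbnil(\alpha)\times\mathbb{A}^N$ and $\Hilbnil(\alpha)\simeq V(\alpha|_{K_\alpha})\times\mathbb{A}^M$ by working with $S$-points and the relative Gröbner theory of \S\ref{subsec:Gbstrata}. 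The paper states these as product decompositions and explicitly defers the verification to the reader, so your write-up is in fact more detailed than the paper's own while matching its strategy.
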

\proof To show that $\Hilbnil(\alpha)$ is a closed subscheme of $\Hilb(\alpha)$, we simpy replace the ideal $J$ in the proof of Lemma~\ref{standardcover}(2) by an ideal $J_0$ generated by $J$ together with $d_{\mu,\nu}$ where $\mu=\mu_i^0$. A morphism $\psi:S\rightarrow S'$ in the category of $k$-algebras factors through $S\rightarrow S/J_0$ if and only if $\ker(\phi\otimes \psi)$  is monic with Gröbner basis $\{\psi(f_\mu)\}$, such that $\psi(f_\mu)=\psi(\mu)$ if $\mu=\mu_i^0$. It then follows that $\Hilbnil(\alpha)$ is represented by a closed subscheme of $\Hilb(\Delta(\alpha))$, hence a closed subscheme of $\Hilb(\alpha)$. For the two formulas, one can simply upgrade the proofs of Lemma~\ref{lem:decomposition} and Lemma~\ref{lem:staircase}. Indeed, one first establishes a stronger structural result 
\begin{align}
&\Hilb(\alpha)\simeq \Hilbnil(\alpha) \times \mathbb{A}^{\sum_i \abs[\big]{\Delta(\alpha)_{\succ \mu_i^0(\alpha)}}}.\\
&  \Hilbnil(\alpha)\simeq V(\alpha|_{K_\alpha})\times \mathbb{A}^{\sum_{i\in K_\alpha}\abs*{C(\alpha|_{J_\alpha})_{\succ \mu_i^0}}}.
\end{align}
by working with $S$-points (instead of $k$-points as we did) in the proofs of above lemmas. It is not hard to see that the proofs of the two lemmas are functorial in nature, and still carries over to $S$-points. We will leave the details to the readers. $\hfill\square$

\bibliography{bibliography.bib}
\end{document}